\newtheorem{theorem}{Theorem}[section]
\newtheorem{mainthm}{Theorem}
\newtheorem*{theorem*}{Theorem}
\newtheorem{corollary}[theorem]{Corollary}
\newtheorem{proposition}[theorem]{Proposition}
\newtheorem{lemma}[theorem]{Lemma}
\newtheorem{Remark}[theorem]{Remark}
\newtheorem*{definition*}{Definition}
\newtheorem{claim}[theorem]{Claim}
\newtheorem{definition}[theorem]{Definition}
\newtheorem*{Question*}{Question}
\def\N{\mathbb{N}}
\def\Z{\mathbb{Z}}
\def\R{\mathbb{R}}
\def\T{\mathbb{T}}
\def\norm #1{\Vert \,#1\, \Vert\,}
\newcommand{\Rmnum}[1]{\expandafter\@slowromancap\romannumeral #1@}
\def\R{\mathbb{R}}
\def\S{\mathbb{S}}
\def\calE{\mathcal{E}}
\def\calF{\mathcal{F}}
\def\calG{\mathcal{G}}
\def\T{\mathbb{T}}
\def\ud{\mathrm{d}}
\def \diff {\operatorname{Diff}}
\def \home {\operatorname{Home}}
 \def\NN{{\mathbb N}}  
 \def\RR{{\mathbb R}} \def\SS{{\mathbb S}} \def\TT{{\mathbb T}}
 \def\ZZ{{\mathbb Z}}
\def\Si{\Sigma}
\def\cA{\mathcal{A}}  \def\cG{\mathcal{G}}  
  \def\cH{\mathcal{H}}  
  \def\cI{\mathcal{I}}  
   \def\cP{\mathcal{P}} 
\def\cE{\mathcal{E}}    \def\cW{\mathcal{W}}
\def\cF{\mathcal{F}}   \def\cR{\mathcal{R}}
\title[Transverse foliations and partially hyperbolic diffeomorphisms]{Transverse foliations on the torus  $\T^2$ and partially hyperbolic diffeomorphisms on $3$-manifolds.}
\author{Christian Bonatti and  Jinhua ZHANG}
\begin{document}
\maketitle
\begin{abstract} In this paper, we prove that given two $C^1$ foliations  $\mathcal{F}$ and $\mathcal{G}$ on $\mathbb{T}^2$ which
 are transverse, there exists a non-null homotopic loop $\{\Phi_t\}_{t\in[0,1]}$ in $\diff^{1}(\T^2)$ such that
  $\Phi_t(\calF)\pitchfork \calG$ for every $t\in[0,1]$, and $\Phi_0=\Phi_1= Id$.

  As a direct consequence, we get a general process for building new partially hyperbolic diffeomorphisms on closed $3$-manifolds.
   \cite{BPP} built a new example of  dynamically coherent non-transitive partially hyperbolic diffeomorphism on a closed $3$-manifold;
   the example in \cite{BPP} is obtained
  by composing the time $t$ map, $t>0$ large enough, of  a very specific  non-transitive Anosov  flow by a Dehn twist along a transverse torus.
  Our result shows that the same construction holds starting with any non-transitive Anosov flow on an oriented $3$-manifold. Moreover, for a given transverse torus,
  our result explains which type of  Dehn twists lead to partially hyperbolic diffeomorphisms.
\end{abstract}

\section{Introduction and statement of the main results}
The main motivation of this paper is the construction of new examples of partially hyperbolic diffeomorphisms on closed $3$-manifolds, initiated in \cite{BPP}. More precisely, our main
result is
a topological result which was missing for \cite{BPP} getting a general construction instead of a precise example.  Nevertheless, this topological result deals with very
elementary objects and is interesting by itself. We first present it below independently from its application on partially hyperbolic diffeomorphisms.

\subsection{Pair of transverse foliations on $\TT^2$}
 The space of $1$-dimensional (non-singular)  smooth foliations on the torus $\T^2$ has several connected components which are easy to describe: such a foliation is directed by  a smooth unit vector
 field, which can be seen has a map $X\colon \TT^2\to \SS^1$;  such a map induces a morphism $X_*\colon \pi_1(\TT^2)=\ZZ^2\to \ZZ$ and two foliations can be joined by a path of non-singular foliations
 if and only if the induced morphisms coincide. The group $\diff_0(\TT^2)$ of diffeomorphisms of $\TT^2$ isotopic to the identity map has a natural action on the space of foliations.

 In this paper,  we consider pairs $(\cF,\cG)$ of transverse foliations on $\TT^2$.  For any such a pair $(\cF,\cG)$of transverse foliations,  we consider the open subset of $\diff_0(\TT^2)$ of all diffeomorphism $\varphi$ so that
 $\varphi(\cF)$ is transverse to $\cG$. Our main result below shows  that this open subset contains non-trivial loops.

\begin{mainthm}\label{t.transverse} Let $\mathcal{F}$ and $\mathcal{G}$ be two $C^1$ one-dimensional foliations on $\mathbb{T}^2$ and they are transverse.
Then there exists a continuous family $\{\Phi_t\}_{t\in[0,1]}$ of $C^1$ diffeomorphisms on $\mathbb{T}^2$ such that
\begin{itemize} \item  $\Phi_0=\Phi_1=Id$;
\item For every $t\in[\,0,1\,]$, the $C^1$ foliation  $\Phi_t(\mathcal{F})$ is transverse to $\mathcal{G}$;
\item For every point $x\in\mathbb{T}^2$, the closed curve $\Phi_t(x)$ is non-null homotopic.
\end{itemize}
\end{mainthm}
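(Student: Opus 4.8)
The plan is to reformulate the statement as one about a single open subset of the diffeomorphism group, to isolate a flexibility principle, and then to produce the loop case by case according to the dynamics of $\mathcal{F}$ and $\mathcal{G}$.

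First I would set $\mathcal{U}=\{\varphi\in\diff^1_0(\T^2):\varphi(\mathcal{F})\pitchfork\mathcal{G}\}$, which is open, contains $\mathrm{Id}$, and — since $\T^2\times[0,1]$ is compact — has the property that along any loop inside it the angle between $\varphi(\mathcal{F})$ and $\mathcal{G}$ stays uniformly bounded away from $0$ and $\pi$. Because $\diff_0(\T^2)$ deformation retracts onto the torus of translations, with the evaluation $\varphi\mapsto\varphi(x_0)$ inducing an isomorphism on $\pi_1\cong\mathbb{Z}^2$, the three conditions imposed on $\{\Phi_t\}$ in the statement are equivalent to asking that $\{\Phi_t\}$ be a loop in $\mathcal{U}$ based at $\mathrm{Id}$ that is non-contractible in $\diff_0(\T^2)$ (and it suffices to check the orbit-loop condition at one point, the others being freely homotopic to it). The flexibility principle I would record is: if $g$ preserves $\mathcal{G}$ and $h$ preserves $\mathcal{F}$, then $g\,\mathcal{U}\,h\subseteq\mathcal{U}$, because $g\varphi h(\mathcal{F})=g(\varphi(\mathcal{F}))$ is transverse to $g(\mathcal{G})=\mathcal{G}$. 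In particular $\mathrm{Stab}_0(\mathcal{G})\subseteq\mathcal{U}$ and $\mathrm{Stab}_0(\mathcal{F})\subseteq\mathcal{U}$; and since conjugating a loop by a path does not change its free homotopy class, it is enough to exhibit a non-contractible loop in $\mathcal{U}$ after replacing $(\mathcal{F},\mathcal{G})$ either by $(\psi(\mathcal{F}),\psi(\mathcal{G}))$ for a fixed $\psi\in\diff(\T^2)$, or by $(\varphi(\mathcal{F}),\mathcal{G})$ for some $\varphi$ joined to $\mathrm{Id}$ inside $\mathcal{U}$.

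The easy case is when one of the foliations, say $\mathcal{G}$, has all its leaves closed: then $\mathcal{G}$ is a $C^1$ bundle $\T^2\to S^1$ by essential circles, one chooses a $C^1$ vector field tangent to $\mathcal{G}$ and rescales it leaf by leaf so that every orbit has period $1$, and the time-$t$ map $\{\Phi_t\}_{t\in[0,1]}$ of the flow is a loop based at $\mathrm{Id}$, lies in $\mathrm{Stab}_0(\mathcal{G})\subseteq\mathcal{U}$, and moves each point once around its leaf. The same argument works whenever $\mathcal{F}$ or $\mathcal{G}$ is invariant under a $C^1$ circle action with essential orbits — in particular for linear foliations, where one may take the translation flow, so all linear pairs are settled. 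For the general case I would invoke the classification of $C^1$ one-dimensional foliations of $\T^2$ (Poincar\'e--Bendixson theory on surfaces together with Denjoy theory): using the reductions above — straightening, by an element of $SL_2(\mathbb{Z})$ and an ambient isotopy applied to both foliations, an essential circle $\tau$ transverse to $\mathcal{G}$ and meeting every $\mathcal{G}$-leaf, and normalizing the position of $\mathcal{F}$ relative to $\tau$ — one reduces to a model in which $\mathcal{G}$ is the suspension over $\tau$ of a $C^1$ circle diffeomorphism $f$. In this model the loop would be built by running the suspension flow of $\mathcal{G}$ (which preserves $\mathcal{G}$, hence stays in $\mathcal{U}$) over one full period, and then undoing the resulting monodromy twist by an explicit isotopy designed to keep the image of $\mathcal{F}$ transverse to $\mathcal{G}$; evaluating the resulting loop at a point of $\tau$ one checks that the homology class obtained is a generator of $H_1(\T^2;\mathbb{Z})$.

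The hard part will be this last step. On one hand everything must be carried out in the $C^1$ category: a $C^1$ minimal or Denjoy foliation need not be $C^1$-conjugate to a linear one, so one cannot simply transport a translation loop through a smooth linearization — the untwisting isotopy has to be produced from the given $C^1$ data. On the other hand, and this is the real crux, one must guarantee that this untwisting isotopy never creates a tangency with $\mathcal{G}$, i.e. remains inside $\mathcal{U}$; this is where the transversality of $\mathcal{F}$ and $\mathcal{G}$ must be used quantitatively, via the uniform two-sided control on the angle between them. Finally there is the orientability issue — $\mathcal{F}$ and $\mathcal{G}$ need not be co-orientable — which I would handle either by passing to a finite cover on which they become orientable and then descending the loop, or by phrasing the whole construction in terms of line fields from the outset.
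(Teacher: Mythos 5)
Your framing is sound: the reduction to exhibiting a non-contractible loop in the open set $\mathcal{U}=\{\varphi:\varphi(\mathcal{F})\pitchfork\mathcal{G}\}$, the observation that the stabilizer of $\mathcal{G}$ (or of $\mathcal{F}$) in $\diff_0(\TT^2)$ sits inside $\mathcal{U}$, the basepoint-change argument allowing you to replace $(\mathcal{F},\mathcal{G})$ by a conjugate or by $(\varphi(\mathcal{F}),\mathcal{G})$ with $\varphi$ joined to the identity inside $\mathcal{U}$, and the case where one foliation is a fibration by circles (where the rescaled tangent flow gives the loop) are all correct. But the proof has a genuine gap exactly where you say "the hard part will be this last step": in the general case you run the flow tangent to $\mathcal{G}$ for one return time, arriving at a diffeomorphism $\Phi_1\neq\mathrm{Id}$ realizing the monodromy, and you then need a path from $\Phi_1$ back to $\mathrm{Id}$ that keeps the image of $\mathcal{F}$ transverse to $\mathcal{G}$. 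No mechanism for producing this path is given; "an explicit isotopy designed to keep the image of $\mathcal{F}$ transverse to $\mathcal{G}$" is a restatement of the theorem, not a proof of it. The "uniform two-sided control on the angle" you invoke is a property of a loop you already have, so it cannot be used to build one. This missing step is the entire technical content of the paper.

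For comparison, the paper fills that gap by first splitting according to whether $\mathcal{F}$ and $\mathcal{G}$ have compact leaves in the same free homotopy class (a dichotomy absent from your outline, and which in fact governs exactly which homotopy classes of loops are achievable, cf.\ Theorems~\ref{thm.nonparallel} and~\ref{thm.parallel}). In the non-parallel case it constructs a \emph{common} complete transversal for $\mathcal{F}$ and $\mathcal{G}$ (Proposition~\ref{st}), deforms $\mathcal{F}$ along the leaves of $\mathcal{G}$ onto a foliation with prescribed holonomy (Proposition~\ref{push}), squeezes between $\mathcal{F}$ and $\mathcal{G}$ an auxiliary smooth foliation with diophantine rotation number, and linearizes it by Herman's theorem; after this change of coordinates $\mathcal{F}$ and $\mathcal{G}$ lie in complementary quadrants of two constant line fields, so \emph{every} translation preserves transversality and the translation loops conclude. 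In the parallel case it establishes a normal form decomposing $\TT^2$ into vertical annuli on which the two foliations are monotone in opposite senses, after which vertical translations work. Your suspension-flow-plus-untwisting idea would have to be implemented by something of comparable strength; note also that your Denjoy-theoretic worry is real but is bypassed in the paper precisely because Herman's theorem is applied to a freely chosen smooth auxiliary foliation, not to $\mathcal{F}$ or $\mathcal{G}$ themselves.
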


Our main theorem is implied by the following two theorems, according to the two cases described in Definition~\ref{d.parallel} below:

\begin{definition}\label{d.parallel}We say that two foliations $\cF$ and $\cG$ of the torus $\TT^2$
\emph{have parallel compact leaves} if and only if there exist a compact leaf of $\cF$ and a compact leaf of $\cG$ which
are in the same free homotopy class.

Otherwise, we say that $\cF$ and $\cG$ \emph{have no parallel compact leaves} or that they are \emph{without parallel compact leaves}.

\end{definition}

\begin{theorem} \label{thm.nonparallel} Let $\mathcal{F}$ and $\mathcal{G}$ be two $C^1$ one-dimensional transverse  foliations on $\mathbb{T}^2$, without parallel compact leaves.
Then for any $\alpha\in \pi_{1}(\T^2)$, there exists a continuous family $\{\Phi_t\}_{t\in[0,1]}$ of $C^1$
  diffeomorphisms on $\mathbb{T}^2$ such that
\begin{itemize} \item  $\Phi_0=\Phi_1=Id$;
\item For every $t\in[\,0,1\,]$, the $C^1$ foliation $\Phi_t(\mathcal{F})$ is transverse to $\mathcal{G}$;
\item For every point $x\in\mathbb{T}^2$, the closed curve $\Phi_t(x)$ is in the homotopy class of $\alpha$.
\end{itemize}
\end{theorem}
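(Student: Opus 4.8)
Denote by $D_{\cG}$ the open subset of $\diff_0(\T^2)$ consisting of the $\varphi$ with $\varphi(\cF)\pitchfork\cG$; we must show that every $\alpha\in\Z^2=\pi_1(\diff_0(\T^2))$ is represented by a loop in $D_{\cG}$ based at $Id$, and some soft reductions make this easier. The set $G\subseteq\Z^2$ of realisable classes is a subgroup: concatenating the loops obtained for $\alpha$ and $\beta$ is legitimate because each keeps $\Phi_t(\cF)$ transverse to $\cG$ and each is based at $Id$, and it produces a loop for $\alpha+\beta$, while reversing a loop gives $-\alpha$. Moreover $G$ does not change if $(\cF,\cG)$ is replaced by $(\psi(\cF),\psi(\cG))$ with $\psi\in\diff_0(\T^2)$ (conjugate the loop by $\psi$; since $\diff_0(\T^2)$ is connected, conjugation acts trivially on its $\pi_1$), nor if $\cF$ and $\cG$ are interchanged (pass to the inverse family), and the statement ``$G=\Z^2$'' is invariant under the $\mathrm{GL}_2(\Z)$-action on $\T^2$. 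So we may conjugate $(\cF,\cG)$ by a diffeomorphism isotopic to the identity and by a linear automorphism, and then only need to realise classes generating $\Z^2$.

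The engine is a translation criterion: suppose that, after such a conjugation, the tangent line field of $\cF$ is invariant under the translation flow $R_{s\bar a}\colon x\mapsto x+s\bar a$ and that of $\cG$ under $R_{s\bar b}$, where $\bar a,\bar b\in\R^2$ are $\R$-linearly independent and $\cF\pitchfork\cG$ still holds. Then for every $\gamma\in\Z^2$ the translation loop $\Phi_t=R_{t\gamma}$ ($t\in[0,1]$) works: it is based at $Id$ (because $\gamma\in\Z^2$), its point-orbits are closed curves of class $\gamma$, and it keeps $\cF$ transverse to $\cG$, since, writing $\gamma=u\bar a+v\bar b$ and using the two invariances,
\[
  R_{t\gamma}(\cF)\pitchfork\cG \iff \cF\pitchfork R_{-t\gamma}(\cG)=R_{-tu\bar a}(\cG)\iff R_{tu\bar a}(\cF)\pitchfork\cG\iff\cF\pitchfork\cG.
\]
Hence it is enough to conjugate $(\cF,\cG)$ to a pair invariant under two $\R$-independent translation flows; all classes are then realised simultaneously.

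Producing such a normal form is where the classification of one-dimensional foliations of $\T^2$ and the hypothesis come in. If $\cF$ has a compact leaf, all its compact leaves are freely homotopic, of a primitive class $\bar a$; their union is closed, its complement is a finite union of open annuli carrying Reeb-type foliations, and straightening the compact leaves and standardising the Reeb annuli conjugates $\cF$, by a diffeomorphism isotopic to the identity, to a foliation invariant under $R_{s\bar a}$. Doing the same for $\cG$ yields a direction $\bar b$; when $\cF$ and $\cG$ both have compact leaves, ``without parallel compact leaves'' is precisely the assertion $\bar a\not\parallel\bar b$, and when only one of them does, the other is a minimal foliation of irrational slope and $\bar a,\bar b$ are automatically $\R$-independent.

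I expect the real work to be twofold. First, the two normalisations must be performed by a \emph{single} diffeomorphism isotopic to the identity, i.e.\ one must standardise the two transverse Reeb structures at the same time; here one uses the grid formed by the compact leaves of $\cF$ and $\cG$ together with closed transversals between them. Second --- the genuine obstacle --- one must treat the case where $\cF$ or $\cG$ has no compact leaf: such a foliation is a suspension of a circle diffeomorphism of irrational rotation number over a closed transversal, but in the $C^1$ category (Denjoy counterexamples) it need not be conjugate to a linear, hence translation-invariant, foliation, so the criterion does not apply verbatim. One then works in suspension coordinates, in which the foliation is transverse to an entire linear foliation $\mathcal{L}_{\bar a}$, and checks directly --- by controlling, along each leaf of $\mathcal{L}_{\bar a}$, the arcs of tangent directions assumed by $\cF$ and by $\cG$ --- that the translation loops $R_{t\bar a}$ preserve transversality for enough primitive classes $\bar a$ to generate $\Z^2$; exhibiting these directions is the technical heart of the argument.
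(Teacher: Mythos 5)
Your soft reductions (the realisable classes form a subgroup, invariance under conjugation by $\diff_0(\TT^2)$ and under $\mathrm{GL}_2(\ZZ)$) and the translation criterion itself are correct, but the step that carries all the weight --- conjugating the pair $(\cF,\cG)$ by a \emph{single} diffeomorphism to a pair whose tangent line fields are invariant under two independent translation flows --- has a genuine gap, and not only in the Denjoy case you flag. Even when $\cF$ has compact leaves, an $R_{s\bar a}$-invariant model forces the holonomy germ of each compact leaf to be the time-one map of a germ of flow on a transversal; a $C^1$ foliation can have compact leaves whose holonomy germs embed in no flow, so $\cF$ \emph{alone} need not be $C^1$-conjugate to a translation-invariant foliation. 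The simultaneous normalisation of $\cF$ and $\cG$ is a further rigidity statement you only flag as ``real work''. In the cases where at least one foliation is minimal --- including the case where neither has a compact leaf, which your case analysis does not address --- your fallback of ``controlling the arcs of tangent directions along the leaves of $\mathcal{L}_{\bar a}$'' is precisely the unproved technical heart of the theorem rather than a deferrable check. (A smaller inaccuracy: the complement of the union of compact leaves need not be a \emph{finite} union of annuli; the compact leaves can form a Cantor set of circles.)

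The missing idea is that one should not try to make $\cF$ or $\cG$ translation-invariant at all, but only to trap them between two auxiliary \emph{affine} foliations. The paper first produces a common complete transversal for $\cF$ and $\cG$ (Proposition~\ref{st}, where the no-parallel-compact-leaves hypothesis excludes Reeb components), straightens coordinates so that both foliations are transverse to the horizontal circle fibration (Theorem~\ref{t.circles}), shows that the translation numbers of the two holonomies on this transversal differ (again by the hypothesis), interpolates between vector fields directing $\cF$ and $\cG$ to build a smooth foliation transverse to both whose rotation number is diophantine (Proposition~\ref{irrational foliation}), and linearises it by Herman's theorem. The two resulting affine foliations $\cH$ and $\cI$ separate the tangent directions of $\theta(\cF)$ and $\theta(\cG)$ into different quadrants at every point; every translation preserves $\cH$, $\cI$ and hence the quadrants, so every translation loop preserves transversality. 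This separation argument is what replaces, and repairs, your normal-form step; without it (or a worked-out substitute for your last paragraph), the proof is incomplete.
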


The proof  of Theorem \ref{thm.nonparallel} consists in endowing $\TT^2$ with coordinates in which the foliations $\cF$ and $\cG$ are separated by $2$ affine foliations (i.e. $\cF$ and $\cG$ are tangent to two transverse
constant cones). Thus in these coordinates every translation leaves $\cF$ transverse to $\cG$, concluding.

\begin{theorem}\label{thm.parallel} Let $\mathcal{F}$ and $\mathcal{G}$ be two $C^1$ one-dimensional foliations on $\mathbb{T}^2$ and
they are transverse. Assume that $\cF$ and $\cG$ have parallel compact leaves  which are in the homotopy class  $\alpha\in \pi_1(\TT^2)$.
Then, for each $\beta\in \pi_1(\TT^2)$, one has that $\beta\in \langle\alpha\rangle$ if and only if there exists a continuous family $\{\Phi_t\}_{t\in[0,1]}$ of $C^1$
 diffeomorphisms on $\mathbb{T}^2$ such that
\begin{itemize} \item  $\Phi_0=\Phi_1=Id$;
\item  For every $t\in[\,0,1\,]$, the $C^1$ foliation $\Phi_t(\mathcal{F})$ is transverse to $\mathcal{G}$;
\item For every point $x\in\mathbb{T}^2$, the closed curve $\Phi_t(x)$ is in the homotopy class of $\beta$.
\end{itemize}
\end{theorem}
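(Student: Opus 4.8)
The plan is to prove the two implications separately: that a loop in the class $\beta$ can only exist when $\beta\in\langle\alpha\rangle$ (a soft argument, resting on one geometric lemma), and conversely that when $\beta\in\langle\alpha\rangle$ such a loop can be built (a construction refining that of Theorem~\ref{thm.nonparallel}).

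\emph{A disjointness lemma.} The geometric input I would isolate first is: \emph{if $c\subset\TT^2$ is a closed curve transverse to $\cG$ with $[c]=\alpha$ and $L'$ is a compact leaf of $\cG$ with $[L']=\alpha$, then $c\cap L'=\emptyset$.} To prove it, pass to the $2$-fold cover $\widehat{\TT^2}\to\TT^2$ (still a torus) on which $\cG$ lifts to an orientable foliation directed by a nonsingular vector field $Y$; let $\hat c$ be a component of the preimage of $c$ (a closed curve transverse to $Y$) and $\widehat{L'}$ a component of the preimage of $L'$ (a periodic orbit of $Y$). Since $\hat c$ is connected and transverse to $Y$, the sign $\mathrm{sign}\,\det(\dot{\hat c},Y)$ is constant along $\hat c$, so all intersection points of $\hat c$ with $\widehat{L'}$ have the same sign and their number is $\pm\,[\hat c]\cdot[\widehat{L'}]$. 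But $[\hat c]$ and $[\widehat{L'}]$ both lie in the rank-one subgroup of $H_1(\widehat{\TT^2})$ spanned by a lift of $\alpha$, so $[\hat c]\cdot[\widehat{L'}]=0$; hence $\hat c\cap\widehat{L'}=\emptyset$, and applying deck transformations gives $c\cap L'=\emptyset$.

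\emph{Necessity.} Given $\{\Phi_t\}$ as in the statement, fix compact leaves $L$ of $\cF$ and $L'$ of $\cG$ of class $\alpha$ and a point $z_0\in L'$. For each $t$, $\Phi_t(L)$ is a leaf of $\Phi_t(\cF)$, hence transverse to $\cG$, and has class $\alpha$ (as $\Phi_t\simeq\mathrm{id}$); by the lemma $z_0\notin\Phi_t(L)$, i.e.\ $\Phi_t^{-1}(z_0)\notin L$. So $t\mapsto\Phi_t^{-1}(z_0)$ is a loop in the open annulus $\TT^2\setminus L$, whose fundamental group maps onto $\langle\alpha\rangle\subset\pi_1(\TT^2)$; hence this loop has class in $\langle\alpha\rangle$. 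On the other hand, inversion in the topological group $\diff_0(\TT^2)$ acts by $-\mathrm{id}$ on $\pi_1$, and evaluation at $z_0$ is a homomorphism $\pi_1(\diff_0(\TT^2))\to\pi_1(\TT^2)$, so the class of $t\mapsto\Phi_t^{-1}(z_0)$ equals $-\beta$. Therefore $\beta\in\langle\alpha\rangle$.

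\emph{Sufficiency.} Assume $\beta=k\alpha$ and choose coordinates with $\alpha=(1,0)$; let $R_s$ denote the translation $x\mapsto x+s$. It suffices to produce a diffeomorphism $g$ of $\TT^2$ isotopic to the identity such that, along every circle $\TT^1\times\{y\}$, the tangent directions of $g(\cF)$ are disjoint in $\RR P^1$ from those of $g(\cG)$: then the loop $\Psi_t:=g^{-1}\circ R_{t\beta}\circ g$ satisfies $\Psi_0=\Psi_1=\mathrm{id}$ and moves every point along a curve of class $\beta$, while — since $R_s$ preserves tangent directions and translates only in the $\alpha$-direction — $\Psi_t(\cF)$ stays transverse to $\cG$ for all $t$. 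To build $g$, first conjugate by a diffeomorphism isotopic to the identity so that the compact leaves of $\cF$ and $\cG$ (pairwise disjoint, by the argument of the lemma) are the horizontal circles $\TT^1\times C_\cF$ and $\TT^1\times C_\cG$ with $C_\cF,C_\cG\subset\TT^1$ disjoint closed sets. Along $\TT^1\times C_\cF$ the foliation $\cF$ is horizontal while $\cG$ is not, uniformly; together with the symmetric statement near $C_\cG$, the desired disjointness already holds for all $y$ in a neighbourhood of $C_\cF\cup C_\cG$. Hence the set of ``bad'' $y$ is a compact subset of $\TT^1\setminus(C_\cF\cup C_\cG)$, contained in finitely many of its components — finitely many annuli in each of which neither foliation has a compact leaf. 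In such an annulus $\cF$ and $\cG$ are transverse, without parallel compact leaves, and the horizontal direction has winding number $0$ along the core for each of them (inherited from $\TT^2$); so the ``constant transverse cones'' argument of Theorem~\ref{thm.nonparallel}, carried out \emph{relative to the boundary}, yields a diffeomorphism equal to the identity near the boundary which separates their tangent directions along every horizontal circle. Performing these finitely many modifications and gluing by the identity elsewhere produces $g$.

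\emph{Where the difficulty lies.} The necessity direction is essentially formal once the disjointness lemma is in hand. The hardest step will be the relative version of Theorem~\ref{thm.nonparallel}: performing the constant-cones construction on an annulus so that the coordinate change is a $C^1$ diffeomorphism isotopic to the identity, equal to the identity near the two boundary circles and compatible there with the (already disjoint) tangent-direction sets of $\cF$ and $\cG$. Once this annulus statement is available, assembling the global $g$ — including when $C_\cF\cup C_\cG$ is a Cantor set — is a routine compactness argument.
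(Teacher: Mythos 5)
Your ``existence of the loop implies $\beta\in\langle\alpha\rangle$'' direction is correct, and it is essentially the argument the paper sketches right after the statement of Theorem~\ref{thm.parallel}: a closed curve transverse to $\cG$ in the class $\alpha$ is disjoint from any compact leaf of $\cG$ in the class $\alpha$ (constant intersection sign plus $\alpha\cdot\alpha=0$), so the evaluation loop is trapped in the complement of an essential circle, whose fundamental group has image $\langle\alpha\rangle$ in $\pi_1(\TT^2)$. Your reduction of the converse to finding $g$ isotopic to the identity for which the direction sets $D_{g(\cF)}(y)$ and $D_{g(\cG)}(y)$ in $\RR P^1$ are disjoint along every circle preserved by the translation is also exactly the paper's strategy: Theorem~\ref{t.normal-form} is precisely such a normal form (in transposed coordinates).

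The gap is in the step you yourself flag as the hardest, and the tool you propose for it cannot work. Since $C_\cF$ and $C_\cG$ are both nonempty, disjoint and closed, at least one complementary interval $(c,d)$ of $C_\cF\cup C_\cG$ has one endpoint in $C_\cF$ and the other in $C_\cG$; on the corresponding \emph{mixed} annulus $\TT^1\times[c,d]$ one has $D_\cF(c)=\{0\}$ and $D_\cG(d)=\{0\}$ (the horizontal direction). Hence no pair of disjoint cones \emph{constant in $y$} can contain $\bigcup_y D_\cF(y)$ and $\bigcup_y D_\cG(y)$ respectively, so a ``relative'' version of the constant-cones separation of Theorems~\ref{thm.nonparallel} and~\ref{t.separating} is simply false on these annuli. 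The separation must rotate with $y$: e.g.\ $\cF$ strictly increasing and $\cG$ non-increasing on the first half, $\cF$ non-decreasing and $\cG$ strictly decreasing on the second half. Showing that the pair can be isotoped, relative to the boundary, into such a position is the genuinely new ingredient of the parallel case: one must first verify that the holonomies $f,g$ of the lifted foliations across the annulus satisfy $(f(x)-x)(g(x)-x)<0$ (the paper's Lemma~\ref{l.between}, which uses the asymptotics of the spiralling leaves near the two compact boundary leaves) and then realize the monotone model with those prescribed holonomies (Lemma~\ref{l.monotonous}). None of this is present in your outline. A second, smaller issue: your very first step straightens \emph{all} compact leaves of both foliations -- in general a Cantor set of circles -- to round circles by a single $C^1$ diffeomorphism isotopic to the identity; this is asserted without proof and is stronger than what is needed. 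The paper's decomposition is designed to avoid it: inside a maximal $\cF$-annulus it normalizes $\cG$ (making it the product foliation) and controls $\cF$ only outside the two extreme compact leaves (Lemma~\ref{l.maximal}), never touching the possibly wild set of compact leaves in between.
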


One easily checks that, if $\cF$ and $\cG$ are transverse $C^1$ foliations having compact leaves in the same homotopy class, then every compact leaf $L_{_\cF}$ of $\cF$ is disjoint from
every compact leaf $L_{_\cG}$ of $\cG$. If $\{\Phi_t\}_{t\in[0,1]}$ is an isotopy so that $\Phi_0$ is the identity map and $\Phi_t(\cF)$ is transverse to $\cG$, then $\Phi_t(L_{_\cF})$ remains
disjoint from $L_{_\cG}$: this implies the \emph{if} part of Theorem~\ref{thm.parallel}.  The \emph{only if} part will be the aim of Section~\ref{s.parallel}.
The proof consists in endowing $\TT^2$ with coordinates in which $\TT^2$ is divided into vertical adjacent annuli in which the foliations are separated by affine foliations:
now the vertical translations preserve the vertical annuli and map $\cF$ on  foliations transverse to $\cG$.

\begin{Remark}As the transversality of foliations is an open condition, any loop of diffeomorphisms $C^1$-close to the loop $\{\Phi_t\}_{ t\in[0,1]}$ has the announced properties.
Therefore, in Theorems~\ref{t.transverse}, \ref{thm.nonparallel}, and \ref{thm.parallel},
one can choose the loop $t\mapsto \Phi_t$  so that the map   $(t,x)\mapsto \Phi_t(x)$, for $(t,x)\in \SS^1\times \TT^2$, is   smooth.

\end{Remark}

\begin{definition}\label{d.G}  Let $(\cF,\cG)$ be a pair of transverse foliations of $\TT^2$.
We denote by $G_{\cF,\cG}\subset \pi_1(\TT^2)$ the group  defined as follows:
\begin{itemize}
 \item if $\cF$ and $\cG$ have no parallel compact leaves then $G_{\cF,\cG}=\ZZ^2=\pi_1(\TT^2)$;
 \item if $\cF$ and $\cG$ have parallel compact leaves, let $\alpha\in\pi_1(\ZZ)$ be the homotopy class of these leaves.  Then
 $G_{\cF,\cG}=\langle \alpha\rangle =\ZZ\cdot\alpha\subset \pi_1(\TT^2)$.
\end{itemize}
\end{definition}

\subsection{Dehn twists and pairs of transverse $2$-foliations on $3$-manifolds}

The aim of this paper is to build partially hyperbolic diffeomorphisms on $3$-manifold by composing the time $t$-map of an Anosov flow by  Dehn-twists along a transverse tori.
In this section we define the notion of Dehn twists, and we state a straightforward consequence of Theorems~\ref{thm.nonparallel} and \ref{thm.parallel} producing Dehn twists preserving the transversality
of two $2$-dimensional foliations.

\begin{definition} Let $u=(n,m)\in \ZZ^2=\pi_1(\TT^2)$. A diffeomorphism $\psi\colon [0,1]\times \TT^2\to [0,1]\times  \TT^2$ is called a
\emph{Dehn twist of $[0,1]\times \TT^2$ directed by $u$} if:
\begin{itemize}
 \item $\psi$ is of the form $(t,x)\mapsto (t,\psi_t(x))$,  where $\psi_t$ is a diffeomorphism of $\TT^2$ depending smoothly on $t$.
 \item $\psi_t$ is the identity map for $t$ close to $0$ or close to $1$.
 \item the closed path $t\mapsto \psi_t(O)$ on $\TT^2$  is freely homotopic to $u$ (where $O=(0,0)$ in $\TT^2=\RR^2/\ZZ^2$).
\end{itemize}

\end{definition}

\begin{definition}Let $M$ be an oriented $3$-manifold and let $T\colon \TT^2\hookrightarrow M$ be an embedded torus. Fix $u\in \pi_1(T)$.  We say that a diffeomorphism
$\psi\colon M\to M$ is \emph{a Dehn twist along $T$ directed by $u$} if there is an orientation preserving diffeomorphism $\varphi\colon [0,1]\times \TT^2\hookrightarrow M$ whose restriction
to $\{0\}\times \TT^2$ induces $T$, and so that:
\begin{itemize}
 \item $\psi$ is the identity map out of $\varphi([0,1]\times \TT^2)$. In particular, $\psi$ leaves invariant $\varphi([0,1]\times \TT^2)$;
 \item The diffeomorphism $\varphi^{-1}\circ \psi \circ \varphi\colon [0,1]\times \TT^2\to [0,1]\times \TT^2$ is a Dehn twist directed by $u$.
\end{itemize}

\end{definition}

\begin{proposition}\label{p.foliations} Let $\cF, \cG$ be a pair of $2$-dimensional foliations on a  $3$-manifold $M$, and let $\cE$ be the $1$-dimensional foliation obtained as $\cE=\cF\cap \cG$.
Assume that there is an embedded torus $T\subset M$ which is transverse to $\cE$ (hence $T$ is transverse to $\cF$ and $\cG$). We denote by $\cF_T,\cG_T$ the $1$-dimensional foliations on $T$ obtained
as intersection of $\cF$ and $\cG$ with $T$, respectively.

Then for every $u\in G_{_{\cF_T,\cG_T}}\subset \pi_1(T)$,   there is a Dehn twist $\psi$ along $T$ directed by $u$ so that $\psi(\cF)$ is transverse to $\cG$.
\end{proposition}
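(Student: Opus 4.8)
The plan is to reduce Proposition~\ref{p.foliations} to Theorems~\ref{thm.nonparallel} and \ref{thm.parallel} applied to the pair of transverse $1$-dimensional foliations $\cF_T,\cG_T$ on the torus $T\cong\TT^2$. First I would use the embedding $\varphi\colon[0,1]\times\TT^2\hookrightarrow M$ provided by the definition of Dehn twist along $T$; such a collar neighborhood exists because $T$ is embedded and two-sided (it is transverse to the $1$-foliation $\cE$, hence coorientable, and $M$ is oriented so $T$ is orientable). Moreover, since $T$ is transverse to $\cE=\cF\cap\cG$, up to shrinking the collar we may assume $\varphi(\{t\}\times\TT^2)$ is transverse to $\cE$ for every $t\in[0,1]$, so that each slice meets $\cF$ and $\cG$ in a pair of transverse $1$-foliations, and $\cF_T=\varphi(\{0\}\times\cdot)^*\cF$ and $\cG_T=\varphi(\{0\}\times\cdot)^*\cG$ are transverse on $\TT^2$.

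Next, by the definition of $G_{\cF_T,\cG_T}$ (Definition~\ref{d.G}): if $\cF_T,\cG_T$ have no parallel compact leaves apply Theorem~\ref{thm.nonparallel} with $\alpha=u$; if they have parallel compact leaves in a class $\alpha$, then $u\in G_{\cF_T,\cG_T}=\langle\alpha\rangle$ means precisely that $u$ satisfies the hypothesis of the ``only if'' implication of Theorem~\ref{thm.parallel}, so apply that theorem with $\beta=u$. In either case we obtain a loop $\{\Phi_t\}_{t\in[0,1]}$ of $C^1$ diffeomorphisms of $\TT^2$ with $\Phi_0=\Phi_1=Id$, with $\Phi_t(\cF_T)$ transverse to $\cG_T$ for all $t$, and with $t\mapsto\Phi_t(x)$ freely homotopic to $u$ for every $x$. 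By the Remark following Theorem~\ref{thm.parallel}, after a $C^1$-small perturbation we may take $(t,x)\mapsto\Phi_t(x)$ smooth, still with all three properties (the transversality being open).

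Now define $\psi\colon M\to M$ by $\psi=\varphi\circ(\,(t,x)\mapsto(t,\Phi_t(x))\,)\circ\varphi^{-1}$ on $\varphi([0,1]\times\TT^2)$ and $\psi=Id$ elsewhere. Since $\Phi_t=Id$ for $t$ near $0$ and near $1$, the two definitions agree near the boundary of the collar and $\psi$ is a well-defined (smooth, orientation-preserving) diffeomorphism of $M$; by construction $\varphi^{-1}\psi\varphi$ is a Dehn twist of $[0,1]\times\TT^2$ directed by $u$, so $\psi$ is a Dehn twist along $T$ directed by $u$. It remains to check $\psi(\cF)\pitchfork\cG$. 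Outside the collar $\psi$ is the identity, so there $\psi(\cF)=\cF\pitchfork\cG$. Inside $\varphi([0,1]\times\TT^2)$, pull everything back by $\varphi$: we must see that $(\Phi_t)_*$ applied to the $2$-foliation $\varphi^*\cF$ stays transverse to $\varphi^*\cG$. Here is the main obstacle, and the only point requiring a genuine argument: $\Phi_t$ acts slicewise on $\TT^2$ and controls only the intersections with the slices, namely it makes $\Phi_t(\cF_{T_t})$ transverse to $\cG_{T_t}$ on each slice $T_t:=\varphi(\{t\}\times\TT^2)$; we need this slicewise transversality of the traces to upgrade to transversality of the full $2$-dimensional foliations in $M$. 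The way I would handle it: since $T_t$ is transverse to both $\cF$ and $\cG$, the plane field $T\cF$ at a point $p\in T_t$ is determined by the line $T\cF\cap TT_t$ together with a fixed transverse direction (a section of the line bundle $TM/TT_t$ that is independent of which $2$-foliation we look at, up to the slice geometry); similarly for $T\cG$. Two $2$-planes in $3$-space containing a common transverse vector to $T_t$ are transverse as planes exactly when their traces on $TT_t$ are transverse as lines. Since the slicewise map $\Phi_t$ preserves each $T_t$ and carries the line $T\cF\cap TT_t$ to a line transverse to $T\cG\cap TT_t$ (this is the output of Theorems~\ref{thm.nonparallel}/\ref{thm.parallel}) while keeping a vector transverse to $T_t$ still transverse to $T_t$, the images $\psi_*(T\cF)$ and $T\cG$ are transverse $2$-planes at every point. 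Assembling this pointwise statement over $\varphi([0,1]\times\TT^2)$ and combining with the trivial exterior region gives $\psi(\cF)\pitchfork\cG$ on all of $M$, completing the proof.
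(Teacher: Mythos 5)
Your overall strategy is the paper's: reduce to Theorems~\ref{thm.nonparallel} and~\ref{thm.parallel} via the induced foliations on the torus, build the Dehn twist slicewise from the loop $\{\Phi_t\}$, and upgrade transversality of the $1$-dimensional traces to transversality of the $2$-plane fields using that both plane fields contain the direction of $\cE$, which is transverse to the slices. That last linear-algebra observation is correct: two $2$-planes in a $3$-dimensional tangent space, both transverse to the tangent plane of the slice, are transverse if and only if their traces on the slice are distinct lines.

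The gap is in the sentence asserting that the slicewise map ``makes $\Phi_t(\cF_{T_t})$ transverse to $\cG_{T_t}$ on each slice $T_t$ (this is the output of Theorems~\ref{thm.nonparallel}/\ref{thm.parallel})''. The theorems control only the pair $(\cF_T,\cG_T)$ on the single torus $T=T_0$: they give $\Phi_s(\cF_T)\pitchfork\cG_T$ for all $s$. For the collar you construct --- you only require each slice to be transverse to $\cE$ --- the traces of $\cF$ and $\cG$ on $T_t$, read in the collar coordinates, are $t$-dependent deformations of $\cF_T$ and $\cG_T$, and nothing guarantees $\Phi_{\alpha(t)}(\cF_{T_t})\pitchfork\cG_{T_t}$. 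The paper closes exactly this point by choosing the collar adapted to $\cE$: the diffeomorphism $U\to T\times[0,1]$ is chosen so that $\cE$ becomes the vertical foliation $\{\{p\}\times[0,1]\}_{p\in T}$; since every leaf of $\cF$ (resp.\ $\cG$) is a union of leaves of $\cE$, this forces $\cF$ and $\cG$ to become the product foliations $\cF_T\times[0,1]$ and $\cG_T\times[0,1]$ in the collar, so the trace on every slice is literally $(\cF_T,\cG_T)$ and the theorems apply slice by slice. (Alternatively one could observe that the set of pairs $(\cA,\cB)$ of transverse foliations with $\Phi_s(\cA)\pitchfork\cB$ for all $s\in[0,1]$ is $C^1$-open, by compactness of $[0,1]\times\TT^2$, and take the collar thin enough; but you make neither argument.) With the $\cE$-adapted collar inserted, the rest of your proof goes through and coincides with the paper's.
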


\subsection{Building partially hyperbolic diffeomorphisms on $3$-manifolds}

In order to state our main result,  we first need to define the notions of \emph{partially hyperbolic diffeomorphism} and  of  \emph{Anosov flow}.

\subsubsection{Definition of partially hyperbolic diffeomorphisms}

A diffeomorphism $f$ of a Riemannian closed $3$-manifold $M$ is called \emph{ partially hyperbolic} if there is a $Df$-invariant splitting $TM= E^s\oplus E^c\oplus E^u$
in direct sum of $1$-dimensional bundles so that

\begin{enumerate}\item[{\bf (P1):}~]there is an integer $N>0$ such that  for any $x\in M$
and any unit vectors $u\in E^s(x)$, $v\in E^c(x)$  and $w\in E^u(x)$,
 one has:
 $$\|Df^N(u)|<\inf\{ 1, \|Df^N(v)\|\}\leq \sup\{ 1,\|Df^N(v)\|\} <\|Df^N(w)\|.$$
\end{enumerate}

A diffeomorphism $f$ of a Riemannian closed $3$-manifold $M$ is called \emph{absolute partially hyperbolic} if it is partially hyperbolic satisfying the stronger assumption
\begin{enumerate}\item[\bf{(P2):}] there are $0<\lambda<1<\sigma $ and an integer $N>0$ so that for any $x,y,z\in M$
and any unit vectors $u\in E^s(x)$, $v\in E^c(y)$ and $w\in E^u(z)$,
 one has:
 $$\|Df^N(u)|<\lambda<\|Df^N(v)\|<\sigma<\|Df^N(w)\|.$$
\end{enumerate}

We refer the reader to \cite[Appendix B]{BDV} for the first elementary properties
and \cite{HHU1} for a
survey book of results and questions for partially hyperbolic diffeomorphisms.

\subsubsection{Definition of Anosov flows}

A vector field $X$ on a $3$-manifold $M$ is called an \emph{Anosov vector field}  if there is a splitting  $TM= E^s\oplus \RR\cdot X\oplus E^u$ as a direct sum of $1$-dimensional bundles which are
invariant by the flow $\{X_t\}_{t\in\RR}$ of $X$, and so that the vectors in $E^s$ are uniformly contracted and the vectors in $E^u$ are uniformly expanded by the flow of $X$.

Notice that $X$ is Anosov if and only if $X$ has no zeros and if there is $t>0$ so that $X_t$ is partially hyperbolic.

The bundles $E^{cs}= E^s\oplus \RR\cdot X$ and $E^{cu}=\RR\cdot X\oplus E^u$ are called the weak stable and unstable  bundles (respectively).
They are tangent to transverse $2$-dimensional foliations denoted by $\cF^{cs}$ and $\cF^{cu}$ respectively,
which are of class $C^1$ if $X$ is of class at least $C^2$.

The bundles $E^s$ and $E^u$ are called the strong stable and strong unstable bundles,  and are tangent to $1$-dimensional foliations denoted by $\cF^{ss}$ and $\cF^{uu}$ which are  called the strong stable and the strong unstable foliations, respectively.

Notice that being an Anosov vector field is an open condition in the set of $C^1$-vector fields and that the structural stability implies that all the flows
$C^1$-close to an Anosov flow are Anosov flows topologically equivalent to it.  Therefore, for our purpose here we may always   assume, and we do it,
that the Anosov flows we consider are smooth.

The most classical Anosov flows on $3$-manifolds are the geodesic flows of hyperbolic closed surfaces and the suspension of hyperbolic linear automorphisms of $\TT^2$
(i.e. induced by an hyperbolic element of $SL(2,\ZZ)$). In 1979, \cite{FW} built the first example of a non-transitive Anosov flow on a closed $3$-manifold.  Many other examples of
transitive or non-transitive Anosov flows have been built in \cite{BL,BBY}.

\subsubsection{Transverse tori}

If $X$ is an Anosov vector field  on an oriented closed $3$-manifold $M$ and if $S\subset M$ is an immersed closed surface  which is transverse to $X$ then
\begin{itemize}
 \item $S$ is oriented (as transversely oriented by $X$);
 \item $S$ is transverse to the weak foliations $\cF^{cs}$ and $\cF^{cu}$ of $X$ and these foliations induce on $S$ two $1$-dimensional $C^1$-foliations $\cF^s_S$ and
 $\cF^u_S$, respectively, which are transverse.
 \item as a consequence of the two previous items, $S$ is a torus.
\end{itemize}

A \emph{transverse torus} is an embedded torus $T\colon \TT^2\hookrightarrow M$ transverse to $X$ and we denote by $\cF^s_T$ and $\cF^u_T$ the $1$-dimensional $C^1$  foliations induced on
$T$
obtained  by intersections of  $T$ with $\cF^{cs}$ and   with  $\cF^{cu}$, respectively.  These foliations are transverse.  Therefore Theorem~\ref{t.transverse} associates to $(\cF^s_T,\cF^u_T)$
 and  a  subgroup $G_{_{\cF^s_T,\cF^u_T}}$ of $\pi_1(T)$  which is either a cyclic group if $\cF^s_T$ and $\cF^u_T$ have parallel compact leaves  or the whole $\pi_1(T)$ otherwise.

Let $T_1,\dots, T_k$ be a finite family  of  transverse tori.  We say that \emph{$X$ has no return on $\bigcup_i T_i$} if  each torus $T_i$ is an embedded torus, the $\{T_i\}$ are pairwise disjoint and
each orbit of $X$ intersects $\bigcup_i T_i$
in at most $1$ point.

 \vskip 2mm

A \emph{Lyapunov function} for $X$ is a function which is not increasing along every orbit, and which is strictly decreasing along every orbit which is not chain recurrent.

In \cite{Br} Marco Brunella noticed that a non-transitive Anosov vector field  $X$  on an  oriented closed $3$-manifold $M$
always admits a  smooth Lyapunov function whose regular levels separate the \emph{basic pieces}
of the flow;  such a regular level is a disjoint union of transverse tori $T_1,\cdots ,T_k$.  One can check the following statement:

\begin{proposition}\label{p.Lyapunov function} Let $X$ be a (non-transitive) Anosov vector field on an oriented closed $3$-manifold $M$.  Then the two following assertions are equivalent:
\begin{enumerate}
 \item $T_1,\dots, T_k$ are transverse tori so that
 $X$ has no return on $\bigcup_i T_i$.
 \item there is a smooth Lyapunov function $\theta\colon M\to \RR$  of $X$ for which  the $T_i$, $i\in\{1,\dots, k\}$ are (distinct) connected components of the same regular level
 $\theta^{-1}(t)$ for some $t\in \RR$.
\end{enumerate}
\end{proposition}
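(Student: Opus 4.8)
The plan is to prove the two implications of Proposition~\ref{p.Lyapunov function} separately, using the spectral decomposition of the non\nobreakdash-transitive Anosov flow together with the standard construction of smooth Lyapunov functions for a flow carrying a Morse decomposition. As a preliminary I would record that, since $X$ is a non\nobreakdash-transitive Anosov vector field, its chain recurrent set $R(X)=\Omega(X)$ is a finite disjoint union $\Lambda_1\sqcup\cdots\sqcup\Lambda_m$ of transitive hyperbolic basic pieces, none of which is all of $M$, and there is no cycle among them; in particular $\{\Lambda_j\}$ is a Morse decomposition of the flow. Two elementary facts will be used repeatedly: (a)~any continuous function that is non-increasing along the orbits of $X$ is constant on each $\Lambda_j$ (it is monotone and bounded along a dense orbit of $\Lambda_j$); (b)~a transverse torus with no return is disjoint from $R(X)$: if $p$ lay in $T_i\cap\Lambda_j$, a periodic orbit of $\Lambda_j$ passing close enough to $p$ would cross the transverse surface $T_i$, hence cross it again (being periodic), contradicting the no-return hypothesis.

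For (2)$\Rightarrow$(1) I start from a smooth Lyapunov function $\theta$ and a regular value $t$ with the $T_i$ among the connected components of $\theta^{-1}(t)$; they are automatically pairwise disjoint compact embedded surfaces. By (a) $\theta$ is constant equal to some $c_j$ on $\Lambda_j$, and $c_j$ cannot be a regular value since a basic piece of a non-transitive Anosov flow, being a nontrivial hyperbolic set, is not contained in any $C^1$ surface; hence $t\neq c_j$ for every $j$ and $\theta^{-1}(t)\cap R(X)=\varnothing$, so in particular $T_i\cap R(X)=\varnothing$. Next I would check that $X$ is transverse to each $T_i$: since $t$ is regular, $\ker d\theta_x=T_xT_i$ for $x\in T_i$, so $X$ is tangent to $T_i$ at $x$ exactly when $(X\!\cdot\!\theta)(x)=0$; as $X\!\cdot\!\theta\le 0$ on $M$, a tangency at which the orbit merely touches $T_i$ would produce an interior local extremum of $\theta$ along that orbit, which is impossible for a non-increasing function, and the remaining (transverse‑crossing) tangencies have to be ruled out by looking at the order of contact together with the fact that $x\notin R(X)$, so that $\theta$ is \emph{strictly} decreasing along the orbit of $x$. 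Granting transversality, $T_i$ carries the two transverse one-dimensional foliations induced by $\cF^{cs}$ and $\cF^{cu}$, hence has zero Euler characteristic and is a torus, so it is a transverse torus; and there is no return, for if the orbit of some point met $\bigcup_iT_i$ at parameters $a<b$ then $\theta$ would be constant equal to $t$ on $[a,b]$ along that orbit, contradicting that the orbit, meeting $\theta^{-1}(t)$, is disjoint from $R(X)$ and therefore carries a strictly decreasing $\theta$.

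For (1)$\Rightarrow$(2) I put $N=\bigsqcup_iT_i$. The no-return hypothesis makes the map $N\times\RR\to M$, $(y,s)\mapsto X_s(y)$, an injective immersion between $3$-manifolds, hence a diffeomorphism onto an open flow-invariant set $S$; using the resulting time coordinate $\tau\colon S\to\RR$ (with $\tau\equiv 0$ on $N$ and $X\!\cdot\!\tau\equiv 1$) I set $\theta:=-\tau$ on a small collar $\{|\tau|<\delta\}$ of $N$, where it already is a local Lyapunov function having $N$ as a regular level. It then remains to extend $\theta$ over the compact manifold $M':=M\setminus\{|\tau|<\delta/2\}$ — on which $X$ is transverse to $\partial M'=\{\tau=\pm\delta/2\}$ (exiting through $\{\tau=-\delta/2\}$, entering through $\{\tau=\delta/2\}$) and $R(X)\subset\operatorname{int}M'$ is still a Morse decomposition — keeping it smooth, equal to $-\tau$ near $\partial M'$, non-increasing along $X$, strictly decreasing outside $R(X)$, and with $0$ a regular value not attained on $R(X)$ (which one arranges by pushing $\theta$ below $-\delta$ near $R(X)$ and a small generic perturbation). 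This is precisely the situation handled by the classical (Conley-type) construction of smooth Lyapunov functions for a flow with a Morse decomposition and transverse boundary — the local ingredient behind Brunella's theorem \cite{Br} — so gluing this extension with $-\tau$ on the collar yields the required $\theta$, and $N$, hence each $T_i$, is a union of connected components of the regular level $\theta^{-1}(0)$.

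The step I expect to be the real obstacle is the transversality assertion in (2)$\Rightarrow$(1): a smooth Lyapunov function may a priori have $X\!\cdot\!\theta$ vanishing at points of a regular level, and excluding this on levels disjoint from $R(X)$ is where the monotonicity of $\theta$ along orbits has to be pushed a little beyond the obvious (one also uses that $T_i$ is a \emph{whole} connected component, so that a tangency locus equal to $T_i$ would make $T_i$ flow-invariant and hence meet $R(X)$, against (b)). Everything else — the spectral decomposition, the saturation diffeomorphism $S\cong N\times\RR$ granted by the no-return condition, and Conley's construction of Lyapunov functions — is routine bookkeeping.
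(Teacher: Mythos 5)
The paper offers no proof of this proposition (it is introduced with ``one can check the following statement''), so your argument has to stand on its own. Its skeleton is the natural one, but each direction has a genuine gap.

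In $(2)\Rightarrow(1)$ you rightly flag transversality as the obstacle, but the fix you sketch (``looking at the order of contact'') cannot work: with the paper's definition of a Lyapunov function ($\theta$ non-increasing along orbits and strictly decreasing along non-chain-recurrent ones), a regular level component disjoint from $R(X)$ can still be tangent to $X$. The one-dimensional model is $\theta=-x^{3}$ for the flow $\dot x=1$; in a flow box away from $R(X)$ one can perturb a given Lyapunov function to $-x+\phi$ with $\partial_x\phi\le 1$, equality at one point $p$ of one orbit and $\partial_y\phi(p)\neq 0$, so that $\theta$ is still strictly decreasing along every orbit, $0$ is still a regular value, and yet $X$ is tangent to the level at $p$. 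So transversality is simply not a consequence of (2): the proposition must be read with the $T_i$ a priori transverse tori (as in the paper's definition of ``no return''), or with the infinitesimal condition $X\!\cdot\!\theta<0$ off $R(X)$, which gives transversality immediately once the level avoids $R(X)$. With that reading your no-return argument is correct; note also that your justification that $t\neq c_j$ (``a basic piece is not contained in a $C^1$ surface'') is delicate for saddle pieces, whereas the direct argument is short: for $p$ on a periodic orbit of $\Lambda_j$ one has $\theta\ge c_j$ on $W^{ss}(p)$ and $\theta\le c_j$ on $W^{uu}(p)$, so $d\theta_p=0$ and $c_j$ is a critical value.

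In $(1)\Rightarrow(2)$ the collar construction is fine, but the appeal to a ``classical Conley-type construction'' hides exactly the step where no-return does its real work, and your explicit instruction to ``push $\theta$ below $-\delta$ near $R(X)$'' is inconsistent with monotonicity: a basic piece that is the $\alpha$-limit of some $y\in N$ must receive a value $\ge\delta/2$ (going backwards from the exit boundary $\{\tau=-\delta/2\}$, where $\theta=\delta/2$, the function is non-decreasing), while a piece that is the $\omega$-limit of some $y'\in N$ must receive a value $\le-\delta/2$. The extension over $M'$ therefore exists only if no basic piece --- and, transitively, no chain of basic pieces joined by connecting orbits --- lies both upstream and downstream of $N$; equivalently, no orbit of $M'$ runs from the entrance boundary $\{\tau=\delta/2\}$ to the exit boundary $\{\tau=-\delta/2\}$. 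This is true but is a theorem, not bookkeeping: if $y'\in W^{s}(\Lambda)\cap N$ and $y\in W^{u}(\Lambda)\cap N$, the inclination lemma applied to a small disk of $N$ around $y'$ produces an orbit meeting $N$ near $y'$ and again near $y$, contradicting no-return, and cycles through several pieces are excluded by concatenating such arguments. Your proposal never establishes this compatibility, and without it the boundary values $\pm\delta/2$ cannot be interpolated monotonically. Finally, ``a small generic perturbation'' will not make $0$ a regular value while preserving monotonicity and $\theta^{-1}(0)\supset N$; one should instead keep the values of $\theta$ on $R(X)$ outside $(-\delta/2,\delta/2)$ and arrange $X\!\cdot\!\theta<0$ on $\{|\theta|<\delta/2\}$ off the collar, so that every value in $(-\delta/2,\delta/2)$ is automatically regular.
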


We are now ready to state our main result.

\subsection{Statement of our main result}

\begin{mainthm}\label{t.Anosov} Let $X$ be a smooth (non-transitive) Anosov vector field  on  an  oriented closed $3$-manifold $M$,
and let $T_1,\dots, T_k$ be   transverse tori so that $X$ has no return on $\bigcup_iT_i$.
We endow each $T_i$ with the pair $(\cF^s_{i}, \cF^u_{i})$ of
transverse $1$-dimensional  $C^1$  foliations obtained by intersection of
$T_i$ with the weak stable and unstable (respectively) foliations of $X$;  let
$$G_i= G_{_{\cF^s_{i},\cF^u_{i}}}\subset \pi_1(T_i)$$
denote the subgroup associated  to the pair $(\cF^s_{i},\cF^u_{i})$ by Theorems~\ref{thm.nonparallel} and \ref{thm.parallel}.

Then for any family $u_1\in G_1, \dots, u_k\in G_k$ there is a family $\Psi_i$ of  Dehn twists along $T_i$ directed by $u_i$, and whose supports are pairwise disjoint
and there is $t>0$ so that the composition

$$f=\Psi_1\circ \Psi_2\circ\cdots\circ \Psi_k\circ X_t$$
is an absolute partially hyperbolic diffeomorphism of $M$.

Furthermore, $f$ is robustly dynamically coherent, the center stable foliation $\cF^{cs}_f$ and center unstable foliation $\cF^{cu}_f$ are \emph{plaque expansive}.
\end{mainthm}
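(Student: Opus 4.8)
Following the scheme of \cite{BPP}, the plan is to realise the Dehn twists so that $f=\Psi_1\circ\dots\circ\Psi_k\circ X_t$ becomes, for $t$ large, a tightly controlled (though never $C^1$-small) perturbation of the partially hyperbolic diffeomorphism $X_t$, and to obtain the invariant bundles of $f$ by a cone-field argument whose cones are thin cones around the bundles $E^s$, $E^c=\RR\cdot X$ and $E^u$ of the Anosov vector field $X$. First I would set up the geometry near the tori. By Proposition~\ref{p.Lyapunov function} there is a smooth Lyapunov function $\theta$ of $X$ having the $T_i$ as connected components of a regular level; near each $T_i$ the function $\theta$ strictly decreases along $X$, so its level sets foliate a tubular neighbourhood of $T_i$ by tori transverse to $X$, and using $\theta$ as first coordinate I get embeddings $\varphi_i\colon[0,1]\times\TT^2\hookrightarrow M$ with pairwise disjoint images, $\varphi_i(\{0\}\times\TT^2)=T_i$ and $\theta\circ\varphi_i$ depending only on the first coordinate. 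For $u_i\in G_i$, Proposition~\ref{p.foliations} (applied with $(\cF,\cG)=(\cF^{cs},\cF^{cu})$) produces, inside $\varphi_i([0,1]\times\TT^2)$ and supported in the middle slab $\varphi_i([1/3,2/3]\times\TT^2)$, a Dehn twist $\Psi_i$ along $T_i$ directed by $u_i$ with $\Psi_i(\cF^{cs})\pitchfork\cF^{cu}$; since the loops of Theorems~\ref{thm.nonparallel} and~\ref{thm.parallel} consist of affine maps preserving two transverse cone fields on $T_i$, and since $G_{\cF^s_i,\cF^u_i}=G_{\cF^u_i,\cF^s_i}$, the \emph{same} $\Psi_i$ can be taken to satisfy also $\Psi_i(\cF^{cu})\pitchfork\cF^{cs}$; by compactness of $M$ both transversalities are uniform. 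Setting $\Psi=\Psi_1\circ\dots\circ\Psi_k$ — the order being irrelevant because the supports are pairwise disjoint — we record that $\Psi$ is the identity off $\bigcup_i\varphi_i([0,1]\times\TT^2)$, that $\|D\Psi^{\pm1}\|\le K$ for a constant $K$ independent of $t$, and that $\theta\circ\Psi=\theta$, whence $\theta$ is a Lyapunov function for $f=\Psi\circ X_t$ for every $t>0$; in particular each $f$-orbit visits the twist slabs $\bigcup_i\varphi_i([1/3,2/3]\times\TT^2)$ at no more than $k$ of its iterates.

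\emph{The analytic core.} Fix thin cone fields $\cC^{cs}_\varepsilon,\cC^{cu}_\varepsilon,\cC^s_\varepsilon,\cC^u_\varepsilon$ around $E^{cs},E^{cu},E^s,E^u$, and let $0<\lambda_0<1<\sigma_0$ be the rates of $X$; note that along $\RR\cdot X$ the rate of $X_t$ remains in a fixed compact interval $[a,A]\subset(0,\infty)$, because $\|DX_t(X(x))\|=\|X(X_tx)\|$. For $t$ large, $DX_t$ sends $\cC^{cu}_\varepsilon$ (resp.\ $\cC^u_\varepsilon$) into a cone of width $O(\lambda_0^{\,t})$ around $E^{cu}$ (resp.\ $E^u$) and expands $\cC^u_\varepsilon$-vectors by at least $c\,\sigma_0^{\,t}$, and symmetrically $DX_{-t}$ contracts $\cC^{cs}_\varepsilon,\cC^s_\varepsilon$ toward $E^{cs},E^s$ with transverse width $O(\sigma_0^{-t})$ and expands $\cC^s_\varepsilon$-vectors by at least $c\,\lambda_0^{-t}$. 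Since $D\Psi^{\pm1}$ distorts angles and norms only by a factor depending on $K$, and since the two-sided transversality guarantees that just after a ``bad'' iterate (one where $\Psi$ acts) the image cone is still uniformly transverse to the relevant weak bundle — so that the next application of $DX_{\pm t}$ re-contracts it — an adapted-cone-field argument with at most $k$ bad iterates per orbit yields the $Df$-invariant dominated splitting $TM=E^s_f\oplus E^c_f\oplus E^u_f$, with $E^{cs}_f$ within $O(\sigma_0^{-t})$ of $E^{cs}$, $E^{cu}_f$ within $O(\lambda_0^{\,t})$ of $E^{cu}$, and $E^c_f$ within $O(\mu^{\,t})$ of $\RR\cdot X$ where $\mu=\max(\lambda_0,\sigma_0^{-1})<1$, away from the bad iterates. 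This gives partial hyperbolicity for $t$ large. For the \emph{absolute} statement, writing a unit $v\in E^{cs}_f$ as $v^{cs}+v^u$ with $v^{cs}\in E^{cs}$ and $\|v^u\|=O(\sigma_0^{-t})$, one gets $\|DX_t(v)\|\le A+\sigma_0^{\,t}\cdot O(\sigma_0^{-t})=O(1)$ uniformly in $t$, hence $\|Df|_{E^{cs}_f}\|\le K\cdot O(1)=:A'$, and symmetrically $\|Df^{-1}|_{E^{cu}_f}\|\le A''$, with $A',A''$ independent of $t$; thus the centre rate of $f$ lies in $[1/A'',A']$ outside the $\le k$ bad iterates and is changed there only by a bounded factor, while the stable and unstable rates of $f$ are at most $K\lambda_0^{\,t}$ and at least $c\,\sigma_0^{\,t}/K$. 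Taking $N$ slightly larger than $k$ and then $t$ large, there are $0<\lambda<1<\sigma$ such that $\|Df^N(u)\|<\lambda<\|Df^N(v)\|<\sigma<\|Df^N(w)\|$ for all $x\in M$ and unit $u\in E^s_f(x)$, $v\in E^c_f(x)$, $w\in E^u_f(x)$, which is (P2): $f$ is absolutely partially hyperbolic.

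\emph{Dynamical coherence, robustness, plaque expansiveness.} Passing if necessary to a finite cover on which $E^s_f,E^c_f,E^u_f$ are orientable and $f$ preserves their orientations, the branching-foliation theorem of Burago and Ivanov provides $f$-invariant branching foliations tangent to $E^{cs}_f$ and $E^{cu}_f$. These are $C^0$-close to the tangent plane fields of the genuine weak foliations $\cF^{cs},\cF^{cu}$ of $X$, which are taut, hence Reebless; therefore the branching foliations are in fact honest $f$-invariant foliations $\cF^{cs}_f,\cF^{cu}_f$, and $f$ is dynamically coherent. Since the proximity of the centre-stable and centre-unstable plane fields to these fixed taut foliations is a $C^1$-open condition on the diffeomorphism, dynamical coherence is robust. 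Finally, plaque expansiveness of the centre foliation $\cF^c_f=\cF^{cs}_f\cap\cF^{cu}_f$ follows from the Hirsch--Pugh--Shub criteria, as in \cite{BPP}, the centre foliation being uniformly transverse to the strong foliations of $f$ and $C^0$-close to the orbit foliation of the Anosov flow.

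\emph{Main obstacle.} The delicate point is the cone-invariance across the Dehn-twist regions in the analytic core: it rests squarely on the \emph{two-sided} transversality $\Psi(\cF^{cs})\pitchfork\cF^{cu}$ and $\Psi(\cF^{cu})\pitchfork\cF^{cs}$, which is exactly what Theorems~\ref{thm.nonparallel} and~\ref{thm.parallel} make available and which holds if and only if $u_i\in G_i$; the companion difficulty is the uniform $O(\mu^{\,t})$-closeness of the invariant bundles of $f$ to those of $X$ that is needed for the absolute estimate. Once these are in hand, the rest is bookkeeping along orbits which, thanks to the Lyapunov function $\theta$, meet the perturbed region only finitely many times.
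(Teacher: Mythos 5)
There is a genuine gap at the analytic core, and it is precisely the point on which the whole construction turns. The criterion for $f=\Psi\circ X_t$ to be (absolutely) partially hyperbolic is \emph{not} the two-sided transversality of the weak foliations that you arrange ($\Psi(\cF^{cs})\pitchfork\cF^{cu}$ and $\Psi(\cF^{cu})\pitchfork\cF^{cs}$); it is the transversality of the pushed-forward \emph{strong} unstable foliation with the center-stable one, and of the pushed-forward center-unstable foliation with the \emph{strong} stable one: $\Psi(\cF^{uu})\pitchfork\cF^{cs}$ and $\Psi(\cF^{cu})\pitchfork\cF^{ss}$ (Lemma~\ref{l.criteria}, taken from \cite{BPP}). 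This is exactly what your cone argument needs: after a ``bad'' iterate, the thin unstable cone sits around $D\Psi(E^{uu})$, and $DX_t$ re-contracts it toward $E^{uu}$ only if $D\Psi(E^{uu})\cap E^{cs}=\{0\}$. Your hypotheses do not give this: from $\Psi(\cF^{cu})\pitchfork\cF^{cs}$ you only know that $D\Psi(E^{cu})\cap E^{cs}$ is a line, and nothing prevents that line from being $E^{ss}$, or from containing $D\Psi(E^{uu})$ inside $E^{cs}$. The sentence ``the image cone is still uniformly transverse to the relevant weak bundle'' is therefore an assertion of the very thing that has to be proved, and it is false for a twist built only from Proposition~\ref{p.foliations} applied to the weak foliations.

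The missing idea — the heart of the paper's proof — is that the support of $\Psi_i$ must be the flow-tube $X_{[0,N]}(T_i)$ of the \emph{same} length $N$ as the time of the flow being composed, not a fixed collar cut out by the Lyapunov function. In the rescaled tube coordinates $\psi_{i,N}$, the strong foliations $\cF^{uu}_X$ and $\cF^{ss}_X$ converge in the $C^1$ topology, as $N\to\infty$, to the product foliations $\cF^u_i\times\{s\}$ and $\cF^s_i\times\{s\}$ tangent to the cross-section tori (Lemma~\ref{l.limit transverse}, i.e.\ Lemma 6.2 of \cite{BPP}). Only then does the Dehn twist $(x,s)\mapsto(\varphi_{\alpha(s)}(x),s)$ send $\cF^{uu}_X$ close to $\varphi_{\alpha(s)}(\cF^u_i)\times\{s\}$, whose transversality to $\cF^{cs}_X=\cF^s_i\times[0,1]$ is exactly what Theorem~\ref{t.transverse} provides on each torus $T_i\times\{s\}$. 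In your setup $\Psi$ is a fixed diffeomorphism independent of $t$, so the condition $\Psi(\cF^{uu})\pitchfork\cF^{cs}$ either holds or fails once and for all, and enlarging $t$ cannot repair it. Two smaller remarks: the loops of Theorem~\ref{thm.parallel} are not affine and do not preserve a pair of constant cone fields (that description is only valid in the non-parallel case), although the symmetry you invoke for the weak foliations is indeed harmless since transversality is symmetric; and your route to dynamical coherence via Burago--Ivanov branching foliations is much heavier than the paper's, which deduces a neutral center bundle (Lemma~\ref{l.neutral}) and then applies the Lyapunov-stability criterion of \cite{HHU1} together with the structural stability of \cite{HPS}.
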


In a forthcoming work with a different group of authors, one will remove the hypothesis that the $T_i$ are connected components of a regular level of a Lyapunov function.  We state
this result here with this restrictive hypothesis in order that this result is a straightforward consequence of  Theorems~\ref{thm.nonparallel} and \ref{thm.parallel}. Removing this
hypothesis will require further very different arguments.

\vskip 5mm
{\bf Acknowledgment: }
We would like to  thank Rafael Potrie  whose questions motivate this paper.

Jinhua Zhang  would like to thank China Scholarship Council (CSC) for financial support (201406010010).

\section{Preliminary: foliations on the torus and its classification}

In this section, we give the definitions and results we need.

We denote by $\SS^1$ the circle $\SS^1=\RR/\ZZ$ and by $\TT^2$ the torus $\RR^2/\ZZ^2=\SS^1\times \SS^1$.

\subsection{Complete transversal}

\begin{definition} Given a  $C^r$ $(r\geq 1)$ foliation $\mathcal{F}$ on   $\mathbb{T}^2$. We say that a $C^1$ simple closed curve
$\gamma$ is \emph{a complete transversal} or a \emph{complete transverse cross section}  of the foliation  $\mathcal{F}$, if $\gamma$ is transverse to $\mathcal{F}$ and every leaf of $\mathcal{F}$
 intersects $\gamma$.
\end{definition}

\begin{lemma}\label{compact leaf} Consider a $C^1$ foliation $\calE$ on $\T^2$. Assume that there is a simple smooth closed
 curve $\gamma$ which is transverse to $\calE$ and is not a complete transversal of $\calE$. Then there exists a compact leaf of
 $\calE$ which is in the homotopy class of $\gamma$.
\end{lemma}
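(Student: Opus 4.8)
The plan is to analyze the first-return map of the foliation $\calE$ to the transverse closed curve $\gamma$, and to show that the hypothesis "$\gamma$ is not a complete transversal" forces the existence of a leaf that never returns to $\gamma$ and is therefore trapped in a region where it must be compact (a Poincaré--Bendixson type argument). First I would fix an orientation of $\gamma$ and, using that $\gamma$ is a smooth simple closed curve transverse to $\calE$, choose a small flow-box-type neighborhood $N\cong \gamma\times(-\varepsilon,\varepsilon)$ so that the plaques of $\calE$ in $N$ are the slices $\gamma\times\{s\}$, crossing $\gamma$ in a consistent direction. For a point $x\in\gamma$, follow the leaf of $\calE$ through $x$ in the positive transverse direction; either it meets $\gamma$ again, defining a partial first-return map $P$ on an open subset $U^+\subseteq\gamma$, or it never returns. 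Since $\gamma$ is not a complete transversal, there is a leaf $L_0$ disjoint from $\gamma$; I would use $L_0$ (and the structure of foliations on $\TT^2$) to locate a point of $\gamma$ whose forward leaf ray does not return.

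The key steps, in order: (1) Set up the return map $P$ and its domain, and observe that $\gamma\setminus U^+$ (points whose positive leaf ray never comes back to $\gamma$) is nonempty --- this is where "not a complete transversal" enters, possibly together with the observation that a leaf disjoint from $\gamma$ must lie in one of the components of $\TT^2\setminus\gamma$, each of which is an annulus. (2) Take such a point $x_0$ and consider the forward leaf ray $\ell$ from $x_0$; it stays in the closure of one complementary annulus $A$ of $\gamma$ after leaving the flow box, and never crosses $\gamma$ again. (3) Look at the $\omega$-limit set of this ray inside the compact annulus $\overline{A}$: since the ray never returns to the transversal $\gamma=\partial A$, a Poincaré--Bendixson argument for foliations (no singularities, surface) shows the $\omega$-limit set is a single compact leaf $c$ of $\calE$, and $c$ is essential in $\overline A$ hence in $\TT^2$. (4) Finally, I would argue that this compact leaf $c$ is freely homotopic to $\gamma$: $c$ separates $x_0$ from $\partial A\subset\gamma$ side, $c$ together with the core of the annulus $A$ and $\gamma$ are all isotopic in the annulus, so $c$ is in the homotopy class of $\gamma$.

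The main obstacle I expect is step (3): making the Poincaré--Bendixson argument precise in the foliated (rather than flow) setting and concluding that the accumulation of the non-returning leaf ray is exactly a compact leaf and not something wilder. Concretely one must rule out that the $\omega$-limit set contains a nontrivial recurrent (non-closed) leaf or spirals onto a more complicated minimal set; on a surface with a nonsingular $C^1$ foliation one can invoke the classical structure of the foliation --- pass to the suspension flow or directing line field, use that every minimal set of a nonsingular flow on a surface of genus $\leq 1$ is either a closed orbit or all of $\TT^2$, and the latter is excluded here since the leaf ray misses the open set $N\cap A$ near $\gamma$. A secondary subtlety is the case where the return map is defined on all of $\gamma$ in the positive direction but fails to be surjective, or where one must instead run the argument in the negative direction; handling both transverse directions symmetrically, and checking that "not complete transversal" indeed produces a non-returning ray on at least one side, is a routine but necessary bookkeeping step. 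Once the limiting compact leaf $c$ is in hand, identifying its homotopy class with that of $\gamma$ via the annulus $A$ is straightforward.
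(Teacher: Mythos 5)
Your overall strategy is the same as the paper's: pass to the complementary annulus of $\gamma$, trap a leaf (or leaf ray) away from the boundary, apply a Poincar\'e--Bendixson argument to produce a compact leaf, and identify its free homotopy class with that of $\gamma$ via the annulus. Two remarks on the route before the main point. The detour through the first-return map is unnecessary: non-completeness of $\gamma$ directly provides a whole leaf $L_0$ disjoint from $\gamma$, and one can apply Poincar\'e--Bendixson to the closure of $L_0$ itself, which is what the paper does. If you insist on producing a non-returning ray \emph{starting on} $\gamma$, you need the extra (true, but not automatic) argument that if every ray issued from the boundary of the cut annulus returned, the union of the resulting boundary-to-boundary arcs would be open and closed, hence the whole annulus, making $\gamma$ complete. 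Also, $\gamma$ is necessarily essential (a null-homotopic transversal would bound a disc and force a singularity), hence non-separating, so $\TT^2\setminus\gamma$ is a \emph{single} open annulus, not several.

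The one genuine problem is your proposed rigorization of step (3), which you correctly identify as the crux. The fact you invoke --- that every minimal set of a nonsingular flow on a surface of genus $\le 1$ is a closed orbit or the whole torus --- is \emph{false} in the $C^1$ category, which is exactly the regularity of the lemma: suspensions of Denjoy circle diffeomorphisms give nonsingular $C^1$ foliations of $\TT^2$ with exceptional (Cantor-like) minimal sets. That classification holds only for $C^2$ foliations (Schwartz's theorem), so as written your argument for the key step does not go through. The fix is to exploit the annulus rather than the torus: since the foliation is transverse to $\gamma$, each boundary circle of the cut annulus has a collar in which every plaque runs from the boundary to the inner edge of the collar, so the tail of the trapped ray (equivalently, the leaf $L_0$ and its closure) avoids these collars and lies in a compact subset of the open annulus, which embeds in $\RR^2$. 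The classical planar Poincar\'e--Bendixson theorem --- valid for nonsingular $C^1$ (even $C^0$) flows, after passing to the orientation double cover of the foliation if necessary, which is again an annulus --- then gives that the $\omega$-limit set is a closed leaf; exceptional minimal sets simply cannot occur in a planar domain. With that substitution, the rest of your argument (the closed leaf is not null-homotopic because the foliation is nonsingular, hence is isotopic to the core of the annulus, hence freely homotopic to $\gamma$) is correct and matches the paper's conclusion.
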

This lemma is classical.  As the proof is short, we include it for completeness.
\begin{proof} Cut the torus along $\gamma$: we get a cylinder $C$ endowed with a foliation transverse to its boundary. Furthermore,
by assumption, this foliation admits a leaf which remains at a uniform distance away from the
 boundary of $C$. Hence, the closure of that leaf is also far from the boundary of $C$. By Poincare-Bendixion theorem, the closure of this leaf contains a compact
 leaf, thus this compact leaf is disjoint from the boundary   $C$. Furthermore, as the foliation is not singular, this leaf cannot be homotopic to $0$ in  the annulus, hence
 it is homotopic to the boundary components.
 \end{proof}
\vspace{9mm}
  \begin{figure}[h]
\begin{center}
\def\svgwidth{0.2\columnwidth}
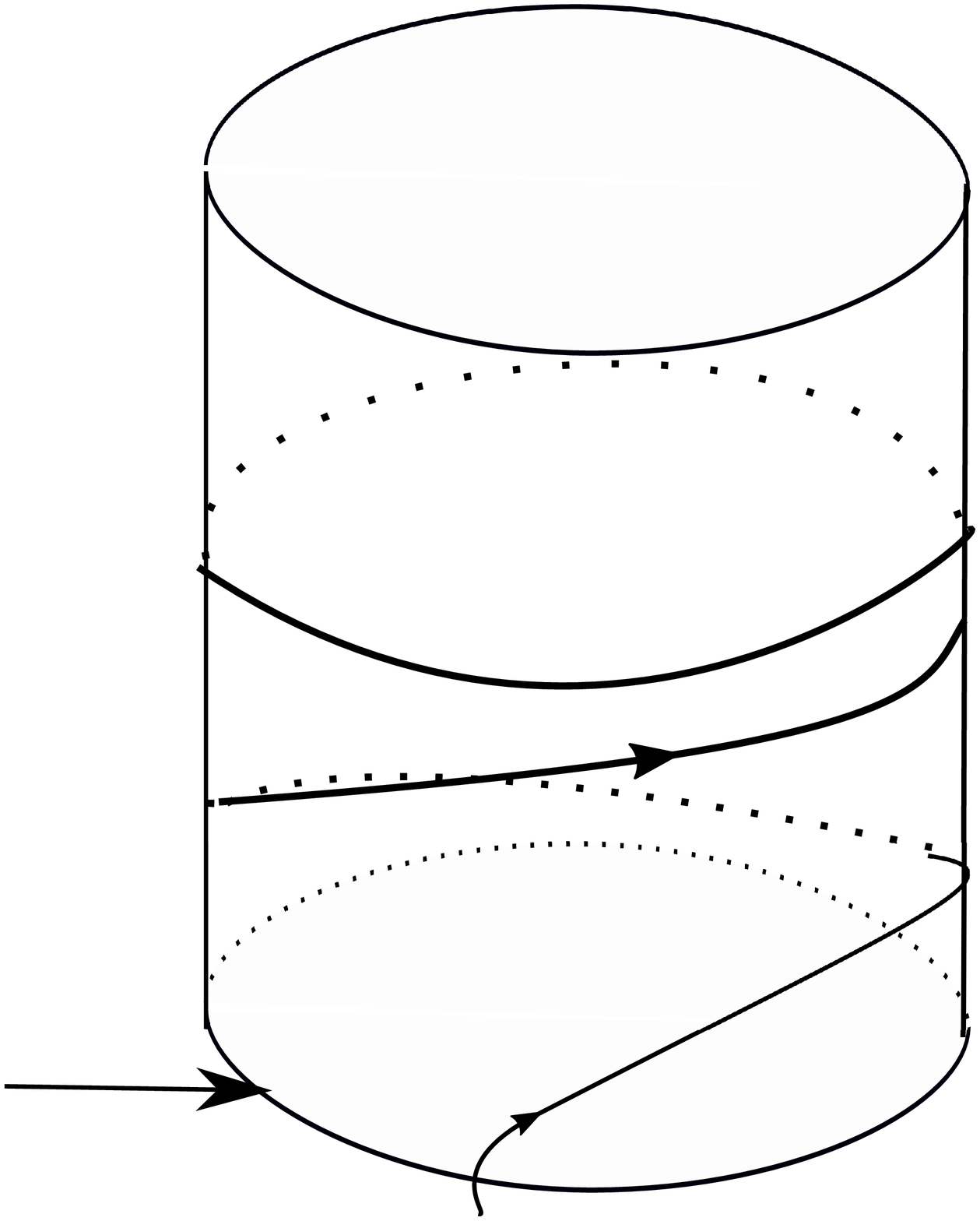
  \caption{}
\end{center}
\end{figure}

\begin{definition} Given two $C^r$ foliations $\calF$ and $\calF^{\prime}$ on manifolds $M$ and $M'$, respectively. $\calF$ and $\calF^{\prime}$ are \emph{$C^r$ conjugate}
if there exists a $C^r$ diffeomorphism $f\colon M\to M'$ such that $f(\calF)=\calF^{\prime}$.
\end{definition}


 \subsection{Reeb components}

 \begin{definition}[Reeb component] Given a foliation $\calF$ on   $\T^2$, we say that $\calF$ has a \emph{Reeb component}, if there
 exists a compact annulus $A$ such that
 \begin{itemize}
 \item  the boundary $\partial{A}$ is the union of two compact leaves of $\calF$;
 \item there is no compact leaf in the interior of $A$;
 \item first item above implies that  $\calF$ is orientable restricted to $A$, so let us choose an orientation. We require that the two oriented compact
 leaves  are in opposite homotopy classes.
 \end{itemize}
 \end{definition}

\begin{figure}[h]
\begin{center}
\def\svgwidth{0.5\columnwidth}
  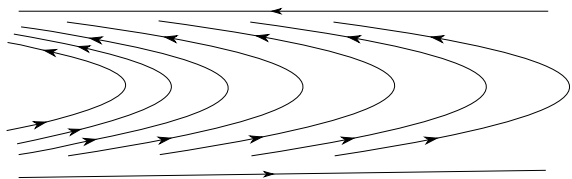
  \caption{Reeb component}
\end{center}
\end{figure}

By using the Poincar\'e-Bendixson theorem, one easily checks the following classical result:
\begin{proposition}\label{p.Reeb}
Let $\cF$ and $\cG$ be two transverse foliations on   $\TT^2$.  Assume that $\cF$ admits a Reeb component $A$.  Then $\cG$
admits a compact leaf contained in the interior of $A$.  Thus $\cF$ and $\cG$ have parallel compact leaves.
\end{proposition}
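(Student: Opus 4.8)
The plan is to reduce the statement to a trapping argument for a flow on the annulus $A$, and then to invoke the Poincar\'e--Bendixson theorem for the foliation $\cG$ restricted to $A$. First I would replace the two foliations on $A$ by vector fields. Fix an orientation of $A$. By the definition of a Reeb component $\cF|_A$ is orientable; orient it so that the orientation it induces on the boundary leaf $L_0$ coincides with the orientation of $L_0$ as a component of $\partial A$. Since the two boundary leaves $L_0,L_1$ carry opposite orientations for $\cF|_A$ (again by the definition of a Reeb component), and the two components of $\partial A$ carry opposite orientations as oriented boundary of $A$, the orientation of $\cF|_A$ also induces on $L_1$ its orientation as a component of $\partial A$. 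Let $X$ be a nonvanishing vector field on $A$ tangent to $\cF$ and defining this orientation; since $A$ and $\cF|_A$ are oriented, choose a nonvanishing vector field $Z$ on $A$ transverse to $\cF$ with $(X,Z)$ a positive frame; and, using that $\cG$ is transverse to $\RR\cdot X$, let $Y=aX+Z$, where $a\colon A\to\RR$ is the unique continuous function making $Y$ everywhere tangent to $\cG$. Then $Y$ is nonvanishing, tangent to $\cG$, and $(X,Y)$ is a positive frame on $A$.

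The key point is that $Y$ points strictly into $A$ along \emph{both} boundary leaves. Along $\partial A$ the vector field $X$ induces the boundary orientation, so with the ``outward normal first'' convention the pair $\bigl(X,\text{inward normal}\bigr)$ is a positive frame; comparing it with the positive frame $(X,Y)$ shows that the component of $Y$ along the inward normal is positive all along $\partial A$, i.e.\ $Y$ enters $A$ transversally along $L_0$ and along $L_1$. I expect this to be the main obstacle: it is the only step where the hypothesis that the two boundary leaves of the Reeb component lie in \emph{opposite} homotopy classes is used, and it is exactly what rules out the degenerate situation in which the $\cG$-flow crosses $A$ from one boundary leaf to the other with no closed leaf trapped inside.

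Granting this, the conclusion is routine. For any $p$ in the interior of $A$, the forward orbit of $p$ under the flow of $Y$ stays in the compact set $A$, so its $\omega$-limit set is a nonempty compact connected $Y$-invariant subset of $A$ containing no zero of $Y$; by the Poincar\'e--Bendixson theorem (applied in the planar surface $A$), it is a periodic orbit of $Y$, that is, a compact leaf $c$ of $\cG$ contained in $A$. Such a leaf cannot meet $\partial A$: along $\partial A$ the foliation $\cG$ is transverse to the leaves $L_0$ and $L_1$, so a $\cG$-leaf touching $\partial A$ would immediately cross out of $A$, contradicting $c\subset A$; hence $c\subset\operatorname{int}(A)$.

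Finally $c$ is essential in the open annulus $\operatorname{int}(A)$: otherwise it would bound a disc in $\operatorname{int}(A)$ carrying the nonvanishing vector field $Y$ tangent to the boundary of that disc, which is impossible (the boundary would be a periodic orbit of $Y$ bounding a disc, forcing a zero of $Y$ inside by Poincar\'e--Bendixson). Therefore $c$ is freely homotopic to the core of $A$, hence to the compact leaves $L_0,L_1$ of $\cF$. Thus $\cG$ has a compact leaf contained in $\operatorname{int}(A)$ and parallel to the compact leaves of $\cF$, and in particular $\cF$ and $\cG$ have parallel compact leaves.
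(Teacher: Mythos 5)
Your proof is correct, and it is essentially the argument the paper has in mind: the paper dismisses this proposition with a one-line appeal to the Poincar\'e--Bendixson theorem, and your write-up supplies exactly that argument, with the orientation bookkeeping (showing the Reeb hypothesis forces the $\cG$-direction to point into $A$ along both boundary leaves) carried out correctly. No gaps.
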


We state now a classification theorem  which can be found in \cite{HH}:
\begin{theorem}\label{thm.classification} \cite[Proposition 4.3.2]{HH} For any $C^r$ foliation $\mathcal{F}$ on $\mathbb{T}^2$,
 we have the following:
\begin{itemize}\item[--] Either $\mathcal{F}$ has Reeb component;
\item [--]or $\mathcal{F}$ is $C^r$ conjugated to the suspension of a $C^r$ diffeomorphism on $S^1$.
\end{itemize}
\end{theorem}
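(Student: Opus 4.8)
The plan is to reduce the statement to the existence of a \emph{complete transversal} for $\mathcal F$ --- a $C^1$ simple closed curve $\gamma$ transverse to $\mathcal F$ meeting every leaf --- and then to read off the suspension structure directly. Suppose such a $\gamma$ has been found. Cutting $\TT^2$ along $\gamma$ produces a compact annulus $A$ carrying a $C^r$ foliation transverse to $\partial A$ and having no compact leaf in its interior: such a leaf would be a compact leaf of $\mathcal F$ disjoint from $\gamma$, contradicting that $\gamma$ is complete. Applying the Poincar\'e--Bendixson theorem in the (planar) open annulus $A\setminus\partial A$, no leaf can have a forward or a backward limit set contained in that open annulus, for otherwise it would contain a compact leaf of $\mathcal F$. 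Hence every leaf-arc of $A$ runs from one boundary circle of $A$ to the other, so the holonomy of $\mathcal F$ between the two boundary circles of $A$ is a globally defined $C^r$ diffeomorphism. Regluing the two copies of $\gamma$ then exhibits $\mathcal F$ as $C^r$-conjugate to the suspension of a $C^r$ diffeomorphism of $\gamma\cong\SS^1$ (orientation preserving, since the result is the torus). If $\mathcal F$ is not transversely orientable, one first passes to the transverse-orientation double cover --- again a torus ---, builds an equivariant complete transversal there, and pushes it down; this is a routine point.

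It remains to produce a complete transversal, under the hypothesis that $\mathcal F$ has no Reeb component. First I would handle the case where $\mathcal F$ has no compact leaf at all: by Lemma~\ref{compact leaf} in its contrapositive form, in that case \emph{every} $C^1$ simple closed curve transverse to $\mathcal F$ is automatically a complete transversal, so it is enough to exhibit a single transverse simple closed curve. This is classical (one starts from a recurrent leaf --- which exists because the closure of any leaf contains a minimal set ---, returns to a short transversal, and closes the picture up; see \cite{HH}).

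The substantial case is when $\mathcal F$ has a compact leaf. Let $K\subset\TT^2$ be the union of all compact leaves of $\mathcal F$. This is a closed set, and any two compact leaves of $\mathcal F$, being disjoint essential simple closed curves on $\TT^2$, are freely homotopic. Cutting $\TT^2$ along $K$ produces at most countably many compact annuli $\widehat A_\lambda$, each with both boundary circles compact leaves of $\mathcal F$, each carrying a $C^r$ foliation transverse to its boundary and with no compact leaf in its interior. Fix one $\widehat A_\lambda$. By the Poincar\'e--Bendixson argument above there is no compact leaf and no confined limit set in its interior; and if the two boundary leaves, oriented coherently with an orientation of $\mathcal F|_{\widehat A_\lambda}$, were in opposite free homotopy classes, then $\widehat A_\lambda$ would be a Reeb component, which is excluded by hypothesis. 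It follows that $\mathcal F|_{\widehat A_\lambda}$ is a product --- every leaf runs from one boundary circle to the other, the leaves spiralling onto the two boundary circles consistently with a single orientation of the boundary ---, so $\widehat A_\lambda$ carries a $C^1$ arc transverse to $\mathcal F$ that joins prescribed points of the two boundary circles, with prescribed transverse directions there, and crosses every leaf of $\widehat A_\lambda$. Since cutting $\TT^2$ along a single compact leaf already arranges the $\widehat A_\lambda$ in cyclic order around the torus, one then splices these arcs, matching the prescribed endpoints and directions across the compact leaves of $K$, into a single embedded $C^1$ simple closed curve $\gamma$ transverse to $\mathcal F$; by construction $\gamma$ crosses every compact leaf of $K$ and every leaf lying in some $\widehat A_\lambda$, hence is a complete transversal, and the first paragraph concludes.

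The hard part will be this last splicing when $K$ is large --- for instance a Cantor set of compact leaves, hence infinitely many gaps $\widehat A_\lambda$. One must keep each transverse arc short (possible, because it is the infinite wrapping of the leaves of $\widehat A_\lambda$ onto its boundary, not the length of the arc, that forces an arc to cross every leaf of a gap), assemble them together with the part of $\gamma$ lying over $K$ into one globally embedded $C^1$ curve, and --- the genuinely delicate point --- keep it transverse to $\mathcal F$ all along $K$, i.e. keep it away from the locus where $\mathcal F$ is tangent to the common direction of the compact leaves. Everything else is the Poincar\'e--Bendixson bookkeeping indicated above.
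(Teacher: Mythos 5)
First, a point of comparison: the paper offers no proof of this statement at all --- it is quoted from Hector--Hirsch as \cite[Proposition 4.3.2]{HH} --- so you are supplying an argument where the authors supply a citation. Your architecture (a complete closed transversal yields the suspension structure, via Lemma~\ref{compact leaf} and Proposition~\ref{p.suspension}; absence of Reeb components should yield a complete transversal) is the standard one and is consistent with how the paper itself manipulates these objects in Section~\ref{s.section}.

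Nevertheless the sketch has genuine gaps. (1)~The annuli $\widehat A_\lambda$ obtained by cutting along the union $K$ of compact leaves have boundary circles which are \emph{leaves}; the restricted foliation is tangent to $\partial\widehat A_\lambda$, not ``transverse to its boundary'' as you write, so ``the Poincar\'e--Bendixson argument above'' (set up for a transverse boundary) is not the one you need. What is needed is the analysis of a foliated annulus with tangent boundary and no interior compact leaf: the boundary holonomies are one-sided contractions or expansions, every interior leaf spirals onto both boundary leaves, and the non-Reeb hypothesis forces the two spiralings to be coherent. This is classical, but it is a different lemma. (2)~The splicing of infinitely many transverse arcs into a single embedded $C^1$ closed curve --- which you yourself flag as ``the genuinely delicate point'' --- is left undone, and in your set-up that is where the proof actually lives. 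It can in fact be avoided entirely: the proof of Lemma~\ref{compact leaf} shows that a closed transversal which is not complete is \emph{disjoint} from some compact leaf; hence a closed transversal meeting every compact leaf is automatically complete, and since the compact leaves are pairwise disjoint essential circles in a single free homotopy class $\alpha$, it suffices to produce \emph{one} closed transversal whose algebraic intersection number with $\alpha$ is nonzero. There is no need to thread the curve through every gap. But that one construction --- a surgery concatenating a compact-leaf arc with a transverse arc and smoothing, in the style of the paper's Proposition~\ref{st} --- is precisely the step your sketch does not carry out. (3)~The non-orientable case is not routine in the way you dismiss it: a suspension foliation of $\TT^2$ is orientable, so if $\mathcal F$ is not (transversely) orientable no complete transversal can exist and you must instead exhibit a Reeb component directly; pushing an equivariant transversal down from the orientation double cover cannot succeed, since its success would contradict non-orientability.
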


In general the union of the compact leaves of a foliation may fail to  be compact. But, for codimension $1$ foliations we have  the following theorem due to
A.Haefliger.
\begin{theorem}\label{H} \cite{H} For any $C^r$ $(r\geq 1)$ codimension one foliation $\calF$ on a compact manifold $M$, the set
 $$\{x\in M| \textrm{The $\calF$-leaf through $x$ is compact}\}$$ is a compact subset of $M$.
\end{theorem}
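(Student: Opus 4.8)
The plan is to reduce the statement to a closedness statement and then to control a leaf that is a limit of compact leaves by combining transverse stability of plaque chains with the local holonomy structure near a compact leaf. Since $M$ is compact, it suffices to prove that the set $E=\{x\in M:\text{the }\mathcal F\text{-leaf through }x\text{ is compact}\}$ is closed. So I would take a sequence $x_n\to x_\infty$ with each leaf $L_n:=L_{x_n}$ compact, set $L:=L_{x_\infty}$, and aim to show $L$ is compact. Choosing a short transversal interval $\tau$ through $x_\infty$ in a foliation box and replacing $x_n$ by a point of $L_n$ lying on $\tau$ near $x_\infty$, one may assume $x_n\in\tau$ and $x_n\to x_\infty=\tau\cap L$. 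Finally, fixing a Riemannian metric on $M$ turns every leaf into a complete Riemannian manifold, so by Hopf--Rinow a leaf is compact if and only if it has finite intrinsic diameter; thus the whole task reduces to bounding the diameter of $L$.

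The two ingredients I would use are the following. First, \emph{transverse stability of plaque chains}: given any compact arc $c\subset L$ issuing from $x_\infty$, cover $c$ by finitely many foliation boxes and follow plaques box by box; this produces, for all $n$ large enough (depending on $c$), an arc $c_n\subset L_n$ issuing from $x_n$ that stays uniformly $\mathcal F$-close to $c$ and has length comparable to that of $c$. Consequently, if $L$ were non-compact, hence of infinite diameter, one could take such arcs $c$ of length tending to infinity and conclude that $\operatorname{diam}(L_n)\to\infty$. Second, the \emph{local structure near a compact leaf}: if $L_0$ is compact, a tubular neighbourhood $N\cong L_0\times(-1,1)$ can be chosen with the $(-1,1)$-fibres transverse to $\mathcal F$, so that $\mathcal F|_N$ is a foliated bundle over $L_0$ classified by a holonomy representation $h\colon\pi_1(L_0)\to\diff^1\big((-1,1),0\big)$; inside $N$, the leaf through a point of height $t$ is compact exactly when the orbit $h(\pi_1(L_0))\cdot t$ is finite, and then it is a finite covering of $L_0$ contained in $N$. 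This second ingredient both gives the geometric picture forced by the approximating sequence near a genuinely compact $L$ and is the vehicle for the contradiction when one assumes $L$ non-compact, via a self-return of $L$ that creates a transverse ``vanishing cycle''.

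Putting this together, I would argue by contradiction: if $L$ is non-compact then, as above, $\operatorname{diam}(L_n)\to\infty$ while all $L_n$ sit in the fixed compact $M$, so each $L_n$ folds back on itself and, following the ray along which $L$ escapes, one obtains a leaf (a Hausdorff limit, or $L$ itself if it turns out properly embedded) carrying a vanishing cycle; one then shows this is incompatible with the $C^1$ regularity of the holonomy germs of $\mathcal F$ — this is exactly the point where the hypothesis $r\geq1$ is needed, and in low dimension it is a Poincar\'e--Bendixson argument on a transversal, but in general it is Haefliger's original argument, which I would reproduce (see \cite{H} and \cite{HH}). I expect this last step — converting the geometric picture of a non-compact leaf squeezed between compact leaves into an actual contradiction with $C^1$ holonomy, keeping in mind that a limit leaf need not be properly embedded so the tubular-neighbourhood picture does not apply to it directly — to be the main obstacle. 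Granting it, $L$ is compact, so $x_\infty\in E$, $E$ is closed, and hence compact since $M$ is compact.
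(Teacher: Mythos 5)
The paper gives no proof of this statement: it is quoted verbatim from Haefliger \cite{H} (see also \cite{HH}), so there is nothing internal to compare against and your proposal must stand on its own as a proof. It does not. The reductions you perform are correct but are the easy part: closedness suffices since $M$ is compact; leaves are complete in the induced metric, so by Hopf--Rinow compactness of a leaf is equivalent to finite diameter; and plaque-chain shadowing shows that if the limit leaf $L$ had infinite diameter then $\operatorname{diam}(L_n)\to\infty$. At that point the entire content of the theorem --- deriving a contradiction from a sequence of compact leaves of unbounded diameter accumulating on a non-compact leaf --- is still ahead of you, and your text explicitly defers it (``it is Haefliger's original argument, which I would reproduce'') while acknowledging it is ``the main obstacle.'' Deferring the crux of the theorem to the paper in which the theorem is proved is a citation, not a proof; as written, the proposal reduces Haefliger's theorem to Haefliger's theorem.

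Two further points. First, your ``second ingredient'' is stated too strongly: a tubular neighborhood $N\cong L_0\times(-1,1)$ of a compact leaf is \emph{not} in general a foliated bundle over $L_0$, because nearby leaves need not stay inside $N$; the correspondence between compact leaves and finite orbits of the holonomy representation $h\colon\pi_1(L_0)\to\operatorname{Diff}^1((-1,1),0)$ is only valid for leaves entirely contained in $N$, and in any case this ingredient is never actually deployed in your argument. Second, the mechanism you gesture at for the contradiction --- a transverse ``vanishing cycle'' incompatible with $C^1$ holonomy --- is most likely the wrong tool: vanishing-cycle arguments (Haefliger, Novikov) concern null-homotopic closed transversals and require analyticity or $C^2$ regularity, whereas the compactness of the union of compact leaves in codimension one is proved by a different analysis of one-sided holonomy and saturated neighborhoods of the set of compact leaves. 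To repair the proposal you would need to actually carry out that analysis rather than name it.
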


\subsection{Translation and rotation numbers}

In this section we recall very classical Poincar\'e theory on the rotation number of a circle homeomorphism. We refer to \cite{HK} for more details.

We denote by $\widetilde{homeo}^{+}(\mathbb{R})$ the set of orientation preserving homeomorphisms on $\mathbb{R}$ which commute
 with the translation  $t\mapsto t+1$. Recall that the elements of $\widetilde{homeo}^{+}(\mathbb{R})$ are precisely the lifts on $\RR$ of the
 orientation preserving homeomorphisms of $\SS^1$.

Let $H\in\widetilde{homeo}^{+}(\mathbb{R})$ be  the lift of $h\in Homeo^+(\SS^1)$. Poincar\'e noticed that the ratio $\frac{H^{n}(x)-x}{n}$ converges uniformly, as $n\to\pm\infty$, to some constant $\tau(H)$ called
the \emph{translation number  of $H$}. The projection $\rho(h)$ of $\tau(H)$ on $\RR/\ZZ$ does not depend on the lift $H$ and is called \emph{the rotation number of $h$}.

We can find  the following observations in many books, in particular in \cite{HK}.

\begin{Remark} The rotation number is
rational if and only if $h$ admits a periodic point.
\end{Remark}

\begin{proposition}\label{p.continuity of translation number}\cite[Proposition 11.1.6]{HK} $\tau(\cdot)$ varies continuous in
$C^0$-topology.
\end{proposition}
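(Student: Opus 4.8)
The plan is to exploit the standard quantitative refinement of Poincar\'e's theorem: $\tau(H)$ is approximated by $\frac1n\bigl(H^n(x)-x\bigr)$ with error at most $1$, uniformly in $x\in\RR$ and $n\geq 1$. I will first recall why this holds. Set $a_n(H)=\max_x\bigl(H^n(x)-x\bigr)$ and $b_n(H)=\min_x\bigl(H^n(x)-x\bigr)$, which exist and are finite since $x\mapsto H^n(x)-x$ is continuous and $1$-periodic (as $H^n$ commutes with $t\mapsto t+1$). Monotonicity of $H^n$ together with $H^n(x+1)=H^n(x)+1$ gives $a_n(H)-b_n(H)\leq 1$; moreover $(a_n)$ is subadditive and $(b_n)$ superadditive, so by Fekete's lemma $a_n/n$ and $b_n/n$ converge, and the inequality $a_n-b_n\leq 1$ forces the two limits to coincide, which is precisely $\tau(H)$. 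This yields $|H^n(x)-x-n\tau(H)|\leq 1$ for all $x\in\RR$, $n\geq 1$; in particular $\bigl|\tau(H)-\tfrac1n H^n(0)\bigr|\leq\tfrac1n$.

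Granting this, fix $H\in\widetilde{homeo}^{+}(\RR)$ and $\varepsilon>0$, and choose an integer $n>3/\varepsilon$. Let $H_k\to H$ in the $C^0$ topology, i.e. $\|H_k-H\|_\infty\to 0$. The key auxiliary fact is that the map $F\mapsto F^n$ is continuous for the $C^0$ topology on $\widetilde{homeo}^{+}(\RR)$: since $H$ commutes with the unit translation it is uniformly continuous on $\RR$ (its modulus of continuity $\omega_H$ is controlled by its restriction to $[0,2]$), and the elementary estimate $\|F\circ G-H\circ H\|_\infty\leq\|F-H\|_\infty+\omega_H\!\bigl(\|G-H\|_\infty\bigr)$ propagates uniform convergence through composition; an induction on $n$ gives $\|H_k^n-H^n\|_\infty\to 0$. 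Hence for $k$ large enough $|H_k^n(0)-H^n(0)|<1$.

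Combining the two ingredients, for such $k$ one has
$$|\tau(H_k)-\tau(H)|\leq\Bigl|\tau(H_k)-\tfrac1n H_k^n(0)\Bigr|+\tfrac1n\,|H_k^n(0)-H^n(0)|+\Bigl|\tfrac1n H^n(0)-\tau(H)\Bigr|<\tfrac1n+\tfrac1n+\tfrac1n<\varepsilon.$$
Since $\varepsilon$ is arbitrary, $\tau(H_k)\to\tau(H)$, proving $C^0$-continuity of $\tau$. The only genuinely non-formal step is the uniform estimate $|H^n(x)-x-n\tau(H)|\leq 1$; the rest is soft (continuity of composition plus a three-term triangle inequality). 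An alternative, essentially equivalent route uses monotonicity of $\tau$: from $H-\delta_k\leq H_k\leq H+\delta_k$ with $\delta_k=\|H_k-H\|_\infty$ one gets $\tau(H-\delta_k)\leq\tau(H_k)\leq\tau(H+\delta_k)$, reducing matters to continuity of $c\mapsto\tau(H+c)$ at $c=0$; but establishing the latter requires the same uniform estimate, so I would present the direct argument above.
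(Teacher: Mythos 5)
Your proof is correct: the uniform estimate $|H^n(x)-x-n\tau(H)|\leq 1$ is established cleanly via the subadditive/superadditive sequences $a_n,b_n$ together with $a_n-b_n\leq 1$, the continuity of $F\mapsto F^n$ in the $C^0$ topology is justified, and the three-term triangle inequality then gives continuity of $\tau$. The paper itself offers no proof of this proposition --- it is quoted from \cite[Proposition 11.1.6]{HK} --- and your argument is precisely the standard one underlying that reference, so there is nothing to object to.
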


\begin{proposition}\label{p.comparing translation number}\cite[Proposition 11.1.9]{HK} Let $H,\,F\in\home^{+}_{0}(\RR)$.
Assume that $\tau(H)$ is irrational and   $H(x)<F(x)$, for any $x\in\RR$.
Then we have that $\tau(H)<\tau(F)$.
\end{proposition}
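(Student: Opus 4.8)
\emph{Proof plan.} Recall that $H$ and $F$ are orientation-preserving homeomorphisms of $\RR$ commuting with $x\mapsto x+1$, and write $\rho=\tau(H)$. The strategy is to exhibit an explicit rational $\frac{p}{q}>\rho$ together with a point $x^{*}$ satisfying $F^{q}(x^{*})\ge x^{*}+p$. Granting this, the map $x\mapsto F^{q}(x)-p$ again commutes with $x\mapsto x+1$ and lies above the identity at $x^{*}$; iterating at $x^{*}$ shows all its iterates of $x^{*}$ stay $\ge x^{*}$, so its translation number $q\,\tau(F)-p$ is $\ge 0$, whence $\tau(F)\ge\frac{p}{q}>\rho$, which is what we want.

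First I would record the uniform gap: since $F-H$ is continuous, $1$-periodic and everywhere positive, there is $\epsilon_{0}>0$ (which we may take $<1$) with $F(x)\ge H(x)+\epsilon_{0}$ for all $x$; a straightforward induction using monotonicity of $F$ upgrades this to $F^{n}(x)\ge H^{n}(x)+\epsilon_{0}$ for every $n\ge 1$ and every $x$. For $q\ge 1$ set $s_{H}(q)=\max_{x}\bigl(H^{q}(x)-x\bigr)$ and $s_{F}(q)=\max_{x}\bigl(F^{q}(x)-x\bigr)$ (the maxima exist, the functions being continuous and $1$-periodic), so that $s_{F}(q)\ge s_{H}(q)+\epsilon_{0}$.

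The core of the argument is the two-sided estimate
$$q\rho\ \le\ s_{H}(q)\ <\ \lceil q\rho\rceil .$$
The upper bound is the classical Poincar\'e fact that $\frac{p}{q}>\tau(H)$ forces $H^{q}(x)<x+p$ for all $x$: if instead $H^{q}(x_{0})\ge x_{0}+p$ for $p=\lceil q\rho\rceil$, then $x\mapsto H^{q}(x)-p$ lies above the identity at $x_{0}$, so its translation number $q\rho-p$ is $\ge 0$, giving $\rho\ge\frac{p}{q}>\rho$ (using $q\rho\notin\ZZ$), a contradiction. The lower bound holds because $s_{H}(q)<q\rho$ would give $H^{q}(x)-x\le q\rho-\delta$ for all $x$ and some $\delta>0$, whence $\tau(H)\le\rho-\frac{\delta}{q}<\rho$. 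Now I would invoke irrationality of $\rho$: the fractional parts $\{q\rho\}$, $q\ge 1$, are dense in $[0,1)$, so one can choose $q$ with $1-\{q\rho\}<\epsilon_{0}$. With $p=\lceil q\rho\rceil$ the estimate yields $p-s_{H}(q)\le\lceil q\rho\rceil-q\rho=1-\{q\rho\}<\epsilon_{0}$, hence $s_{F}(q)\ge s_{H}(q)+\epsilon_{0}>p$. Thus some $x^{*}$ has $F^{q}(x^{*})-x^{*}>p$, and the first paragraph concludes.

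The main obstacle is only apparent: the pointwise comparison produces merely the \emph{bounded} gap $F^{n}\ge H^{n}+\epsilon_{0}$, which disappears after dividing by $n$ and seems too weak to separate the asymptotic translation numbers. The resolution is that no growth of the gap is needed; irrationality of $\rho$ lets us pick the exponent $q$ so that the maximal displacement of $H^{q}$ lands within $\epsilon_{0}$ below an integer $p$ with $\frac{p}{q}>\rho$, and then the single uniform shift $\epsilon_{0}$ is exactly enough to carry the displacement of $F^{q}$ up to $p$. The one step meriting care is the two-sided bound $q\rho\le s_{H}(q)<\lceil q\rho\rceil$, which nonetheless belongs to standard Poincar\'e theory and can, if preferred, simply be quoted from \cite{HK}.
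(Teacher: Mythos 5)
Your argument is correct. The paper does not prove this proposition at all --- it is quoted verbatim from Katok--Hasselblatt (Proposition 11.1.9) --- so there is no in-paper proof to compare against; your write-up is essentially the standard Poincar\'e-theory argument: a uniform gap $\epsilon_0$ between $H$ and $F$, the bound $\max_x(H^q(x)-x)\ge q\rho$, and density of $\{q\rho\}$ near $1$ to push $F^q$ above an integer translate $x\mapsto x+p$ with $p/q>\rho$. All the individual steps (the induction $F^n\ge H^n+\epsilon_0$, the identity $\tau(F^q-p)=q\tau(F)-p$, and the two-sided estimate on $s_H(q)$) check out, so the proof is complete as written.
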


Poincar\'e theory proves that a homeomorphism $h$ of the circle with irrational rotation number is semi-conjugated to the rotation $R_{\rho(h)}$;  but  $h$ may fail to be conjugated to $R_{\rho(h)}$
even if $h$ is a $C^1$-diffeomorphism (Denjoy counter examples). However the semi-conjugacy is a $C^0$-conjugacy if $h$ is a $C^2$-diffeomorphism (Denjoy theorem). In general,
if $h$ is a smooth circle diffeomorphism with irrational rotation number, the conjugacy to the corresponding rotation may fail to be a diffeomorphism. However Herman proved that there are
generic conditions on $\rho(h)$ ensuring the smoothness of the conjugacy:

\begin{theorem}\label{He} \cite{He} Let $f\in \diff^{r}(\mathbb{S}^1)$ ($r\geq 3$) be a diffeomorphism of the circle. If the rotation number of $f$ is diophantine,
then $f$ is $C^{r-2}$ conjugated to an irrational rotation.
\end{theorem}

\subsection{Foliations without  compact leaves on the annulus}

Let $\cF$ be a $C^r$ foliation on the annulus $\SS^1\times [0,1]$ so that
\begin{itemize}
 \item $\cF$ is transverse to the boundary $\SS^1\times \{0,1\}$;
 \item $\cF$ has no compact leaves in the annulus.
\end{itemize}
Thus Poincar\'e-Bendixson theorem implies that every leaf entering through $\SS^1\times \{0\}$ goes out through a point of $\SS^1\times \{1\}$.
The map $\cP_\cF\colon \SS^1\times\{0\}\to\SS^1\times\{1\}$,  which associates an entrance point  $(x,0)$ of a leaf in the annulus to  its exit point at $\S^1\times\{1\}$,
is called the holonomy of $\cF$.

Consider the universal cover $\RR\times [0,1]\to\SS^1\times [0,1]$; we denote by $\tilde \cF$ the lift of $\cF$ on $\RR\times [0,1]$ and by $\widetilde{\cP_\cF}$ the holonomy of
$\tilde \cF$.  Note that $\widetilde{\cP_\cF}$ is a lift of $\cP_\cF$.

We will use the following classical and elementary results:

\begin{proposition}\label{p.holonomies} Let $\cF,\cG$ be $C^r$-foliations, $r\geq 0$, on the annulus $\SS^1\times [0,1]$ so that :
\begin{itemize}
\item The foliations $\cF$ and $\cG$ are transverse to the boundary $\SS^1\times\{0,1\}$ and have no compact leaves in the annulus;
 \item The foliations $\cF$ and $\cG$ coincide in a neighborhood of the boundary  $\SS^1\times \{0,1\}$;
 \item The foliations $\cF$ and $\cG$ have same holonomy, that is $\cP_\cF=\cP_\cG$.
\end{itemize}
Then there is a $C^r$ diffeomorphism $\varphi\colon \SS^1\times[0,1]\to\SS^1\times [0,1]$ which coincides with the identity map in a neighborhood of the boundary $\SS^1\times\{0,1\}$ and
so that $$\varphi(\cG)=\cF.$$

If furthermore the lifted foliations $\tilde \cF$ and $\tilde \cG$ have same holonomies, that is  $\widetilde{\cP_\cF}=\widetilde{\cP_\cF}$,  then $\varphi$ is isotopic (relative  to a neighborhood of the boundary)
to the identity map.
\end{proposition}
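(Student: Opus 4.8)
\textbf{Proof proposal for Proposition~\ref{p.holonomies}.}

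The plan is to build $\varphi$ leaf by leaf, using the leaves of a fixed auxiliary foliation as coordinate curves. First I would reduce to a normal form near the boundary: since $\cF$ and $\cG$ coincide on a neighborhood $\SS^1\times([0,\epsilon)\cup(1-\epsilon,1])$, I can flow along the (common) leaves there to introduce $C^r$ coordinates $(x,s)\in\SS^1\times[0,1]$ in which, near the boundary, the common leaves are exactly the vertical segments $\{x\}\times[0,\epsilon)$ and $\{x\}\times(1-\epsilon,1]$. In these coordinates the holonomy map $\cP_\cF=\cP_\cG$ is literally read off as $x\mapsto \cP_\cF(x)$ along verticals, and because there are no compact leaves in the annulus, Poincar\'e–Bendixson guarantees every leaf of either foliation is a compact arc crossing from bottom to top, meeting each horizontal circle $\SS^1\times\{s\}$; more precisely each leaf is a graph over $[0,1]$ in the $s$-variable in a neighborhood of the boundary and, globally, is parametrized monotonically by $s$ after an isotopy of the transverse circles — but the cleanest route is to parametrize each leaf by arc length or simply to note the leaf of $\cF$ through $(x,0)$ is a $C^r$ curve $\ell^\cF_x\colon[0,1]\to\SS^1\times[0,1]$ with $\ell^\cF_x(0)=(x,0)$, $\ell^\cF_x(1)=(\cP_\cF(x),1)$, depending $C^r$ on $x$ (this is the $C^r$ dependence of solutions on initial conditions for the line field directing $\cF$), and similarly $\ell^\cG_x$ for $\cG$.

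Next I would define $\varphi$ by sending, for each $x$, the leaf $\ell^\cG_x$ to the leaf $\ell^\cF_x$ respecting the common parametrization: set $\varphi(\ell^\cG_x(\tau))=\ell^\cF_x(\tau)$ for $\tau\in[0,1]$. Because $\cP_\cF=\cP_\cG$, the endpoints match: the top endpoint of $\ell^\cG_x$ is $(\cP_\cG(x),1)=(\cP_\cF(x),1)$, which is exactly the top endpoint of $\ell^\cF_x$, so $\varphi$ is well defined and bijective on the annulus, maps $\SS^1\times\{0\}$ and $\SS^1\times\{1\}$ to themselves by the identity, and by construction $\varphi(\cG)=\cF$. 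Near the boundary both families of leaves are the vertical segments, so $\varphi$ is the identity on a neighborhood of $\SS^1\times\{0,1\}$. The point that needs care — and the main obstacle — is the regularity of $\varphi$: a priori $(x,\tau)\mapsto \ell^\cF_x(\tau)$ and $(x,\tau)\mapsto\ell^\cG_x(\tau)$ are $C^r$ maps from $\SS^1\times[0,1]$ to the annulus which are diffeomorphisms onto their images only if the two coordinates $(x,\tau)$ form a genuine $C^r$ chart, i.e. the leaves together with the transversal $\SS^1\times\{0\}$ give a $C^r$ product structure. This is where one must invoke the hypothesis that $\cF$ (resp.\ $\cG$) has \emph{no compact leaf}: each leaf hits the transversal in exactly one point, and the holonomy along leaves from $\SS^1\times\{0\}$ defines a $C^r$ trivialization of the annulus fibered by the leaves. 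Concretely, the map $\Psi_\cF\colon\SS^1\times[0,1]\to\SS^1\times[0,1]$, $(x,\tau)\mapsto\ell^\cF_x(\tau)$, is a $C^r$ diffeomorphism — injectivity because distinct leaves are disjoint and each leaf is met once by the chosen parametrization, surjectivity because every point lies on a leaf which meets the bottom, and the inverse is $C^r$ because the line field is $C^{r-1}$ and nowhere tangent to the fibers $\{x\}\times[0,1]$ near the boundary while transverse to $\SS^1\times\{s\}$ in the interior (one may need to first replace $\tau$ by a $C^r$ reparametrization making every leaf transverse to all circles $\SS^1\times\{s\}$, which is possible precisely because no leaf is compact); then $\varphi=\Psi_\cF\circ\Psi_\cG^{-1}$ is a $C^r$ diffeomorphism, equal to the identity near the boundary and carrying $\cG$ to $\cF$.

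Finally, for the isotopy statement, I would lift everything to the universal cover $\RR\times[0,1]$. The leaves $\tilde\ell^\cF_{\tilde x}$ and $\tilde\ell^\cG_{\tilde x}$ now go from $(\tilde x,0)$ to $(\widetilde{\cP_\cF}(\tilde x),1)$ and $(\widetilde{\cP_\cG}(\tilde x),1)$ respectively, and the hypothesis $\widetilde{\cP_\cF}=\widetilde{\cP_\cG}$ ensures the same endpoint-matching holds upstairs, so $\tilde\varphi$ is well defined, commutes with the deck translation $(t,s)\mapsto(t+1,s)$, and descends to $\varphi$. To produce the isotopy, interpolate the two product structures: for $u\in[0,1]$ let $\tilde\ell^u_{\tilde x}$ be the leaf obtained by using the convex combination $(1-u)\cdot(\text{endpoint of }\tilde\ell^\cG)+u\cdot(\text{endpoint of }\tilde\ell^\cF)$ — but since these endpoints already agree, more simply: take a $C^r$ path $\{\cP^u\}_{u\in[0,1]}$ of holonomies from $\cP_\cG$ to $\cP_\cF$ rel a neighborhood of the endpoints (here the equality of \emph{lifted} holonomies lets us connect them through maps with a common lift, hence through a path, e.g.\ straight-line homotopy on the lifts, staying in $\widetilde{homeo}^+$), realize each $\cP^u$ by a foliation $\cF^u$ interpolating the leaf fields, and let $\varphi_u=\Psi_{\cF}\circ\Psi_{\cF^u}^{-1}$. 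Each $\varphi_u$ is the identity near the boundary, $\varphi_0=\varphi$, $\varphi_1=\mathrm{id}$, and the family is continuous; the only subtlety is again the uniform $C^r$ product structure along the deformation, handled as above. The assumption on the lifts is exactly what forbids a nontrivial shift of the vertical circle (a Dehn twist of the annulus) from being forced on $\varphi$, which is why without it one only gets $\varphi$ up to composition with an integer power of such a twist, i.e.\ isotopic to the identity only after the lift condition is imposed.
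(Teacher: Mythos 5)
Your construction is correct and is exactly the classical argument the paper has in mind: Proposition~\ref{p.suspension} supplies a fibration transverse to each foliation, turning each of $\cF$ and $\cG$ into a $C^r$ product structure (the suspension of its holonomy), after which $\varphi=\Psi_\cF\circ\Psi_\cG^{-1}$ is the identity near the boundary by the endpoint matching $\cP_\cF=\cP_\cG$, and the lifted-holonomy hypothesis kills the Dehn-twist obstruction in the mapping class group of the annulus rel boundary. The only phrasing to adjust is the appeal to ``$C^r$ dependence of solutions on initial conditions'' for the directing line field, which is unavailable for $r\in\{0,1\}$; the $C^r$ regularity of the maps $\Psi_\cF$, $\Psi_\cG$ must come from the foliated charts via Proposition~\ref{p.suspension}, as your own parenthetical already indicates.
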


An important step for proving Proposition~\ref{p.holonomies} is the next classical result that we will also use several times:

\begin{proposition}\label{p.suspension} Let $\cF$ be a $C^r$ ($r\geq 1$) foliation  on the annulus $\SS^1\times[0,1]$,  transverse to the boundary and without compact leaf.
Then there is a smooth surjection $\theta\colon \SS^1\times[0,1]\to [0,1]$ so that $\cF$ is transverse to the fibers of  $\theta$.
\end{proposition}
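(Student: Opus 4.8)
The plan is to produce the function $\theta$ by constructing, in a way compatible along overlaps, a submersion onto $[0,1]$ whose level sets are everywhere transverse to $\cF$. First I would use that $\cF$ is transverse to the boundary and has no compact leaf, so by the Poincar\'e--Bendixson theorem every leaf of $\cF$ crosses the annulus from $\SS^1\times\{0\}$ to $\SS^1\times\{1\}$; moreover each leaf is a properly embedded line (a closed leaf would have to be a compact leaf, which is excluded, and a leaf cannot accumulate on itself in the interior for the same reason). Thus the leaf space is Hausdorff and $\cF$ is, topologically, a product foliation: there is a homeomorphism $h\colon\SS^1\times[0,1]\to\SS^1\times[0,1]$ carrying $\cF$ to the foliation by vertical segments $\{x\}\times[0,1]$, and we may take $h$ to be the identity near the boundary (using that $\cF$ is transverse to the boundary, so near the boundary the leaves are graphs over the $[0,1]$-direction). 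However $h$ is only a homeomorphism, and the second coordinate $h_2\colon\SS^1\times[0,1]\to[0,1]$ is the naive candidate for $\theta$ but need not be smooth; the whole point of the proposition is to upgrade to a smooth $\theta$.

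The step I expect to be the main obstacle is precisely this smoothing: one must find a \emph{smooth} surjection $\theta$ transverse to $\cF$, not merely a continuous first integral of the complementary direction. The clean way I would do this is local-to-global. Near each point pick a $C^r$ flow box $U_\alpha\cong (-1,1)\times(-1,1)$ in which $\cF$ is the horizontal foliation, with the second coordinate $s_\alpha$ a smooth submersion onto an interval with level sets transverse to $\cF$; arrange that boxes near the boundary have $s_\alpha$ equal to the boundary-collar coordinate. On overlaps $U_\alpha\cap U_\beta$ the two functions $s_\alpha,s_\beta$ have the same level sets (the plaques of $\cF$), so $s_\beta = g_{\alpha\beta}\circ s_\alpha$ for a $C^r$ diffeomorphism $g_{\alpha\beta}$ between intervals; these satisfy a cocycle relation. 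Rather than patch these directly, I would instead build a smooth vector field $Z$ on $\SS^1\times[0,1]$ which is everywhere transverse to $\cF$, points "upward" (from the $\{1\}$-side toward the $\{0\}$-side, say), and near the boundary equals $\partial_t$: such a $Z$ exists by a standard partition-of-unity argument, since "transverse to $\cF$ and positively crossing" is an open convex condition on the fiber of $TM$ at each point, hence admits smooth sections, and convexity lets us glue local choices. The flow of $Z$ is complete in forward/backward time until it hits the boundary, and every orbit runs from $\SS^1\times\{1\}$ to $\SS^1\times\{0\}$ in finite time; normalizing, one gets a smooth $\theta\colon\SS^1\times[0,1]\to[0,1]$ (essentially the "time to reach the bottom", rescaled) with $\theta\equiv 0,1$ near the respective boundary components. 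Since $\theta$ is constant along the flow of $Z$ and $Z$ is transverse to $\cF$, and $d\theta(Z)\neq 0$, the level sets of $\theta$ are transverse to $\cF$; surjectivity onto $[0,1]$ is clear from the boundary behaviour.

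A few technical points I would make sure to nail down: (i) the existence of the transverse positively-crossing vector field $Z$ — here one uses that transversality of a line to $\cF$ at a point is open, that the set of positively-crossing transverse directions at a point is connected (in fact convex after choosing a local trivialization in which $\cF$ is horizontal), and that $\cF$ transverse to the boundary guarantees $\partial_t$ itself works near $\SS^1\times\{0,1\}$, so partition of unity glues these into a global smooth $Z$; (ii) completeness of the flow up to the boundary and the fact that every orbit exits, which follows from no compact leaf (so $Z$ has no closed orbit in the interior either, as a closed orbit of $Z$ would be a closed transversal bounding — rule this out by the same Poincar\'e--Bendixson / no-compact-leaf reasoning, or simply by noting $\theta$ decreases strictly along $Z$ once $\theta$ is defined via any local model and then arguing globally); (iii) smoothness of $\theta$ at points where orbits hit the boundary, which is fine because near the boundary $Z=\partial_t$ and the collar structure makes the hitting time smooth. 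This gives the desired $\theta$, and in fact $\theta$ is a smooth fibration away from where one might worry, with $\cF$ transverse to all its fibers, which is exactly the assertion.
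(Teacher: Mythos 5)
The paper states Proposition~\ref{p.suspension} as a classical fact and gives no proof, so your argument has to stand on its own; as written, its central construction does not work. The first problem is the boundary normalization of $Z$: you ask for $Z$ transverse to $\cF$ \emph{and} equal to $\partial/\partial t$ near $\SS^1\times\{0,1\}$. But ``$\cF$ transverse to the boundary'' means the leaves cross the circles $\SS^1\times\{0\}$ and $\SS^1\times\{1\}$, so near the boundary $T\cF$ has a nonzero $\partial/\partial t$-component and may well \emph{equal} the $\partial/\partial t$-direction (the vertical foliation satisfies every hypothesis of the proposition). Hence $\partial/\partial t$ is in general tangent, not transverse, to $\cF$ near the boundary, and the vector field you want need not exist: you have conflated transversality to $\cF$ with transversality to the boundary circles. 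The same confusion appears in your flow-box discussion, where you say the level sets of the transverse coordinate $s_\alpha$ are ``the plaques of $\cF$'' and simultaneously that they are transverse to $\cF$; plaques are tangent to $\cF$, and a function constant on plaques is a local first integral, which is the opposite of what $\theta$ must be.

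The second, more fundamental problem is the final transversality claim. The conclusion requires $d\theta|_{T\cF}\neq 0$, i.e.\ $\theta$ strictly monotone along every leaf; that is what ``the fibers of $\theta$ are transverse to $\cF$'' means. Your construction only gives $d\theta(Z)\neq 0$ for some $Z$ transverse to $\cF$ (the sentence ``$\theta$ is constant along the flow of $Z$ \dots and $d\theta(Z)\neq 0$'' is self-contradictory, a symptom of the mix-up), and this says nothing about $d\theta$ on $T\cF$: the fiber $\theta^{-1}(c)$ is the image of $\SS^1\times\{0\}$ under the backward flow of $Z$, and since that flow does not preserve $\cF$, the transversality of the bottom circle to $\cF$ is not propagated; a fiber can perfectly well become tangent to a leaf in the interior. (There is also no reason every $Z$-orbit reaches the bottom: $Z$ could have a closed orbit isotopic to the core, since closed transversals to $\cF$ exist, and then the transit time is undefined.) The repair is to work \emph{along} $\cF$ rather than across it: orient $\cF$ from $\SS^1\times\{0\}$ to $\SS^1\times\{1\}$ (possible, since by your own Poincar\'e--Bendixson argument every leaf is a compact arc joining the two boundary circles) and set $\theta_0(p)=\ell(p)/L(p)$, the leafwise arc length from the bottom to $p$ divided by the total length of the leaf through $p$. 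Then the leafwise derivative of $\theta_0$ is $1/L(p)$, positive and bounded away from $0$, so its level sets are transverse to $\cF$; $\theta_0$ may fail to be smooth transversally, and the smoothing is where your openness/convexity argument genuinely belongs, applied to the condition ``leafwise derivative positive'' rather than to transverse directions.
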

A classical consequence of Proposition~\ref{p.suspension} is that a foliation on   $\TT^2$ admitting a complete transversal is
conjugated to the suspension of the first return map on this transversal.

\section{Existence of a complete transversal curve for two transverse foliations without  parallel compact leaves}\label{s.section}
In this section, we consider two foliations $\cF$, $\cG$ on the torus $\TT^2$ which do not have parallel compact leaves (see Definition~\ref{d.parallel}).
According to Proposition~\ref{p.Reeb} the foliation $\cF$ and $\cG$ have no Reeb component. In particular, $\cF$ and $\cG$ are orientable.
By Theorem \ref{thm.classification}, for
each of them, there exist complete transverse cross sections. In this section, we prove that any two transverse foliations  without parallel compact leaves
share a complete transverse cross section.

\begin{proposition} \label{st} If two $C^1$ foliations  $\mathcal{F}$ and $\mathcal{G}$ are transverse on $\T^2$ and have no parallel compact leaves, then there
exists a smooth simple closed curve $\gamma$ which is a complete transversal to both $\mathcal{F}$ and $\mathcal{G}$.
\end{proposition}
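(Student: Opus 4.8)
The plan is to produce the curve $\gamma$ by first finding a smooth simple closed curve $\gamma_0$ transverse to $\cF$ which is automatically a complete transversal for $\cF$, and then to \emph{perturb} it, keeping it transverse to $\cF$ and simple, so as to also become transverse to, and a complete transversal for, $\cG$. The starting point is Theorem~\ref{thm.classification} together with Proposition~\ref{p.Reeb}: since $\cF$ and $\cG$ have no parallel compact leaves, neither has a Reeb component, so each is $C^1$-conjugate to a suspension of a circle diffeomorphism; in particular each admits a complete transversal. Fix a complete transversal $\gamma_0$ of $\cF$, a simple closed curve in some homotopy class $\alpha_\cF\in\pi_1(\TT^2)$ which must be non-trivial and \emph{not} the class of any compact leaf of $\cF$ (a complete transversal crosses every leaf, in particular every compact leaf, so its class is not that of a compact leaf of $\cF$).

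The key dichotomy is whether $\gamma_0$ is already transverse (after a $C^1$-small perturbation) to $\cG$ as well. The main case to handle is when the homotopy class $\alpha_\cF$ of $\gamma_0$ is the class of a compact leaf of $\cG$, or more generally when $\gamma_0$ cannot be made transverse to $\cG$. Here I would argue as follows. Consider the two foliations $\cF$ and $\cG$ and look at their compact leaves: by Theorem~\ref{H} the set of points on compact leaves of $\cG$ is a compact set $K_\cG$, and similarly $K_\cF$ for $\cF$; and since the two foliations have no parallel compact leaves, no compact leaf of $\cF$ is isotopic to a compact leaf of $\cG$, so any compact leaf of $\cF$ meets any compact leaf of $\cG$ transversally (they are closed curves in distinct, hence geometrically-intersecting, homotopy classes) — in fact, one checks the compact leaves of $\cF$ and $\cG$ must all be parallel among themselves within each foliation (two disjoint non-separating simple closed curves on $\TT^2$ are isotopic), so all compact leaves of $\cF$ lie in a single class $\alpha$ and all compact leaves of $\cG$ in a single class $\beta$ with $\alpha\neq\pm\beta$. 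Now I choose $\gamma$ to be transverse to \emph{both} constraints simultaneously: take $\gamma$ in a class $\delta$ that is geometrically dual to the directions of both $\cF$'s and $\cG$'s non-compact-leaf behavior. Concretely, using the suspension structure of $\cF$ (Proposition~\ref{p.suspension} applied to the complement of a neighborhood of $K_\cF$, glued to the product structure near $K_\cF$), one gets a fibration-like map $\theta_\cF\colon\TT^2\to\SS^1$ whose fibers are transverse to $\cF$; similarly $\theta_\cG$. The curve $\gamma$ should be chosen transverse to the fibers of both $\theta_\cF$ and $\theta_\cG$ and transverse to $\cF$ and $\cG$; transversality to $\cF$ (resp. $\cG$) plus meeting every fiber of $\theta_\cF$ (resp. $\theta_\cG$) forces $\gamma$ to be a complete transversal, because any leaf of $\cF$ that misses $\gamma$ would, by the suspension/Poincar\'e--Bendixson structure, have to be trapped in a region of $\TT^2\setminus\gamma$ where the vertical fibering $\theta_\cF$ is exhausted — contradicting that $\gamma$ meets all fibers.

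More hands-on, I would actually proceed by a direct construction. Start with $\gamma_0$ a complete transversal of $\cF$. Using the suspension structure of $\cG$, take its complete transversal $\eta_0$ (class $\alpha_\cG\neq$ class of a $\cG$-compact leaf). If $\alpha_\cF=\alpha_\cG$ up to sign, I can isotope $\gamma_0$ within its class to a curve $C^0$-close to $\eta_0$; a curve $C^1$-close to a curve transverse to $\cG$ is transverse to $\cG$, and small $C^1$ isotopies preserve transversality to $\cF$ since that is open; this produces the desired $\gamma$. If $\alpha_\cF\neq\pm\alpha_\cG$, I build $\gamma$ by a ``staircase'' argument: cut $\TT^2$ along $\gamma_0$ to a cylinder $C$ with $\cF$ transverse to $\partial C$ and without compact leaves (Poincar\'e--Bendixson as in Lemma~\ref{compact leaf}), so $\cF|_C$ is a suspension; inside $C$, $\cG$ still has no Reeb-type obstruction, and I modify $\gamma_0$ by pushing it along $\cF$-leaves across the cylinder so that it picks up the right winding to meet every $\cG$-leaf, while automatically staying transverse to $\cF$ (moving a transversal along the flow direction keeps it a transversal). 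The main obstacle — and the step I expect to require the most care — is exactly this coordinated construction: guaranteeing that the perturbed/rebuilt curve remains \emph{simple} (embedded) while acquiring transversality and completeness for the \emph{second} foliation, i.e. controlling the interaction of the two suspension structures. I expect the cleanest route is to first establish, via the classification and Haefliger's theorem, that in suitable $C^1$ coordinates on $\TT^2$ both foliations are confined to a common pair of constant transverse cones outside a controlled region, so that a straight line of appropriate rational slope works as $\gamma$; this is essentially the coordinate normalization announced in the paragraph after Theorem~\ref{thm.nonparallel}, and Proposition~\ref{st} is the geometric heart of it.
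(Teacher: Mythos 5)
Your overall plan---start from a complete transversal of $\mathcal{F}$ alone and then modify it to handle $\mathcal{G}$---runs into a difficulty that the proposal never actually resolves, namely producing a \emph{single} simple closed curve that is transverse to \emph{both} foliations at once. In your case $\alpha_{\mathcal F}=\pm\alpha_{\mathcal G}$, the argument ``isotope $\gamma_0$ to a curve close to $\eta_0$; small $C^1$ isotopies preserve transversality to $\mathcal F$'' does not work: to gain transversality to $\mathcal G$ the new curve must be $C^1$-close to $\eta_0$, hence $C^1$-far from $\gamma_0$, and the isotopy is not small; there is no reason a curve near a transversal of $\mathcal G$ is still transverse to $\mathcal F$. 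In the other case you explicitly leave open both the embeddedness of the ``staircase'' curve and its transversality to the second foliation, which is precisely the crux. Finally, the fallback you suggest at the end --- first putting both foliations into common constant transverse cones so that a straight line works --- is circular: that normalization is Theorem~\ref{t.separating}, whose proof in the paper \emph{uses} Proposition~\ref{st} (via Theorem~\ref{t.circles}, which needs a common complete transversal as input).

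The paper avoids all of this by reversing the order of operations. Since neither foliation has a Reeb component, both are orientable; take unit vector fields $X$ and $Y$ directing $\mathcal F$ and $\mathcal G$. The average $\tfrac12 X+\tfrac12 Y$ is automatically transverse to both foliations, and a $C^0$-small perturbation yields a smooth non-singular vector field with a periodic orbit $\tilde\gamma$: this is a simple closed curve transverse to both $\mathcal F$ and $\mathcal G$ with no isotopy juggling at all. By Lemma~\ref{compact leaf} and the no-parallel-leaves hypothesis, $\tilde\gamma$ is complete for at least one foliation, say $\mathcal F$; if it fails to be complete for $\mathcal G$, one performs a surgery (concatenate a leaf segment $L$ of $\mathcal F$, chosen inside a compact leaf of $\mathcal F$ when one exists, with a suitably oriented arc of $\tilde\gamma$, and smooth) to obtain a simple closed curve $\gamma$ transverse to both foliations that meets a compact leaf of $\mathcal G$ in exactly one point; the nonzero intersection number forces $\gamma$ not to be homotopic to the compact leaves of $\mathcal G$ (or of $\mathcal F$), and Lemma~\ref{compact leaf} then gives completeness. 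Your proposal contains neither the ``average vector field'' trick nor this surgery-plus-intersection-number step, and as written it does not close.
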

\begin{proof}

As noticed before the statement of Proposition~\ref{st}, the foliations $\mathcal{F}$ and $\mathcal{G}$ have no Reeb component and therefore  $\mathcal{F}$ and $\mathcal{G}$ are
  orientable. Thus there exist two unit vector fields $X,Y$ such that $X$ and $Y$ are
  tangent to the foliations $\mathcal{F}$ and $\mathcal{G}$ respectively.

  Since $X$ and $Y$  are transverse, the vector field $\frac{1}{2}\,X+\frac{1}{2}\,Y$ is
  transverse to both $\mathcal{F}$ and $\mathcal{G}$. Let $Z$ be a smooth vector field $C^0$ close enough to
   $\frac{1}{2}\,X+\frac{1}{2}\,Y$ so that $Z$ is non-singular  and transverse to both foliations $\mathcal{F}$ and $\mathcal{G}$.
Furthermore, up to perform a small perturbation, we can assume that $Z$ admits a periodic orbit $\tilde \gamma$
which is a simple closed curve transverse to   both $\cF$ and $\cG$.


According to Lemma~\ref{compact leaf}, if $\tilde\gamma$ is not a complete transversal of one of the foliations $\cF$ or $\cG$,
this foliation admits a compact leaf homotopic to $\tilde \gamma$.  As $\cF$ and $\cG$ have no parallel compact leaves,
this may happen to at most one of $\cF$ and $\cG$. In other words, $\tilde \gamma$ is a complete transversal for at least one of the
foliations, thus we assume that $\tilde \gamma$ is a complete transversal for $\cF$. If $\tilde \gamma$ is a complete transversal
for $\cG$,  we are done.

Thus we assume that  it is not the case. Therefore Lemma~\ref{compact leaf} implies that $\cG$ has compact
leaves which  are in the homotopy class of $\tilde \gamma$. We denote by $C_{_\cG}$ a compact leaf of $\cG$, and  we denote by   $L$ a segment of leaf of  $\mathcal{F}$ with
endpoints $p,q$ on $\tilde \gamma$ and whose interior is disjoint from $\tilde\gamma$; furthermore,   if
   $\mathcal{F}$ has a compact leaf, we choose $L$ contained in a compact leaf of $\mathcal{F}$. We  denote by  $\sigma\subset \tilde\gamma$ the (unique) non trivial oriented segment so that
   \begin{itemize}
    \item $\sigma$ joins the final point $q$ of $L$ to
   its initial point $p$;
   \item  the interior of $\sigma$ is disjoint from  $\{p,q\}$;
   \item the orientation of $\sigma$ coincides with the transverse orientation of the foliation $\cG$,  given by the vector field $X$ directing $\cF$.
   \end{itemize}
 Thus the concatenation $\gamma_0= L\cdot \sigma$ is a closed curve (which is simple unless $p=q$ in that case $p=q$ is the unique and non topologically transverse intersection point)
 consisting in one leaf segment  and one transverse segment to $\cF$.
 A classical process allows us to smooth $\gamma_0$ into a smooth curve $\gamma$ transverse to $\cF$ (see Figure 3 for the case $p\neq q$ and Figure 4 for the case $p=q$), and the choice of the oriented segment $\sigma$ allows us to choose $\gamma$ transverse to $\cG$.
 Furthermore, we have that
 \begin{itemize}
 \item $\gamma$ cuts the compact leaf $C_{_\cG}$ of $\cG$ transversely and in exactly one point;
 \item if $\cF$ has compact leaves, then $\gamma$ cuts the compact leaf containing $L$ transversely and in exactly $1$ point;
 \item $\gamma$ is a closed simple curve (even in the case $p=q$).
 \end{itemize}
 \begin{figure}[h]
\begin{center}
\def\svgwidth{0.9\columnwidth}
  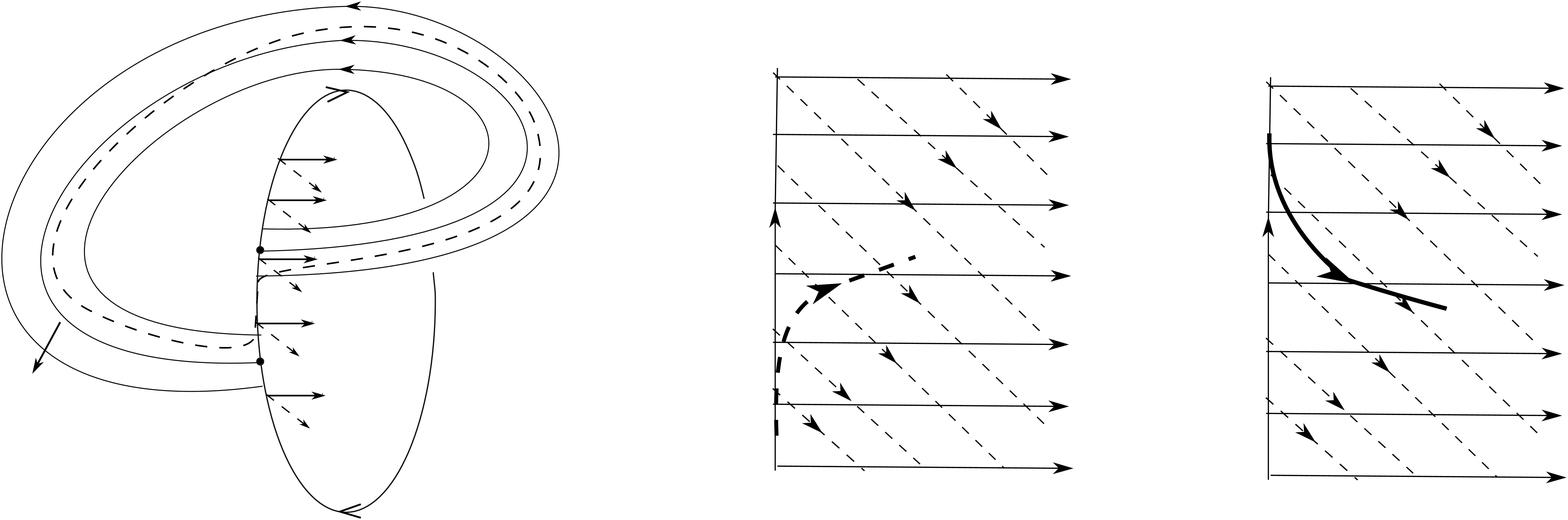
  \caption{In the first figure: the dash line is the transversal $\gamma$; the dash and real arrows on the circle pointing outside give the directions of $\calG$ and $\calF$ respectively. The second and the third figure show the good choice of curve and bad choice of curve respectively. }
\end{center}
\end{figure}
\begin{figure}[h]
\begin{center}
\def\svgwidth{0.35\columnwidth}
  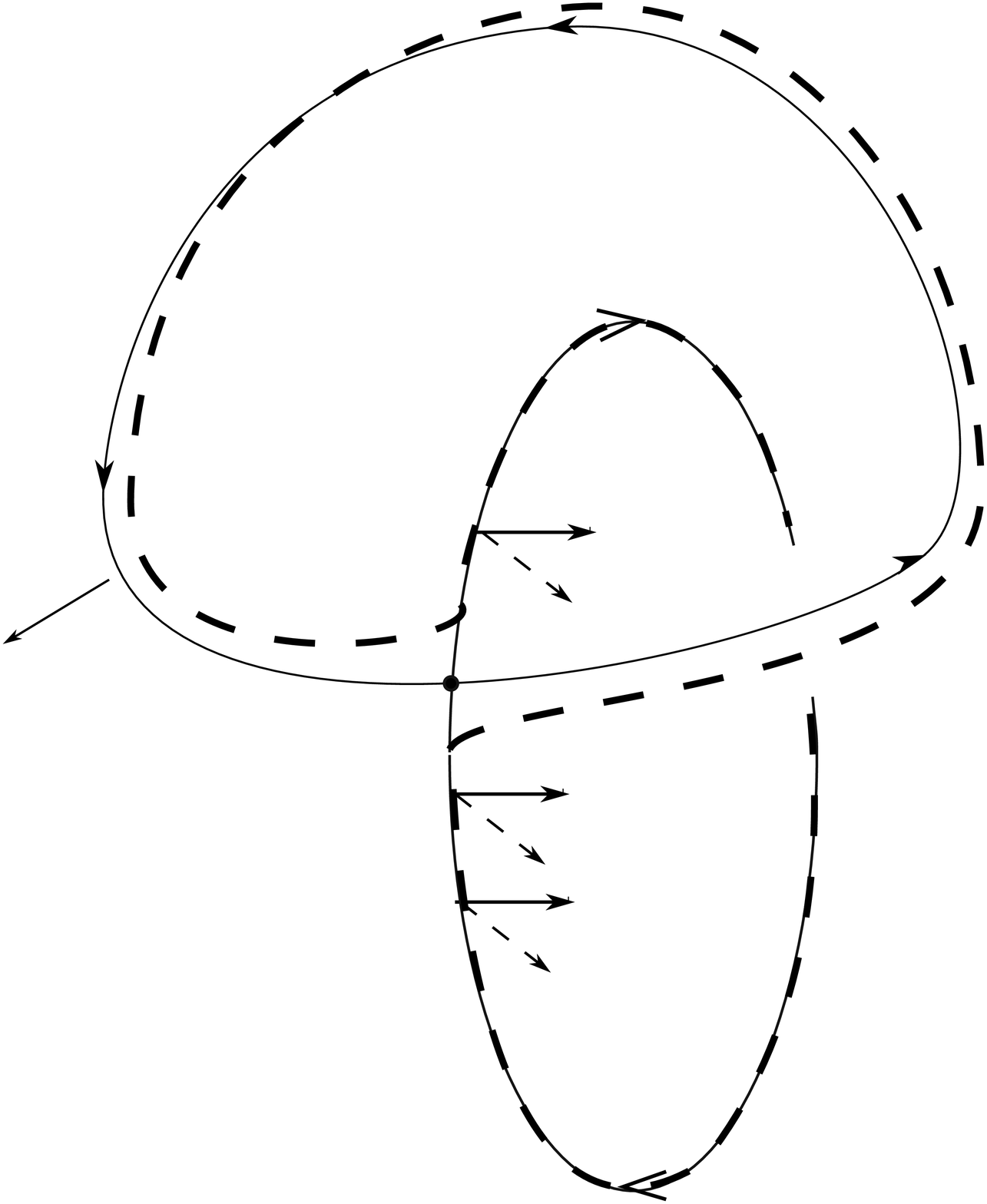
  \caption{The dash line is the transversal $\gamma$. The dash and real arrows on the circle pointing outside give the directions of $\calG$ and $\calF$ respectively.}
\end{center}
\end{figure}
\newpage
 Now $\gamma$ is a simple closed curve transverse to $\cG$ and has non-vanishing intersection number with a compact leaf of $\cG$,  and therefore $\gamma$ is not homotopic to the compact leaves of $\cG$.
 Lemma~\ref{compact leaf} implies therefore that $\gamma$ is a complete transversal of $\cG$.  The same argument show that, if $\cF$ has a compact leaf, then $\gamma$ is a complete transversal of $\cF$.
 Finally, if $\cF$ has no compact leaves, any closed transversal is a complete transversal, ending the proof.
\end{proof}

\section{Deformation of a foliation along its transverse foliation}

For any $C^1$ foliation $\calE$, we will denote by $\calE_x$  the leaf of $\calE$ through $x$. For any two points $x,y$ on a common
leaf of $\calE$, we denote $ d_{\calE}(x,y) $  as the distance between $x,y$ on the $\cE$-leaf.
  \begin{proposition}\label{push} Let $S=\R\times [0,1]$ be a horizontal strip on $\R^2$. Assume that $\tilde\calE$, $\tilde\calF$  and  $\tilde\calG$ are
    $C^1$ foliations on $S$ satisfying that
  \begin{itemize}
  \item the foliation $\tilde\cG$ is transverse to $\tilde \cF$ and $\tilde\cE$, that is,  $\tilde\calE\pitchfork\tilde\calG$ and $\tilde\calF\pitchfork\tilde\calG$;
   \item the foliations $\tilde\calE$, $\tilde\calF$ and $\tilde\calG$ are invariant under the map $(x,y)\mapsto(x+1,y)$;
  \item the foliations $\tilde\calE$ and $\tilde\calF$ have the same holonomy map from $\R\times\{0\}$ to $\R\times\{1\}$;
 \item Each leaf of each foliation intersects the two boundary components of $S$  transversely.
  \end{itemize}
   Then there exists a continuous family  $\{\Phi_t\}_{t\in[ 0 , 1 ]}$ of $C^1$ diffeomorphisms  on $\R\times[0,1]$ such that
   \begin{itemize} \item $\Phi_0=Id$;
   \item $\Phi_1(\tilde\calE)=\tilde\calF$;
   \item $\Phi_t(\tilde\calE)\pitchfork\tilde\calG$, for every $t\in[ 0 , 1 ]$;
   \item $\Phi_t$ commutes with the map  $(x,y)\mapsto(x+1,y)$, for any $t\in[0,1]$;
   \item $\Phi_t$ coincides with the identity map on $\RR\times\{0,1\}$, for any $t\in[0,1]$.
   \end{itemize}

 If furthermore $\tilde\cE$ and $\tilde\cF$ coincide in a neighborhood of the boundary $\RR\times\{0,1\}$ of $S$ then we can choose the family
 $\{\Phi_t\}_{t\in[0,1]}$ of diffeomorphisms so that there is a neighborhood of
 $\RR\times\{0,1\}$ on which the $\Phi_t$ coincides with the identity map, for any  $t\in[0,1]$.
   \end{proposition}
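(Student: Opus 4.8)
\emph{Overview and Step 1 (reduction to a model case).} The plan is to straighten $\tilde\cG$ to the \emph{vertical} foliation of $S=\R\times[0,1]$, i.e.\ the foliation by the segments $\{x=\mathrm{const}\}$, and then to write the isotopy down by hand. Since each leaf of $\tilde\cG$ meets both boundary components and $\tilde\cG$ has no compact leaf (a compact leaf would be a circle disjoint from $\partial S$), every leaf of $\tilde\cG$ is a compact arc from $\R\times\{0\}$ to $\R\times\{1\}$. Passing to the quotient annulus and applying Proposition~\ref{p.suspension}, $\tilde\cG$ is --- after an equivariant $C^1$-conjugacy equal to the identity near $\partial S$ --- transverse to every horizontal $\R\times\{s\}$, so that its leaves become graphs over $y\in[0,1]$; then $\Xi(x,y):=(\Pi(x,y),y)$, with $\Pi$ the first integral of $\tilde\cG$ normalised by $\Pi(x,0)=x$, is an equivariant $C^1$-diffeomorphism $S\to\R\times[0,1]$, equal to the identity on $\R\times\{0\}$, carrying $\tilde\cG$ to the vertical foliation. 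Replacing $(\tilde\cE,\tilde\cF,\tilde\cG)$ by its $\Xi$-image (and conjugating the final isotopy back by $\Xi^{-1}$) we may assume $\tilde\cG$ is vertical: $\Xi$ preserves transversality and the coincidence of the holonomies of $\tilde\cE$ and $\tilde\cF$, and since $\Xi$ fixes $\R\times\{0\}$ pointwise and carries neighbourhoods of $\partial S$ to neighbourhoods of $\partial S$, the optional hypothesis ``$\tilde\cE=\tilde\cF$ near $\partial S$'' survives too.

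\emph{Step 2 (the model case): construction of $\Phi_1$.} Assume $\tilde\cG$ is vertical. Then $\tilde\cE\pitchfork\tilde\cG$ says exactly that no leaf of $\tilde\cE$ is ever vertical, so $x$ is strictly monotone along each leaf; being a compact arc from $\R\times\{0\}$ to $\R\times\{1\}$, the leaf through $(x_0,0)$ is the graph of a $C^1$ function $\phi^{\cE}_{x_0}$ equal to $0$ and to $1$ at the two ends of its domain. In particular the common bottom-to-top holonomy $\eta$ of $\tilde\cE$ and $\tilde\cF$ has no fixed point (a fixed point of $\eta$ would give a leaf with both endpoints on the same vertical line); up to the flip $(x,y)\mapsto(x,1-y)$ we may take $\eta$ to be an equivariant increasing homeomorphism of $\R$ with $\eta(x)>x$, so that $\phi^{\cE}_{x_0}$ (and likewise $\phi^{\cF}_{x_0}$) has domain $[x_0,\eta(x_0)]$. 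The crucial elementary observation is that for each fixed $c$ the map $x_0\mapsto\phi^{\cE}_{x_0}(c)$ is a strictly \emph{decreasing} $C^1$-bijection from $[\eta^{-1}(c),c]$ onto $[0,1]$ --- decreasing because leaves of a foliation are pairwise disjoint, and a $C^1$-diffeomorphism because it is a holonomy of the $C^1$ foliation $\tilde\cE$ --- and similarly for $\cF$. Hence the rule $\psi_1\bigl(c,\phi^{\cE}_{x_0}(c)\bigr)=\phi^{\cF}_{x_0}(c)$ (for $x_0\in[\eta^{-1}(c),c]$) defines, for each $c$, an increasing $C^1$-diffeomorphism $\psi_1(c,\cdot)$ of $[0,1]$ fixing the endpoints, depending in a $C^1$ and $\ZZ$-equivariant way on $c$ (writing $\psi_1$ as a composition of holonomy maps and of a first integral makes this --- and in particular the absence of any problem where the interval $[\eta^{-1}(c),c]$ varies --- transparent). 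Then $\Phi_1(x,y):=(x,\psi_1(x,y))$ is a $C^1$-diffeomorphism of $S$ carrying, for each $x_0$, the graph of $\phi^{\cE}_{x_0}$ onto that of $\phi^{\cF}_{x_0}$; that is, $\Phi_1(\tilde\cE)=\tilde\cF$.

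\emph{Step 3 (the isotopy and the boundary refinement).} Put $\Phi_t(x,y):=\bigl(x,\,(1-t)y+t\,\psi_1(x,y)\bigr)$ for $t\in[0,1]$: this is $C^1$ in $(t,x,y)$, equals the identity at $t=0$ and on $\R\times\{0,1\}$, is $\ZZ$-equivariant, and for each $t$ is a diffeomorphism of $S$ since its $y$-derivative $(1-t)+t\,\partial_y\psi_1$ is a convex combination of positive functions. Because $\Phi_t$ preserves the first coordinate, it carries a local graph $y=\phi(x)$ to the local graph $y=(1-t)\phi(x)+t\,\psi_1(x,\phi(x))$, so $\Phi_t(\tilde\cE)$ has no vertical tangent line anywhere, i.e.\ $\Phi_t(\tilde\cE)\pitchfork\tilde\cG$ for all $t$. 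Conjugating the family $\{\Phi_t\}$ back by $\Xi^{-1}$ (and undoing the flip if one was performed) yields the family of the statement. Finally, if $\tilde\cE$ and $\tilde\cF$ agree on a neighbourhood of $\partial S$, then for $x_0$ close to $c$ the leaves of $\tilde\cE$ and of $\tilde\cF$ through $(x_0,0)$ agree near their common bottom endpoint, which forces $\psi_1(c,y)=y$ for $y$ near $0$, and symmetrically near $1$; hence $\psi_1$, and with it every $\Phi_t$, is the identity near $\partial S$, a property again preserved under $\Xi^{-1}$.

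\emph{Expected main difficulty.} The conceptual content of Steps~2--3 is light: in the straightened picture ``moving along $\tilde\cG$'' is precisely applying a map $(x,y)\mapsto(x,\psi_t(x,y))$, and any such map \emph{automatically} keeps $\tilde\cE$ transverse to $\tilde\cG$, so the only genuine requirement is the single identity $\Phi_1(\tilde\cE)=\tilde\cF$, which comes from the monotone ordering of the holonomy data together with the convexity of the group of increasing diffeomorphisms of $[0,1]$. The main work is Step~1 --- ensuring that $\tilde\cG$ really can be straightened to the vertical foliation by an equivariant $C^1$-diffeomorphism that is the identity on and near the bottom boundary, and bookkeeping the effect of this straightening on the holonomies of $\tilde\cE$, $\tilde\cF$ and on the ``coincide near $\partial S$'' hypothesis --- together with the routine (but needed) verification that the holonomy maps and first integrals of the $C^1$ foliations involved are indeed of class $C^1$.
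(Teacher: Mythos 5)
Your proof is correct and is essentially the paper's argument: both push each point along its $\tilde\cG$-leaf from the $\tilde\cE$-leaf to the $\tilde\cF$-leaf sharing the same boundary points and then interpolate along the $\tilde\cG$-leaves, the only difference being that your preliminary straightening of $\tilde\cG$ to the vertical foliation replaces the paper's $d_{\tilde\cG}$-proportional interpolation by a linear one in coordinates. One point worth tightening is the boundary refinement: ``$y$ near $0$'' does not by itself place $(c,y)$ on the initial arc of its leaf emanating from the bottom endpoint (a leaf could a priori dip back toward $\RR\times\{0\}$ far from that endpoint), so to get a uniform $\delta$ below which every point lies on that initial arc one should invoke the uniform bound on the slopes of the $\tilde\cE$-leaves coming from transversality to the vertical foliation and compactness of the quotient annulus.
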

\begin{proof} By assumption, for each $x\in\RR\times\{0\}$, the leaf $\tilde\calE_x$  and the leaf $\tilde\calF_x$  have the same boundary.

\begin{claim}  For each $y\in\tilde\calE_x$, the leaf $\tilde\calG_y$  intersects $\tilde\calF_x$ at a unique point.
\end{claim}
\proof Since $\tilde\calE$ and $\tilde\calF$ are transverse to $\tilde\calG$, one can prove that every leaf of $\tilde\calG$ intersects every leaf of $\tilde\calE$
and $\tilde\calF$ in at most one point. If $y$ is an end point of $\tilde\cE_x$ it is also an end point of $\tilde \cF_x$, concluding.  Consider now $y$ in the interior of $\tilde\cE_x$.

Recall that $\tilde\calG_y$ is a segment joining  the two boundary components of $S$.  Thus  $S\setminus \tilde\calG_y$ has two connected
 components. Moreover each connected component of $S\setminus \tilde\cG_y$   contains exactly one end point of $\tilde\calE_x$.
 As  $\tilde\calF_x$ has the same end points as $\tilde\cE_x$, it intersects
  $\tilde\calG_y$.
\endproof

Now, we can define a map $h_{\tilde\calE}$ from $S$ to itself. For each $x\in S$, there exists a unique leaf of $\tilde\calF$ which has  the
same boundary as  $\tilde\calE_x$, by the claim above, $\tilde\calG_x$ intersects  that unique leaf of $\tilde\calF$ at only one point and we
 denote it as $h_{\tilde\calE}(x)$ (see Figure 5 below).
 \begin{figure}[h]
\begin{center}
\def\svgwidth{0.7\columnwidth}
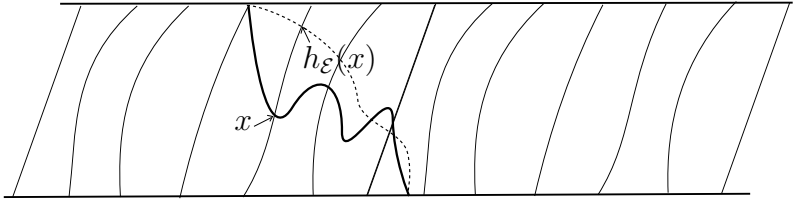
  \caption{The light line, the dark line and dash line  denote the leaves of $\tilde\calG$, $\tilde\calE$ and $\tilde\calF$ respectively.}
\end{center}
\end{figure}

 Since $\tilde\calE$, $\tilde\calF$ and $\tilde\calG$ are $C^1$-foliations, $h_{\tilde\calE}$ is a $C^1$ map, and its inverse $h_{\tilde \cF}$ is obtained by reversing the roles of $\tilde\cE$ and $\tilde \cF$,
 proving that $h_{\tilde\cE}$ is a diffeomorphism.
   Since each foliation is invariant under horizontal  translation $(r,s)\mapsto(r+1,s)$, the diffeomorphisms
     $h_{\tilde\calE}$ and $h_{\tilde\calF}$ commute with the map  $(r,s)\mapsto(r+1,s)$.

  Since $x$ and $h_{\tilde\calE}(x)$ are on the same $\tilde\calG$ leaf, the map  $d_{_\cG}(x,h_{\tilde\calE}(x))$ is well defined from $S$ to $\R$ and
  one can check that it is a $C^1$ map
  which is invariant under the translation $(r,s)\mapsto(r+1,s)$.

  Now, for each $t\in[0,1]$, we define $ \Phi_t(x) $ as the point, in the segment  joining $x$ to $h_{\tilde\calE}(x)$ in  the leaf $\tilde\calG_x$,  so that
  $$d_{\tilde\calG}(x,\Phi_t(x))=t \cdot d_{_{\tilde\cG}}(x,h_{\tilde\calE}(x)) \mbox{ and }d_{\tilde\calG}(\Phi_t(x),h_{\tilde\calE}(x))=(1-t)\cdot d_{_{\tilde\cG}}(x,h_{\tilde\calE}(x)).$$

Then, we have that $\Phi_0=Id$, $\Phi_1=h_{\tilde\calE}$ and $\Phi_t$  commutes with the horizontal translation $(r,s)\mapsto(r+1,s)$ and preserves each leaf of the foliation $\tilde\cG$.
One easily check  that $\Phi_t$ is of class $C^1$  and depends continuously on $t$. Furthermore,  its derivative along the leaves of $\tilde\cG$ does not vanish,
so that $\Phi_t$ is a diffeomorphism restricted to every leaf of $\tilde\cG$.
As $\Phi_t$ preserves every leaf of $\tilde\cG$, one deduces that $\Phi_t(\tilde\cE)$ is transverse to $\tilde\cG$ and $\Phi_t$ is a diffeomorphism of $S$.
Thus, $\{\Phi_t\}_{t\in[0,1]}$ is the announced continuous path of $C^1$ diffeomorphisms of $S$.

\end{proof}

For any  $C^1$ simple closed curve $\gamma$ on $\T^2$ whose homotopy class is non-trivial, we can cut the torus along $\gamma$
to get a cylinder. The universal cover of the cylinder is a strip denoted by $S_\gamma$ and diffeomorphic to $\R\times[0,1]$.
For any $C^1$ foliation $\calE$ on $\T^2$  transverse to $\gamma$, one denotes by $\tilde\cE$
 the lift of $\cE$ on $S_\gamma$.

\begin{corollary}\label{c.push} Let $\calE$, $\calF$ and $\calG$ be three $C^1$ foliations on $\T^2$. Assume that:
\begin{itemize} \item $\calE\pitchfork\calG$ and $\calF\pitchfork\calG$;
\item there exists a $C^1$ simple closed curve $\gamma$ such that
\begin{enumerate}\item the curve $\gamma$ is a complete transversal of the foliations  $\calE$, $\calF$ and $\calG$;
 \item the lifted foliations $\tilde{\calE}$ and $\tilde{\calF}$ have the same holonomy map  defined from one boundary component of $S_{\gamma}$ to the
 other;
  \end{enumerate}
  \end{itemize}
   Then there exists a continuous family of $C^1$ diffeomorphisms $\{\Phi_t\}_{t\in[0\, , \,1]}\subset\diff^{1}(\T^2)$ such that
   \begin{itemize} \item[--] $\Phi_0=Id$;
   \item [--]$\Phi_t(\calE)\pitchfork\calG$, for every $t\in[ 0 , 1 ]$;
   \item [--]$\Phi_1(\calE)=\calF$.
   \end{itemize}
\end{corollary}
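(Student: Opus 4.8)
The plan is to reduce the statement on $\TT^2$ to the strip situation of Proposition~\ref{push} by cutting along the common complete transversal $\gamma$. First I would use hypothesis (1): since $\gamma$ is a complete transversal of each of $\cE$, $\cF$, $\cG$, cutting $\TT^2$ along $\gamma$ produces a cylinder $C$ on which the three induced foliations are transverse to the boundary $\partial C$ and have no compact leaf in the interior (a compact interior leaf would prevent $\gamma$ from being a complete transversal, by Lemma~\ref{compact leaf} applied on the cylinder / the closed leaf being disjoint from $\gamma$). Passing to the universal cover $S_\gamma \cong \RR\times[0,1]$ of $C$, I obtain $C^1$ foliations $\tilde\cE,\tilde\cF,\tilde\cG$ on the horizontal strip $S=\RR\times[0,1]$, all invariant under the deck translation $(x,y)\mapsto(x+1,y)$, with each leaf crossing both boundary components transversely. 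Hypothesis (2) gives exactly that $\tilde\cE$ and $\tilde\cF$ have the same holonomy from $\RR\times\{0\}$ to $\RR\times\{1\}$, and $\tilde\cG$ is transverse to both. So all the hypotheses of Proposition~\ref{push} are in place.

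Next I would apply Proposition~\ref{push} to $(\tilde\cE,\tilde\cF,\tilde\cG)$, obtaining a continuous family $\{\tilde\Phi_t\}_{t\in[0,1]}$ of $C^1$ diffeomorphisms of $S$ with $\tilde\Phi_0=\mathrm{Id}$, $\tilde\Phi_1(\tilde\cE)=\tilde\cF$, $\tilde\Phi_t(\tilde\cE)\pitchfork\tilde\cG$ for all $t$, each $\tilde\Phi_t$ commuting with $(x,y)\mapsto(x+1,y)$ and restricting to the identity on $\RR\times\{0,1\}$. The commutation with the deck transformation lets $\tilde\Phi_t$ descend to a $C^1$ diffeomorphism of the cylinder $C$; because it is the identity on $\partial C$, and the two boundary circles of $C$ are glued back to recover $\TT^2$, the descended maps patch together to a well-defined continuous family $\{\Phi_t\}_{t\in[0,1]}$ of $C^1$ diffeomorphisms of $\TT^2$ (smoothness across the gluing curve $\gamma$ follows since the map is the identity in a neighborhood of $\partial C$ — here I would invoke the ``furthermore'' clause of Proposition~\ref{push} after first modifying $\tilde\cF$ near the boundary so that it agrees with $\tilde\cE$ there, which changes nothing since by Proposition~\ref{p.holonomies} it only amounts to pre-composing with a boundary-trivial diffeomorphism; alternatively one arranges $\gamma$ from the start so the foliations are ``vertical'' near it). Then $\Phi_0=\mathrm{Id}$, $\Phi_1(\cE)=\cF$, and $\Phi_t(\cE)\pitchfork\cG$ on $\TT^2$ because transversality is checked leafwise and holds on $S_\gamma$, hence on $C$, hence on $\TT^2$.

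The one genuine subtlety — the part I expect to be the main obstacle — is the $C^1$ (indeed smooth) regularity of the descended family $\Phi_t$ \emph{across} the cutting curve $\gamma$, and more precisely ensuring that the construction in Proposition~\ref{push}, which produces a $\tilde\Phi_t$ equal to the identity only \emph{on} $\RR\times\{0,1\}$, can be upgraded to a $\tilde\Phi_t$ equal to the identity \emph{in a neighborhood} of $\RR\times\{0,1\}$, so that gluing is seamless. This is handled by the last sentence of Proposition~\ref{push}: arrange (using Proposition~\ref{p.holonomies} / Proposition~\ref{p.suspension}, which let one conjugate a foliation transverse to the boundary of an annulus without compact leaf to any other with the same holonomy by a map trivial near the boundary) that $\tilde\cE$ and $\tilde\cF$ coincide near $\RR\times\{0,1\}$ before applying Proposition~\ref{push}. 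Everything else — transversality being an open, leafwise condition; continuity in $t$; equivariance under the deck group — is routine and carries over verbatim from the strip to the torus.
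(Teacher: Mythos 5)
Your overall strategy is the same as the paper's: cut along the common complete transversal $\gamma$, apply Proposition~\ref{push} on the strip, descend by equivariance and triviality on the boundary, and you correctly isolate the one real difficulty, namely the $C^1$ regularity of the descended maps \emph{across} $\gamma$. However, your proposed resolution of that difficulty does not close. If you replace $\tilde{\cF}$ by a foliation $\tilde{\cF}'$ that coincides with $\tilde{\cE}$ near $\RR\times\{0,1\}$ so as to invoke the ``furthermore'' clause, the family produced by Proposition~\ref{push} ends at a diffeomorphism sending $\cE$ to $\cF'$, not to $\cF$, whereas the corollary requires $\Phi_1(\cE)=\cF$ exactly. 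Your claim that this ``changes nothing'' because Proposition~\ref{p.holonomies} lets you pre-compose with a boundary-trivial diffeomorphism is not licensed: that proposition requires the two foliations to \emph{coincide in a neighborhood of the boundary}, and $\cF$ and $\cF'$ differ precisely there; and even granting a diffeomorphism $\chi$ with $\chi(\cF)=\cF'$ that is the identity only \emph{on} the boundary, inserting $\chi^{-1}$ at the end of the family either destroys $\Phi_0=Id$ or requires a correcting isotopy whose smoothness across $\gamma$ is exactly the problem you set out to solve -- the argument is circular. The alternative you mention (``arrange $\gamma$ so the foliations are vertical near it'') is also not available, since $\cE$ and $\cF$ are two a priori distinct foliations transverse to $\gamma$ and cannot both be normalized near $\gamma$ without changing one of them.

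The paper's way out is different and worth recording: one runs the construction of Proposition~\ref{push} not on the strip $S_\gamma$ but on the whole universal cover $\RR^2$ of $\TT^2$, equivariantly with respect to \emph{all} deck transformations. The lifts of $\gamma$ cut $\RR^2$ into strips, and the pushing map takes the uniform form $y\mapsto \tilde{\cF}_{\pi(y)}\cap\tilde{\cG}_y$, where $\pi$ is the projection along $\tilde{\cE}$-leaves onto the adjacent lift of $\gamma$; since $\tilde{\cE}$ is a $C^1$ foliation transverse to those lines, $\pi$ -- and hence the pushing map -- is $C^1$ in a full neighborhood of each lift of $\gamma$, with no need to make the foliations agree near $\gamma$. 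If you want to keep your boundary-normalization route, you must either add the hypothesis that $\cE$ and $\cF$ already coincide near $\gamma$ (which is what actually holds in the application, by Proposition~\ref{p.transverse to horizontal}), or supply the missing isotopy from $\cF'$ to $\cF$ together with a separate regularity argument along $\gamma$.
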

\begin{proof}[Sketch of proof]If we just apply Proposition~\ref{push}, one obtains a family of  homeomorphisms of $\TT^2$ which are $C^1$ diffeomorphisms on the complement of
$\gamma$ and which coincide with the identity map on $\gamma$ and satisfy all the announced properties.  Thus the unique difficulty is the regularity along $\gamma$.
For that we check that the construction in the proof of Proposition~\ref{push} can be done on the whole universal cover of $\TT^2$ commuting with all the deck transformations,
leading to diffeomorphisms on $\TT^2$.
\end{proof}

\section{Deformation process for  transverse foliations without  parallel compact leaves:  proof of Theorem \ref{thm.nonparallel}}

\subsection{Separating transverse foliations by two linear ones, and proof of Theorem~\ref{thm.nonparallel}}

\begin{theorem}\label{t.separating} Let $\cF$ and $\cG$ be two transverse $C^1$ foliations on   $\TT^2$ without parallel compact leaves.
Then there are two affine foliations $\cH$ and $\cI$ on $\TT^2$ and a diffeomorphism $\theta\colon \TT^2\to\TT^2$ so that
\begin{itemize}
 \item the foliations $\theta(\cF)$, $\theta(\cG)$, $\cH$ and $\cI$ are pairwise transverse;
 \item there are local orientations  of the foliations at any point $p\in\TT^2$ so that
 \begin{itemize}
 \item  $\theta(\cF)$ and $\theta(\cG)$ cut $\cH$ with the same orientation;
 \item $\theta(\cF)$ and $\theta(\cG)$ cut $\cI$ with opposite orientations.
 \end{itemize}
\end{itemize}
\end{theorem}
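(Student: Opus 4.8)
The plan is to find coordinates on $\TT^2$ in which the tangent directions of $\cF$ and $\cG$ lie in two fixed, disjoint open arcs of $\mathbb{RP}^1$; the affine foliations $\cH,\cI$ will then be the two coordinate directions, and the orientation relations will just record the pointwise cyclic order of the four directions. As a preliminary: by Proposition~\ref{p.Reeb} neither $\cF$ nor $\cG$ has a Reeb component, so both are orientable and, by Theorem~\ref{thm.classification}, $C^1$-conjugate to suspensions; in particular the tangent-line maps $\ell_\cF,\ell_\cG\colon\TT^2\to\mathbb{RP}^1$ are null-homotopic. Direct $\cF,\cG$ by unit vector fields $X,Y$ with $(X,Y)$ a positively oriented frame everywhere (possible since $\cF\pitchfork\cG$), and put $Z^+:=X+Y$, $Z^-:=X-Y$, the diagonals of the parallelogram on $X,Y$: they are nowhere zero and everywhere transverse, and one checks that the four lines occur pointwise in the cyclic order $\ell_\cF,\ell_{Z^+},\ell_\cG,\ell_{Z^-}$. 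Thus $\mathbb{RP}^1\setminus\{\ell_\cF,\ell_\cG\}$ splits into two open arcs $\calC^+\ni\ell_{Z^+}$, $\calC^-\ni\ell_{Z^-}$, and since $\ell_\cF,\ell_\cG$ are null-homotopic these cone fields do not wind and carry global unit sections $\widehat{Z^\pm}$. A key sign computation: the ray-arcs $\calC^+_{\mathrm{ray}},\calC^-_{\mathrm{ray}}$ are adjacent along the ray of $X$ and together make an arc of length exactly $\pi$, so every transverse intersection of a curve tangent to $\calC^+$ with one tangent to $\calC^-$ has the same sign; consequently two simple closed curves, one tangent to $\calC^+$ and one to $\calC^-$, are never freely homotopic (being homotopic they would have zero algebraic, hence zero geometric, intersection, and the annulus they bound, together with the absence of Reeb components/parallel compact leaves, gives a contradiction).

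The theorem then reduces to the following statement: there exist transverse foliations $\cH'$, $\cI'$ of $\TT^2$ by simple closed curves, with $\cH'$ tangent to $\calC^-$, $\cI'$ tangent to $\calC^+$, and $[\cH']\wedge[\cI']=\pm1$. Indeed, $\theta:=(\pi_{\cH'},\pi_{\cI'})\colon\TT^2\to(\TT^2/\cH')\times(\TT^2/\cI')\cong\SS^1\times\SS^1$ is then a local diffeomorphism (since $\cH'\pitchfork\cI'$), hence a covering of degree $|[\cH']\wedge[\cI']|=1$, i.e.\ a diffeomorphism onto $\RR^2/\ZZ^2$ taking $\cH'$ and $\cI'$ to the two coordinate affine foliations; we may take $\theta$ orientation preserving, so $D\theta$ acts on $\mathbb{RP}^1$ by a degree-one map and sends the pointwise cyclic order $\ell_{\cH'},\ell_\cF,\ell_{\cI'},\ell_\cG$ to the analogous cyclic order of the images. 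Hence $\theta(\cF)$ and $\theta(\cG)$ have everywhere slopes of opposite sign, all four foliations are pairwise transverse, and a direct inspection of signs of intersection shows that, with the evident local orientations, $\theta(\cF)$ and $\theta(\cG)$ cross one of the coordinate foliations with the same orientation and the other with opposite orientations; naming them $\cH$ and $\cI$ accordingly gives exactly the required conclusion.

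To build $\cH'$ and $\cI'$: by Proposition~\ref{st}, $\cF$ and $\cG$ have a common complete transversal $\gamma$; its direction avoids $\ell_\cF,\ell_\cG$ and cannot switch components of $\mathbb{RP}^1\setminus\{\ell_\cF,\ell_\cG\}$, so $\gamma$ is tangent to $\calC^-$ (or to $\calC^+$, in which case interchange $\cH$ and $\cI$ below). Take $\cH'$ to be a foliation of $\TT^2$ by circles parallel to $\gamma$, all tangent to $\calC^-$ (this is the technical point discussed next). Cutting $\TT^2$ along $\gamma$ produces an annulus $A$ with $\partial A$ tangent to $\calC^-$, so $\widehat{Z^+}$ is transverse to $\partial A$, inward on one boundary component and outward on the other; a forward $\widehat{Z^+}$-orbit from the inward component cannot accumulate on a periodic orbit (that periodic orbit would be a $\calC^+$-curve isotopic to $\gamma$, excluded above), hence it reaches the outward component, giving a simple arc tangent to $\calC^+$ that crosses $A$ exactly once. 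Closing it up yields a simple closed curve $\gamma_+$ with $[\gamma_+]\wedge[\gamma]=\pm1$, and we let $\cI'$ be a foliation by circles parallel to $\gamma_+$, all tangent to $\calC^+$. Since $\cH'$ is tangent to $\calC^-$ and $\cI'$ to $\calC^+$ they are automatically transverse, and $[\cH']=[\gamma]$, $[\cI']=[\gamma_+]$ form a basis of $H_1(\TT^2)$, completing the reduction.

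The main obstacle is the \emph{extension lemma} used twice above: if $\calD$ is an open non-winding cone field on $\TT^2$ and $c$ a simple closed essential curve tangent to $\calD$, then $\TT^2$ carries a foliation by circles parallel to $c$, all tangent to $\calD$. I would prove it by shrinking $\calD$ to a closed cone field still tangent to $c$, foliating a thin tubular neighbourhood of $c$ by parallel circles (tangent to $\calD$ by openness), and filling the complementary annulus $B$ — whose boundary is then tangent to $\calD$ — by circles tangent to $\calD$ parallel to $\partial B$; equivalently, producing a submersion $B\to[0,1]$, constant on each boundary circle, with fibres tangent to $\calD$, i.e.\ an exact closed $1$-form $\omega$ on $B$ with $\omega|_{\partial B}=0$ and $\ker\omega\subset\calD$. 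Here the vanishing winding of $\calD$ (which goes back to "no Reeb component'', i.e.\ "no parallel compact leaves'') is exactly what removes the obstruction to the existence of $\omega$, the form being constructed by a Poincaré–Bendixson analysis of the $\calD$-admissible flows on the annulus, in the spirit of Proposition~\ref{p.suspension}. I expect this lemma — soft-looking but requiring genuine care with the global consistency of transversality signs — to be where most of the work lies; everything else is elementary topology plus the results already established in the paper.
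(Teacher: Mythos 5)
Your overall strategy --- putting $\TT^2$ in coordinates where the tangent directions of $\cF$ and $\cG$ are separated by two constant directions --- is the same as the paper's, and your first circle foliation $\cH'$ (circles parallel to the common complete transversal $\gamma$ of Proposition~\ref{st}, transverse to both $\cF$ and $\cG$) is exactly what the paper's Theorem~\ref{t.circles} provides. The gap is in the construction of $\cI'$. The arc obtained by flowing $\widehat{Z^+}$ across the cut annulus $A$ ends at a point of $\gamma$ which is in general different from its starting point, so ``closing it up'' forces you to insert a segment of $\gamma$ itself; that segment is tangent to $\calC^-$, and no curve containing it can be tangent to $\calC^+$ (a connected curve transverse to both $\cF$ and $\cG$ stays, by continuity, in a single component of the complement of $\ell_\cF\cup\ell_\cG$). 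Worse, the object you want need not exist at all. Take $\cF$ and $\cG$ to be the linear foliations directed by $(\alpha,1)$ and $(\beta,1)$ with $\alpha<\beta$ irrational and, say, $(\alpha,\beta)\subset(0.31,0.36)$: there are no compact leaves, so the hypotheses hold. Any closed curve tangent to the intermediate cone $\calC^+$ can be parametrized with positive vertical speed, hence lies in a class $(n,m)$ with $n/m\in(\alpha,\beta)$ and meets the horizontal circle $\gamma$ exactly $m$ times; here the smallest admissible $m$ is $3$ (class $(1,3)$). So no closed curve tangent to $\calC^+$ has intersection number $\pm1$ with $\gamma$, your $\cI'$ does not exist for this $\cH'$, and the map $\theta=(\pi_{\cH'},\pi_{\cI'})$ would in any case be a covering of degree $\ge 3$.

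One could try to repair this by choosing the homology classes of $\cH'$ and $\cI'$ jointly (in the example, classes $(0,1)$ and $(1,3)$ do form a basis with the required tangencies), but then you must (i) show that the rational ``slopes'' realizable by foliations by circles tangent to $\calC^+$ include a primitive class admitting a complementary class realizable in $\calC^-$, and (ii) prove your extension lemma for a non-constant cone field, where producing a foliation by circles in a prescribed class tangent to $\calC^+$ is essentially as hard as the theorem itself. The paper sidesteps both issues: after Theorem~\ref{t.circles} it takes the second separating foliation to have \emph{irrational diophantine} rotation number --- such numbers always exist strictly between the translation numbers $\tau(f)\neq\tau(g)$ of the holonomies of $\cF$ and $\cG$, which differ precisely because there are no parallel compact leaves (Proposition~\ref{irrational foliation}) --- and then straightens it to an affine foliation by Herman's theorem (Theorem~\ref{He}). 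The second half of your argument needs to be reworked along these lines.
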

The two affine foliations $\cH$ and $\cI$ divide the tangent space $T_pM$ at each point $p\in \TT^2$ into four quadrants, and  Theorem~\ref{t.separating}
asserts that the tangent lines at $p$ of  $\theta(\cF)$ and $\theta(\cG)$ are contained in different quadrants.

The proof of Theorem~\ref{t.separating} is the aim of the whole section. Let us first deduce the proof of Theorem~\ref{thm.nonparallel}
\begin{proof}[Proof of Theorem~\ref{thm.nonparallel}]
We consider  two transverse $C^1$ foliations $\cF$ and $\cG$ on $\TT^2$ without compact leaves and the diffeomorphism $\theta$ and the affine foliations $\cH$ and $\cI$ given by Theorem~\ref{t.separating}.
Consider any vector $u\in\RR^2$ and let $T_u$ be the affine translation of $\TT^2$ directed by $u$, that is $T_u(p)=p+u$.

\begin{claim} For any $u\in\RR^2$,  the foliation $T_u(\theta(\cF))$ is transverse to $\theta(\cG)$

\end{claim}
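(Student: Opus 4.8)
The plan is to use the key property established in Theorem~\ref{t.separating}: the affine foliations $\cH$ and $\cI$ divide each tangent plane $T_p\TT^2$ into four open quadrants, and at every point $p$ the tangent line of $\theta(\cF)$ and the tangent line of $\theta(\cG)$ lie in \emph{distinct} quadrants. More precisely, fixing local orientations, the statement that $\theta(\cF)$ cuts $\cH$ and $\cI$ with (respectively) the same and opposite orientation as $\theta(\cG)$ means that, writing the four quadrants cyclically as $Q_1,Q_2,Q_3,Q_4$ (with $Q_1,Q_3$ opposite and $Q_2,Q_4$ opposite), the two tangent lines always lie in two quadrants that are adjacent, i.e. share an edge which is a ray of exactly one of $\cH$ or $\cI$. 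I would first make this reformulation precise: there is a continuous (indeed smooth) assignment $p\mapsto (C_{\cF}(p),C_{\cG}(p))$ of a pair of opposite closed quadrants bounded by the lines of $\cH$ and $\cI$ at $p$, containing the tangent directions of $\theta(\cF)$ and $\theta(\cG)$ respectively, and these two double-cones meet only along a line of $\cH$ (since $\theta(\cF)$ and $\theta(\cG)$ cut $\cH$ with the same orientation, they are on the same side of $\cI$ in the appropriate sense — I would sort out the exact bookkeeping here).

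Next I would observe that the translation $T_u$ acts on tangent spaces as the identity under the canonical trivialization $T\TT^2\cong \TT^2\times\RR^2$, and that the affine foliations $\cH,\cI$ are translation-invariant: $T_u(\cH)=\cH$ and $T_u(\cI)=\cI$. Hence for every $p$, the tangent line of $T_u(\theta(\cF))$ at $p$ is the image under $DT_u$ of the tangent line of $\theta(\cF)$ at $T_u^{-1}(p)=p-u$, which in the fixed trivialization is literally the same line in $\RR^2$ as the tangent line of $\theta(\cF)$ at $p-u$; and this line lies in the quadrant cone $C_{\cF}(p-u)=C_{\cF}$ (the cones are constant in the trivialization since $\cH,\cI$ are affine). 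Therefore the tangent line of $T_u(\theta(\cF))$ at $p$ lies in the fixed double-cone $C_{\cF}$, while the tangent line of $\theta(\cG)$ at $p$ lies in $C_{\cG}$, and since $C_{\cF}$ and $C_{\cG}$ are double-cones meeting only along a single line (a ray of $\cH$) on which neither foliation is tangent, the two tangent lines at $p$ are distinct. As $p$ was arbitrary, $T_u(\theta(\cF))\pitchfork\theta(\cG)$, which is the claim.

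The only genuine subtlety — and the step I would be most careful about — is the orientation/quadrant bookkeeping: one must check that ``$\theta(\cF)$ and $\theta(\cG)$ cut $\cH$ with the same orientation and $\cI$ with opposite orientations'' really forces their tangent lines into two quadrants whose closures intersect only in a ray of $\cH$ (and not, say, in a ray of $\cI$, which would be compatible with the lines being tangent along $\cI$), uniformly in $p$. Once this is pinned down, transversality is immediate because a ray of $\cH$ is by hypothesis transverse to both $\theta(\cF)$ and $\theta(\cG)$, so neither tangent line can equal it, forcing the two tangent lines to be distinct. Everything else is the trivial remark that affine foliations are translation-invariant and that translations act trivially on directions in the standard trivialization of $T\TT^2$.
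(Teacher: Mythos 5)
Your argument is correct and is essentially the paper's own proof: since $\cH$ and $\cI$ are affine, they (and the four quadrants they cut out in each tangent space) are invariant under $T_u$, so the tangent directions of $T_u(\theta(\cF))$ and of $\theta(\cG)$ stay in different pairs of opposite quadrants and hence are distinct, i.e.\ transverse. The bookkeeping you flag at the end is not actually needed: transversality of $\theta(\cF)$ and $\theta(\cG)$ to \emph{both} $\cH$ and $\cI$ already places each tangent line in the interior of its double-cone, and the two open double-cones are disjoint.
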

\begin{proof} The foliations $\cH$ and $\cI$ are invariant by $T_u$, and the quadrants defined by $\cH$ and $\cI$ are preserved by $T_u$   so that $T_u(\theta(\cF))$ is still transverse to
both $\cH$ and $\cI$ and its tangent bundle is contained in the same quadrants as $\theta(\cF)$, and therefore  $T_u(\theta(\cF))$ is not contained in the same quadrants as the tangent bundle of $\theta(\cG)$.

Thus $T_u(\theta(\cF))$ is transverse to $\theta(\cG)$, concluding.
\end{proof}

Consider $(m,n)\in\ZZ^2=H_1(\TT^2,\ZZ)$ and let $u=(r,s)$ be the image of $(m,n)$ by the natural action of $\theta$ on $H_1(\T^2,\ZZ)$. The the announced loop of diffeomorphism
is  $\{\varphi_t=\theta^{-1}T_{tu}\theta\}_{t\in[0,1]}$.  Then $\varphi_t(\cF)$ is transverse to $\cG$ for every $t\in [0,1]$, $\varphi_0=\varphi_1= id_{_{\TT^2}}$, and the
loop $t\mapsto \varphi_t(p)$ is in the homology class of $(m,n)$ for every $p\in \TT^2$.
\end{proof}

Therefore, it remains  to prove Theorem~\ref{t.separating}.  The proof is divided into two main steps corresponding to the next subsections.

\subsection{Separating transverse foliations  by a circle bundle}

In this section,  consider two transverse foliations $\cF$ and $\cG$ without parallel compact leaves. We first choose a coordinate
to make $\cG$ in a ``good position", then we  apply Proposition~\ref{push} to deform $\cF$ in ``good position", keeping $\cG$ invariant.

By Lemma \ref{st}, there exists a smooth simple closed curve $\gamma$ which is a complete transversal of $\calF$ and $\calG$.
The aim of this section is to prove next result which can be seen as the first step for proving Theorem~\ref{t.separating}.

\begin{theorem}\label{t.circles}  Let $\cF$ and $\cG$ be two transverse $C^1$ foliations on  $\TT^2$ and assume that they share the same complete transversal $\gamma$.  Then
there exists $\theta\in\diff^1(\T^2)$ such that
\begin{itemize}
\item  $\theta(\gamma)=\SS^1\times \{0\}$;
\item   Both $\theta(\cF)$ and $\theta(\cG)$ are  transverse to the
horizontal circle $\SS^1\times \{t\}$, for any $t\in\SS^1$.
\end{itemize}
\end{theorem}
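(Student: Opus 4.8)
The plan is to proceed in two steps: first put $\cG$ into a normal form in which $\gamma=\SS^1\times\{0\}$ and $\cG$ is transverse to every horizontal circle, and then deform $\cF$, sliding points along the leaves of $\cG$, until $\cF$ is also transverse to every horizontal circle.

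\emph{Step 1: normalising $\cG$.} Since $\gamma$ is a complete transversal of $\cG$, the foliation $\cG$ is $C^1$-conjugate to the suspension of its first return map $h$ to $\gamma$ (the fact recalled right after Proposition~\ref{p.suspension}), by a conjugacy carrying $\gamma$ to the fibre over $0$ of the mapping torus of $h$. As $h$ is an orientation preserving circle diffeomorphism, that mapping torus is diffeomorphic to $\SS^1\times\SS^1$ by a diffeomorphism sending the fibres to the circles $\SS^1\times\{t\}$ and the fibre over $0$ to $\SS^1\times\{0\}$. Composing, I obtain $\theta_1\in\diff^1(\TT^2)$ with $\theta_1(\gamma)=\SS^1\times\{0\}$ and $\theta_1(\cG)$ transverse to every $\SS^1\times\{t\}$. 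Replacing $(\cF,\cG,\gamma)$ by $(\theta_1(\cF),\theta_1(\cG),\SS^1\times\{0\})$, I may from now on assume that $\gamma=\SS^1\times\{0\}$, that $\cG$ is transverse to all horizontal circles, and --- this being preserved by diffeomorphisms --- that $\gamma$ is still a complete transversal of $\cF$.

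\emph{Step 2: deforming $\cF$.} I would build a \emph{model} foliation $\cF'$ on $\TT^2$ which is transverse to every horizontal circle (so $\gamma$ is a complete transversal of $\cF'$ too), satisfies $\cF'\pitchfork\cG$, and has the same holonomy on $\gamma$ as $\cF$ --- more precisely so that the lift $\tilde\cF'$ to the strip $S_\gamma$ has the same holonomy map as $\tilde\cF$. Granting such a $\cF'$, Corollary~\ref{c.push}, applied with $\cF$ playing the role of ``$\cE$'', $\cF'$ playing the role of ``$\cF$'', and $\gamma$ as common complete transversal, produces a continuous family $\{\Phi_t\}_{t\in[0,1]}\subset\diff^1(\TT^2)$ with $\Phi_0=Id$, $\Phi_t(\cF)\pitchfork\cG$ for all $t$, and $\Phi_1(\cF)=\cF'$. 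Moreover, from the construction (Proposition~\ref{push}) each $\Phi_t$ preserves every leaf of $\cG$ and restricts to the identity on $\gamma$; hence $\Phi_1(\cG)=\cG$ and $\Phi_1(\gamma)=\SS^1\times\{0\}$. Then $\theta:=\Phi_1\circ\theta_1$ answers the theorem: $\theta(\gamma)=\SS^1\times\{0\}$, $\theta(\cG)$ is transverse to all horizontal circles (being the normalised $\cG$), and so is $\theta(\cF)=\cF'$.

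\emph{Construction of $\cF'$ and the main difficulty.} Everything thus reduces to producing $\cF'$, which I expect to be the crux, and which rests on a one-sidedness phenomenon. Because $\cG$ is transverse to all horizontal circles, at each point the horizontal direction and the tangent line of $\cG$ are distinct, as are (since $\cF\pitchfork\cG$) the tangent lines of $\cF$ and of $\cG$; and since the projectivised tangent bundle of $\TT^2$ is trivial, removing the tangent line field of $\cG$ leaves a trivial interval bundle, so ``the side of $[T\cG]$'' occupied by a continuous line field is globally well defined. Thus both the horizontal foliation and $\cF$ lie on one fixed side of $\cG$. I would then establish the corresponding \emph{one-sidedness lemma}: cutting $\TT^2$ along $\gamma$ into an annulus $A$, the fact that a non-singular foliation of a disc cannot be transverse to its whole boundary (applied to $\cG$ on a disc bounded by an arc of an $\cF$-leaf and an arc of $\partial A$) shows that no leaf of $\cF$ has both endpoints on the same component of $\partial A$, so the $\gamma$-holonomy of $\cF$ is a genuine $C^1$ circle diffeomorphism; and since the leaf of $\cF$ issued from $(y,0)$ stays, for $t>0$, strictly on the fixed side of the leaf of $\cG$ issued from $(y,0)$, its lifted holonomy $\tilde P_\cF$ is pointwise strictly on one side of the lifted holonomy $\tilde P_\cG$ of $\cG$ --- say, after a reflection if necessary, $\tilde P_\cF<\tilde P_\cG$ everywhere. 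With this in hand I would take $\cF'$ to be the foliation directed by a field $\partial_t+f\,\partial_x$ with $f$ a $C^1$ function on $\TT^2$ satisfying $f<g$ (where $\partial_t+g\,\partial_x$ directs $\cG$), chosen so that the time-one flow of $\dot x=f$ equals $\tilde P_\cF$; geometrically its leaves are graphs over the $t$-circle running from $(y,0)$ to $(\tilde P_\cF(y),1)$ and squeezed strictly between the leaves of $\cG$. The places where I expect the real work to lie are exactly this one-sidedness lemma (with the bookkeeping of orientations) and the construction of such an $f$ --- realising the prescribed holonomy inside the one-sided constraint $f<g$ while keeping $f$ of class $C^1$ and periodic in $t$, which forces one to tune the deformation carefully near $\gamma$. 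By contrast Step~1 is a direct consequence of Proposition~\ref{p.suspension}.
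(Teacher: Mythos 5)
Your proposal is correct and follows essentially the same route as the paper: normalise $\cG$ to a suspension so that $\gamma=\SS^1\times\{0\}$ and $\cG$ is transverse to the horizontal circles, build a model foliation with the same lifted holonomy as $\tilde\cF$ that is transverse to both $\cG$ and the horizontals, and then slide $\cF$ onto it along the leaves of $\cG$ via Proposition~\ref{push}/Corollary~\ref{c.push}. The ``real work'' you correctly isolate --- the pointwise comparison $\tilde P_\cF\neq\tilde P_\cG$ of the lifted holonomies and the construction of the model foliation squeezed between the leaves of $\cG$ while matching $\cF$ near $\gamma$ --- is precisely the content of the paper's Proposition~\ref{p.transverse to horizontal}, which carries it out exactly as you sketch: straighten $\cG$ to the vertical foliation, use that the transferred holonomy satisfies $f^0(x)>x$, and interpolate with a cutoff near the boundary.
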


Up to now,  $\cF$ and $\cG$ are two transverse foliations on $\TT^2$ which share the same complete transversal $\gamma$.
In particular,  $\cG$ is conjugated to the suspension of its holonomy (first return map) on $\gamma$. In other words, we can choose an
appropriate  coordinate on $\T^2=\mathbb{S}^1\times\mathbb{S}^1$ such that:
\begin{itemize}
\item  the circle $\gamma=\mathbb{S}^1\times\{0\}$ is a complete transversal for $\cF$ and $\cG$;
 \item the foliation $\calG$ is everywhere transverse to the horizontal circles;
 \item the foliation $\cG$ is vertical in a small neighborhood of $\mathbb{S}^1\times\{0\}$.
\end{itemize}

 Under this coordinate, we cut the torus along $\gamma$ and we get a cylinder $\mathbb{S}^1\times[0,1]$. Thus $\TT^2$ is obtained
 from $\SS^1\times[0,1]$ by identifying $(x,0)$ with $(x,1)$, for $x\in \SS^1$.

 Now, we take a universal cover of that cylinder,
 we get a strip $S=\R\times [0,1]$. The foliations $\calF$ and $\calG$ can be lifted as two foliations $\tilde{\calF}$ and $\tilde{\calG}$ on $S$,
  respectively. Moreover, $\tilde{\calG}$ is everywhere transverse to the horizontal direction.

 The proof of Theorem~\ref{t.circles} has two steps: first we build a foliation $\cE$ on $\T^2$   transverse to $\cG$ and to the horizontal foliation,
 so that $\cE$ has the same holonomy as $\cF$.
 Then we push $\cF$ on $\cE$ by a diffeomorphism preserving $\cG$, by using Corollary~\ref{push}.  Thus the main step of the proof is:

\begin{proposition}\label{p.transverse to horizontal}With the notations above,  there exist $\epsilon>0$ and  a $C^1$ foliation $\calE$ transverse to $\calG$ on
 $\T^2$  such that the foliation $\tilde{\calE}$ induced by $\calE$ on the strip $S=\RR\times[0,1]$ satisfies:
\begin{itemize}
 \item the foliation $\tilde{\calE}$ is transverse to the horizontal direction;
\item For any $x\in\R\times\{0\}$, we have that
$$\tilde{\calE}_{x}\cap (\R\times([0,\epsilon]\cup[1-\epsilon,1]))=\tilde{\calF}_{x}\cap (\R\times([0,\epsilon]\cup[1-\epsilon,1])).$$
\end{itemize}
\end{proposition}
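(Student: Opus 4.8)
The plan is to cut $\TT^2$ along $\gamma=\SS^1\times\{0\}$, pass to the cylinder $C=\SS^1\times[0,1]$, and replace $\cF$ there by a foliation which follows $\cF$ but has been ``straightened along $\cG$''.

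\emph{Set-up and a straightening diffeomorphism.} On $C$ the foliation $\cG$ is transverse to every horizontal circle $\SS^1\times\{t\}$ and is vertical near $\partial C$, while $\cF$ is transverse to $\partial C$ and, by openness of transversality, transverse to the horizontal circles on $[0,\epsilon]\cup[1-\epsilon,1]$ for some $\epsilon>0$. Both foliations are orientable (they have no Reeb component, Proposition~\ref{p.Reeb}) and neither has a compact leaf in $C$ (for $\cG$ because such a leaf would have a horizontal tangency at its extreme heights; for $\cF$ because $\gamma$ is a complete transversal), so by Poincar\'e--Bendixson every leaf of each runs from one boundary circle of $C$ to the other; moreover, orienting $\cF$ by a tangent vector field, the determinant of that field against $\dot\gamma$ is continuous and nonzero on the connected curve $\gamma$, hence of constant sign, so $\cF$ crosses $\gamma$ coherently and the holonomies $\cP_\cF,\cP_\cG\colon\SS^1\to\SS^1$ (following leaves from $\SS^1\times\{0\}$ to $\SS^1\times\{1\}$) are well-defined orientation-preserving $C^1$ circle diffeomorphisms. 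Write $\cG$ as the integral-curve family of $\tfrac{dx}{dy}=g(x,y)$ (with $g\equiv0$ near $y\in\{0,1\}$), and near $\partial C$ write $\cF$ as $\tfrac{dx}{dy}=f(x,y)$; since $\cF\pitchfork\cG$, the function $f-g$ is continuous and nonzero near $\partial C$, hence of constant sign there, and reversing the orientation of the $x$-circle if needed we may assume $f>g$ near $\partial C$. Let $\Xi\colon\SS^1\times[0,1]\to C$ be $\Xi(s,y)=(\phi^\cG_y(s),y)$, where $\phi^\cG_y$ is the holonomy of $\cG$ from height $0$ to height $y$; this is a $C^1$ diffeomorphism, it preserves the $y$-coordinate (hence the horizontal foliation), it carries the vertical foliation onto $\cG$, and $\Xi=\mathrm{id}$ on $\SS^1\times\{0\}$ while $\Xi=\cP_\cG$ on $\SS^1\times\{1\}$.

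\emph{Reduction.} It suffices to construct on $\SS^1\times[0,1]$ a $C^1$ foliation $\cE'$ which is transverse to both the vertical and the horizontal foliation, which coincides with $\cF':=\Xi^{-1}(\cF)$ near $\partial$, and whose holonomy equals that of $\cF'$: then $\cE:=\Xi(\cE')$ is transverse to $\cG$ and to all horizontal circles, it coincides with $\cF$ near $\gamma$ (so it descends to a $C^1$ foliation of $\TT^2$ and satisfies $\tilde\cE=\tilde\cF$ on $\RR\times([0,\epsilon]\cup[1-\epsilon,1])$), and $\cP_\cE=\cP_\cG\circ\cP_{\cF'}=\cP_\cF$ with matching lifts, the last equality because the same identity holds for $\cF$ itself (this last property is the one needed to apply Corollary~\ref{c.push} afterwards). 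Now ``$\cE'$ transverse to the vertical foliation'' says exactly that $\cE'$ is the integral-curve family of $\tfrac{dy}{ds}=F(s,y)$ for a $C^1$ real-valued function $F$; ``transverse to the horizontal foliation'' says $F$ is nowhere zero; coinciding with $\cF'$ near $\partial$ forces $F=F_{\cF'}$ there, and from the formula for $\Xi$ near $\partial$ one checks $F_{\cF'}>0$ there, so by connectedness of $\SS^1\times[0,1]$ any such $F$ is positive everywhere. Note also that $\cF'$ is itself transverse to the vertical foliation (because $\cF\pitchfork\cG$), so $\cF'$ is the integral-curve family of $\tfrac{dy}{ds}=F_{\cF'}$ with $F_{\cF'}$ finite, although $F_{\cF'}$ may vanish or be negative away from $\partial$.

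\emph{The key positivity and the construction.} Since every $\cF'$-leaf is a graph over $s$ and $F_{\cF'}>0$ near the bottom, the coordinate $s$ increases strictly along every $\cF'$-leaf as one runs from $\{y=0\}$ to $\{y=1\}$. Fix $0<a<\epsilon$ and $1-\epsilon<b<1$ with $\cF'$ transverse to the horizontals on $[0,a]\cup[b,1]$, and decompose $\widetilde{\cP_{\cF'}}=\widetilde A\circ\widetilde M\circ\widetilde B$, where $\widetilde B$ is the $\cF'$-holonomy from height $0$ to height $a$, $\widetilde A$ that from height $b$ to height $1$, and $\widetilde M$ the ``follow the leaf from height $a$ to height $b$'' map; the monotonicity just observed gives $\widetilde M>\mathrm{id}$ (indeed $\widetilde M(s_0)$ is the $s$-coordinate, at height $b$, of the $\cF'$-leaf through $(s_0,a)$, and $s$ increases along that leaf between the two heights). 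Define $\cE'=\cF'$ on $[0,a]\cup[b,1]$, and on $[a,b]$ take the leaves of $\cE'$ to be the curves $\{(\psi_y(s_0),y)\,:\,y\in[a,b]\}$, where $y\mapsto\psi_y$ is an isotopy of circle diffeomorphisms commuting with $+1$ such that $\psi_a=\mathrm{id}$, $\psi_b=\widetilde M$, with $\partial_y\psi_y>0$ everywhere (so each leaf is strictly monotone in $y$, i.e.\ transverse to the horizontals), and with $\psi_y$ agreeing with the relevant one-sided $\cF'$-holonomy germs at $y=a$ and $y=b$ (so $\cE'$ is $C^1$ and coincides with $\cF'$ near those heights). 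Such an isotopy exists precisely because $\widetilde M>\mathrm{id}$: for each $s_0$ the required path $y\mapsto\psi_y(s_0)$ must join $s_0$ to $\widetilde M(s_0)>s_0$ with prescribed increasing germs at the two ends, so any strictly increasing $C^1$ interpolation works, these interpolations being chosen jointly monotone in $s_0$ and $C^1$ in $(s_0,y)$. Then $\cE'$ has holonomy $\widetilde A\circ\widetilde M\circ\widetilde B=\widetilde{\cP_{\cF'}}$, as required, and $\cE:=\Xi(\cE')$ concludes.

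\emph{The main difficulty.} The one point that is not purely formal is the positivity $\widetilde M>\mathrm{id}$ and, through it, the fact that the holonomy of $\cF$ can be realized at all by a foliation transverse to the horizontal circles: this is exactly why one must first straighten $\cG$ (equivalently, pass to coordinates co-moving with $\cG$), since only then does $\cF$ become transverse to the vertical foliation, so that its leaves are graphs over $s$ with $s$ monotone and the holonomy becomes ``forward''. The remaining verifications --- that $\Xi$ is a $C^1$ diffeomorphism, that $F_{\cF'}>0$ near $\partial$, and that $\cE$ is $C^1$ across $\gamma$ and descends to $\TT^2$ --- are routine.
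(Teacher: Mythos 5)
Your proof is correct and follows essentially the same route as the paper's: straighten $\cG$ to the vertical foliation by its holonomy (your $\Xi$ is the paper's $\psi_0$), observe that in these coordinates the leaves of $\cF$ advance strictly in the $s$-direction so that the middle holonomy dominates the identity, and then replace $\cF$ on the middle of the cylinder by a monotone interpolation that matches the boundary germs and the holonomy. The only differences are cosmetic: you justify the key inequality $\widetilde M>\mathrm{id}$ via monotonicity of $s$ along leaves where the paper uses the holonomy inequality $f>g$ directly, and you assert the existence of the increasing $C^1$ interpolation $\psi_y$ where the paper writes out an explicit formula ($h_t$) and verifies $\partial h_t/\partial t>0$.
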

\begin{Remark} The last item of Proposition \ref{p.transverse to horizontal} means  that
\begin{itemize} \item the foliations $\cE$ and $\cF$ coincide in a neighborhood of $\gamma$
\item the holonomy maps from $\R\times\{0\}$
 to $\R\times\{1\}$ associated to $\tilde{\calE}$ and $\tilde{\calF}$ are the same.
 \end{itemize}
\end{Remark}
\begin{proof}
 We denote by  $$f, g:\R\times\{0\}\mapsto\R\times\{1\}$$ the $C^{1}$ holonomy maps of  $\tilde{\calF}$ and $\tilde{\calG}$  respectively.

As $\tilde\cF$ is transverse to $\tilde \cG$, we have that  $f(x) \neq  g(x), \mbox{ for any } x\in\R\times\{0\}$. Hence, we can assume that $f(x)>g(x)$ for any
$x\in\R\times\{0\}$ (the other case is similar).

\vskip 2mm

We denote by $g_t\colon \RR\to \RR$ the holonomy of $\tilde \cG$ from $\RR\times\{0\}$ to $\RR\times \{t\}$. In particular
$g_0$ is the identity map and $g_1=g$. Our assumption that $\cG$ is vertical close to the boundary, implies that $g_t$ is the identity map
for $t$ small enough and $g_t=g$ for $t$ close to $1$.

Let $\psi_0\colon S\to S$ be defined by $(x,t)\mapsto (g_t(x),t)$.
Then $\psi_0$ is a diffeomorphism which commutes with the translation $T_1\colon (x,t)\mapsto (x+1,t)$.

Consider the foliations $\tilde \cG_0= \psi_0^{-1}(\tilde G)$ and $\tilde\cF_0=\psi_0^{-1}(\tilde \cF)$.
Now we have
\begin{itemize}
\item $\tilde\cG_0$ is the vertical foliation;
\item $\tilde\cF_0$ is a $C^1$ foliation transverse to the vertical foliation and transverse to the boundary of $S$,
and invariant by the translation $T_1$.  We denote by $\cF_0$ its quotient on the annulus $\SS^1\times [0,1]$.
\item every leaf of $\tilde \cF_0$ goes from $\RR\times \{0\}$ to $\RR\times \{1\}$ so that the holonomy map $f^0$ is well
defined and $f^0=g^{-1}\circ f$.  Our assumption $f(x)>g(x)$ means that $f^0(x)>x$ for every $x$.
\end{itemize}

As $\cF_0$ is transverse to the boundary of $\SS^1\times [0,1]$, there is $\delta>0$ so that $\tilde \cF_0$ is transverse to the horizontal foliation
on $\RR\times [0,\delta]$ and on $\RR\times [1-\delta,1]$.  Thus the holonomy  $f^0_t\colon \RR\times\{0\}\to \RR\times\{t\}$ of the foliation $\tilde \cF_0$
is well defined for $t\in[0,\delta]\cup[1-\delta,1]$ and satisfies:
\begin{itemize}
 \item $f^0_t(x)>x$ for $t>0$, and moreover $f^0_{t_1}(x)<f^0_{t_2}(x)$ for $t_1,t_1\in[0,\delta]\cup[1-\delta,1]$ and $t_1<t_2$.
 \item The map $(x,t)\mapsto f^0_t(x)$ is $C^1$ and $\frac{\partial f^0_t(x)}{\partial t} >0$ (because $\tilde\cF_0$ is transverse to the vertical foliation).
\end{itemize}

Consider $\varepsilon>0$ so that
\begin{itemize}
\item $$\varepsilon < \inf_{x\in\RR}\{ f^0_{1-\delta}(x)- f^0_\delta(x), \mbox{ for } x\in\RR \}$$
 \item $$\varepsilon < \inf\left\{ \frac{\partial f^0_t(x)}{\partial t}, \mbox{ for } x\in\RR \mbox{ and }t\in[0,\delta]\cup[1-\delta,1]\right\}$$
 \end{itemize}

With this choice of $\varepsilon$,  one can easily check the following inequalities
\begin{claim}

\begin{itemize}
 \item for any $t\in[0,\delta]$ and $x\in \RR$,  one has
 \begin{equation}\label{e.1}
 f^0_t(x)<\frac{f^0_{1-\delta}(x)+ f^0_\delta(x)}{2} + (t-\frac12)\varepsilon;
 \end{equation}
 \item for any $t\in[1-\delta,1]$ and $x\in \RR$,  one has
 \begin{equation}\label{e.2}
 \frac{f^0_{1-\delta}(x)+ f^0_\delta(x)}{2} + (t-\frac12)\varepsilon< f^0_t(x).
 \end{equation}
\end{itemize}

\end{claim}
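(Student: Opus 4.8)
The plan is to deduce both inequalities of the claim directly from the two defining conditions on $\varepsilon$, using the fundamental theorem of calculus applied to the $C^1$ function $t\mapsto f^0_t(x)$.

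First I would record what the hypotheses supply. Since $\delta$ was only required to be small, we may and do assume in addition that $\delta<\tfrac12$. The second condition on $\varepsilon$ says that $\frac{\partial f^0_t(x)}{\partial t}>\varepsilon$ for every $x\in\RR$ and every $t\in[0,\delta]\cup[1-\delta,1]$; integrating in $t$ this gives, for $0\le t\le \delta$,
$$f^0_\delta(x)-f^0_t(x)=\int_t^\delta \frac{\partial f^0_s(x)}{\partial s}\,\ud s\ \ge\ (\delta-t)\,\varepsilon ,$$
and, for $1-\delta\le t\le 1$,
$$f^0_t(x)-f^0_{1-\delta}(x)=\int_{1-\delta}^t \frac{\partial f^0_s(x)}{\partial s}\,\ud s\ \ge\ (t-1+\delta)\,\varepsilon ,$$
with equality only at $t=\delta$, resp. $t=1-\delta$. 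The first condition on $\varepsilon$ gives $f^0_{1-\delta}(x)-f^0_\delta(x)>\varepsilon$ for every $x$.

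Next I would prove \eqref{e.1} (the range $t\in[0,\delta]$). Combining $f^0_t(x)\le f^0_\delta(x)-(\delta-t)\varepsilon$ with the target, it is enough to check that
$$f^0_\delta(x)-(\delta-t)\varepsilon\ \le\ \frac{f^0_{1-\delta}(x)+f^0_\delta(x)}{2}+\Big(t-\tfrac12\Big)\varepsilon ;$$
the terms in $t$ cancel and this reduces to $f^0_{1-\delta}(x)-f^0_\delta(x)\ge (1-2\delta)\varepsilon$, which holds since $0<1-2\delta<1$ and $f^0_{1-\delta}(x)-f^0_\delta(x)>\varepsilon$. As $(1-2\delta)\varepsilon<\varepsilon$ strictly, this reduced inequality is strict, hence so is \eqref{e.1}. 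The inequality \eqref{e.2} (the range $t\in[1-\delta,1]$) is entirely symmetric: from $f^0_t(x)\ge f^0_{1-\delta}(x)+(t-1+\delta)\varepsilon$ it reduces to exactly the same inequality $f^0_{1-\delta}(x)-f^0_\delta(x)\ge (1-2\delta)\varepsilon$.

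I do not expect a real obstacle here — the claim is pure bookkeeping around the two properties of $\varepsilon$. The only two points that require a little care are the (harmless) reduction to $\delta<\tfrac12$, and keeping track of where the estimates above are strict, so that the conclusions \eqref{e.1} and \eqref{e.2} come out as the strict inequalities that are stated.
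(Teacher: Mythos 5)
Your proof is correct: the paper itself leaves this claim as "one can easily check," and your argument --- integrating the bound $\partial f^0_s(x)/\partial s>\varepsilon$ over $[t,\delta]$ (resp.\ $[1-\delta,t]$) and then reducing both inequalities to $f^0_{1-\delta}(x)-f^0_\delta(x)\geq(1-2\delta)\varepsilon$, which follows strictly from the first condition on $\varepsilon$ --- is exactly the intended verification. The bookkeeping of where strictness enters (the reduced inequality is strict, so the possible equality at $t=\delta$, resp.\ $t=1-\delta$, in the integrated bound is harmless) is handled correctly.
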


Let $\alpha\colon [0,1]\to [0,1]$ be a smooth function so that:
\begin{itemize}
 \item $\alpha(t) \equiv 1$, for $t$ close to $0$ and close to $1$;
 \item $\alpha(t)\equiv 0$,  for $t\in[\frac{\delta}2, 1-\frac{\delta}2]$;
 \item $\frac{d\alpha}{dt}\leq 0$ on $[0,\delta]$ and $\frac{d\alpha}{dt}\geq 0$ on $[1-\delta,1]$.
\end{itemize}

For $x\in\RR$ and $t\in[0,1]$,  we define $h_t(x)$ as follows
\begin{itemize}
\item If $t\in [0,\delta]\cup [1-\delta,1]$ then
$$h_t(x)= \alpha(t) f^0_t(x) + (1-\alpha(t))\left( \frac{f^0_\delta(x)+f^0_{1-\delta}(x)}{2} + \varepsilon(t-\frac 12)\right);$$
 \item if $t\in[\delta,1-\delta]$ then
 $$h_t(x)= \frac{f^0_\delta(x)+f^0_{1-\delta}(x)}{2} + \varepsilon(t-\frac 12).$$
\end{itemize}

\begin{claim}The map $\psi_1\colon (x,t)\mapsto (h_t(x),t)$ is well defined  and is a $C^1$ diffeomorphism of $S$ such that
\begin{itemize}\item $\psi_1$ preserves the horizontal
foliation and commutes with the translation $T_1$;
\item
$$\frac{\partial h_t(x) }{\partial t}>0, \mbox{ for every } (x,t)\in S.$$
\end{itemize}
\end{claim}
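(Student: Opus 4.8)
The plan is to read the five asserted properties off the explicit formula for $h_t(x)$, checking that its two defining cases agree on their overlap and then differentiating case by case. Throughout write $M(x,t):=\frac{f^0_\delta(x)+f^0_{1-\delta}(x)}{2}+\varepsilon(t-\tfrac12)$, so that $h_t=\alpha(t)\,f^0_t+(1-\alpha(t))\,M(\cdot,t)$ on $[0,\delta]\cup[1-\delta,1]$ and $h_t=M(\cdot,t)$ on $[\delta,1-\delta]$. Since $\alpha\equiv0$ on $[\tfrac{\delta}{2},1-\tfrac{\delta}{2}]$, on $[\tfrac{\delta}{2},\delta]$ the first expression already reduces to $M(\cdot,t)$, and likewise on $[1-\delta,1-\tfrac{\delta}{2}]$; in particular the two cases agree at $t=\delta$ and $t=1-\delta$, so $h$ is well defined, and $h$ coincides with the three $C^1$ maps $(x,t)\mapsto\alpha(t)f^0_t(x)+(1-\alpha(t))M(x,t)$ on $[0,\delta]$, $(x,t)\mapsto M(x,t)$ on $[\tfrac{\delta}{2},1-\tfrac{\delta}{2}]$, and $(x,t)\mapsto\alpha(t)f^0_t(x)+(1-\alpha(t))M(x,t)$ on $[1-\delta,1]$, which agree on the nondegenerate overlaps $[\tfrac{\delta}{2},\delta]$ and $[1-\delta,1-\tfrac{\delta}{2}]$; each is $C^1$ because $\alpha$ is smooth, $M$ is affine in $t$, and $(x,t)\mapsto f^0_t(x)$, $f^0_\delta$, $f^0_{1-\delta}$ are $C^1$. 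Hence $h$, and therefore $\psi_1$, is $C^1$ on $S=\RR\times[0,1]$.

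Next, the holonomies $f^0_t$ commute with $x\mapsto x+1$, since they come from the $T_1$-invariant foliation $\tilde\cF_0$; because the remaining terms in the definition of $h_t$ do not depend on $x$ and $\alpha(t)+(1-\alpha(t))=1$, this gives $h_t(x+1)=h_t(x)+1$, i.e. $\psi_1\circ T_1=T_1\circ\psi_1$. Preservation of the horizontal foliation is immediate because $\psi_1$ fixes the second coordinate.

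To see $\frac{\partial h_t(x)}{\partial t}>0$: on $[\delta,1-\delta]$ the derivative equals $\varepsilon>0$; on $[0,\delta]\cup[1-\delta,1]$ one computes
$$\frac{\partial h_t(x)}{\partial t}=\alpha'(t)\bigl(f^0_t(x)-M(x,t)\bigr)+\alpha(t)\,\frac{\partial f^0_t(x)}{\partial t}+(1-\alpha(t))\,\varepsilon .$$
On $[0,\delta]$ we have $\alpha'\le0$ while $f^0_t(x)-M(x,t)<0$ by \eqref{e.1}, so the first term is $\ge0$; on $[1-\delta,1]$ we have $\alpha'\ge0$ while $f^0_t(x)-M(x,t)>0$ by \eqref{e.2}, so again the first term is $\ge0$. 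The other two terms are $\ge0$ since $\alpha\in[0,1]$ and $\frac{\partial f^0_t(x)}{\partial t}>0$, and strict positivity follows by cases: if $\alpha(t)<1$ the last term is positive, and if $\alpha(t)=1$ the middle term equals $\frac{\partial f^0_t(x)}{\partial t}>0$. Thus $\frac{\partial h_t(x)}{\partial t}>0$ everywhere.

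Finally, $\psi_1$ is a diffeomorphism. The maps $f^0_t,f^0_\delta,f^0_{1-\delta}$ are orientation-preserving $C^1$ diffeomorphisms of $\RR$ — holonomies of a $C^1$ foliation transverse to the horizontal direction, all of whose leaves cross from $\RR\times\{0\}$ to $\RR\times\{t\}$ — so their $x$-derivatives are positive; consequently $\frac{\partial h_t(x)}{\partial x}$, which equals $\tfrac12\bigl((f^0_\delta)'(x)+(f^0_{1-\delta})'(x)\bigr)$ on $[\delta,1-\delta]$ and a convex combination of $(f^0_t)'(x)$ and $\tfrac12\bigl((f^0_\delta)'(x)+(f^0_{1-\delta})'(x)\bigr)$ on the transition intervals, is everywhere positive. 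Hence the Jacobian matrix of $\psi_1$ is triangular with positive diagonal entry $\frac{\partial h_t(x)}{\partial x}$ and $1$, so $\psi_1$ is a local $C^1$ diffeomorphism; and for each $t$ the map $x\mapsto h_t(x)$ is a strictly increasing $C^1$ map commuting with $x\mapsto x+1$, hence a bijection of $\RR$, so $\psi_1$ is a bijection of $S$, therefore a $C^1$ diffeomorphism of $S$ with inverse $(y,t)\mapsto(h_t^{-1}(y),t)$. The only step needing care is the sign bookkeeping for $\frac{\partial h_t(x)}{\partial t}$ on the two transition intervals, where one must pair the sign of $\alpha'$ with the appropriate inequality \eqref{e.1} or \eqref{e.2}; this is exactly what the two conditions imposed on $\varepsilon$ were designed to make work.
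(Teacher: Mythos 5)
Your proof is correct and follows essentially the same route as the paper's: differentiate $h_t$ in $t$ and in $x$ case by case, pair the sign of $\alpha'$ with inequalities \eqref{e.1} and \eqref{e.2} on the transition intervals, and use $\partial h_t/\partial x>0$ together with equivariance under $x\mapsto x+1$ to conclude $h_t$ is a diffeomorphism of $\RR$. You are in fact slightly more careful than the paper on two minor points (agreement of the two defining formulas on the overlap, and strict positivity of $\partial h_t/\partial t$ when $\alpha(t)\in\{0,1\}$ makes individual terms vanish), but the argument is the same.
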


\begin{proof}One easily checks that $\psi_1$ is continuous  and of class $C^1$.  The formula gives also that $\psi_1$ commutes with $T_1$.
Now
$$\frac{\partial h_t(x) }{\partial x}=\frac12( \frac{\partial f^0_\delta(x)}{\partial x}+\frac{\partial f^0_{1-\delta}(x) }{\partial x})>0 \mbox{ if }t\in[\delta,1-\delta]$$
and

$$\frac{\partial h_t(x) }{\partial x}= \alpha(t)\frac{\partial f^0_t(x)}{\partial x}    +
(1 -\alpha(t))\cdot \frac12\left( \frac{\partial f^0_\delta(x)}{\partial x}+\frac{\partial f^0_{1-\delta}(x) }{\partial x}\right) >0 \mbox{ if }t\notin[\delta,1-\delta].$$

This shows that $h_t$ is a diffeomorphism of $\RR$ for every $t\in[0,1]$.

It remains to prove the last item of the claim.

$$\frac{\partial h_t(x) }{\partial t}=\varepsilon>0, \mbox{ if }t\in[\delta,1-\delta]$$

 and  if  $t\notin[\delta,1-\delta]$ the derivative $\frac{\partial h_t(x) }{\partial t}$ is equal to :
 $$\frac{\ud\alpha(t)}{\ud t} \left(  f^0_t(x)- \left(\frac12  (f^0_\delta(x)+f^0_{1-\delta}(x)+\varepsilon(t-\frac 12)\right)\right)+ \alpha(t)\frac{\partial f^0_t}{\partial t} + (1-\alpha(t))\varepsilon.$$

The   last  two terms of this sum are positive, as product of positive numbers.

For $t\in[0,\delta]$ the first term is  product of two negative numbers, as the derivative of $\alpha$ is negative and (\ref{e.1}) implies:
$$ f^0_t(x)- \left(\frac12  (f^0_\delta(x)+f^0_{1-\delta}(x)+\varepsilon(t-\frac 12)\right)<0.$$

For $t\in [1-\delta,1]$ the first term is  product of two positive numbers,
as the derivative of $\alpha$ is positive and (\ref{e.2}) implies:
$$ f^0_t(x)- \left(\frac12  (f^0_\delta(x)+f^0_{1-\delta}(x)+\varepsilon(t-\frac 12)\right) >0.$$

Thus $\frac{\partial h_t(x) }{\partial t}>0$ for every $(x,t)$.
\end{proof}

Now the foliation $\tilde \cH$ defined as the image of the vertical foliation  by $\psi_1$  satisfies:
\begin{itemize}
 \item $\tilde \cH$ is transverse to the horizontal foliation.
 \item $\tilde \cH$ is transverse to the vertical foliation (that is, to $\tilde\cG_0$).
 \item its holonomy from $\RR\times \{0\}$ to $\RR\times \{t\}$ is $h_t$.  In particular, it coincides with $f_t$ for $t$ so that $\alpha(t)=1$, that
 is, in the neighborhood of $\RR\times \{0\}$ and $\RR\times\{1\}$.
 \item as a consequence of the previous item, the foliation $\tilde \cH$ coincides with $\tilde \cF_0$ in the neighborhood of $\RR\times \{0\}$ and $\RR\times\{1\}$.
\end{itemize}

We can now finish the proof of Proposition~\ref{p.transverse to horizontal}: the announced foliation on the strip $S$ is $\tilde \cE=\psi_0(\tilde\cH)$. This foliation
is invariant under the translation $T_1$, so it passes to the quotient in a foliation $\cE$ on the annulus $\SS^1\times [0,1]$.  As $\tilde \cE$ coincides with $\tilde \cF$ in a neighborhood of the boundary of $S$,
one gets that $\cE$ coincides with $\cF$ on the boundary of the annulus, and therefore this foliation induces a $C^1$ foliation, still denoted by $\cE$ on the torus $\TT^2$.
\end{proof}

Next remark ends the proof of Theorem~\ref{t.circles}:
\begin{Remark}

According to  Proposition \ref{push}, there is a continuous path of diffeomorphisms $\{\varphi_t\}_{ t\in[0,1]}$  of $S$ so that:
\begin{itemize}
\item $\varphi_t$ commutes with the translation $T_1\colon (x,t)\mapsto (x+1,t)$;
\item $\varphi_0$ is the identity map;
\item for every $t$, $\varphi_t$ coincides with the identity map in a neighborhood of the boundary of $S$;
\item $\varphi_t(\tilde \cG)=\tilde \cG$ for every $t$;  in particular $\varphi_t(\tilde\cF)$ is transverse to $\tilde \cG$ for every $t$;
\item $\varphi_1(\tilde \cF)=\tilde\cE$.
\end{itemize}
\end{Remark}

We now state a small variation of the statement of Theorem~\ref{t.circles} which  follows (exactly as Theorem~\ref{t.circles})
from  of Propositions~\ref{p.transverse to horizontal} and~\ref{push}, and that we will use in a next section.

\begin{lemma}\label{l.circles}
Let $\cF$ and $\cG$ be two transverse $C^1$-foliations on the annulus $\SS^1\times [0,1]$.  Assume that
\begin{itemize}
 \item $\cG$ is transverse to every circle $\SS^1\times\{t\}$;
 \item $\cF$ is transverse to the boundary $\SS^1\times\{0,1\}$ and has no compact leaf in $\SS^1\times (0,1)$.
\end{itemize}

Then there is a $C^1$ diffeomorphism $\theta$ of $\SS^1\times [0,1]$ which coincides with the identity map in a neighborhood of the boundary
 and which preserves every leaf of $\cG$,  so that
$\theta(\cF)$ is transverse to every circle $\SS^1\times\{t\}$.

\end{lemma}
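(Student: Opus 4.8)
The plan is to reduce to the torus situation of Theorem~\ref{t.circles} by a doubling/gluing trick, or — more directly — to imitate the proof of Theorem~\ref{t.circles} with the annulus playing the role of the cut-open torus. I would follow the second route since it keeps track of the boundary-support condition automatically. First, since $\cG$ is transverse to every circle $\SS^1\times\{t\}$, the holonomy of $\cG$ from $\SS^1\times\{0\}$ to $\SS^1\times\{t\}$ is a well-defined circle diffeomorphism $g_t$ for every $t$, depending $C^1$-smoothly on $t$, with $g_0=\mathrm{id}$. Pulling back by the diffeomorphism $(x,t)\mapsto(g_t^{-1}(x),t)$ — which is the identity wherever $\cG$ is already "vertical", but in general is a genuine diffeomorphism of the annulus fixing the boundary circles setwise — straightens $\cG$ to the vertical foliation and turns $\cF$ into a $C^1$ foliation $\cF_0$ still transverse to $\SS^1\times\{0,1\}$ with no compact leaf in the interior. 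Here there is a minor point to check: the straightening diffeomorphism need not be the identity near the boundary, so after pulling back, $\cF_0$ may differ from $\cF$ there; but all we need downstream is that $\cF_0$ is transverse to the boundary and compact-leaf-free, which is preserved.

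Next I would invoke Proposition~\ref{p.suspension} applied to $\cF_0$ (transverse to the boundary, without compact leaf): it provides a smooth surjection $\SS^1\times[0,1]\to[0,1]$ whose fibers are transverse to $\cF_0$. Lifting to the strip $S=\RR\times[0,1]$ and working with the holonomy map $f^0$ of $\tilde\cF_0$ from $\RR\times\{0\}$ to $\RR\times\{1\}$, one repeats verbatim the construction in the proof of Proposition~\ref{p.transverse to horizontal}: choose $\delta>0$ so that $\tilde\cF_0$ is transverse to the horizontal foliation on the collars $\RR\times[0,\delta]$ and $\RR\times[1-\delta,1]$, pick $\varepsilon>0$ small, use the interpolation functions $\alpha$ and the explicit family $h_t$ to build a fiber-preserving diffeomorphism $\psi_1$ of $S$ commuting with $T_1$, equal to the identity near the boundary, and such that $\psi_1$ maps the vertical foliation to a foliation $\tilde\cH$ transverse to \emph{both} the horizontal and the vertical directions, with holonomy agreeing with that of $\tilde\cF_0$. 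Then Proposition~\ref{push} (with $\tilde\cE$ there $=$ vertical foliation, $\tilde\cF$ there $=\tilde\cF_0$, $\tilde\cG$ there $=$ vertical foliation, noting that $\tilde\cH$ and $\tilde\cF_0$ have the same holonomy and the needed transversalities hold) — or rather its proof, adapted as in Corollary~\ref{c.push} to glue back across $\SS^1\times\{0\}\sim$ nothing, since here there is no identification — yields an isotopy $\{\varphi_t\}$ of the annulus, identity near the boundary, preserving every leaf of the vertical foliation, with $\varphi_1$ carrying $\tilde\cF_0$ to $\tilde\cH$. Finally, conjugating back by the straightening diffeomorphism $(x,t)\mapsto(g_t(x),t)$ produces the desired $\theta$ on $\SS^1\times[0,1]$: it preserves every leaf of $\cG$ (being a composition of maps that do), coincides with the identity near the boundary, and sends $\cF$ to a foliation transverse to every circle $\SS^1\times\{t\}$.

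The main obstacle, as usual in this circle of ideas, is \emph{regularity}: one must verify that after the straightening and the re-conjugation the resulting map is genuinely $C^1$ on the whole closed annulus, including along the boundary circles and along the cut that was implicitly used when passing to the strip $S$. For the boundary this is handled by the $\alpha$-cutoff making everything literally the identity near $\SS^1\times\{0,1\}$; for the interior cut there is no identification to respect in the annulus case, so the strip $S$ \emph{is} (a fundamental domain of) the universal cover of $\SS^1\times[0,1]$ and $T_1$-equivariance of every map constructed suffices to pass to the quotient — this is strictly easier than the torus case treated in Corollary~\ref{c.push}, where one additionally had to match up across $\gamma$. A secondary point to be careful about is that the hypothesis only gives "no compact leaf in $\SS^1\times(0,1)$", so a priori $\cF$ could be tangent to a boundary circle; but transversality to $\SS^1\times\{0,1\}$ is assumed, which is exactly what Proposition~\ref{p.suspension} needs, so this causes no trouble.
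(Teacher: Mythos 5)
Your argument is correct and is exactly the route the paper takes: the paper gives no separate proof of this lemma, stating only that it ``follows (exactly as Theorem~\ref{t.circles}) from Propositions~\ref{p.transverse to horizontal} and~\ref{push}'', which is precisely your combination of straightening $\cG$ to the vertical foliation, running the $h_t$-interpolation of Proposition~\ref{p.transverse to horizontal} on $\cF_0$, and pushing $\cF_0$ onto $\cH$ along the vertical leaves via Proposition~\ref{push}. One cosmetic slip: when you invoke Proposition~\ref{push} the roles should be $\tilde\cE=\tilde\cF_0$, $\tilde\cF=\tilde\cH$, and $\tilde\cG=$ the vertical foliation (your parenthetical assigns the vertical foliation to $\tilde\cE$), but the rest of your sentence shows you are applying it correctly.
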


\subsection{Building the second linear foliation: end of the proof of Theorem~\ref{t.separating}}

\begin{proposition}\label{irrational foliation} Let $\cF$ and $\cG$ be two transverse $C^1$ foliations on   $\TT^2$ without  parallel compact leaves.
Assume that both $\calF$ and $\calG$ are transverse to the horizontal foliation. We endow $\cF$ and $\cG$ with orientations so that they cut the horizontal foliation
with the same orientation.

Then there exists a smooth ($C^\infty$) foliation $\calE$ on $\T^2$ such that
 \begin{itemize}\item the circle $\S^1\times \{0\}$  is a complete transversal to $\calE$;
  \item the holonomy map induced by $\calE$ on $\gamma$ has a diophantine rotation number;
  \item the foliation  $\calE$  is transverse to the foliations  $\calF$, $\calG$ and  to the horizontal direction.  We endow it with an orientation so that it
  cuts the horizontal foliation with the same orientation as $\cF$ and $\cG$.
   \item the foliation $\cE$ cuts $\cF$ and $\cG$ with opposite orientations.
   \end{itemize}
\end{proposition}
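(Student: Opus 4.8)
The plan is to construct $\cE$ as a foliation whose tangent lines everywhere lie strictly between the horizontal direction and the directions of $\cF$ and $\cG$, while at the same time pinning down its holonomy on $\gamma=\SS^1\times\{0\}$ to be a rotation with diophantine rotation number. First I would record what the orientation hypotheses give geometrically: since $\cF$ and $\cG$ are transverse to the horizontal foliation and cut it with the same orientation, at each point $p\in\TT^2$ the tangent lines of $\cF$, $\cG$ and the horizontal line determine three directions, the horizontal one strictly separating (in one of the two half-turns of $\RR\PP^1$) the lines $T_p\cF$ and $T_p\cG$. The closed cone $C_p$ of directions lying ``below'' both $T_p\cF$ and $T_p\cG$ but not horizontal is open, nonempty, varies continuously with $p$, and is invariant under nothing in particular — but it contains, near each point, a narrow subcone close to the horizontal. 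Any foliation $\cE$ tangent at every point to the interior of $C_p$ is automatically transverse to $\cF$, to $\cG$ and to the horizontal foliation, cuts the horizontal with the same orientation as $\cF,\cG$, and cuts $\cF$ and $\cG$ with opposite orientations (it passes from one side of $T_p\cF$ to the horizontal and then to the other side of $T_p\cG$ relative to the horizontal). Thus the entire content of the proposition is: produce a foliation tangent to this field of cones whose holonomy on $\gamma$ is a rotation with diophantine rotation number.

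Next I would build such a foliation explicitly as a suspension over $\gamma$. Because $\cF$ and $\cG$ are transverse to every horizontal circle and $\gamma$ is a complete transversal, I parametrize $\TT^2=\SS^1\times\SS^1$ so the second coordinate is ``time''; a foliation transverse to the horizontal direction and to the verticals near $\gamma$ is the graph of an ODE $\dot x = v(x,t)$ on $\SS^1$, with holonomy on $\gamma$ equal to the time-$1$ flow. The cone condition ``$T_p\cE$ strictly below $T_p\cF$ and $T_p\cG$, not horizontal'' translates into a two-sided inequality $0 < v(x,t) < \min\{v_\cF(x,t), v_\cG(x,t)\}$ (after choosing orientations so all these slopes are positive), where $v_\cF,v_\cG$ are the slope fields of $\cF,\cG$; these are continuous and strictly positive, so the admissible set of $v$ is an open nonempty ``band''. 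Now I would fix a diophantine number $\rho$, e.g. $\rho=(\sqrt5-1)/2$, and look for $v$ in this band whose time-$1$ map is conjugate to $R_\rho$ — or, more cheaply, whose time-$1$ map simply \emph{has} rotation number $\rho$, since by Herman's theorem (Theorem~\ref{He}) a smooth circle diffeomorphism with diophantine rotation number is smoothly conjugate to the rotation, and in fact for the purpose of the proposition it suffices that the holonomy has diophantine rotation number. To hit rotation number $\rho$, I would start from the constant field $v\equiv \rho$ (whose holonomy is exactly $R_\rho$) if it happens to lie in the band; if not, I shrink: choose a small $\lambda>0$ with $\lambda\rho$ inside the band $0<v<\min\{v_\cF,v_\cG\}$ near the horizontal — this is possible since the band contains all sufficiently small positive constants, as $\min\{v_\cF,v_\cG\}$ is bounded below by a positive constant by compactness of $\TT^2$. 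But $\lambda\rho$ gives rotation number $\lambda\rho$, which is diophantine iff $\lambda\rho$ is, so I must instead keep the value $\rho$ and achieve it with a non-constant $v$.

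The cleanest route, and the one I would actually carry out, is a reparametrization-of-time argument. Pick any smooth $v_0$ in the band — for instance a small constant $c>0$ — giving a foliation $\cE_0$ transverse to $\cF,\cG$ and the horizontal, with some holonomy $h_0\in\diff^\infty(\SS^1)$ of rotation number, say, $\rho_0$. The set of rotation numbers achievable by foliations tangent to the band is an interval: scaling $v$ by a positive smooth function $\phi(t)$ depending only on $t$ (which keeps $\cE$ in the band as long as $0<\phi(t)v(x,t)<\min\{v_\cF,v_\cG\}$) multiplies the ``total time'' and, by Proposition~\ref{p.continuity of translation number} and Proposition~\ref{p.comparing translation number}, moves the rotation number continuously and monotonically; pushing $\phi$ up toward the ceiling $\min\{v_\cF,v_\cG\}/\sup v$ and down toward $0$ we sweep an interval of rotation numbers containing $0$ in its closure and of positive length (the length is positive because near the horizontal the ceiling is bounded away from $0$). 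Hence by the intermediate value property this interval contains uncountably many diophantine numbers; choose the corresponding $\phi$, and set $\cE$ to be the resulting smooth foliation, with $\gamma=\SS^1\times\{0\}$ manifestly a complete transversal. The one place that needs care — and the main obstacle — is making the interval of achievable rotation numbers genuinely nondegenerate: I must exhibit two admissible slope fields $v_{\min}, v_{\max}$ in the band with $\tau(\text{holonomy of }v_{\min}) \ne \tau(\text{holonomy of }v_{\max})$, and conclude strict monotonicity along the family interpolating them using Proposition~\ref{p.comparing translation number} (which requires the strict pointwise inequality of the lifted flows, easily arranged since $v_{\max}>v_{\min}$ pointwise forces the time-$1$ maps to satisfy $H_{\max}(x)>H_{\min}(x)$). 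Once that is in hand, continuity of $\tau$ in the $C^0$ topology (Proposition~\ref{p.continuity of translation number}) along the path $\phi_s = (1-s)v_{\min}+s\,v_{\max}$ finishes the selection of a diophantine value, and the four bullet points of the proposition hold by the cone discussion above.
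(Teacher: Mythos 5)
Your overall strategy---realize $\cE$ as a slope field inside an open cone of directions, show that the achievable rotation numbers form a nondegenerate interval, and pick a diophantine value using continuity of the translation number---is the same as the paper's. But two of your key steps are wrong, and they are exactly the two places where the geometry of the fourth bullet and the no-parallel-compact-leaves hypothesis must enter.

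First, the cone is the wrong one. Writing leaves transverse to the horizontal circles as solutions of $\dot x=v(x,t)$, your band $0<v<\min\{v_{\cF},v_{\cG}\}$ puts the tangent direction of $\cE$ on the \emph{same} side of both $T_p\cF$ and $T_p\cG$: with upward-pointing tangent vectors $(v_{\cE},1)$, $(v_{\cF},1)$, $(v_{\cG},1)$, the determinants $v_{\cF}-v_{\cE}$ and $v_{\cG}-v_{\cE}$ are both positive, so $\cE$ cuts $\cF$ and $\cG$ with the \emph{same} orientation and the fourth bullet fails --- and that bullet is precisely what Theorem~\ref{t.separating} needs downstream. Moreover your band may be empty: the hypotheses only give that $v_{\cF}$ and $v_{\cG}$ are finite, not positive, so the claim that $\min\{v_{\cF},v_{\cG}\}$ is bounded below by a positive constant ``by compactness'' is false; compactness gives boundedness, not positivity. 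The correct cone is the one \emph{strictly between} $T_p\cF$ and $T_p\cG$, i.e.\ positive combinations $(1-s)X+sY$ of the oriented unit tangent fields, which is what the paper interpolates along.

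Second, and relatedly, you never use the hypothesis that $\cF$ and $\cG$ have no parallel compact leaves, and the statement is false without it. The nondegeneracy of the interval of rotation numbers cannot come from ``the ceiling is bounded away from $0$''; it comes from the claim $\tau(f)\neq\tau(g)$ for the holonomies $f,g$ of $\tilde\cF,\tilde\cG$ on the strip, which the paper proves by noting that $\tau(f)=\tau(g)$ irrational contradicts Proposition~\ref{p.comparing translation number} (since $f<g$ pointwise), while $\tau(f)=\tau(g)=n/m$ rational would produce compact leaves of $\cF$ and of $\cG$ in the same homotopy class $(m,n)$. For any field in the correct cone the holonomy $h$ satisfies $f<h<g$, hence $\tau(f)\le\tau(h)\le\tau(g)$; if $\tau(f)$ equalled $\tau(g)$ and were rational, every admissible $\cE$ would have that same rational rotation number and no diophantine choice would exist. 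So the no-parallel-leaves hypothesis is the engine of the proof, and its absence from your argument is a genuine gap, not a stylistic difference.
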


\begin{proof}

As already done before, we cut the torus along $\SS^1\times\{0\}$, getting an annulus, and we denote by $\tilde\cF$ and
$\tilde\cG$ the lift of $\cF$ and $\cG$ on the strip $\RR\times[0,1]$  which is the  universal cover of
the annulus.
We denote by $f$ and $g$ the holonomy maps from $\RR\times\{0\}$ to $\RR\times\{1\}$ associated to the lifted foliations $\tilde{\calF}$ and $\tilde{\calG}$, respectively.
By transversality of $\cF$ with $\cG$, we have that either $f(x)<g(x)$ for any $x$, or $f(x)>g(x)$ for any $x$.
Without loss of generality, we assume that $f(x)<g(x)$.
Let $\tau(f)$ and $\tau(g)$ be the translation numbers of $f$ and $g$.
\begin{claim} $\tau(f)\neq\tau(g)$.
\end{claim}
\begin{proof} We prove it by contradiction. Assume that $\tau(f)=\tau(g)$, then  $\tau(f)=\tau(g)$ is either  rational or irrational.
When they are irrational, since $f(x)<g(x)$, by Proposition \ref{p.comparing translation number}, we have that $\tau(f)<\tau(g)$,
 a contradiction.
 When they are both rational, then there exist $m,n\in\N$ such that $\tau(f)=\tau(g)=\frac{n}{m}$. Hence, there exist two
  points $x_{0},y_0\in\R$, such that
 \begin{displaymath} f^{m}(x_0)=x_0+n \textrm{ and } g^{m}(y_0)=y_0+n,
 \end{displaymath}
 which implies that there exist compact leaves of $\calF$ and $\calG$ that are in the homotopy class of $(m,n)$, contradicting the
 non-parallel assumption.
\end{proof}

We  endow $\tilde{\calF}$ and $\tilde{\calG}$ with orientations such that they point inward the strip $S$ at $\R\times\{0\}$ and point
outward at $\R\times\{1\}$, and $\cF$ and $\cG$ are endowed with the corresponding orientations.
 Let  $X$ and $Y$ be the unit vector fields tangent to $\calF$ and $\calG$ respectively, pointing to the
 orientation of the corresponding foliation, and $\tilde X$ and $\tilde Y$ be their lifts on $S$.

 \begin{claim} There are smooth vector fields $U$ and $V$ on $\TT^2$ so that
 \begin{itemize}
  \item at each point $x\in \TT^2$,  the vertical  coordinates of $U(x)$ and $V(x)$ are strictly positive.
  In particular, $U$ and $V$ are transverse to the horizontal foliation.
  We denote by $\tilde U$ and $\tilde V$ the lifts of $U$ and $V$ on the strip $S$.
  \item let $h$ and $k$ be the holonomies of   $\tilde U$ and $\tilde V$, respectively, from $\RR\times \{0\}$ to $\RR\times\{1\}$.  These holonomies commute with the translation $T_1$, and
  let $\tau(h)$ and $\tau(k)$ denote their translation numbers.  Then
  $$\tau(f)\leq \tau(h)<\tau(k)\leq \tau(g).$$
 \end{itemize}
 \end{claim}
 \begin{proof}Just consider a small enough $\varepsilon>0$ and consider smooth vector fields $U$ and $V$ arbitrarily $C^0$ close to $X+\varepsilon Y$ and $\varepsilon X+Y$, respectively.
 \end{proof}

 Now the vector fields $U_t=(1-t)U+ tV$, $t\in[0,1]$,  are all transverse to both foliations  $\cF$, $\cG$ and to the horizontal foliation,  and they cut $\cF$ and $\cG$ with opposite orientations.
 We denote by $\tilde U_t$ the lift of $U_t$ on the strip $S$.
 Let $\tau_t$ denote the translation number of the holonomy of $\tilde U_t$ from $\RR\times \{0\}$ to $\RR\times \{1\}$. According to Proposition~\ref{p.continuity of translation number},
 the map $t\mapsto \tau_t$ is a continuous monotonous function joining
 $\tau(h)$ to $\tau(k)$. As $\tau(h)<\tau(k)$ there is $t\in(0,1)$ for which $\tau_t$ is an irrational diophantine number, ending the proof.
\end{proof}

We  end the proof of Theorem~\ref{t.separating} by noticing that Theorem \ref{He} implies

\begin{lemma} Let $\cE$ be a smooth foliation on $\TT^2$ transverse to the horizontal foliation and so
that its holonomy on $\SS^1\times \{0\}$ is a diffeomorphism with an irrational diophantine
rotation number.  Then there is a diffeomorphism $\theta$ of $\TT^2$ which preserves each horizontal circle $\SS^1\times \{t\}$, for any $t\in\SS^1$,  and satisfies  that
$\theta(\cE)$ is an affine foliation.
\end{lemma}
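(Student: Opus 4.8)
The plan is to present $\cE$ as a suspension foliation over the horizontal fibration, to straighten its return map to a rigid rotation by Herman's theorem (Theorem~\ref{He}), and to realize that straightening by a \emph{vertical} isotopy — i.e. one preserving each circle $\SS^1\times\{t\}$.

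First I would fix coordinates $\TT^2=\SS^1\times\SS^1$ in which the horizontal foliation $\mathcal H$ has leaves $\SS^1\times\{t\}$, with tangent direction $\partial_x$. Since $\cE$ is transverse to $\mathcal H$, the projection $T\cE\to T\TT^2/T\mathcal H\cong\langle\partial_t\rangle$ is a bundle isomorphism, so $\cE$ is orientable and directed by a smooth vector field which, after normalization, has the form $\partial_t+a(x,t)\,\partial_x$ with $a$ smooth and $1$-periodic in $x$ and in $t$. In particular $t$ is strictly increasing along every leaf of $\cE$, so $\cE$ has no compact leaf and $\SS^1\times\{0\}$ is a complete transversal whose first-return (holonomy) map is, by hypothesis, a $C^\infty$ circle diffeomorphism $\varphi$ with $\alpha:=\rho(\varphi)$ irrational and diophantine. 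Passing to the cyclic cover $\SS^1\times\RR\to\TT^2$ in the $t$-variable, I would record the holonomy $\bar g_t\colon\SS^1\to\SS^1$ of the lifted foliation from level $0$ to level $t$: integrating $\dot x=a(x,t)$ shows it is defined for every $t\in\RR$, is $C^\infty$ in $(x,t)$, each $\bar g_t$ is an orientation-preserving diffeomorphism of $\SS^1$, $\bar g_0=\mathrm{id}$, $\bar g_1=\varphi$ (via the covering identification of $\SS^1\times\{1\}$ with $\SS^1\times\{0\}$), and the $\ZZ$-invariance of $\partial_t+a\,\partial_x$ gives $\bar g_{t+1}=\bar g_t\circ\varphi$ for all $t$.

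Next, Theorem~\ref{He} applied with $r=\infty$ yields $k\in\diff^\infty(\SS^1)$ with $k\circ\varphi\circ k^{-1}=R_\alpha$, where $R_\alpha$ is the rotation by $\alpha$; equivalently $R_\alpha\circ k\circ\varphi^{-1}=k$. I would then define, for $t\in\RR$,
\[
\theta_t:=R_{\alpha t}\circ k\circ \bar g_t^{-1}\in\diff^\infty(\SS^1),
\]
which is $C^\infty$ in $(x,t)$, and check, using $\bar g_{t+1}=\bar g_t\circ\varphi$ together with the displayed conjugacy identity, that $\theta_{t+1}=\theta_t$. Hence $\theta(x,t):=(\theta_t(x),t)$ descends to a $C^\infty$ diffeomorphism of $\TT^2$ that preserves every horizontal circle. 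Finally, the leaf of the lifted $\cE$ through $(x_0,0)$ is $\{(\bar g_t(x_0),t):t\in\RR\}$, so its image under the lift of $\theta$ is $\{(k(x_0)+\alpha t,\,t):t\in\RR\}$; thus the leaves of $\theta(\cE)$ are the projections of the straight lines of direction $\partial_t+\alpha\,\partial_x$, i.e. $\theta(\cE)$ is the affine foliation tangent to this constant vector field, as required.

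The argument is short, and the single point that needs care is that the vertical isotopy $\{\theta_t\}_{t\in[0,1]}$ must close up smoothly into a diffeomorphism of $\TT^2$, that is, $\theta_{t+1}=\theta_t$. This is exactly where Herman's theorem is essential: the identity $R_\alpha\circ k\circ\varphi^{-1}=k$ holds for the smooth conjugacy $k$ it provides but fails for an arbitrary (merely topological) conjugacy of $\varphi$ to $R_\alpha$, so diophantineness of $\rho(\varphi)$ and the resulting $C^\infty$ conjugacy cannot be dispensed with (Denjoy counterexamples already obstruct the $C^1$ version). Everything else — the normalization of the directing vector field, the smoothness and the group identity $\bar g_{t+1}=\bar g_t\circ\varphi$ of the holonomies, and the identification of the image foliation as linear — is routine and of the same elementary nature as the computations in the earlier sections.
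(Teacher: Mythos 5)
Your proof is correct and is exactly the argument the paper has in mind: the paper states this lemma without proof as a consequence of Herman's theorem (Theorem~\ref{He}), and your construction $\theta_t=R_{\alpha t}\circ k\circ\bar g_t^{-1}$, together with the verification that $\bar g_{t+1}=\bar g_t\circ\varphi$ and the conjugacy identity force $\theta_{t+1}=\theta_t$, supplies precisely the details being omitted. You also correctly identify the one non-routine point (the closing-up of the vertical isotopy, which is where smoothness of the Herman conjugacy is used), so nothing is missing.
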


\section{Deformation process of parallel case and proof of Theorem \ref{thm.parallel}}\label{s.parallel}
We dedicate this whole section to give the proof of Theorem \ref{thm.parallel}. We sate a definition which is only used in this section.
\begin{definition}Given a $C^1$ foliation $\cE$ on the annulus $[0,1]\times\S^1=[0,1]\times \R/\Z$ without
compact leaves such that $\cE$  is transverse to the vertical circle $\{t\}\times\S^1$, for any $t\in[0,1]$.
The leaves of such a foliation $\cE$ are called
 \begin{itemize}
 \item[--] \emph{not increasing (resp. not  decreasing)}, if the lifted foliation $\tilde{\cE}$ on $[0,1]\times\R$ satisfies that every leaf of $\tilde{\cE}$ is not  increasing (resp. not decreasing);
 \item[--]\emph{non-degenerate increasing (resp. non-degenerate decreasing)}, if  the  lifted foliation $\tilde{\cE}$ on $[0,1]\times\R$ satisfies that
every leaf of $\tilde{\cE}$ is strictly increasing (resp. strictly decreasing) and transverse to the horizontal foliation $\{[0,1]\times\{t\}\}_{t\in\R}$.
\end{itemize}
\end{definition}
\subsection{Normal form for two transverse foliations with parallel compact leaves and proof of Theorem~\ref{thm.parallel}}

The aim of this section is the proof of Theorem~\ref{thm.parallel}.  The main step for this proof is the following result which puts any
pair of transverse $C^1$ foliations in a canonical position.

\begin{theorem}\label{t.normal-form}  Let $\cF$ and $\cG$ be two transverse $C^1$ foliations on $\TT^2$ admitting parallel compact leaves.
Then  there are an integer $k$,  a set of points $\{t_i\}_{i\in\ZZ/k\ZZ}$ in $\SS^1$ which are  cyclically ordered on $\SS^1$,  and  a diffeomorphism $\theta\colon \TT^2\to \TT^2$ so that
\begin{itemize}
 \item the foliations $\theta(\cF)$ and $\theta(\cG)$ are transverse to $\{t_i\}\times S^1$,  for any $i\in\ZZ/k\ZZ$;
 \item for each $i\in\ZZ/k\ZZ$, the restrictions of the foliations $\theta(\cF)$ and $\theta(\cG)$ to the annulus $C_i=[t_i,t_{i+1}]\times \SS^1$ satisfy one of the six possibilities below
 \begin{enumerate}
 \item $\theta(\cF)$ coincides with the horizontal foliation on $C_i$ and $\cG$ admits compact leaves in $C_i$;
 \item $\theta(\cG)$ coincides with the horizontal foliation on $C_i$ and $\cF$ admits compact leaves in $C_i$;
 \item the foliations $\theta(\cF)$ and $\theta(\cG)$ are transverse to the vertical foliation on $C_i$.  Furthermore, every leaf of $\theta(\cF)$ (resp. of $\theta(\cG)$) on $C_i$   is non-degenerate  increasing  (resp. not increasing);
 \item the foliations $\theta(\cF)$ and $\theta(\cG)$ are transverse to the vertical foliation on $C_i$.  Furthermore,   every leaf of $\theta(\cF)$ (resp. of $\theta(\cG)$) on   $C_i$
 is not increasing (resp. non-degenerate increasing);
 \item the foliations $\theta(\cF)$ and $\theta(\cG)$ are transverse to the vertical foliation on $C_i$.  Furthermore,  every leaf of $\theta(\cF)$ (resp. of $\theta(\cG)$)  on $C_i$
 is non-degenerate decreasing (resp. not decreasing);
 \item the foliations $\theta(\cF)$ and $\theta(\cG)$ are transverse to the vertical foliation on $C_i$.  Furthermore,  every leaf of $\theta(\cF)$ (resp. of $\theta(\cG)$) on
   $C_i$
 is not decreasing (resp. non-degenerate decreasing).
 \end{enumerate}
\end{itemize}
\end{theorem}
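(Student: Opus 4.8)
The plan is to reduce everything to the study of the region between consecutive compact leaves. First I would invoke Haefliger's theorem (Theorem~\ref{H}): the union $K_\cF$ of compact leaves of $\cF$ and the union $K_\cG$ of compact leaves of $\cG$ are compact. Since $\cF$ and $\cG$ are transverse and have parallel compact leaves in a class $\alpha$, all these compact leaves are isotopic to each other (any compact leaf of $\cF$ not parallel to $\alpha$ would, by Proposition~\ref{p.Reeb} applied after noting the foliations have no Reeb components or else would force a second parallel class — one checks the parallel class is unique exactly because the foliations are transverse), and, as remarked after the statement of Theorem~\ref{thm.parallel}, every compact leaf of $\cF$ is disjoint from every compact leaf of $\cG$. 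Choose a diffeomorphism sending $\alpha$ to the vertical class, so that after an initial change of coordinates every compact leaf of $\cF$ and of $\cG$ is a vertical circle $\{s\}\times\SS^1$; then $K_\cF\cup K_\cG$ is a compact subset of $\SS^1\times\SS^1$ meeting each horizontal circle $\SS^1\times\{\cdot\}$ — wait, I mean it is a union of vertical circles, hence corresponds to a compact subset $F\subset\SS^1$ of the first coordinate. The complement $\SS^1\setminus F$ is a countable union of open intervals; I would first treat the case $F$ finite and then indicate how to reduce to it.

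The core geometric input is that on any annulus bounded by two adjacent vertical circles of $F$ (i.e. an annulus $[a,b]\times\SS^1$ whose interior contains no compact leaf of either foliation), each of $\cF$ and $\cG$, being transverse to the boundary circles and having no compact leaf inside, has a well-defined holonomy; and by transversality with the vertical boundary circles combined with Poincaré–Bendixson one of the two foliations is transverse to \emph{every} vertical circle in the annulus (Lemma~\ref{compact leaf} shows the vertical circle $\{a\}\times\SS^1$ is a complete transversal for any foliation with no parallel compact leaf inside). Concretely: if the interior annulus $C_i$ contains a compact leaf of $\cF$ then by Proposition~\ref{p.Reeb}-type reasoning $\cG$ is transverse to all vertical circles there, giving cases (1)/(2); otherwise both foliations are transverse to all vertical circles in $C_i$, and then the monotonicity of holonomy together with the transversality $\cF\pitchfork\cG$ (the two holonomies from one boundary to the other never agree, so one is everywhere above the other) yields that, after orienting, the lifted leaves of $\cF$ are strictly monotone and those of $\cG$ are weakly monotone in the \emph{same} sense, or vice versa — the four sub-cases (3)–(6) according to which foliation is the "non-degenerate" one and whether the common monotonicity is increasing or decreasing. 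To put the annulus in the stated normal form one applies Lemma~\ref{l.circles}: in case (1) one straightens $\cF$ to the horizontal foliation by a diffeomorphism preserving every leaf of $\cG$ (and symmetrically for case (2)); in cases (3)–(6) one uses Theorem~\ref{t.circles}/Lemma~\ref{l.circles} on the pair to make both transverse to all horizontal circles, then a fibre-preserving diffeomorphism to arrange the precise monotonicity claimed — and since all these diffeomorphisms are chosen to be the identity near the boundary of each annulus, they glue to a global $\theta\colon\TT^2\to\TT^2$.

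The point requiring the most care — and where I expect the main obstacle — is the passage from "finitely many adjacent annuli" to the general case: $F=\{s:\{s\}\times\SS^1\text{ is a compact leaf of }\cF\text{ or }\cG\}$ need not be finite, so a priori the index set should be an infinite, possibly Cantor-like, ordered set rather than $\ZZ/k\ZZ$. To get the clean statement with a finite cyclically ordered family $\{t_i\}$ one must argue that the \emph{combinatorial} type of the annuli (which of the six cases occurs, together with the monotonicity sign) is locally constant in a strong sense, so that maximal blocks of a given type can be amalgamated into finitely many annuli. The key sub-claim is: if $\{s_n\}\to s$ with $\{s_n\}\times\SS^1$ and $\{s\}\times\SS^1$ all compact leaves (say of $\cF$), then on a one-sided neighbourhood of $s$ the behaviour of $\cG$ is forced to be of a single type — this follows from continuity of holonomy (Proposition~\ref{p.continuity of translation number}) and the fact that transversality of $\cF$ with $\cG$ forces, near an accumulation of compact leaves, $\cG$ to be transverse to all verticals there (otherwise $\cG$ too would have a compact leaf parallel to $\alpha$ in every gap, and one checks this still produces the required sign pattern). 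After establishing local constancy of the type, a compactness argument on $\SS^1$ reduces $F$ effectively to a finite set of "transition points" $\{t_i\}$, completing the proof. I would organize this as: (a) coordinates and compactness of $K_\cF\cup K_\cG$; (b) analysis of a single gap annulus giving the six cases; (c) normal form on each annulus via Lemma~\ref{l.circles} with identity near the boundary; (d) local constancy of the type and the finiteness/amalgamation argument; (e) gluing.
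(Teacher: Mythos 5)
Your overall strategy (cut $\TT^2$ into vertical annuli according to the compact leaves, normalize on each annulus by deformations that are the identity near the cut circles, then glue) is the same as the paper's, and the finiteness issue you single out as the main obstacle is in fact the easy part: since $K_{\cF}$ and $K_{\cG}$ are disjoint compact sets they are at positive distance, so any two compact leaves of $\cF$ within that distance bound an annulus free of $K_{\cG}$; this yields finitely many \emph{maximal} $\cF$-annuli and $\cG$-annuli, cyclically alternating, with no need for a ``local constancy of type'' argument. The genuine gaps are elsewhere. First, your justification of cases (3)--(6) does not work. Transversality of $\cF$ and $\cG$ only gives that the two lifted holonomies $f,g$ from one cut circle to the other never agree, hence that one is everywhere above the \emph{other}; but the normal form requires them to lie on opposite sides of the \emph{identity}, i.e.\ $(f(x)-x)(g(x)-x)<0$, which is a strictly stronger statement and is what forces one foliation to be (weakly) increasing and the other (weakly) \emph{decreasing} -- note that you describe the monotonicities as being ``in the same sense,'' which contradicts the statement you are proving and would destroy the cone separation that makes vertical translations preserve transversality in the proof of Theorem~\ref{thm.parallel}. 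The sign condition is not formal: the paper derives it (Lemma~\ref{l.between}) from the fact that, as the cut circles approach the bounding compact leaves, the leaves of $\cF$ spiral into the $\cF$-compact leaf and those of $\cG$ into the $\cG$-compact leaf, so the two holonomy displacements tend to $+\infty$ and $-\infty$ respectively. Even then, a single gap between an $\cF$-compact-leaf region and a $\cG$-compact-leaf region cannot carry a single one of the types (3)--(6): it must be split at its midpoint into two annuli (e.g.\ type (3) followed by type (6)), because $\cF$ must be allowed to become horizontal at one end and $\cG$ at the other.

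Second, you do not address the deformation needed \emph{next to} the compact leaves. For the annulus adjacent to a maximal $\cF$-annulus to be of type (3)--(6) starting from the cut circle, you must first comb $\cF$ so that it is transverse to every vertical circle between its outermost compact leaf $\{b\}\times\SS^1$ and the cut circle. Lemma~\ref{l.circles} cannot do this: it requires $\cF$ to be transverse to the boundary of the annulus in question and to have no compact leaf there, whereas here one boundary component \emph{is} a compact leaf of $\cF$ into which the nearby leaves spiral. This is exactly what the Claim inside Lemma~\ref{l.maximal}, together with the explicit shear of Lemma~\ref{l.analysis}, is for. Relatedly, in cases (1)/(2) the statement requires one foliation to \emph{coincide} with the horizontal foliation, not merely to be transverse to the vertical circles; that normalization comes from Proposition~\ref{p.holonomies} (conjugating suspensions with equal holonomy), not from Lemma~\ref{l.circles}. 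Your step (c) therefore needs to be replaced by the two-stage argument of Proposition~\ref{p.maximal} (straighten the compact-leaf-free foliation to horizontal, then comb the other one outside its band of compact leaves) before the monotone normalization of the in-between annuli can even be set up.
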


The proof of Theorem~\ref{t.normal-form} will be done in the next subsections.  We start below by ending the proof of Theorem~\ref{thm.parallel}.
\begin{proof}[Proof of Theorem~\ref{thm.parallel}] Let $\cF$ and $\cG$ be two $C^1$ foliations of $\TT^2$ admitting  parallel compact leaves,
and let $\alpha\in\pi_1(\TT^2)$ be the homotopy class of the compact leaves of $\cF$ and $\cG$.
Let $k>0$, $\{t_i\}_{i\in\ZZ/k\ZZ}$, and $\theta$ be the integer, the elements of $\SS^1$
 and the diffeomorphism given  by Theorem~\ref{t.normal-form}, respectively.

One easily checks  that there is at least one annulus of the type (1) or (2).  As a consequence, the compact leaves of $\theta(\cF)$ are isotopic to the vertical circle $\{0\}\times \SS^1$.

Consider any vertical vector $(0,t)$, for  $t\in\RR$,  and let $V_t$ be the vertical translation defined by $(r,s)\mapsto (r,t+s)$.  Then $V_t$ preserves each annulus $C_i$.  Now one can check, on each annulus $C_i$, that
$V_t(\theta(\cF))$ is transverse to $\theta(\cG)$.

Consider now $\beta\in \langle\alpha\rangle$, so that $\beta=n\alpha$ for some $n\in\ZZ$. Then the announced loop of diffeomorphisms is $\{\theta^{-1}\circ V_{nt}\circ\theta\}_{t\in[0,1]}$.
\end{proof}
\subsection{First decomposition in annuli}

By Theorem \ref{H}, the sets of compact leaves of $\calF$ and $\calG$ are all compact sets. We denote the unions of compact leaves
 of $\calF$ and $\calG$ as $K_F$ and $K_G$ respectively. Note that every compact leaf of $\cF$ is disjoint from any compact leaf of $\cG$,
 because they are in the same homotopy class, and by assumption, $\cF$ and $\cG$ are transverse.  Thus $K_G$ and $K_F$ are disjoint compact sets.

 The aim of this section is to prove Proposition~\ref{p.normal-form} below which is an important step for proving Theorem~\ref{t.normal-form}.

\begin{proposition}\label{p.normal-form} Let $\cF$ and $\cG$ be two transverse $C^1$-foliations on $\TT^2$ having parallel compact leaves.
Then there are $k_0$ and  a family $\{B_i\}_{i\in\ZZ/4k_0\ZZ}$ of annuli so that
\begin{itemize}
 \item each $B_i$ is an annulus diffeomorphic to $[0,1]\times\SS^1$ and embedded in $\TT^2$ whose boundary is transverse to both foliations $\cF$ and $\cG$.
 \item $B_i$ is disjoint from $B_{j}$ if $j\notin\{i-1,i,i+1\}$,  and $B_i\cap B_{i+1}$ consists in a common connected component of the boundaries $\partial B_i$ and $\partial B_{i+1}$.
 In particular,  the interiors of these $B_i$ are pairwise disjoint;
 \item each annulus $B_{2j+1}$ is disjoint from the compact leaves of $\cF$ and of $\cG$, that is
 $$B_{2j+1}\cap\left(K_F\cup K_G)\right)=\emptyset;$$
 \item each annulus $B_{4i}$ contains compact leaves of $\cF$ and is disjoint from the compact leaves of $\cG$;
 \item each annulus $B_{4i+2}$ contains compact leaves of $\cG$ and is disjoint from the compact leaves of $\cF$.
\end{itemize}
\end{proposition}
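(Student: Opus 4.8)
The plan is to build the annuli $B_i$ by first isolating the compact leaves of $\cF$ and $\cG$, and then filling the gaps between them. Recall that $K_F$ and $K_G$ are disjoint compact sets, each being a union of circles in the (common) homotopy class $\alpha$. I would start by working in coordinates $\TT^2 = \SS^1\times\SS^1$ in which $\alpha$ is the vertical direction $\{*\}\times\SS^1$ and in which there exists a horizontal circle $\SS^1\times\{0\}$ that is a complete transversal for both foliations (such a circle exists: it must meet every compact leaf, and away from the compact parts one invokes Lemma~\ref{compact leaf} as in Section~\ref{s.section} to see that a transversal cutting each compact leaf once is in fact complete). Parametrizing $K_F\cup K_G$ by its trace on this transversal, I get a compact subset $Q\subset\SS^1$, disjoint union of the two compact sets $Q_F$ and $Q_G$. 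The complement $\SS^1\setminus Q$ is a countable union of open intervals.

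The second step is to use the separation of $Q_F$ and $Q_G$ to choose finitely many ``cutting circles''. Since $Q_F$ and $Q_G$ are compact and disjoint, one can pick points $s_1<s_2<\dots<s_{2k_0}$ (cyclically) in $\SS^1\setminus Q$ so that the closed arcs they cut out alternate: some contain points of $Q_F$ only, the next ones contain points of $Q_G$ only, and in between one still has room to insert the ``buffer'' arcs disjoint from $Q$ entirely. Concretely I would take the $2k_0$ arcs containing compact leaves to be $B_{4i}$ (those meeting $K_F$) and $B_{4i+2}$ (those meeting $K_G$), and between consecutive ones insert a thin annulus $B_{2j+1}$ whose closure is disjoint from $K_F\cup K_G$. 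To make the boundary circles genuinely transverse to both $\cF$ and $\cG$ (not merely horizontal, which a priori they are, being horizontal circles $\SS^1\times\{s_j\}$ chosen in the region away from the compact leaves), I note that near a point of $\SS^1\setminus Q$ neither foliation has a compact leaf, so by Haefliger's theorem~\ref{H} and openness there is a whole neighborhood free of compact leaves; inside it the horizontal circle is automatically transverse to both $\cF$ and $\cG$ provided the coordinate is chosen (by an initial isotopy) so that both foliations are transverse to the horizontal direction on a neighborhood of each $\SS^1\times\{s_j\}$ — and this can be arranged since away from the compact leaves one is in the ``suspension'' regime of Proposition~\ref{p.suspension}. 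One then checks that each annulus between consecutive cutting circles is an embedded $[0,1]\times\SS^1$ with boundary transverse to both foliations, that adjacent annuli share exactly a boundary circle, and that non-adjacent ones are disjoint; the alternation $F$-compact / buffer / $G$-compact / buffer gives the index pattern modulo $4k_0$.

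The main obstacle I anticipate is twofold. First, one must guarantee that each ``compact'' annulus $B_{4i}$ contains compact leaves of $\cF$ \emph{and} is disjoint from compact leaves of $\cG$ simultaneously — this is exactly where the hypothesis that the foliations are transverse and the compact leaves are parallel is used: a compact leaf of $\cF$ and a compact leaf of $\cG$ cannot intersect (they are homotopic and would have to cross transversally, which is impossible for two disjoint simple closed curves in the same free homotopy class on $\TT^2$ unless they coincide, and coincidence contradicts transversality). Hence $K_F$ and $K_G$ are separated, and the alternation of arcs is possible. Second, the number $k_0$ must be finite: this is precisely Haefliger's Theorem~\ref{H}, which says $K_F$ and $K_G$ are compact, so $\SS^1\setminus Q$ has only finitely many components of length exceeding any given threshold, and the separation distance between $Q_F$ and $Q_G$ is positive; choosing the cutting points with spacing below that distance uses only finitely many of them. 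Everything else — transversality of boundary circles, the combinatorial incidence pattern of the $B_i$ — is routine once the coordinates and cutting points are fixed. I would present the argument in this order: fix coordinates and the complete transversal; invoke Theorem~\ref{H} and the parallel-transverse incompatibility to get $Q_F$, $Q_G$ compact and separated; choose the $2k_0$ cutting arcs with the alternating pattern and insert buffers; verify the five bulleted properties.
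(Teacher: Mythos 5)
Your decomposition is the same as the paper's: the maximal blocks of compact leaves of $\cF$ and of $\cG$ alternate cyclically (finitely many of them, by Haefliger's theorem together with the positive distance between the disjoint compact sets $K_F$ and $K_G$), one thickens each block slightly, and one inserts buffer annuli in between. The combinatorial part of your argument is fine. But there is a genuine gap at the technical heart of the statement, namely the requirement that the boundary circles of the $B_i$ be transverse to \emph{both} foliations. You assert that a circle in the class $\alpha$ lying in a gap free of compact leaves is ``automatically transverse to both $\cF$ and $\cG$ provided the coordinate is chosen so that both foliations are transverse to the cutting direction there'', invoking Proposition~\ref{p.suspension}. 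That is circular: Proposition~\ref{p.suspension} produces, for \emph{one} foliation without compact leaves on an annulus, a fibration transverse to it; it does not produce a single circle transverse to two foliations simultaneously, and choosing coordinates in which both foliations are transverse to the vertical direction presupposes exactly the circle you are trying to construct. The paper handles this point by a separate construction (Lemma~\ref{l.Bi}): in each component of the gap one takes a closed curve made of one segment of a leaf of $\cF$ concatenated with one segment of a leaf of $\cG$ and smooths it, exactly as in Section~\ref{s.section}; the transversality to both foliations comes from the correct choice of the oriented segments in that smoothing, not from general position.

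A second, related weak point is your opening step: you posit a single circle that is a complete transversal to both $\cF$ and $\cG$ and meets each compact leaf once, so as to read off $K_F\cup K_G$ as a compact subset $Q$ of $\SS^1$. Proposition~\ref{st} produces a common complete transversal only in the \emph{non}-parallel case; in the parallel case the existence of such a curve (transverse to both foliations globally and crossing each compact leaf exactly once) is not established anywhere in the paper and would itself require an argument of the same nature as the one you are omitting. This detour is avoidable --- the paper works directly with the annuli bounded by compact leaves ($\cF$-annuli and the relation of $K_G$-homotopy) rather than with a trace on a transversal --- but as written your parametrization rests on an unproved claim. Replacing the transversal by the intrinsic equivalence classes of Lemma~\ref{l.F-annulus}, and the ``automatic transversality'' by the smoothing construction of Lemma~\ref{l.Bi}, would turn your outline into the paper's proof.
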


We say that a compact set $C$ is  \emph{a $\cF$-annulus} (resp. \emph{a $\cG$-annulus}) if we have the following:
 \begin{itemize}
 \item[--] the compact set $C$ is  diffeomorphic to either   $\SS^1$ or   $\SS^1\times [0,1]$;
 \item[--] the compact set $C$ is disjoint from $K_G$ (resp. of $K_F$);
 \item[--] the boundary of $C$ consists in compact leaves of $\cF$ (resp. of $\cG$).
\end{itemize}  We say that two compact leaves $L_1, L_2$ of $\cF$ (resp. of $\cG$) are
\emph{$K_G$-homotopic} (resp. \emph{$K_F$-homotopic}) if $L_1\cup L_2$ bounds a $\cF$-annulus (resp. \emph{a $\cG$-annulus}).

\begin{Remark}\label{r.F-homotopic}
 \begin{itemize}
  \item The union of two  non-disjoint $\cF$-annuli  is a $\cF$-annulus.
  \item two compact leaves of $\cF$ are $K_G$-homotopic if and only if  they are contained in the same $\cF$-annulus.
  \item there is $\delta>0$ so that any two   compact leaves of $\cF$ passing through points $x,y$ with $d(x,y)\leq \delta$ are $K_G$-homotopic.
 \end{itemize}
\end{Remark}

As a direct consequence of Remark~\ref{r.F-homotopic}, one gets
\begin{lemma}\label{l.F-annulus}
\begin{enumerate}
 \item The relation of  $K_G$-homotopy (resp. of $K_F$-homotopy) is an equivalence relation on $K_F$ (resp. of $K_G$).
 \item there are finitely many $K_G$-homotopy classes (resp. $K_F$-homotopy classes).
 \item There are $k\in \NN\setminus\{0\}$ and  pairwise disjoint compact sets $\{A_i\}_{i\in\ZZ/2k\ZZ}$ , so that
 \begin{itemize}
  \item $A_{2i}$ is an $\cF$-annulus and $A_{2i+1}$ is a $\cG$-annulus.
  \item For each $K_G$-homotopy class (resp. $K_F$-homotopy class)  of compact leaves of $\cF$ (resp. of $\cG$), there is a (unique) $i$ so that the class is precisely the set of compact leaves
  of $\cF$ (resp. of $\cG$) contained in $A_{2i}$ (resp. in $A_{2i+1}$).
  \item these $\{A_i\}$ are cyclically ordered in the following meaning: for any $i\in\ZZ/2 k\ZZ$,  the set  $\TT^2 \setminus (A_{i-1}\cup A_{i+1})$ consists precisely in two disjoint  open annuli such that one
  of them contains $A_i$ and is disjoint from $A_j$ for $j\neq i$.
 \end{itemize}
\end{enumerate}
\end{lemma}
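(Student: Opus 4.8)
The statement follows from Remark~\ref{r.F-homotopic} together with Haefliger's Theorem~\ref{H}, so the plan is to extract each of the three conclusions in turn and then assemble the cyclic ordering of the annuli $\{A_i\}$.

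First, for item (1), I would verify that $K_G$-homotopy is an equivalence relation on $K_F$. Reflexivity and symmetry are immediate from the definition (a single compact leaf, or a pair written in either order, bounds the degenerate/same $\cF$-annulus). For transitivity, if $L_1\cup L_2$ bounds an $\cF$-annulus $C_{12}$ and $L_2\cup L_3$ bounds an $\cF$-annulus $C_{23}$, then $C_{12}$ and $C_{23}$ share the leaf $L_2$, hence are non-disjoint, so by the first bullet of Remark~\ref{r.F-homotopic} their union $C_{12}\cup C_{23}$ is again an $\cF$-annulus; it contains $L_1$ and $L_3$ on (or adjacent to) its boundary, and by the second bullet of Remark~\ref{r.F-homotopic} this means $L_1$ and $L_3$ are $K_G$-homotopic. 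The argument for $K_F$-homotopy on $K_G$ is symmetric.

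Second, for item (2), I would argue by contradiction using the third bullet of Remark~\ref{r.F-homotopic}: there is $\delta>0$ such that any two compact leaves of $\cF$ through points at distance $\le\delta$ are $K_G$-homotopic. If there were infinitely many $K_G$-homotopy classes, pick a representative leaf in each; each such leaf meets $K_F$, which is compact by Theorem~\ref{H}, so by compactness two of these representatives pass through points within distance $\delta$ of each other, hence lie in the same class --- a contradiction. So there are finitely many classes; likewise for $K_F$-homotopy classes in $K_G$.

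Third, for item (3), each $K_G$-homotopy class is, by the second bullet of Remark~\ref{r.F-homotopic}, exactly the set of compact leaves of $\cF$ in a single $\cF$-annulus (take the union of all $\cF$-annuli meeting that class; by finiteness and the union property this is itself an $\cF$-annulus, maximal among those carrying the class); call these $\cF$-annuli $A'_1,\dots,A'_p$ and the $\cG$-annuli $A''_1,\dots,A''_q$, all pairwise disjoint since distinct classes are disjoint and $K_F\cap K_G=\emptyset$. Each such annulus is isotopic to the common homotopy class $\alpha$ of the compact leaves, so cutting $\TT^2$ along one boundary circle of $A'_1$ gives an annulus in which all the $A'_j$ and $A''_\ell$ are disjoint sub-annuli (or circles) transverse to the $\SS^1$-factor; this linearly orders them, and in particular the $\cF$- and $\cG$-annuli must alternate --- between two consecutive $\cF$-annuli there is a region disjoint from $K_F$; it must meet $K_G$ (otherwise the two $\cF$-annuli together with the region between them would form one larger $\cF$-annulus, contradicting maximality/distinctness), so it contains a $\cG$-annulus, and symmetrically. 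Hence $p=q=:k$ and, after relabelling cyclically, we get $\{A_i\}_{i\in\ZZ/2k\ZZ}$ with $A_{2i}$ an $\cF$-annulus and $A_{2i+1}$ a $\cG$-annulus, satisfying the stated cyclic-ordering property: $\TT^2\setminus(A_{i-1}\cup A_{i+1})$ is two open annuli, one containing $A_i$ and disjoint from the other $A_j$.

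The main obstacle I expect is item (3): making rigorous the passage from "finitely many pairwise disjoint essential sub-annuli of $\TT^2$, all in the homotopy class $\alpha$" to a genuine cyclic order, and then checking that the $\cF$- and $\cG$-annuli must interleave without gaps of the wrong type. This is where one uses that every complementary region between consecutive $\cF$-annuli is forced (by maximality of the annuli as carriers of a homotopy class, i.e. by the union property of Remark~\ref{r.F-homotopic}) to contain a compact leaf of $\cG$, and vice versa; the bookkeeping is elementary but needs care to phrase correctly.
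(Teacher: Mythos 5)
Your proposal is correct and follows essentially the same route as the paper: items (1) and (2) are deduced from Remark~\ref{r.F-homotopic} (transitivity via the union property, finiteness via the uniform $\delta$ and compactness of $K_F$ from Haefliger's theorem), and for item (3) you define each $A_i$ as the union of all $\cF$-annuli (resp.\ $\cG$-annuli) carrying a given class, observe these are disjoint essential compact annuli hence cyclically ordered, and force the alternation by noting that two adjacent maximal $\cF$-annuli would merge into a single $\cF$-annulus, contradicting maximality --- exactly the paper's argument. The only difference is that you spell out details the paper leaves implicit.
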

\begin{proof} The set $\{A_i\}$ is  defined as the set of   the unions of  all the $\cF$-annuli containing  compact leaves of $\cF$ in a given $K_G$ homotopy class
 and
the unions of  all the $\cG$-annuli containing  compact leaves of $\cG$ in a given $K_F$ homotopy class.
Then $\{A_i\}$ bound a family of disjoint compact annuli (or circles) whose boundary are non-null homotopic  simple curves on $\TT^2$. Thus these curves are in the same homotopy class and
the annuli are cyclically  ordered on $\TT^2$. Thus, up to reorder the annuli, we assume that the order is compatible with the cyclic order.
Finally if $A_i$ is a  $\cF$ annulus  then $A_{i+1}$ cannot be a $\cF$-annulus, otherwise there would exist a $\cF$-annulus containing
both $A_i$ and $A_{i+1}$,
contradicting to the maximality of $A_i$.
\end{proof}

The annuli $A_{2i}$ and $A_{2i+1}$ will be called the \emph{maximal $\cF$-annuli} and \emph{maximal $\cG$-annuli}, respectively.

\begin{lemma}\label{l.Bi} With the hypotheses and terminology above, each maximal $\cF$-annulus (resp. maximal $\cG$-annulus) $A$  admits a base of neighborhoods $\{B_n\}_{n\in\NN}$ which are
diffeomorphic to $[0,1]\times \SS^1$ and whose boundaries are transverse to both $\cF$ and $\cG$.
\end{lemma}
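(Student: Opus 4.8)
The plan is to build the neighborhoods $B_n$ by flowing outward along a vector field transverse to both foliations, using the fact that near a maximal $\cF$-annulus $A$ the two foliations $\cF$ and $\cG$ are easy to understand. First, let $A$ be a maximal $\cF$-annulus; its boundary $\partial A$ consists of one or two compact leaves of $\cF$ (if $A$ is a circle, $\partial A = A$ itself and the argument is the same taking a thin collar). Fix a transverse orientation of $\cF$ near $\partial A$. The key geometric input is Proposition~\ref{p.Reeb}: since $\cF$ and $\cG$ are transverse and $A$ contains no $\cG$-compact-leaf, on a sufficiently small one-sided collar of each boundary leaf of $A$ the foliation $\cG$ is transverse to that leaf and has no compact leaf there, while $\cF$ has $\partial A$ as a leaf; so in a small neighborhood $N$ of $A$ we have complete control: $\cG$ is everywhere transverse to copies of the boundary curve, and the compact leaves of $\cF$ in $N$ are exactly those in $A$.

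Next I would produce a smooth vector field $Z$ defined on a neighborhood of $A$ which is transverse to both $\cF$ and $\cG$: near $\partial A$ take $Z$ to be, say, $\tfrac12(X+Y)$ where $X$ directs $\cF$ and $Y$ directs $\cG$ (both oriented coherently by the transverse orientation of $\cF$), then slightly perturb to a smooth nonsingular $Z$ still transverse to $\cF$ and $\cG$, pointing \emph{out} of $A$ along $\partial A$. The time-$s$ flow $Z_s$ of $Z$, for small $s>0$, pushes each boundary curve of $A$ to a nearby simple closed curve $\gamma_s$; since $Z$ is transverse to $\cF$ and to $\cG$, each such $\gamma_s$ is a simple closed curve transverse to both foliations, and it is isotopic to $\partial A$ hence non-null-homotopic. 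Setting $B_s$ to be the union of $A$ with the two thin annular regions swept out between each component of $\partial A$ and the corresponding $\gamma_s$ (one thin annulus if $A$ is a circle), one gets an embedded annulus diffeomorphic to $[0,1]\times\SS^1$, containing $A$ in its interior, whose boundary $\gamma_s$ is transverse to $\cF$ and $\cG$. As $s\to 0$ these $B_s$ form a neighborhood basis of $A$, which is the assertion; taking $s=1/n$ gives the sequence $\{B_n\}$.

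The main obstacle is not the flowing construction itself but verifying that the curves $\gamma_s$ one obtains are genuinely \emph{transverse} to $\cF$ and $\cG$ and \emph{simple}, for all small $s>0$ simultaneously, rather than just generically; this is where the no-Reeb-component / Proposition~\ref{p.Reeb} control near $A$ is essential, since it guarantees that on a whole one-sided collar of $\partial A$ the direction field of $\cG$ stays uniformly away from the direction of $\partial A$, so $Z$ can be chosen transverse to both on that collar and its short-time flow keeps $\gamma_s$ transverse and embedded. A minor technical point to dispatch is the case where $\partial A$ has two components lying in different maximal positions: one just does the construction independently near each boundary component and takes the union, which is still an embedded $[0,1]\times\SS^1$ because the two collars are disjoint for $s$ small. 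No compactness issue arises beyond that $\partial A$ is compact, so uniform choices of $s$ exist.
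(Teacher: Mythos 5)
There is a genuine gap at the central step of your construction. You define $\gamma_s=Z_s(\partial A)$ as the image of the boundary leaf under the time-$s$ flow of a vector field $Z$ transverse to both foliations, and you assert that ``since $Z$ is transverse to $\cF$ and to $\cG$, each such $\gamma_s$ is transverse to both foliations.'' That inference is false: transversality of $Z$ to $\cF$ says nothing about the image of a \emph{curve} under the time-$s$ map of $Z$. The curve you are pushing is a compact \emph{leaf} of $\cF$, i.e.\ it is everywhere tangent to $\cF$; for small $s$ the curve $\gamma_s$ is $C^1$-close to $\partial A$, hence its tangent direction is everywhere close to the direction of $\cF$, which is exactly the opposite of what you need. (Test case: $\cF$ the horizontal foliation of a collar $\SS^1\times[0,\epsilon]$ with $\partial A=\SS^1\times\{0\}$ and $Z=\partial/\partial y$; then $Z_s(\partial A)=\SS^1\times\{s\}$ is tangent to $\cF$, and nearby foliations whose leaves spiral slowly toward $\partial A$ behave the same way up to a $C^1$-small error.) Transversality of $\gamma_s$ to $\cG$ does survive, since $\partial A$ is transverse to $\cG$ and transversality is $C^1$-open, but transversality to $\cF$ is precisely the hard point and your argument does not address it. The statement that \emph{is} true, and that the paper uses elsewhere (Proposition~\ref{st}), is that a curve everywhere \emph{tangent} to $Z$ (e.g.\ a periodic orbit of $Z$) is transverse to both foliations; $Z_s(\partial A)$ is not such a curve.

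The paper's proof avoids this entirely: it takes a neighborhood $U$ of $A$ with $U\setminus A$ disjoint from all compact leaves of $\cF$ and of $\cG$, observes that each component of $U\setminus A$ contains an embedded circle made of exactly one arc of an $\cF$-leaf and one arc of a $\cG$-leaf, and smooths that circle into a simple closed curve transverse to both foliations exactly as in Section~\ref{s.section}. If you want to repair your argument you should replace the flowed curves $\gamma_s$ by curves obtained from such a leaf-arc concatenation (or by genuine closed orbits of a transverse vector field confined to the collar), and then check separately that they are non-null-homotopic and cobound with $\partial A$ a thin annulus, which is what gives the neighborhood basis.
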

\begin{proof} Assume for instance that $A$ is a maximal $\cF$-annulus. Its boundary consists in compact leaves of $\cF$, and in particular is transverse to $\cG$.
Furthermore any neighborhood $V$ of $A$ contains
an annulus $U$ which is a neighborhood of $A$ and satisfies that $U\setminus A$ is disjoint from $K_F$ and  $K_G$.
Now each connected component of $U\setminus A$ contains an embedded circle which consists in   exactly one segment of leaf of $\cF$ and one segment
of leaf of $\cG$.
Exactly as in Section~\ref{s.section}, we get a simple curve transverse to both $\cF$ and $\cG$ by smoothing such a curve.

One gets the announced annulus by considering such a transverse curve to both $\cF$ and $\cG$ in each connected component of $U\setminus A$.

\end{proof}
\begin{proof}[Proof of Proposition~\ref{p.normal-form}] The announced annuli $B_{4i}$ and $B_{4i+2}$ are pairwise disjoint neighborhoods of the
maximal $\cF$ annuli and maximal $\cG$-annuli, respectively,  given
by Lemma~\ref{l.Bi}. Each  annulus $B_{2j+1}$ is given by  the closure of  a connected component of $\TT^2\setminus\bigcup_i(B_{4i}\cup B_{4i+2})$.
\end{proof}

\subsection{In the neighborhoods of the maximal $\cF$ annuli}

The aim of this section is to prove the following Proposition which implies that, in the neighborhoods of the maximal $\cF$ and $\cG$-annuli,
one can put $\cF$ and $\cG$ in the position announced by Theorem~\ref{t.normal-form}.

\begin{proposition}\label{p.maximal} Let $\cF$ and $\cG$ be two transverse foliations on $\T^2$ having parallel compact leaves.
 Let $\{B_j\}_{\in\ZZ/4k\ZZ}$ be the annuli, which are     built in Proposition~\ref{p.normal-form} and whose boundaries are  transverse
to both $\cF$ and $\cG$,   and $A_{4i}$  (resp. $A_{4i+2}$) be  the maximal $\cF$-annuli (resp.$\cG$-annuli) contained in $B_{4i}$
 (resp. in $B_{4i+2}$), for $i\in  \ZZ/k\ZZ$.

Then there exists $\theta\in\diff^1(\TT^2)$ such that  for every $j\in\ZZ/4k\ZZ$,  one has $$\theta (B_j)=[\frac j{4k},\frac{j+1}{4k}]\times \SS^1;$$
 and for every $i\in\ZZ/k\ZZ$, one has:
\begin{itemize}
 \item the foliation $\theta(\cG)$ coincides with  the horizontal foliation  $\{[\frac{4i}{4k},\frac{4i+1}{4k}]\times\{t\}\}_{t\in\SS^1}$ on $\theta (B_{4i})=[\frac{4i}{4k},\frac{4i+1}{4k}]\times \SS^1 $;
 \item there are $\frac{4i}{4k} <a_{4i}\leq b_{4i}<\frac{4i+1}{4k}$ such that  $\theta(A_{4i})=[a_{4i},b_{4i}]\times \SS^1$. In particular $\{a_{4i}\}\times\SS^1$ and $\{b_{4i}\}\times \SS^1$ are compact leaves
 of $\theta(\cF)$;
 \item the foliation $\theta(\cF)$ is transverse to the vertical circle  $\{r\}\times \SS^1$, for any $r\in [\frac {4i}{4k}, a_{4i})\cup (b_{4i},\frac{4i+1}{4k}]$.
 \end{itemize}
 and similarly:
 \begin{itemize}
 \item the foliation $\theta(\cF)$ coincides with  the horizontal foliation  $\{[\frac{4i+2}{4k},\frac{4i+3}{4k}]\times\{t\}\}_{t\in\SS^1}$ on $\theta ( B_{4i+2})=[\frac{4i+2}{4k},\frac{4i+3}{4k}]\times \SS^1 $
 \item there are $\frac{4i+2}{4k} <a_{4i+2}\leq b_{4i+2}<\frac{4i+3}{4k}$ so that  $\theta(A_{4i+2})=[a_{4i+2},b_{4i+2}]\times \SS^1$. In particular $\{a_{4i+2}\}\times\SS^1$ and $\{b_{4i+2}\}\times \SS^1$ are compact leaves
 of $\theta(\cG)$;
 \item the foliation $\theta(\cG)$ is transverse to the vertical circle  $\{r\}\times \SS^1$, for any $r\in [\frac {4i+2}{4k}, a_{4i+2})\cup (b_{4i+2},\frac{4i+3}{4k}]$.
 \end{itemize}

\end{proposition}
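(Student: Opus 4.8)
The goal is to build a single diffeomorphism $\theta$ of $\TT^2$ that simultaneously straightens the decomposition $\{B_j\}$ into equal horizontal annuli $[\frac j{4k},\frac{j+1}{4k}]\times\SS^1$, makes $\cG$ horizontal on the $B_{4i}$, makes $\cF$ horizontal on the $B_{4i+2}$, identifies each maximal $\cF$-annulus (resp. $\cG$-annulus) with a horizontal sub-annulus, and makes $\cF$ (resp. $\cG$) transverse to the vertical circles on the complementary collar regions. The natural strategy is to build $\theta$ \emph{annulus by annulus}, constructing it separately on each $B_j$ so that the pieces agree near the common boundary circles, and then gluing. Since the boundary circles $\partial B_j$ are transverse to both foliations and non-null-homotopic, each $B_j$ is itself diffeomorphic to $\SS^1\times[0,1]$, and we have a lot of freedom in the interior as long as we control a neighborhood of the two boundary circles.

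\textbf{Step 1: the annuli $B_{4i}$ (neighborhoods of maximal $\cF$-annuli).} Here $\cG$ has no compact leaf in $B_{4i}$ (since $B_{4i}$ is disjoint from $K_G$) and its boundary is transverse to $\cG$; by Proposition~\ref{p.suspension} and the ensuing remark, $\cG$ restricted to $B_{4i}$ is conjugate to a suspension, so there is a diffeomorphism sending $B_{4i}$ to a standard annulus taking $\cG$ to the horizontal foliation. Now $\cF$ is transverse to this horizontal foliation on the boundary and on a neighborhood of the boundary (transversality is open), but in the interior $\cF$ meets the maximal $\cF$-annulus $A_{4i}$, which consists of horizontal-homotopy-class compact leaves of $\cF$. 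On the two collar components of $B_{4i}\setminus A_{4i}$, $\cF$ has no compact leaves and is transverse to the boundary, so by Lemma~\ref{l.circles} (applied with the roles of ``horizontal'' and ``vertical'' exchanged by a $90^\circ$ rotation) we may further compose with a diffeomorphism \emph{preserving each leaf of $\cG$} — hence keeping $\cG$ horizontal — that makes $\cF$ transverse to the vertical circles on each collar. Finally, inside $A_{4i}$ itself we only need to arrange that $A_{4i}$ becomes a horizontal sub-annulus $[a_{4i},b_{4i}]\times\SS^1$: since $\partial A_{4i}$ already consists of compact leaves of $\theta(\cF)$ and these are isotopic to the vertical circle, a final adjustment (a reparametrization in the first coordinate, preserving the horizontal foliation $\cG$) achieves this. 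Exchanging the roles of $\cF$ and $\cG$ gives the symmetric statement for the $B_{4i+2}$.

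\textbf{Step 2: the transition annuli $B_{2j+1}$ and gluing.} On the odd annuli $B_{2j+1}$ neither foliation has a compact leaf and the boundary is transverse to both, so these are ``easy'' pieces: any diffeomorphism carrying $B_{2j+1}$ to the standard annulus $[\frac{2j+1}{4k},\frac{2j+2}{4k}]\times\SS^1$ works for the statement of the proposition, but we must be careful that near each boundary circle $\partial B_j\cap\partial B_{j\pm1}$ the diffeomorphism we chose on $B_{2j+1}$ agrees with the one already chosen on the adjacent even annulus. The standard device is to fix first, once and for all, a diffeomorphism germ along each of the $4k$ separating circles $\partial B_j$ (each mapped to a horizontal circle $\{\frac j{4k}\}\times\SS^1$), and to require in Step 1 that the constructions on the $B_{4i}$, $B_{4i+2}$ restrict to these prescribed germs near their boundaries — this is possible because in Step 1 all the flexibility is in the interior. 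Then on each $B_{2j+1}$ we only need an interpolating diffeomorphism matching two prescribed boundary germs, which exists because any two such germs are isotopic (the circles are standard). Assembling the pieces gives the global $\theta$.

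\textbf{Main obstacle.} The technical heart is Step 1: inside a neighborhood of a maximal $\cF$-annulus one must straighten $\cG$ to horizontal \emph{and} control $\cF$ on the collars \emph{and} pin $A_{4i}$ to a horizontal sub-annulus, all by diffeomorphisms that preserve prescribed boundary germs and do not destroy work done on the other foliation — in particular the collar adjustment must preserve every leaf of $\cG$, which is exactly what the ``$\theta$ preserves every leaf of $\cG$'' clause in Lemma~\ref{l.circles} is designed to provide. The bookkeeping of the boundary germs, so that the even and odd annuli glue into a genuine diffeomorphism of $\TT^2$ rather than merely a homeomorphism smooth off the circles, is the other delicate point; it is handled exactly as in the proof of Corollary~\ref{c.push}, by arranging that all constructions coincide with the identity (or a fixed germ) near the relevant circles.
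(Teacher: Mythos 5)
Your overall architecture matches the paper's: reduce the proposition to a normal form on each even annulus $B_{4i}$ (straighten $\cG$ to the horizontal foliation using that it has no compact leaf there, pin the compact leaves of $\cF$ inside a sub-annulus $[a,b]\times\SS^1$, then make $\cF$ transverse to the vertical circles on the two collars), with routine gluing over the odd annuli since nothing is required of $\theta$ there beyond matching boundary germs. However, there is a genuine gap at the key technical step. You propose to obtain the transversality of $\cF$ to the vertical circles on the collar components of $B_{4i}\setminus A_{4i}$ by a direct application of Lemma~\ref{l.circles}. This is not legitimate: the inner boundary circle of such a collar is a component of $\partial A_{4i}$, i.e.\ a \emph{compact leaf of $\cF$}, so the hypothesis of Lemma~\ref{l.circles} that $\cF$ be transverse to the boundary of the annulus fails there. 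Worse, that lemma produces a diffeomorphism equal to the identity near the boundary, which can never repair the behavior of $\cF$ arbitrarily close to the compact leaf: the non-compact leaves of $\cF$ in the collar spiral into $\partial A_{4i}$ (Poincar\'e--Bendixson), and in general they have tangencies with vertical circles at points accumulating on the compact leaf, so no diffeomorphism supported away from $\partial A_{4i}$ can remove them all.

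The paper's proof of Lemma~\ref{l.maximal} devotes a separate construction precisely to a neighborhood of each outermost compact leaf: one builds a model foliation $\cH_0$ near $\{a\}\times\SS^1$ with the same holonomy as $\cF$ by interpolating the holonomy map $f$ with the identity via $r\mapsto\alpha(s)f(r)+(1-\alpha(s))r$, then composes with the explicit shear $(x,y)\mapsto(x+\alpha(x)\beta(y),y)$ of Lemma~\ref{l.analysis} to make this model transverse to the vertical circles on $[0,a)\times\SS^1$, and finally conjugates $\cF$ to the model by a diffeomorphism preserving every horizontal leaf (possible because the holonomies agree). Only after this step, which yields transversality of the new $\cF$ to the vertical circles on some $[a-\varepsilon,a)\times\SS^1$, can Lemma~\ref{l.circles} be applied --- namely to the remaining sub-collar $[0,a-\varepsilon]\times\SS^1$, whose two boundary circles are now both transverse to $\cF$. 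Your proposal omits this step entirely, and without it the argument does not go through; the rest (straightening $\cG$ via Proposition~\ref{p.holonomies}, straightening the compact leaves of $\cF$, and the boundary-germ bookkeeping for the gluing) is sound.
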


Proposition~\ref{p.maximal} is a straightforward consequence of Lemma~\ref{l.maximal} below:

\begin{lemma}\label{l.maximal} Let $\cF$ and $\cG$ be two transverse $C^1$-foliations of the annulus $[0,1]\times \SS^1$  so that
the boundary $\{0,1\}\times \SS^1$ is transverse to both $\cF$ and $\cG$.  Assume  that $\cG$ has no compact leaves (in $(0,1)\times \SS^1$)  and    $\cF$ admits
compact leaves in  $(0,1)\times \SS^1$.

  Then there exists  $\theta\in\diff^1([0,1]\times \SS^1)$  so that
\begin{enumerate}
 \item the foliation $\theta(\cG)$ is the horizontal foliation $\{[0,1]\times\{t\}\}_{t\in\SS^1}$.
 \item there are $0<a\leq b<1$ so that $\{a\}\times\SS^1$ and $\{b\}\times \SS^1$ are compact leaves
 of $\theta(\cF)$ and  every compact leaf of $\theta(\cF)$ is contained in $[a,b]\times \SS^1$;
 \item the foliation $\theta(\cF)$ is transverse to the vertical circles $\{r\}\times \SS^1$ for $r\notin[a,b]$.
\end{enumerate}
\end{lemma}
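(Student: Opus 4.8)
The plan is to reduce in stages to the case where $\cG$ is already horizontal and $\cF$ is already in the desired position, using throughout only diffeomorphisms of $[0,1]\times\SS^1$ that preserve each leaf of the horizontal foliation $\cH_0=\{[0,1]\times\{t\}\}_{t\in\SS^1}$, so that once $\cG$ is straightened it stays equal to $\cH_0$. First I would straighten $\cG$: since $\cG$ is transverse to the boundary with no compact leaf in the interior, the Poincar\'e--Bendixson theorem forces every leaf of $\cG$ to run from $\{0\}\times\SS^1$ to $\{1\}\times\SS^1$, and Proposition~\ref{p.suspension} then lets one conjugate $\cG$, by a $C^1$ diffeomorphism, to a suspension and hence to $\cH_0$. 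After this $\cF$ is transverse to $\cH_0$ and to $\{0,1\}\times\SS^1$ and still has compact leaves in the interior; being transverse to $\cH_0$, each such leaf projects with degree one onto the $t$-circle, so it is the graph $r=\gamma(t)$ of a $C^1$ function $\gamma\colon\SS^1\to(0,1)$ and is isotopic to a vertical circle, and any two distinct compact leaves of $\cF$ are disjoint graphs. In particular the compact leaves of $\cF$ are linearly ordered by ``lying below''.

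Next I would isolate the extreme compact leaves. By Haefliger's Theorem~\ref{H} the union $K_F$ of the compact leaves of $\cF$ is compact in $(0,1)\times\SS^1$; choosing a point of $K_F$ realizing the smallest value of $r$, the compact leaf of $\cF$ through it lies below every other compact leaf (two leaves sharing a point coincide), so there is a lowest compact leaf $L_a$, and symmetrically a highest one $L_b$ (possibly $L_a=L_b$). A diffeomorphism $(r,t)\mapsto(\rho(r,t),t)$ equal to the identity near $\{0,1\}\times\SS^1$ carries $L_a$ onto $\{a\}\times\SS^1$ and $L_b$ onto $\{b\}\times\SS^1$ for suitable $0<a\le b<1$; afterwards all compact leaves of $\cF$ lie in $[a,b]\times\SS^1$, the circles $\{a\}\times\SS^1$ and $\{b\}\times\SS^1$ are leaves of $\cF$, and $\cF$ has no compact leaf in $[0,a)\times\SS^1$ or in $(b,1]\times\SS^1$.

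It remains to repair $\cF$ in the two outer collars by a diffeomorphism supported in $[0,a]\times\SS^1\cup[b,1]\times\SS^1$, preserving $\cH_0$ and keeping $\{a\}\times\SS^1$ and $\{b\}\times\SS^1$ as leaves, so that afterwards $\cF$ is transverse to every vertical circle $\{r\}\times\SS^1$ with $r\notin[a,b]$. Working in $[0,a]\times\SS^1$ (the other collar being symmetric), the open region below $L_a$ meets neither $K_F$ nor $K_G=\varnothing$, so the smoothing construction of Section~\ref{s.section} (closing up a once-winding leaf segment of $\cF$ by a short segment of an $\cH_0$-leaf and rounding the corners) yields simple closed curves transverse to both $\cF$ and $\cH_0$, isotopic to vertical circles, lying arbitrarily close to $L_a$. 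Cutting along such a curve $c$: on the annulus between $\{0\}\times\SS^1$ and $c$ the foliation $\cF$ is transverse to the boundary and has no compact leaf, while $\cH_0$ is transverse to the vertical circles, so Lemma~\ref{l.circles} furnishes a diffeomorphism of that annulus, the identity near its boundary and preserving $\cH_0$, after which $\cF$ is transverse to all the vertical circles it contains. Running this over a sequence of such curves $c$ converging to $L_a$, patching the results, and using that near a compact leaf a $C^1$ foliation transverse to $\cH_0$ is a suspension of the holonomy of that leaf, one makes $\cF$ transverse to $\{r\}\times\SS^1$ for all $r\in[0,a)$; the symmetric modification over $[b,1]\times\SS^1$ and the composition of all the diffeomorphisms used then yield the desired $\theta\in\diff^1([0,1]\times\SS^1)$.

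The step I expect to be the main obstacle is this last one: away from $L_a$ and $L_b$ everything is governed by Lemma~\ref{l.circles}, but the leaves of $\cF$ accumulating on $L_a$ spiral and may oscillate in the $r$-direction, so one genuinely has to alter $\cF$ in a collar of $L_a$, putting it into ``suspension normal form'' near the compact leaf; arranging this while keeping everything $C^1$ up to and including $L_a$ — equivalently, controlling the accumulation near $L_a$ of the diffeomorphisms produced by Lemma~\ref{l.circles} — is the delicate point of the argument.
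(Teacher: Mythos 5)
Your first two steps (straightening $\cG$ to the horizontal foliation and normalizing the extreme compact leaves of $\cF$ to $\{a\}\times\SS^1$ and $\{b\}\times\SS^1$) match the paper's reduction, which invokes Proposition~\ref{p.holonomies} to obtain items (1) and (2) at once. The problem is your third step, and you have correctly located the difficulty but not resolved it. Your plan is to exhaust the punctured collar $[0,a)\times\SS^1$ by compact sub-annuli bounded by closed transversals converging to $L_a$, to apply Lemma~\ref{l.circles} on each, and to "patch the results". Lemma~\ref{l.circles} gives, on each sub-annulus, a diffeomorphism equal to the identity near its boundary, but with no control whatsoever on its $C^1$ size; composing infinitely many of them on annuli accumulating on $\{a\}\times\SS^1$ produces a map that is a $C^1$ diffeomorphism of $[0,a)\times\SS^1$ but has no reason to extend to a $C^1$ diffeomorphism (or even to a homeomorphism with nondegenerate derivative) across $\{a\}\times\SS^1$. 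The appeal to a "suspension normal form near the compact leaf" does not repair this: the germ of $\cF$ along $L_a$ is determined by its holonomy, but that holonomy is an arbitrary fixed-point-free $C^1$ germ, and nothing forces the nearby leaves to be graphs over the $\SS^1$ factor, which is exactly what transversality to the vertical circles requires.

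The paper closes this gap with a single explicit construction on a collar of $\{a\}\times\SS^1$ (the Claim inside the proof of Lemma~\ref{l.maximal}). One builds a model foliation $\cH$ having $\{a\}\times\SS^1$ as a compact leaf and the same holonomy as $\cF$, by interpolating holonomies, $h_s(r)=\alpha(s)f(r)+(1-\alpha(s))r$; since $f(r)\neq r$ for $r<a$, this model is transverse to the vertical circles except on the band where it is exactly vertical, and the shear supplied by Lemma~\ref{l.analysis} (with the lower bound $-\varphi$ dictated by the slope of the model) makes it transverse to every vertical circle $\{r\}\times\SS^1$, $r<a$, in one stroke, uniformly up to the compact leaf. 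Because $\cH$ and $\cF$ have equal holonomies and share the compact leaf, they are conjugate by a diffeomorphism of the collar preserving each horizontal segment; after this single correction, Lemma~\ref{l.circles} is applied only once, on the compact annulus $[0,a-\varepsilon]\times\SS^1$, which stays at positive distance from $L_a$, so no infinite patching and no convergence issue arises. To complete your argument you would need either this kind of one-step normalization near $L_a$ and $L_b$, or genuine $C^1$ estimates ensuring convergence of your infinite sequence of corrections.
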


The proof of Lemma~\ref{l.maximal} uses the Lemma~\ref{l.analysis} below
\begin{lemma}\label{l.analysis} For any  continuous function $\varphi:[0,1]\mapsto[0,+\infty)$ such that $\varphi>0$ on $(0,1)$, and any interval $(c, d)\subset(0,1)\subset \S^1$,
 there exists   $\theta\in\diff^{\infty}(\RR\times\S^1)$ such that
 \begin{itemize}
 \item the diffeomorphism $\theta$ coincides with the identity map out of $[0,1]\times \SS^1$;
 \item $\theta([0,1]\times\{y\})=[0,1]\times\{y\}$;
 \item $D\theta(\frac{\partial}{\partial y})=\frac{\partial}{\partial y}+a(x,y) \frac{\partial}{\partial x}$;
 \item $a(x,y)>0$, for any $(x,y)\in(0,1)\times (c,d)$;
 \item $a(x,y)>-\varphi(x)$, for any $(x,y)\in (0,1)\times \SS^1$.
 \end{itemize}
\end{lemma}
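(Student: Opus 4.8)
The plan is to reduce the statement to a one-dimensional problem and then solve it by separating variables. Since $\theta$ has to preserve each horizontal segment $[0,1]\times\{y\}$ and to be the identity outside $[0,1]\times\SS^1$, it must be of the form $\theta(x,y)=(\Theta(x,y),y)$ where, for each fixed $y\in\SS^1$, the map $x\mapsto\Theta(x,y)$ is an orientation-preserving diffeomorphism of $\RR$ equal to $x$ for $x\notin(0,1)$. For such a $\theta$ one has $D\theta(\tfrac{\partial}{\partial y})=\tfrac{\partial}{\partial y}+\partial_y\Theta(x,y)\,\tfrac{\partial}{\partial x}$, so $a(x,y)=\partial_y\Theta(x,y)$, and it is enough to build a $C^\infty$ function $\Theta$ on $\RR\times\SS^1$ such that: $\Theta(x,y)=x$ for $x\notin(0,1)$ with $\Theta-x$ flat (all derivatives vanishing) along $x=0$ and $x=1$; $\partial_x\Theta>0$ everywhere; $\partial_y\Theta>0$ on $(0,1)\times(c,d)$; and $\partial_y\Theta(x,y)>-\varphi(x)$ on $(0,1)\times\SS^1$.

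I would look for $\Theta$ in the separated form $\Theta(x,y)=x+\varepsilon\,\beta(x)\,\psi(y)$ for a small constant $\varepsilon>0$ and suitable $\beta,\psi$. For $\psi$: choose a $C^\infty$ $1$-periodic function whose derivative $\psi'$ is $>0$ on $(c,d)$; such a function exists because $\psi'$ only needs to be a smooth mean-zero function on the circle which is positive on $(c,d)$ — e.g. take $\psi'=\rho_+-\rho_-$, where $\rho_+\ge0$ is a smooth bump positive exactly on $(c,d)$ and $\rho_-\ge0$ is a smooth bump supported in the interior of $\SS^1\setminus[c,d]$ (nonempty, as $(c,d)$ is a proper subinterval of the circle) with $\int_{\SS^1}\rho_-=\int_{\SS^1}\rho_+$, and then $\psi(y)=\int_{y_0}^{y}\psi'$. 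Set $C:=\|\psi'\|_{C^0}$, a finite positive number. For $\beta$: choose $\beta\colon\RR\to[0,+\infty)$ of class $C^\infty$, with support in $[0,1]$, with $\beta>0$ on $(0,1)$, and with $\beta(x)<\varphi(x)/C$ for every $x\in(0,1)$. Finally pick $\varepsilon\in(0,1]$ so small that $\varepsilon\,\|\beta'\|_{C^0}\,\|\psi\|_{C^0}<1$.

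Then I would verify the four properties. As $\beta$ is supported in $[0,1]$ and flat at $0$ and $1$, the map $\Theta$ is $C^\infty$ on $\RR\times\SS^1$ and equals $x$ for $x\notin[0,1]$, so $\theta$ is the identity off $[0,1]\times\SS^1$ and preserves each $[0,1]\times\{y\}$. One has $\partial_x\Theta=1+\varepsilon\beta'(x)\psi(y)>0$ by the choice of $\varepsilon$, so each $\Theta(\cdot,y)$ is an increasing diffeomorphism of $\RR$ fixing $0$ and $1$, hence $\theta\in\diff^{\infty}(\RR\times\SS^1)$ and $\theta([0,1]\times\{y\})=[0,1]\times\{y\}$. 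Next $a(x,y)=\partial_y\Theta(x,y)=\varepsilon\beta(x)\psi'(y)$; on $(0,1)\times(c,d)$ it is a product of strictly positive factors, so $a>0$ there; and for $(x,y)\in(0,1)\times\SS^1$, using $\beta(x)>0$, $\psi'(y)\ge-C$ and $\beta(x)<\varphi(x)/C$,
\[
a(x,y)=\varepsilon\,\beta(x)\,\psi'(y)\ \ge\ -\varepsilon\,C\,\beta(x)\ >\ -\varepsilon\,\varphi(x)\ \ge\ -\varphi(x),
\]
which gives all the required conclusions.

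The only ingredient that is not completely routine is the construction of $\beta$: a $C^\infty$ function that is simultaneously positive on $(0,1)$, bounded above by the merely continuous (and possibly, at the endpoints, fast-vanishing) function $\varphi/C$, and flat at $0,1$ so as to extend smoothly by $0$. This is standard: one first minorizes the positive continuous function $\varphi/C$ on the open interval $(0,1)$ by a positive $C^\infty$ function, using a locally finite cover of $(0,1)$ by open intervals with closure in $(0,1)$ and a subordinate partition of unity (choosing the bump functions so that their $k$-th derivatives grow at most polynomially in $1/x$ near $0$ and in $1/(1-x)$ near $1$), and one then multiplies this minorant by the flat bump $x\mapsto e^{-1/(x(1-x))}$ on $(0,1)$, extended by $0$: since that flat bump is $\le1$ and absorbs polynomial blow-ups together with all their derivatives, the product is $<\varphi/C$, still positive on $(0,1)$, and $C^\infty$ on all of $\RR$. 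Carrying out this bookkeeping is the most tedious point of the argument, but involves no genuine difficulty; alternatively, if $\varphi$ is bounded away from $0$ near the endpoints (as it typically is in the applications), one may simply take $\beta=\varepsilon_0\, e^{-1/(x(1-x))}$ with $\varepsilon_0$ small.
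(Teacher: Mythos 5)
Your proposal is correct and follows essentially the same route as the paper: a separated-variables shear $\theta(x,y)=\bigl(x+(\text{bump in }x)\cdot(\text{function of }y),\,y\bigr)$ with the $x$-bump positive on $(0,1)$ and dominated by $\varphi$, and the $y$-factor having positive derivative on $(c,d)$. The only differences are cosmetic (you ensure $\partial_x\Theta>0$ by scaling with a small $\varepsilon$ where the paper normalizes the derivatives to be $>-1$, and you spell out the construction of the bump under $\varphi$, which the paper merely asserts).
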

\begin{proof}
We fix $c<d$ and take a point $e\in(d,1)$.
We take $\theta(x,y)=(x+\alpha(x)\beta(y), y)$ where $\alpha\colon\RR\to[0,+\infty)$ and $\beta\colon\SS^1\to [0,1]$ are smooth functions so that
\begin{itemize}\item  $\alpha(x)$ is defined on $\R$ and equals to zero in $(-\infty,0]\cup[1,+\infty)$;
 \item $0<\alpha(x)<\varphi(x)$ in the set $(0,1)$ and $\alpha\equiv 0$ out of $[0,1]$ (the existence of such a function is not hard to check);
 \item the derivative $\alpha'(x)$ is everywhere strictly  larger than $-1$;
\item $\beta(y)$ is  equal to zero in the set $[0, c]\cup [e,1]$;
\item the derivative $\beta'(y)$ is strictly positive for  $y\in (c,d)$;
\item the derivative $\beta'(y)$ is larger than $-1$ everywhere.
\end{itemize}
With this choice, one gets that the restriction of $\theta$ to any horizontal line has a non-vanishing derivative, hence is a diffeomorphism.  One deduces that $\theta$ is a diffeomorphism
of $[0,1]\times \SS^1$.  Furthermore,
the function $a(x,y)$ in the statement is $\alpha(x)\cdot\beta'(y)$ which is strictly  positive on $(0,1)\times (c,d)$ 
and larger than $-\alpha(x)>-\varphi(x)$ for $x\in(0,1)$, concluding the proof.
\end{proof}
\begin{Remark}\label{r.analysis} In the proof of Lemma~\ref{l.analysis}, if we define $\theta_t$ by $$\theta_t(x,y)=(x+t\alpha(x)\beta(y), y), \textrm{ for any $t\in[0,1]$},$$
 one gets a continuous family of
diffeomorphisms for the $C^\infty$ topology so that $\theta_0$ is the identity map and every $\theta_t$, $t\neq 0$, satisfies the conclusion of Lemma~\ref{l.analysis}.  In particular, in Lemma~\ref{l.analysis}
one may choose $\theta$ arbitrarily $C^\infty$ close to identity.

\end{Remark}

\begin{proof}[Proof of Lemma~\ref{l.maximal}]

As $\cG$ is transverse to the boundary and has no compact leaves in $[0,1]\times \SS^1$, then as a simple corollary of Proposition~\ref{p.holonomies} one gets that, up to consider the images
of $\cF$ and $\cG$ by a diffeomorphism of the annulus, we may assume that $\cG$ is the horizontal foliation and that there are compact leaves $\{a\}\times \SS^1$ and $\{b\}\times \SS^1$, $0<a\leq b<1$, so that the
compact leaves of $\cF$ are contained in $[a,b]\times \SS^1$. In other words, we may assume that items (1) and (2) are already satisfied.  It remains to get item (3),
that is, to get the transversality of $\cF$
with the vertical fibers out of $[a,b]\times \SS^1$.

We first show
\begin{claim} There is a $C^1$ foliation $\cH$ defined in a neighborhood of the compact leaf $\{a\}\times \SS^1$ so that
\begin{itemize}
 \item the  leaves of $\cH$ are transverse to the horizontal foliation $\cG$;
 \item $\{a\}\times \SS^1$ is a compact leaf of $\cH$;
 \item the holonomies $h$ and $f$ 
 on the transversal $[0,1]\times \{0\}$ for the foliations $\cH$ and $\cF$ are equal;
 \item  the foliation $\cH$ is transverse to the vertical circle  $\{r\}\times \SS^1$, for $r<a$.
\end{itemize}
\end{claim}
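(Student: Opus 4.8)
The plan is to build $\cH$ on a one-sided neighborhood $N=[a-\varepsilon,a]\times\SS^1$ of the compact leaf as the foliation by the integral curves of a time-periodic equation $\frac{dx}{dt}=V(x,t)$, the $1$-periodic $C^1$ right-hand side $V$ being constructed by hand from the holonomy $f$. First I would fix $\varepsilon>0$ so small that $\cF$ has no compact leaf in $(a-\varepsilon,a)\times\SS^1$ and $f$ is a $C^1$ diffeomorphism on $[a-\varepsilon,a]$ onto its image with $f(a)=a$, $f'>0$ and $f\neq\mathrm{id}$ on $(a-\varepsilon,a)$; since $f-\mathrm{id}$ then has constant sign there, and changing the circle coordinate $t$ into $-t$ replaces $f$ by $f^{-1}$ while preserving all the transversality conditions to be proved, I may assume $f(x)>x$ for $x\in(a-\varepsilon,a)$.

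The reduction is then the following. A $C^1$ foliation on $N$ transverse to the horizontal foliation $\cG$ is exactly the family of integral curves of some $\frac{dx}{dt}=V(x,t)$ with $V$ of class $C^1$ and $1$-periodic in $t$; such a foliation has $\{a\}\times\SS^1$ as a leaf iff $V(a,\cdot)\equiv0$, is transverse to the circles $\{r\}\times\SS^1$ for $r<a$ iff $V\neq0$ on $\{x<a\}$, and has first-return map equal to $f$ on the transversal $[0,1]\times\{0\}$ near $(a,0)$ iff the time-$1$ flow $g_1$ of $V$ is $f$. So it suffices to produce a $C^1$ family $(g_t)_{t\in[0,1]}$ of diffeomorphisms onto their images with $g_0=\mathrm{id}$, $g_1=f$, $g_t(a)=a$, $g_t(x)>x$ for $x\in[a-\varepsilon,a)$ and $t>0$, and giving back a $C^1$, $1$-periodic $V$ (equivalently, whose $t$-derivatives match suitably across the lines $t\in\ZZ$).

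The construction I propose: choose a $C^1$ function $W\geq0$ on $[a-\varepsilon,a]$ vanishing only at $a$ and with $W\leq c_0\,(f-\mathrm{id})$, where $c_0<\bigl(1+\sup_{[a-\varepsilon,a]}f'\bigr)^{-1}$ (for instance $W=c_0(f-\mathrm{id})$, which is $C^1$ because $\cF$ is); let $\mathrm{Fl}^W_s$ be the local flow of the autonomous field $W$ on $[a-\varepsilon,a]$, which fixes $a$ and pushes points to the right. Set $V\equiv W$ for $t\in[0,\frac13]\cup[\frac23,1]$, so that $g_t=\mathrm{Fl}^W_t$ on $[0,\frac13]$ and $g_t=\mathrm{Fl}^W_{t-1}\circ f$ on $[\frac23,1]$; on $[\frac13,\frac23]$ I interpolate $g_t$ from $g_{1/3}=\mathrm{Fl}^W_{1/3}$ to $g_{2/3}=\mathrm{Fl}^W_{-1/3}\circ f$. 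The choice of $c_0$ guarantees $\int_y^{f(y)} dz/W(z)>1$ for every $y\in[a-\varepsilon,a)$ (bound the integral below by $(f(y)-y)/\sup_{[y,f(y)]}W$ and use $f(z)-z\leq(1+\sup f')(f(y)-y)$ for $z\in[y,f(y)]$), hence $\mathrm{Fl}^W_{2/3}(y)<f(y)$, i.e. $g_{1/3}(y)<g_{2/3}(y)$ for $y<a$. Thus the interpolation can be carried out with $g_t(y)$ strictly increasing in $t$, fixing $a$, diffeomorphic in $y$, and with $t$-derivatives $W\circ g_{1/3}$ at $t=\frac13$ and $W\circ g_{2/3}$ at $t=\frac23$ (a routine Hermite-type interpolation in $t$, shrinking $W$ if needed to avoid overshoot), which makes it match the flow pieces to first order at $t=\frac13,\frac23$. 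The resulting $V$ is $C^1$, $1$-periodic, vanishes on $\{x=a\}$, is positive on $\{x<a\}$, and satisfies $g_1=f$.

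Finally I would let $\cH$ be the foliation of $N$ by the integral curves of $\frac{dx}{dt}=V(x,t)$ and verify it has the four asserted properties: it is $C^1$ and transverse to $\cG$ (its tangent line is never horizontal), transverse to each $\{r\}\times\SS^1$ with $r<a$ (since $V\neq0$ there), admits $\{a\}\times\SS^1$ as a leaf, and has holonomy $g_1=f$ on $[0,1]\times\{0\}$. If a genuinely two-sided neighborhood of the leaf is wanted, I would extend $\cH$ to the right by $\cF$ after a preliminary modification of $\cF$, supported near $\{a\}\times\SS^1$ and trivial on that circle, matching $\cH$ to first order along it. The hard point, and the only one needing care, is the tension between keeping $V>0$ in the interior and making $V$ glue $C^1$ across the lines $t\in\ZZ$: a naive interpolation forces an honest matching of the $t$-derivatives at integer times, which cannot hold for all $x$ simultaneously, whereas making $V$ time-independent (equal to $W$) near $t\in\ZZ$ removes both obstructions at once — at the sole cost of the inequality $\int_y^{f(y)} dz/W(z)>1$, which is secured by taking $W$ small.
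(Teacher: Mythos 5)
Your construction is correct in substance, but it follows a genuinely different route from the paper's. The paper defines a foliation $\cH_0$ on a two-sided neighborhood of $\{a\}\times\SS^1$ directly by prescribing its holonomies $h_s(r)=\alpha(s)f(r)+(1-\alpha(s))r$, where $\alpha$ is a smooth bump function; this foliation is transverse to the horizontals and to the verticals where $\alpha'>0$, but it is \emph{vertical} for $s$ near $0$ and $1$, and the paper then removes these vertical tangencies by composing with a horizontal shear $\theta_0(x,y)=(x+\alpha(x)\beta(y),y)$ supplied by Lemma~\ref{l.analysis} (the quantitative input being that the slope function $\varphi(r)=\inf_s|\delta(r,s)|$ is positive for $r<a$). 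You instead suspend an isotopy $(g_t)$ from $\mathrm{id}$ to $f$ with everywhere positive velocity, obtained by flowing a small autonomous field $W\le c_0(f-\mathrm{id})$ near the endpoints of the time interval and interpolating in the middle; your key estimate $\int_y^{f(y)}dz/W(z)>1$ plays exactly the role of the paper's positivity of $\varphi$, guaranteeing that the flow pieces do not overshoot and leave room for a monotone interpolation. Your argument is self-contained where the paper outsources the shearing to a reusable lemma; the paper's formula-based $\cH_0$, on the other hand, lives on a genuinely two-sided neighborhood from the start, which avoids your final gluing step.

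Three loose ends in your write-up are worth tightening. First, the monotone Hermite interpolation needs the endpoint slopes $W(g_{1/3}(y)),W(g_{2/3}(y))$ to be bounded by a fixed multiple of the secant slope $3(g_{2/3}(y)-g_{1/3}(y))$ \emph{uniformly as} $y\to a$; this follows from the two-sided comparability $\min(1,\inf f')\,(f(y)-y)\le f(z)-z\le (1+\sup f')(f(y)-y)$ for $z\in[y,f(y)]$, of which you only prove the upper half, so state the lower half too. Second, matching $g_t$ only to first order in $t$ at $t=\tfrac13,\tfrac23$ makes the family $(t,y)\mapsto g_t(y)$ jointly $C^1$, hence the flow-box charts and the foliation are $C^1$, but the field $V=\partial_tg_t\circ g_t^{-1}$ itself then generally has a jump in $\partial_tV$ there; either weaken your claim that ``$V$ is $C^1$'' to what you actually need, or match to second order ($\partial_t^2g_t=(WW')\circ g_t$) with a quintic interpolant. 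Third, the two-sided extension by $\cF$ across the common leaf $\{a\}\times\SS^1$ is not automatic (two $C^1$ foliations glued along a common compact leaf need not form a $C^1$ foliation); since the claim is only used through the conjugacy on the side $r<a$ and the transversality to verticals is only asserted there, the cleanest fix is to note that the one-sided statement suffices, or to carry out the matching of $\partial_xV$ at $x=a$ explicitly.
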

\begin{proof}
We fix $0<\epsilon<1/2$ and a function $\alpha\colon[0,1] \to [0,1]$ so that $\alpha\equiv 0$ in  $[0,\varepsilon]$, $\alpha\equiv 1$ in $[1-\varepsilon,1]$ and $\alpha'(s)>0$ for
$s\in (\varepsilon,1-\varepsilon)$.

Consider the foliation $\cH_0$,  defined in a neighborhood of the compact leaf $\{a\}\times \SS^1$, whose holonomy map
 $h_s\colon [0,1]\times\{0\}\to [0,1]\times \{s\}$, for any $s\in\S^1$, is defined by
$r\mapsto \alpha(s) f(r)+ (1-\alpha(s)) r$,
where $f\colon [0,1]\times\{0\}\to[0,1]\times\{0\}$ is the holonomy map of $\cF$.

As $\cF$ has no compact leaves on $[0,a)\times \SS^1$, one gets that $f(r)\neq r$ for every $r<a$. Thus, by the choice of  $\cH_0$, we have that
\begin{itemize}
\item  $\cH_0$ is transverse to the horizontal foliation everywhere;
\item $\cH_0$ is  transverse to the vertical
foliation at each point $(r,s)$ with $r<a$ and $s\in (\varepsilon, 1-\varepsilon)$;
 \item $\cH_0$ is  vertical for $s$ in the interval $[0,\varepsilon]\cup [1-\varepsilon,1]= [-\varepsilon,\varepsilon]\subset \RR/\ZZ=\SS^1$.
\end{itemize}

We fix an interval $[e,f]\subset \SS^1$ disjoint from $[-\varepsilon,\varepsilon]$. The foliation $\cH_0$ is directed by a vector of the form
$\frac{\partial}{\partial s}+\delta(r,s)\frac{\partial}{\partial r}$,  where the function $\delta$ is continuous and non-vanishing on $[0,a)\times [e,f]$.
We define $\varphi(r)=\inf_{s\in[e,f]} |\delta(r,s)|$. By the absolute continuity of $\delta$, the map $\varphi$ is continuous and positive for $r<a$.
The map $\varphi$ is only defined on a small neighborhood of $a$,  and we extend it to $[0,a]$ as a continuous function which is positive on $(0,a)$.

Applying Lemma~\ref{l.analysis} to $\varphi$ and to  an interval $(c,d)$ containing $[-\varepsilon,\varepsilon]$ and disjoint from $[e,f]$, one gets a smooth
diffeomorphism $\theta_0$ of $[0,a]\times \SS^1$,
preserving each horizontal leaf,  such that $\theta_0(\cH_0)$ is transverse to the vertical foliation on $[0,a)\times\SS^1$, concluding the proof of the claim.

\end{proof}

The foliation $\cH$ defined by the claim in a neighborhood of $\{a\}\times \SS^1$, is conjugated to $\cF$ by a diffeomorphism preserving the compact leaf
$\{a\}\times \SS^1$ and every horizontal segment $[0,1]\times \{s\}$.
We can do the same in a neighborhood of the compact leaf $\{b\}\times \SS^1$.

Thus there is a diffeomorphism $\theta_1$  of $[0,1]\times \SS^1$ preserving the leaves $\{a\}\times \SS^1$ and $\{b\}\times \SS^1$ and preserving every horizontal
segment $[0,1]\times \{s\}$, and there is $\varepsilon >0$
so that $\theta_1(\cF)$
is transverse to the  vertical circles on $[a-\varepsilon, a)\times \SS^1$ and on $(b,b+\varepsilon]\times \SS^1$.

For concluding the proof,  it remains to put the foliation $\cF$ transverse to the vertical circles on the annuli $[0, a-\varepsilon]\times\S^1$ and $[b+\varepsilon, 1]\times\S^1$.
On each of these annuli, we have that
\begin{itemize}\item $\cG$ is a foliation transverse to the circle bundle;
\item $\cF$ is transverse everywhere to $\cG$;
\item Both $\cF$ and $\cG$ are transverse to the boundary and have no compact leaf.
\end{itemize}
Thus applying Lemma~\ref{l.circles} to these annuli,  one gets diffeomorphisms which preserve each leaf of $\cG$ and equal to the identity map on the boundary,
such that these diffeomorphisms send $\cF$ on a foliation
transverse to the circle bundle,  concluding the proof.
\end{proof}

Let us add a statement that we will not use,  but it is obtained by a slight modification of the proof of Lemma~\ref{l.maximal}:
\begin{corollary} Let $\cF$ and $\cG$ be two transverse $C^1$-foliations of the annulus $[0,1]\times \SS^1$ which are both transverse to the boundary.  We assume that $\cG$ has no compact leaves
 in the interior of the annulus.  Then there is a diffeomorphism $\theta$ of the annulus so that $\theta(\cG)$ is the horizontal foliation $\{[0,1]\times \{s\}\}_{s\in\SS^1}$ and $\theta(\cF)$ satisfies
 the following properties:
 \begin{itemize}
  \item every compact leaf of $\theta(\cF)$ is a vertical circle;
  \item every non compact leaf of $\theta(\cF)$ is transverse to the vertical circles.
 \end{itemize}
 Furthermore, $\theta$ has the same regularity as $\cF$ and $\cG$.
 Finally, if $\cG$ is  already the horizontal foliation, then $\theta$ can be chosen preserving every leaf of $\cG$ and equal to the identity map in a neighborhood of the boundary of the annulus.
\end{corollary}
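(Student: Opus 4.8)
The plan is to imitate the proof of Lemma~\ref{l.maximal}, carried out in three steps (normalize $\cG$, straighten the compact leaves of $\cF$, then fix transversality away from them), the only new feature being that the set of compact leaves of $\cF$ is now allowed to be an arbitrary compact set rather than a single sub‑annulus.

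\emph{Step 1: normalizing $\cG$.} Since $\cG$ is transverse to the boundary and has no compact leaf in the interior, every leaf of $\cG$ runs from $\{0\}\times\SS^1$ to $\{1\}\times\SS^1$, and following leaves defines, for each $r\in[0,1]$, the $\cG$-holonomy $\Gamma_r$ from $\{0\}\times\SS^1$ to $\{r\}\times\SS^1$; by (the argument behind) Proposition~\ref{p.suspension} this is a $C^r$-diffeomorphism depending $C^r$-ly on $r$. Then $(r,s)\mapsto (r,\Gamma_r(s))$ is a $C^r$-diffeomorphism of $[0,1]\times\SS^1$ conjugating the horizontal foliation to $\cG$; composing with its inverse, I would assume from now on that $\cG$ is the horizontal foliation $\{[0,1]\times\{s\}\}_{s\in\SS^1}$, and arrange all subsequent diffeomorphisms to preserve each horizontal leaf (so that, when $\cG$ is already horizontal, this first step is empty and the whole construction stays among horizontal‑leaf–preserving diffeomorphisms).

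\emph{Step 2: straightening the compact leaves of $\cF$.} As $\cF$ is transverse to the horizontal foliation it is orientable, directed by a $C^r$ vector field $\partial_s+a(r,s)\,\partial_r$. A short argument (Poincaré–Bendixson plus an intermediate–value argument ruling out multiple intersections of a compact leaf with a horizontal segment) shows that every compact leaf of $\cF$ is the graph $r=\psi(s)$ of a $C^r$ function $\psi\colon\SS^1\to(0,1)$, and distinct compact leaves have disjoint graphs. By Theorem~\ref{H} the union $K_\cF$ of the compact leaves is compact and contained in $(0,1)\times\SS^1$, so its trace $Z=\{\psi(0)\}$ on the transversal $[0,1]\times\{0\}$ is a compact subset of $(0,1)$, and the whole interval $[\min Z,\max Z]$ lies in the domain of the first–return map of $\cF$ to $[0,1]\times\{0\}$ (a leaf issued from it is trapped between compact leaves). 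Using that the graph $r=\psi(s)$ depends continuously on its height $\psi(0)$, I would build a $C^r$-diffeomorphism $\theta_1$ of $[0,1]\times\SS^1$ preserving every horizontal leaf, equal to the identity near the boundary, carrying each compact leaf $r=\psi(s)$ onto the vertical circle $\{\psi(0)\}\times\SS^1$: for each fixed $s$ the required self‑map of $[0,1]$ is a $C^r$-extension, equal to the identity near $\{0,1\}$, of the continuously $s$-varying order isomorphism $\{\psi(s):\psi\}\to Z$, and these extensions are chosen to fit together (a standard, but slightly delicate, family version of extending an order isomorphism between Cantor sets to a diffeomorphism). Replacing $\cF$ by $\theta_1(\cF)$, all compact leaves of $\cF$ are now the vertical circles over $Z$, contained in a small neighbourhood $N\times\SS^1$ of $Z\times\SS^1$.

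\emph{Step 3: transversality off the compact leaves.} It remains to modify $\cF$ by a $C^r$-diffeomorphism preserving every horizontal leaf, equal to the identity near all the compact leaves and near the boundary, making $\cF$ transverse to every vertical circle $\{r\}\times\SS^1$ with $r\notin Z$. Because $\cF$ is transverse to the two boundary circles, $a(r,\cdot)$ does not vanish for $r$ near $\{0,1\}$, so $\cF$ is already transverse to the nearby vertical circles; hence it suffices to treat each open annulus $J\times\SS^1$ where $J$ runs over the components of $(0,1)\setminus Z$ (countably many), the two annuli adjacent to the boundary being treated so that the modification is the identity near $\{0,1\}\times\SS^1$. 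On $\overline J\times\SS^1$ the foliation $\cF$ has no compact leaf in the interior and is transverse to the horizontal foliation, and each boundary circle is either a compact leaf of $\cF$ (a vertical circle) or a vertical circle transverse to $\cF$. Near a boundary circle which is a compact leaf, one produces — exactly as in the Claim inside the proof of Lemma~\ref{l.maximal} (interpolating holonomies, then shearing via Lemma~\ref{l.analysis}) — a diffeomorphism fixing that leaf and making $\cF$ transverse to the nearby vertical circles; on the remaining compact sub‑annulus of $\overline J\times\SS^1$ both foliations are transverse to the boundary with no compact leaf, so Lemma~\ref{l.circles}, with the vertical circles as the circle bundle and the horizontal $\cG$ as the transverse foliation, makes $\cF$ transverse to all vertical circles there. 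Composing these moves with $\theta_1$ and the conjugacy of Step~1 gives the desired $\theta$: it has the same regularity as $\cF$ and $\cG$, and if $\cG$ is already horizontal it preserves every leaf of $\cG$ and is the identity near the boundary.

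The hard part will be Step~2: producing a \emph{single} $C^r$ diffeomorphism that simultaneously straightens every compact leaf when $Z$ is merely compact (e.g. a Cantor set), with the prescribed behaviour near the boundary; and, relatedly, making sure the countably many local modifications of Step~3 near the (possibly accumulating) compact leaves of $\cF$ patch together into a genuine $C^r$ diffeomorphism of the annulus.
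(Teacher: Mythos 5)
Your three-step plan follows the same route as the paper, which derives this corollary as a modification of the proof of Lemma~\ref{l.maximal}, the only change being that the transversalization has to be carried out in each of the countably many connected components of the complement of the compact leaves of $\cF$. However, your write-up stops exactly at the two points where something remains to be proved, and you say so yourself; as it stands it is a plan with two open gaps rather than a proof. For the second gap (patching the countably many modifications of Step~3), the missing device is Remark~\ref{r.analysis}: each application of Lemma~\ref{l.analysis} may be taken arbitrarily $C^\infty$-close to the identity, and since the modification attached to a complementary annulus $J\times\SS^1$ is supported there and equals the identity near its boundary, imposing that $\|D\theta_J-\mathrm{id}\|$ tends to $0$ (say, is bounded by the length of $J$) forces the glued map to be differentiable, with derivative the identity, along every compact leaf accumulated by the $J$'s, and to have continuous derivative there. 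The same smallness must also be imposed on the other ingredients of your per-component modification (the conjugacy produced by the Claim in the proof of Lemma~\ref{l.maximal}, and the diffeomorphism given by Lemma~\ref{l.circles}), since these too are supported in annuli accumulating on compact leaves.

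For the first gap (Step~2), the route you sketch --- extending fiberwise an order isomorphism between the compact sets $K_s=\{\psi(s)\}$ and $Z=\{\psi(0)\}$ --- is genuinely problematic: the natural affine extension on the complementary gaps need not be differentiable at accumulation points of $Z$, and there is no general $C^1$-extension lemma for order isomorphisms of Cantor sets. The repair is to use the holonomy of $\cF$ itself rather than an abstract extension. Let $\Phi_s$ denote the holonomy of the lift $\tilde\cF$ from $[0,1]\times\{0\}$ to $[0,1]\times\{s\}$; it is defined and jointly $C^1$ on a neighborhood of $[\min Z,\max Z]\times\RR$ (leaves there are trapped between the extreme compact leaves), it satisfies $\Phi_s(\psi(0))=\psi(s)$ for every compact leaf $\psi$, and $\Phi_{s-1}=\Phi_s$ on $Z$ because the compact leaves have period one. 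Hence $\Theta_s:=(1-\beta(s))\,\Phi_s^{-1}+\beta(s)\,\Phi_{s-1}^{-1}$, with $\beta$ a smooth step function vanishing near $0$ and equal to $1$ near $1$, is for each $s$ an increasing $C^1$ map that realizes the required straightening on $K_s$ and equals the identity at $s=0,1$, so $(r,s)\mapsto(\Theta_s(r),s)$ descends to the annulus; extending $\Theta_s$ to all of $[0,1]$, equal to the identity near the boundary, is then an interpolation of $C^1$ boundary data on two intervals, exactly as in Proposition~\ref{p.transverse to horizontal}. With these two repairs your argument closes and coincides with the intended proof.
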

\begin{proof}The unique change is that, in the last part of the proof,  we will need to use Lemma~\ref{l.analysis} in any connected component of the complement of the compact leaves of $\cF$, that is, countably many times.
For that we uses Remark~\ref{r.analysis} for choosing these diffeomorphisms arbitrarily $C^\infty$-close to identity.
\end{proof}

\subsection{Between two maximal $\cF$ and $\cG$-annuli}
The aim of this section is to  end  the proof of Theorem~\ref{t.normal-form} and therefore to end  the proof of
Theorem~\ref{thm.parallel}. We consider two transverse $C^1$ foliations $\cF$, $\cG$ on
$\TT^2$ with parallel compact leaves.

According to Proposition~\ref{p.normal-form} and~\ref{p.maximal},  there is a diffeomorphism $\theta_0$ of the torus
$\TT^2$ so that, up to replace $\cF$ and $\cG$ by $\theta_0(\cF)$ and $\theta_0(\cG)$,   there is an integer  $k>0$ for which $\cF$ and $\cG$ satisfy the following properties
\begin{itemize}
 \item both foliations $\cF$ and  $\cG$ are transverse to every vertical circle $\{\frac j{4k}\}\times\SS^1$,  for any  $j\in\ZZ/4k\ZZ$;

 \item both foliations $\cF$ and  $\cG$ have no compact leaves on the vertical annuli
 $[\frac {2i+1}{4k},\frac{2i+2}{4k}]\times \SS^1$,  for any  $i\in\ZZ/2k\ZZ$;

\item  the foliation $\cG$ coincides with the horizontal foliation on each vertical annulus
$[\frac {4i}{4k},\frac{4i+1}{4k}]\times \SS^1$,   for any  $i\in\ZZ/k\ZZ$;

 \item  there are $\frac {4i}{4k}< a_{4i} \leq b_{4i}<\frac{4i+1}{4k}$ such  that
 \begin{itemize}
 \item[--] $\{a_{4i}\}\times \SS^1$ and $\{b_{4i}\}\times \SS^1$ are compact leaves of $\cF$;
 \item[--] every compact leaf of $\cF$ in $[\frac {4i}{4k},\frac{4i+1}{4k}]\times \SS^1$ are contained in $[a_{4i},b_{4i}]\times \SS^1$;
 \item[--] $\cF$  is transverse to the vertical circles on $\big([\frac {4i}{4k}, a_{4i})\cup(b_{4i},\frac{4i+1}{4k}]\big)\times \SS^1;$
 \end{itemize}

 \item  the foliation $\cF$ coincides with the horizontal foliation on each vertical annulus
 $[\frac {4i+2}{4k},\frac{4i+3}{4k}]\times \SS^1$,    for any  $i\in\ZZ/k\ZZ$;

 \item  there are $\frac {4i+2}{4k}< a_{4i+2} \leq b_{4i+2}<\frac{4i+3}{4k}$ such that
 \begin{itemize}
 \item[--]$\{a_{4i+2}\}\times \SS^1$ and $\{b_{4i+2}\}\times \SS^1$ are compact leaves of $\cG$;
 \item[--]  every compact leaf of $\cG$ in $[\frac {4i+2}{4k},\frac{4i+3}{4k}]\times \SS^1$ are contained in
 \\$[a_{4i+2},b_{4i+2}]\times \SS^1$;
 \item[--]  $\cG$
 is transverse to the vertical circles on $\big([\frac {4i+2}{4k}, a_{4i+2})\cup(b_{4i+2},\frac{4i+3}{4k}]\big)\times \SS^1.$
 \end{itemize}
 \end{itemize}

 The following Proposition ends the proof of Theorem~\ref{t.normal-form}:
 \begin{proposition}\label{p.between} With the hypotheses and notations above, for any $i\in\ZZ/2k\ZZ$,  there is a diffeomorphism $\theta_i$ of $\TT^2$ supported on
 $( b{_{2i}},a{_{2i+2}})\times \SS^1$
 such  that for the restrictions   $\cF_i$ of $\theta_i(\cF)$ and $\cG_i$ of $\theta_i(\cG)$ to $[b{_{2i}},a{_{2i+2}}]\times \SS^1$, we have the followings:
 \begin{itemize}
 \item   the leaves of both $\cF_i$ and $\cG_i$ are transverse to every vertical circle $\{r\}\times\SS^1$,  for any $r\in [b{_{2i}},a_{2i+2}]$;
 \item   the leaves of  $\cF_i$ and $\cG_i$ satisfy one of the four possibilities below:
 \begin{enumerate}
  \item the leaves of $\cF_i$  (resp. of $\cG_i$) are not decreasing (resp. non-degenerate decreasing) on  $[b_{2i},\frac{b{_{2i}}+a_{2i+2}}{2}]\times \SS^1$  and
   are non-degenerate increasing (resp not increasing) on  $[\frac{b{_{2i}}+a{_{2i+2}}}{2},a_{2i+2}]\times \SS^1$;
  \item the leaves of $\cF_i$  (resp. of $\cG_i$) are not increasing (resp. non-degenerate increasing) on  $[b_{2i},\frac{b{_{2i}}+a{_{2i+2}}}{2}]\times \SS^1$  and
   are non-degenerate decreasing (resp. not decreasing) on  $[\frac{b{_{2i}}+a{_{2i+2}}}{2},a_{2i+2}]\times \SS^1$;
   \item the leaves of $\cG_i$  (resp. of $\cF_i$) are not decreasing (resp. non-degenerate  decreasing) on  $[b_{2i},\frac{b{_{2i}}+a{_{2i+2}}}{2}]\times \SS^1$  and
   are non-degenerate  increasing (resp. not increasing) on  $[\frac{b{_{2i}}+a{_{2i+2}}}{2},a_{2i+2}]\times \SS^1$;
  \item the leaves of $\cG_i$  (resp. of $\cF_i$) are not increasing (resp. non-degenerate  increasing) on  $[b_{2i},\frac{b{_{2i}}+a{_{2i+2}}}{2}]\times \SS^1$  and
   are non-degenerate  decreasing (resp.  not decreasing) on  $[\frac{b{_{2i}}+a{_{2i+2}}}{2},a_{2i+2}]\times \SS^1$.
 \end{enumerate}
 \end{itemize}
 \end{proposition}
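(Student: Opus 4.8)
The plan is to treat each region $[b_{2i},a_{2i+2}]\times\SS^1$ separately and, inside it, to bring the pair $(\cF,\cG)$ to an explicit model by first making both foliations transverse to the vertical circles and then conjugating each of them, one after the other, to a suspension with prescribed monotonicity. First I would rescale $[b_{2i},a_{2i+2}]$ to $[0,1]$, write $R=[0,1]\times\SS^1$, and read off the normal form: up to exchanging the names of $\cF$ and $\cG$ (which merely exchanges conclusions (1)--(2) with (3)--(4)), the circle $\{0\}\times\SS^1$ is a compact leaf of $\cF$ with $\cG$ horizontal near it, and $\{1\}\times\SS^1$ is a compact leaf of $\cG$ with $\cF$ horizontal near it. In particular $\cF$ and $\cG$ have no compact leaves in the interior of $R$, and near $\{0\}\times\SS^1$ (resp. near $\{1\}\times\SS^1$) the foliation $\cF$ (resp. $\cG$) spirals onto the adjacent compact leaf, so there its leaves are very steep graphs over $[0,1]$, with slope of a definite sign (positive near $x=0$, because $\cG$ is horizontal there and $\cF\pitchfork\cG$).

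The first step is to make both $\cF$ and $\cG$ transverse to every vertical circle $\{r\}\times\SS^1$ of the interior, by a diffeomorphism equal to the identity near $\partial R$. Since $\cG$ has no compact leaves inside $R$ and is already transverse to the vertical circles near $\partial R$, the suspension normal form (Proposition~\ref{p.suspension}) together with the relative conjugacy statement of Proposition~\ref{p.holonomies} straightens $\cG$ on a sub-annulus $[\varepsilon,1-\varepsilon]\times\SS^1$; then, on that sub-annulus, $\cF$ is transverse to the two boundary circles with no compact leaf inside, so Lemma~\ref{l.circles}, applied with the straightened $\cG$ as reference foliation, straightens $\cF$ by a diffeomorphism preserving every leaf of $\cG$. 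After this the leaves of both foliations are graphs over $[0,1]$ and one has ``slope'' functions $p$ (for $\cF$) and $q$ (for $\cG$) with $p\neq q$ everywhere, hence $p>q$ on the connected annulus $R$ (the case $p<q$ being symmetric and, together with the name exchange above, accounting for the four possibilities).

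The heart of the argument -- and the step I expect to be the main obstacle -- is the behaviour of the holonomies. Because $\cF$ spirals infinitely onto $\{0\}\times\SS^1$ and its slope is positive there, for $\varepsilon$ small the holonomy $H_\cF$ of $\cF$ from $\{\varepsilon\}\times\SS^1$ to $\{1-\varepsilon\}\times\SS^1$ winds upward arbitrarily many times near $x=0$, while near $x=1$ it is the identity (there $\cF$ is horizontal); the uncontrolled behaviour over the middle annulus $B_{2i+1}$ contributes only a bounded amount, which the unbounded winding dominates, so in the universal cover $H_\cF>\mathrm{id}$ everywhere. Symmetrically the holonomy $H_\cG$ of $\cG$ is everywhere $<\mathrm{id}$. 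A diffeomorphism equal to the identity near $\partial R$ cannot change $H_\cF$ or $H_\cG$, so this monotonicity is exactly what must be established before any model can be fitted; conversely, once it is known, one can realize $H_\cF$ by a foliation $\cF_i^{0}$ transverse to the vertical circles whose leaves are non-decreasing (strictly increasing except on a right-hand collar of $[\varepsilon,1-\varepsilon]\times\SS^1$ where it coincides with the horizontal $\cF$, and coinciding with the steep $\cF$ on a thin left-hand collar), and likewise $H_\cG$ by $\cG_i^{0}$ with non-increasing leaves, strictly decreasing except on a left-hand collar. By construction the slopes of $\cF_i^{0}$ lie in $[0,\infty)$ and those of $\cG_i^{0}$ in $(-\infty,0]$, and they vanish only on disjoint sub-annuli, so $\cF_i^{0}\pitchfork\cG_i^{0}$ and the slope ranges over each vertical circle are disjoint -- which is precisely the form of transversality that survives the vertical translations used later in the proof of Theorem~\ref{thm.parallel}; moreover $(\cF_i^{0},\cG_i^{0})$ satisfies one of the possibilities (1)--(4).

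It remains to carry $(\cF,\cG)$ onto $(\cF_i^{0},\cG_i^{0})$ with a single diffeomorphism of $[\varepsilon,1-\varepsilon]\times\SS^1$ equal to the identity near the boundary. I would first conjugate $\cG$ to $\cG_i^{0}$ (Proposition~\ref{p.holonomies}: same germ along the boundary, same holonomy); this may spoil transversality of the image of $\cF$ to the vertical circles, so I would re-straighten it by Lemma~\ref{l.circles} with $\cG_i^{0}$ as reference, by a diffeomorphism preserving every leaf of $\cG_i^{0}$; now the image of $\cF$ and $\cF_i^{0}$ are both transverse to the vertical circles and to $\cG_i^{0}$, agree near $\partial R$, and have the same top-to-bottom holonomy, so the annulus version of Proposition~\ref{push} (apply Proposition~\ref{push} on the strip $[\varepsilon,1-\varepsilon]\times\RR$, the universal cover of the annulus) pushes the image of $\cF$ onto $\cF_i^{0}$ along the leaves of $\cG_i^{0}$ by a diffeomorphism equal to the identity near the boundary and preserving $\cG_i^{0}$. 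The composition of these three diffeomorphisms sends $\cF$ to $\cF_i^{0}$ and $\cG$ to $\cG_i^{0}$; extending it by the identity to $R$ and then to $\TT^2$ yields $\theta_i$, supported in $(b_{2i},a_{2i+2})\times\SS^1$, with $\theta_i(\cF)$ and $\theta_i(\cG)$ satisfying one of the four announced possibilities.
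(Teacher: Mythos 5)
Your proposal is correct and follows essentially the same route as the paper: the decisive step you single out --- that the spiralling of $\cF$ (resp.\ $\cG$) onto its compact boundary leaf forces the lifted holonomies to satisfy $f_\varepsilon>\mathrm{id}$ and $g_\varepsilon<\mathrm{id}$, the bounded contribution of the middle annulus being dominated by the unbounded winding --- is exactly the paper's Lemma~\ref{l.between}, and realizing these holonomies by an explicit monotone model pair with the same boundary germs is exactly Lemma~\ref{l.monotonous}. The only divergence is in how the conjugacy onto the model is assembled: the paper uses a single map defined via the bi-foliated coordinates (sending $p$ to the intersection of the model leaves through $q_F(p)$ and $q_G(p)$), whereas you compose Proposition~\ref{p.holonomies}, Lemma~\ref{l.circles} and Proposition~\ref{push}; both are routine once the holonomy signs are established.
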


   \begin{figure}[h]
\begin{center}
\def\svgwidth{0.5\columnwidth}
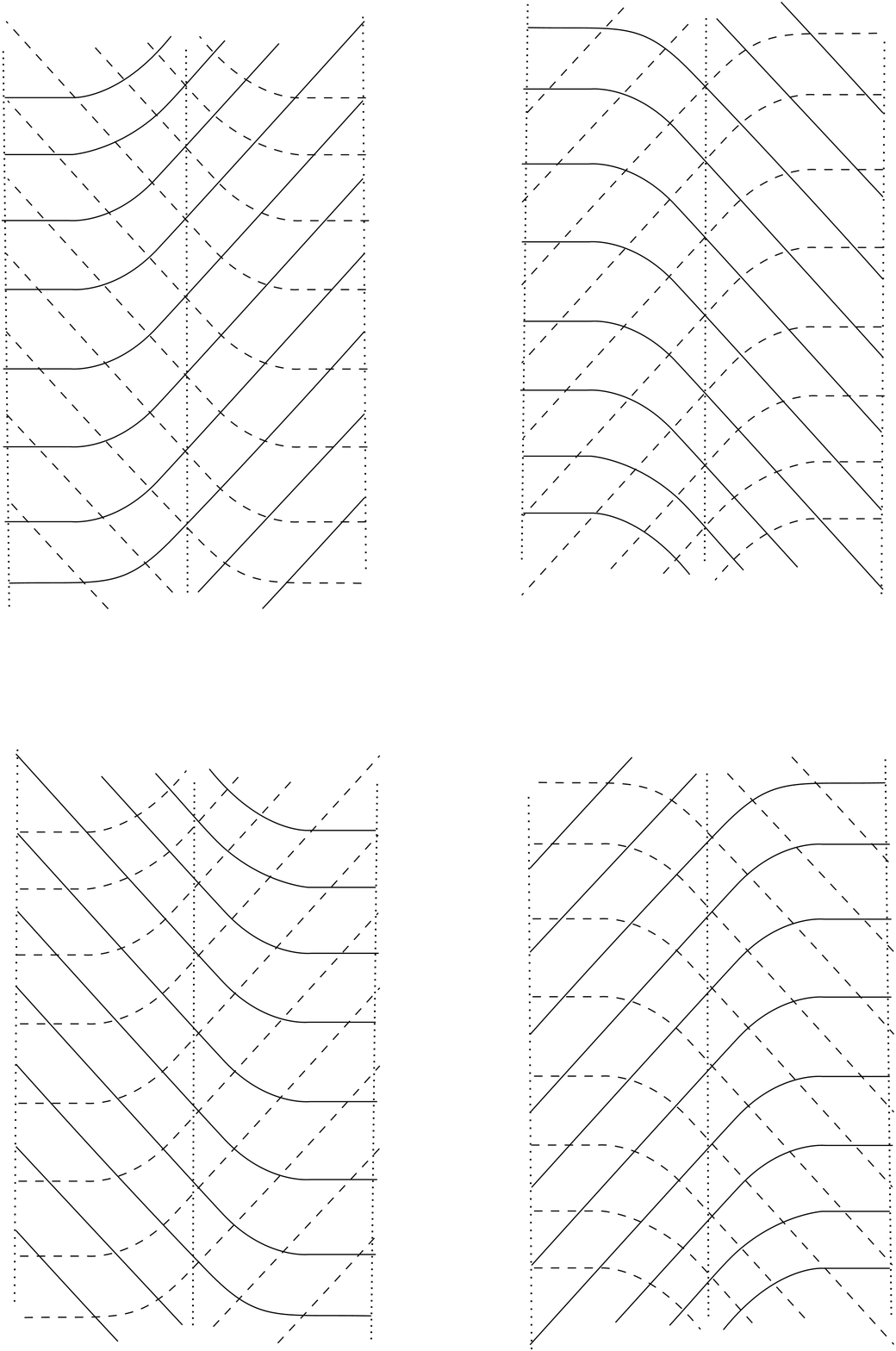
  \caption{In each figure, the real lines denote the leaves of $\cF_i$ and the dash lines denote the leaves  of $\cG_i$.}
\end{center}
\end{figure}
 We start by using Proposition~\ref{p.between} to end the proof of Theorem~\ref{t.normal-form}
 \begin{proof}[Proof of Theorem~\ref{t.normal-form}] Let  $\{\theta_\ell\}_{\ell\in\ZZ/2k\ZZ}$ be the sequence of diffeomorphisms on annuli,
 which are given by  Proposition \ref{p.between}.
 We take  four sets of points  $\{d_{4i}\}_{i\in\Z/k\Z}$, $\{c_{4i+2}\}_{i\in\Z/k\Z}$,  $\{d_{4i+2}\}_{i\in\Z/k\Z}$ and  $\{c_{4i+4}\}_{i\in\Z/k\Z}$ on $\S^1$ such that
 \begin{itemize}
 \item  $$b_{4i}<d_{4i}<\frac{b_{4i}+a_{4i+2}}2< c_{4i+2}<a_{4i+2};$$
 \item $$b_{4i+2}<d_{4i+2}<\frac{b_{4i+2}+a_{4(i+1)}}2< c_{4(i+1)}<a_{4(i+1)};$$

 \item The set $\{c_{4i+2}, c_{4i+4}, d_{4i+2},d_{4i+2}\}_{i\in\Z/k\Z}$  is disjoint from the union of the support of  all $\{\theta_\ell\}_{\ell\in\Z/2k\Z}$.
  \end{itemize}
  \begin{figure}[h]
\begin{center}
\def\svgwidth{0.8\columnwidth}
  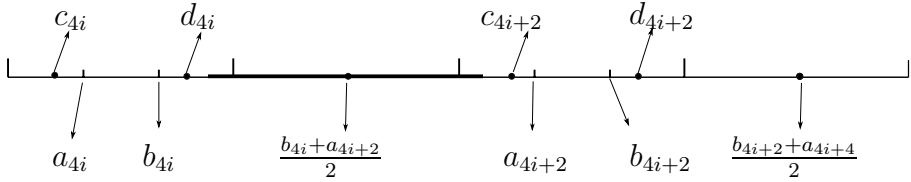
  \caption{The thick segment denotes the support of some $\theta_{\ell}$.}
\end{center}
\end{figure}

   We  choose the annuli $\{C_j\}_{j\in\ZZ/6k\ZZ}$ as  follows
 \begin{itemize}
  \item each annulus $C_{6i}$ is the vertical annulus $[c_{4i},d_{4i}]\times \SS^1$; notice that it  contains $[a_{4i},b_{4i}]\times \SS^1$ in its interior;
  \item each annulus $C_{6i+1}$ is the vertical annulus $[d_{4i},\frac12({b_{4i}+a_{4i+2}})]\times\S^1$;
  \item each annulus $C_{6i+2}$ is the vertical annulus $[\frac12({b_{4i}+a_{4i+2}}),c_{4i+2} ]\times\S^1$;
  \item each annulus $C_{6i+3}$  is the vertical annulus $[c_{4i+2},d_{4i+2}]\times \SS^1$ containing $[a_{4i+2},b_{4i+2}]\times \SS^1$ in its interior;
  \item each annulus $C_{6i+4}$ is the vertical annulus $[d_{4i+2},\frac12({b_{4i+2}+a_{4(i+1)}})]\times\S^1$;
  \item each annulus $C_{6i+5}$ is the vertical annulus $[\frac12(b_{4i+2}+a_{4(i+1)}),c_{4(i+1)}]\times\S^1$.
 \end{itemize}
 \end{proof}

It remains to prove Proposition~\ref{p.between}.

\begin{lemma}\label{l.between} Let $\cF$ and $\cG$ be two transverse foliations on $[0,1]\times \SS^1$ so that:
\begin{itemize}
 \item $\{0\}\times \SS^1$ is a compact leaf of $\cF$;
 \item $\{1\}\times \SS^1$ is a compact leaf of $\cG$;
\item $\cF$ and $\cG$ have no compact leaves in $(0,1)\times \SS^1$;
 \item  there is  a neighborhood $U_0=[0,\varepsilon_0]\times\SS^1$ of $\{0\}\times\SS^1$  on which $\cG$ coincides with the horizontal foliation and $\cF$ is transverse to the vertical circles;
 \item there is  a neighborhood $U_1=[1-\varepsilon_0,1]$ of $\{1\}\times\SS^1$  on which $\cF$ coincides with the horizontal foliation and $\cG$ is transverse to the vertical circles.
\end{itemize}

Then for any  $0<\varepsilon<\varepsilon_0$ the holonomies of $\cF$ and $\cG$ from $\Si_{0,\varepsilon}=\{\varepsilon\}\times \SS^1$ to $\Si_{1,\varepsilon}=\{1-\varepsilon\}\times \SS^1$ are well defined.
Consider the lifts $\tilde \cF$ and $\tilde \cG$ of $\cF$ and $\cG$ on the universal cover $[0,1]\times \RR$.  The holonomies $f_\varepsilon$ and $g_\varepsilon$ of
 $\tilde \cF$ and $\tilde \cG$ from
$\{\varepsilon\}\times \RR$ to $\{1-\varepsilon\}\times \RR$ are well defined.  Then for any $\varepsilon>0$ small enough one has:

 $$\left(f_\varepsilon(x)-x\right)\cdot (g_\varepsilon(x)-x)<0, \mbox{ for every }x\in\RR.$$
\end{lemma}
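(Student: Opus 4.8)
The plan is to analyse $\cF$ near its compact leaf $\{0\}\times\SS^1$ and $\cG$ near its compact leaf $\{1\}\times\SS^1$ separately, and to show that the transversality of $\cF$ and $\cG$ forces the two ``windings'' to be compatible. We may assume $\varepsilon_0<1/2$, so that $U_0$ and $U_1$ are disjoint; I write $\partial_r,\partial_x$ for the coordinate fields on the universal cover $[0,1]\times\RR$. First I set up orientations. Since $\cF$ is transverse to the vertical circles on $U_0$ and has $\{0\}\times\SS^1$ as a leaf, each leaf of $\cF$ meeting the interior of $U_0$ is, over $U_0$, a graph $x=\sigma(r)$, with $\sigma(r)\to\pm\infty$ as $r\to 0^+$: a finite limit would place a single point of $\{0\}\times\SS^1$ in the closure of that leaf, contradicting that leaf closures are saturated and $\{0\}\times\SS^1$ is a whole leaf. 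The usual Poincar\'e--Bendixson argument (as in the proof of Lemma~\ref{compact leaf}) shows moreover that every leaf of $\cF$ other than $\{0\}\times\SS^1$ is of this type near $r=0$ and has an endpoint on $\{1\}\times\SS^1$; hence $\cF$ is orientable on $(0,1)\times\SS^1$, and we orient it so that it points towards increasing $r$ on $U_0$ — which is also the direction of increasing $r$ on $U_1$, where $\cF$ is the horizontal foliation. Symmetrically, on $U_1$ the leaves of $\cG$ are graphs $x=\tau(r)$ winding onto $\{1\}\times\SS^1$, so $\tau(r)\to\pm\infty$ as $r\to 1^-$, and we orient $\cG$ so that it points towards increasing $r$ on $U_1$, hence also on $U_0$ where $\cG$ is horizontal.

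The key point is that, since $\cF\pitchfork\cG$ and $(0,1)\times\SS^1$ is connected, the frame $(\cF,\cG)$ has a constant orientation $\delta\in\{+1,-1\}$. On $U_0$ we have $\cG=\partial_r$ and $\cF=\partial_r+\sigma'(r)\,\partial_x$, so $\det(\cF,\cG)=-\sigma'$; thus $\sigma'$ has constant sign $-\delta$ and, $\sigma$ being monotone with infinite limit, $\sigma(r)\to\delta\cdot\infty$ as $r\to 0^+$. On $U_1$ we have $\cF=\partial_r$ and $\cG=\partial_r+\tau'(r)\,\partial_x$, so $\det(\cF,\cG)=\tau'$, and likewise $\tau(r)\to\delta\cdot\infty$ as $r\to 1^-$. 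So the winding of $\cF$ at $r=0$ and that of $\cG$ at $r=1$ ``have the same sign $\delta$''.

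It remains to translate this into the statement about $f_\varepsilon,g_\varepsilon$. For $0<\varepsilon<\varepsilon_0$ let $\phi^{\cF}_{a,b}$ (resp. $\phi^{\cG}_{a,b}$) be the lifted holonomy of $\cF$ (resp. $\cG$) from $\{a\}\times\RR$ to $\{b\}\times\RR$. As $\cF$ is horizontal on $U_1$ and $\cG$ is horizontal on $U_0$, the intermediate holonomies there are the identity, so
\[
 f_\varepsilon=F\circ\phi^{\cF}_{\varepsilon,\varepsilon_0},\qquad
 g_\varepsilon=\phi^{\cG}_{1-\varepsilon_0,\,1-\varepsilon}\circ G,
\]
where $F=\phi^{\cF}_{\varepsilon_0,\,1-\varepsilon_0}$ and $G=\phi^{\cG}_{\varepsilon_0,\,1-\varepsilon_0}$ are lifts of circle homeomorphisms not depending on $\varepsilon$ (they are well defined because the leaves of $\cF$, $\cG$ through $\{\varepsilon_0\}\times\SS^1$ and $\{1-\varepsilon_0\}\times\SS^1$ meet $\Si_{0,\varepsilon}$ and $\Si_{1,\varepsilon}$, using that $f_\varepsilon,g_\varepsilon$ are well defined). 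Now $\phi^{\cF}_{\varepsilon_0,\varepsilon}(0)=\sigma(\varepsilon)\to\delta\cdot\infty$ as $\varepsilon\to 0$, $\sigma$ being the leaf of $\cF$ through $(\varepsilon_0,0)$; since the displacement of a lift of a circle homeomorphism has oscillation at most $1$ (independently of $\varepsilon$), this divergence is uniform in the basepoint, and inverting yields $\phi^{\cF}_{\varepsilon,\varepsilon_0}(x)-x\to-\delta\cdot\infty$ uniformly in $x$. As $F(x)-x$ is bounded ($1$-periodic), $f_\varepsilon(x)-x\to-\delta\cdot\infty$ uniformly in $x$. The symmetric computation near $\{1\}\times\SS^1$ gives $\phi^{\cG}_{1-\varepsilon_0,1-\varepsilon}(z)-z\to\delta\cdot\infty$ uniformly in $z$, hence $g_\varepsilon(x)-x\to\delta\cdot\infty$ uniformly in $x$. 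Therefore, for every $\varepsilon>0$ small enough and every $x\in\RR$, $f_\varepsilon(x)-x$ has sign $-\delta$ while $g_\varepsilon(x)-x$ has sign $\delta$, so $(f_\varepsilon(x)-x)(g_\varepsilon(x)-x)<0$.

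I expect the only delicate step to be the orientation bookkeeping near the compact leaves — establishing that the leaves of $\cF$ wind \emph{monotonically} onto $\{0\}\times\SS^1$, and that constancy of the orientation of the frame $(\cF,\cG)$ couples the winding of $\cF$ at $r=0$ with that of $\cG$ at $r=1$. Everything after that is routine Poincar\'e theory of lifts of circle homeomorphisms.
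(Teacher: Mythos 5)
Your proof is correct and follows essentially the same route as the paper's: identify that the leaves of $\cF$ (resp. $\cG$) spiral monotonically onto $\{0\}\times\SS^1$ (resp. $\{1\}\times\SS^1$), use the constancy of the relative orientation of the transverse pair $(\cF,\cG)$ to couple the two spiraling directions, and then decompose $f_\varepsilon$ and $g_\varepsilon$ into a fixed middle holonomy with bounded displacement and boundary holonomies whose displacements diverge uniformly to opposite infinities. Your determinant computation and the oscillation bound for lifts of circle maps just make explicit what the paper leaves implicit, so no further comment is needed.
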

\begin{proof}On $U_0\setminus\{0\}\times\SS^1$,  the foliation $\cF$ is transverse to the horizontal segments and to the vertical circles.  Therefore its leaves are either non-degenerate  increasing or non-degenerate  decreasing curves.
Let us assume that they are non-degenerate increasing (the other case is similar).

Notice that, on $[\varepsilon_0,1-\varepsilon_0]\times \SS^1$, the foliations $\cF$ and $\cG$ are transverse to   the boundary and are transverse to each other. We orient $\cF$ and $\cG$ from $\{\varepsilon_0\}\times\SS^1$ to
$\{1-\varepsilon_0\}\times \SS^1$. As $\cF$ is increasing along $\{\varepsilon_0\}\times\SS^1$ and horizontal along $\{1-\varepsilon_0\}\times\SS^1$, and as $\cG$ is horizontal along
$\{\varepsilon_0\}\times\SS^1$, one gets that $\cG$ is decreasing along $\{\varepsilon_0\}\times\SS^1$.  Thus the leaves of $\cG$ are decreasing curves on $U_1\setminus\{1-\varepsilon_0\}\times \SS^1$.

Let us denote by
            $$f_{\varepsilon,0}\colon \{\varepsilon\}\times \RR\to \{\varepsilon_0\}\times \RR$$
            $$f_{1,\varepsilon}\colon \{1-\varepsilon_0\}\times \RR\to \{1-\varepsilon\}\times \RR$$
             and
$$g_{\varepsilon,0}\colon \{\varepsilon\}\times \RR\to \{\varepsilon_0\}\times \RR$$
$$g_{1,\varepsilon}\colon \{1-\varepsilon_0\}\times \RR\to \{1-\varepsilon\}\times \RR$$
the holonomies of $\tilde \cF$ and $\tilde \cG$
on the corresponding transversals. We consider them as diffeomorphisms of $\RR$ (that is we forget the horizontal coordinate).

Then $g_{\varepsilon,0}=f_{1,\varepsilon}$ are equal to the identity map as they are horizontal foliations in the corresponding regions.

Thus one gets that $$f_\varepsilon= f_{\varepsilon_0}\circ f_{\varepsilon,0} \mbox{ and }g_\varepsilon=g_{1, \varepsilon}\circ g_{\varepsilon_0}.$$

Now Lemma~\ref{l.between} follows directly from the following claim:
\begin{claim} $f_{\varepsilon,0}(x)-x$ and $g_{1,\varepsilon}(x)-x$ converge uniformly to $+\infty$ and $-\infty$, respectively, as $\varepsilon$ tends to $0$.
\end{claim}
The claim follows directly from the fact that the leaves of $\tilde \cF$ (resp. $\tilde \cG$) are non-degenerate
increasing (resp. non-degenerate decreasing) curves asymptotic to the vertical line $\{0\}\times\RR$ (resp. $\{1\}\times\RR$) according to the negative orientation (resp. positive orientation).
\end{proof}

One ends the proof of Proposition~\ref{p.between} by proving:
\begin{lemma}\label{l.monotonous} Let $\cF$ and $\cG$ be two transverse $C^1$ foliations on the annulus $[0,1]\times \SS^1$
which are transverse to the boundary and do not have any compact leaf in the interior.
We denote by $\tilde \cF$ and $\tilde \cG$ the lifts of $\cF$ and $\cG$ to $[0,1]\times\RR$.
Under that hypotheses,  the holonomies of
$\tilde \cF$ and $\tilde \cG$ from $\{0\}\times \RR$ to $\{1\}\times \RR$ are well defined and we denote them as $f$ and $g$,
respectively (and we consider them as diffeomorphisms of $\RR$).
Assume that:
\begin{itemize}
 \item[--] the foliation $\cG$ (resp. $\cF$) coincides with the horizontal foliation on a neighborhood of $\{0\}\times \SS^1$ (resp. $\{1\}\times \SS^1$);
 \item[--]  for every $x\in\RR$,  one has $f(x)>x$ and $g(x)<x$.
\end{itemize}
Then there is a diffeomorphism $\theta$  of $[0,1]\times \SS^1$, equal to the identity map on a neighborhood of the boundary, and isotopic to the identity relative to the boundary, and so that (denoting by
$\tilde\cF_\theta$ and $\tilde \cG_\theta$ the lifts of $\theta(\cF)$ and $\theta(\cG)$ to $[0,1]\times \RR$):
\begin{itemize}
\item the leaves of $\theta(\cF)$ and of $\theta(\cG)$ are transverse to the vertical circles;
 \item the leaves $\tilde\cF_\theta$ are non-degenerate increasing  on $[0,\frac12]\times\RR$ and are not decreasing on $[\frac 12,1]\times \RR$;
 \item the leaves of $\tilde\cG_\theta$ are not increasing  on $[0,\frac12]\times\RR$ and are non-degenerate decreasing on $[\frac 12,1]\times \RR$;
\end{itemize}
\end{lemma}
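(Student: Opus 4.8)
\emph{Overall strategy.} The plan is to first straighten the picture so that both $\cF$ and $\cG$ are transverse to every vertical circle $\{r\}\times\SS^1$, and then to produce $\theta$ as a diffeomorphism that moves points only along these vertical circles, i.e. of the form $\theta(x,y)=(x,\phi(x,y))$; the monotonicity statements then become differential inequalities on $\phi$.

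\emph{Reduction to transversality with the vertical circles.} Since $\cG$ coincides with the horizontal foliation near $\{0\}\times\SS^1$ (hence is transverse to the vertical circles there) and is transverse to the boundary circle $\{1\}\times\SS^1$, and $\cG$ has no compact leaf, Proposition~\ref{p.suspension} lets us conjugate, by a diffeomorphism equal to the identity near the boundary and isotopic to the identity relative to the boundary, to the situation where $\cG$ is transverse to every vertical circle. Then Lemma~\ref{l.circles} (applied with $\cG$ in the role of the foliation transverse to the ``fibers'' and $\cF$ in the role of the foliation merely transverse to the boundary and without compact leaf) produces a further diffeomorphism, again the identity near the boundary, which preserves $\cG$ leafwise and makes $\cF$ transverse to every vertical circle. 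After these reductions $\cF$ and $\cG$ are directed by vector fields $\partial_x+u(x,y)\,\partial_y$ and $\partial_x+v(x,y)\,\partial_y$; transversality of $\cF$ and $\cG$ together with connectedness of the annulus forces $u>v$ everywhere, and the hypotheses give $v\equiv 0<u$ on a neighborhood of $\{0\}\times\SS^1$ and $u\equiv 0>v$ on a neighborhood of $\{1\}\times\SS^1$ (here one uses that $\cG$, resp. $\cF$, is horizontal on that neighborhood, and that $\cF$, resp. $\cG$, is non-degenerate increasing, resp. decreasing, on it --- a property available in the situation where the lemma is applied, cf. the proof of Lemma~\ref{l.between}).

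\emph{Shape of $\theta$.} It suffices to find a $C^1$ function $\phi$ on $[0,1]\times\RR$ with $\phi(x,y+1)=\phi(x,y)+1$, $\partial_y\phi>0$, $\phi(x,y)=y$ for $x$ near $0$ and near $1$, and
$$-\,\partial_y\phi\cdot u\ \le\ \partial_x\phi\ \le\ -\,\partial_y\phi\cdot v ,$$
the left inequality being strict on $[0,\tfrac12]\times\RR$ and the right one strict on $[\tfrac12,1]\times\RR$. Indeed $\theta(x,y):=(x,\phi(x,y))$ is then a $C^1$ diffeomorphism of $[0,1]\times\SS^1$ (because $\partial_y\phi>0$), equal to the identity near the boundary, and isotopic to the identity relative to the boundary via $(x,y)\mapsto(x,(1-t)\phi(x,y)+ty)$; it preserves each vertical circle, so $\theta(\cF)$ and $\theta(\cG)$ are again transverse to the vertical circles, and differentiating $x\mapsto\phi(x,\ell(x))$ for leaves $\ell'=u$, resp. $\ell'=v$, shows that at the image point the leaf of $\theta(\cF)$, resp. $\theta(\cG)$, has slope $\partial_x\phi+\partial_y\phi\,u$, resp. $\partial_x\phi+\partial_y\phi\,v$. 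The displayed inequalities then give exactly that $\theta(\cF)$ is non-degenerate increasing on $[0,\tfrac12]$ and not decreasing on $[\tfrac12,1]$, that $\theta(\cG)$ is not increasing on $[0,\tfrac12]$ and non-degenerate decreasing on $[\tfrac12,1]$, and --- since the two slopes differ by $\partial_y\phi\,(u-v)>0$ --- that $\theta(\cF)\pitchfork\theta(\cG)$; the conditions are consistent with $\partial_x\phi=0$ near the boundary because $v=0<u$ near $x=0$ and $u=0>v$ near $x=1$.

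\emph{Construction of $\phi$, and the main obstacle.} Set $\partial_x\phi=-\,\partial_y\phi\cdot c$ where $c=\mu(x)\,v+(1-\mu(x))\,u$ for a smooth $\mu\colon[0,1]\to[0,1]$ with $\mu\equiv1$ near $x=0$, $\mu\equiv0$ near $x=1$, $\mu>0$ on $[0,\tfrac12]$, $\mu<1$ on $[\tfrac12,1]$; then $v\le c\le u$ everywhere, $c=v$ near $x=0$, $c=u$ near $x=1$, $c<u$ on $[0,\tfrac12]$, $c>v$ on $[\tfrac12,1]$, which is precisely what makes the required inequalities on $\partial_x\phi$ hold, with $\partial_x\phi\equiv0$ near both ends. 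The integrability condition $\partial_y(\partial_x\phi)=\partial_x(\partial_y\phi)$ becomes the linear transport equation $\partial_x\rho+c\,\partial_y\rho=-(\partial_y c)\,\rho$ for $\rho:=\partial_y\phi$; solving by characteristics with $\rho(0,\cdot)\equiv1$ gives a positive $C^1$ solution, and integrating the equation over a period in $y$ (using $1$-periodicity of $c$ and $\rho$) shows $\int_y^{y+1}\rho(x,t)\,dt\equiv1$, so that $\phi(x,y):=\int_0^y\rho(x,t)\,dt$, corrected by a suitable function of $x$ alone, has the prescribed periodicity and $\partial_x\phi=-\rho\,c$. Near $x=0$ one gets $\rho\equiv1$ and $\phi(x,y)=y$; near $x=1$, however, $\partial_x\phi\equiv0$ only forces $\phi(1,\cdot)$ to be some circle diffeomorphism $\Psi$, and the genuinely delicate point --- the main obstacle --- is to arrange $\Psi=\mathrm{id}$ while keeping $\partial_x\phi$ inside the cone $(-\rho u,-\rho v)$ on the two halves. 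I would handle this either by exploiting the remaining freedom in the choice of $c$ in the interior of the annulus (adding a compactly supported correction, still with $v<c<u$, and tuning it by a continuity argument on the resulting $\Psi$), or by post-composing $\theta$ with a fiberwise correction supported near $\{1\}\times\SS^1$ that preserves the monotonicity already achieved there --- possible because on that neighborhood $\cF$ is horizontal and $\cG$ is non-degenerate decreasing. Everything else is a routine consequence of Propositions~\ref{p.suspension} and~\ref{push}, Lemma~\ref{l.circles}, and elementary ODE facts.
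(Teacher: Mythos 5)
Your route is genuinely different from the paper's. The paper first constructs a model pair $(\cF_0,\cG_0)$ that already has the required monotonicity, coincides with $(\cF,\cG)$ near the boundary, and has the same lifted holonomies $f$ and $g$; it then defines $\tilde\theta$ by sending the intersection point of a leaf of $\tilde\cF$ and a leaf of $\tilde\cG$ to the intersection point of the corresponding model leaves (the same device as the map $h_{\tilde\cE}$ in Proposition~\ref{push}); the resulting $\theta$ does not preserve the vertical circles. You instead normalize both foliations to be transverse to the verticals and look for a fiber-preserving $\theta(x,y)=(x,\phi(x,y))$ subject to the cone inequalities $-\partial_y\phi\cdot u\le\partial_x\phi\le-\partial_y\phi\cdot v$. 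Your reduction step and the translation of the conclusion into these inequalities are correct (modulo the observation, which you make, that the lemma implicitly assumes $\cF$ increasing near $\{0\}\times\SS^1$, as it must, since $\theta=\mathrm{id}$ there). Both proofs ultimately hinge on producing an auxiliary foliation with prescribed holonomy: yours needs the interpolating foliation directed by $c$ to have identity lifted holonomy $\tilde h_c$ (so that $\phi(1,\cdot)=\tilde h_c^{-1}=\mathrm{id}$), while the paper needs $\cF_0,\cG_0$ with holonomies $f,g$ and refers for this to the proof of Proposition~\ref{p.transverse to horizontal}.

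The obstacle you flag is therefore exactly the crux, and of your two proposed resolutions only the first can be made to work, and not by a one-parameter continuity argument. Your option (b) --- post-composing with a fiberwise correction supported in $[1-2\varepsilon,1]\times\SS^1$ --- requires interpolating from $\mathrm{id}$ to $\Psi^{-1}$ through maps $\Lambda(x,\cdot)$ with $\partial_x\Lambda\ge0$, since the leaves of $\theta(\cF)$ are horizontal there and must stay not decreasing; this forces $\Psi\le\mathrm{id}$, and there is no reason for $\Psi=\tilde h_c^{-1}$ to lie on one side of the identity: one only knows $g\le\tilde h_c\le f$ with $g<\mathrm{id}<f$. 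For option (a), the condition $\tilde h_c=\mathrm{id}$ is one equation for every $y$, so the correction to $c$ must depend on $y$ and be tuned leaf by leaf: for each $y_0$ the endpoint at $x=1$ of a curve issued from $(0,y_0)$ with slope constrained to $[v,u]$ sweeps $[g(y_0),f(y_0)]\ni y_0$, so an intermediate-value (shooting) argument produces the right leaf, and one must then check that the chosen leaves do not cross and form a $C^1$ foliation. This is precisely the ``foliation with prescribed holonomy'' construction of Proposition~\ref{p.transverse to horizontal}, so the gap is fillable, but as written it is not filled. A last, minor point: for $C^1$ foliations the slope fields $u,v$ are only continuous, so the transport equation with coefficient $\partial_yc$ is not literally available; it is cleaner to define $\phi(x,\cdot)$ directly as the holonomy of the $c$-foliation from $\{x\}\times\RR$ to $\{0\}\times\RR$ (which is what your $\phi$ is), whose joint $C^1$ regularity is the standard suspension fact used throughout the paper.
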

\begin{proof}[Sketch of proof] We just need to choose a pair of transverse $C^1$ foliations $\cF_0$ and $\cG_0$ so that, denoting  by $\tilde \cF_0$ and $\tilde\cG_0$ their lifts on $[0,1]\times \RR$, one has:
\begin{itemize}
 \item $\cF_0$ and $\cG_0$ are transverse to the vertical foliation,
 \item $\cF_0$ and $\cG_0$ coincide with $\cF$ and $\cG$ in a neighborhood of the boundary
 \item the holonomies of $\tilde \cF_0$ and $\tilde \cG_0$ from $\{0\}\times \RR$ to $\{1\}\times\RR$ are $f$ and $g$, respectively,
 \item the leaves $\tilde\cF_0$ are non-degenerate increasing  on $[0,\frac12]\times\RR$ and are not decreasing on $[\frac 12,1]\times \RR$;
 \item the leaves of $\tilde\cG_0$ are not increasing  on $[0,\frac12]\times\RR$ and are non-degenerate decreasing  on $[\frac 12,1]\times \RR$;
\end{itemize}
The fact that we can choose such a pair of foliations is similar to the proof of Proposition~\ref{p.transverse to horizontal}.

Then the pair $(\cF,\cG)$ is conjugated to $(\cF_0,\cG_0)$ by a unique diffeomorphism equal to the identity map  in a neighborhood of the boundary.
The lift $\tilde \theta$ on $[0,1]\times \RR$ of the announced diffeomorphism $\theta$ is build as follows:
consider a point $p\in[0,1]\times \RR$ and let $q_F(p)$ and $q_G(p)$ be the intersections with $\{0\}\times \RR$ of the leaves  $\tilde \cF_p$ and $\tilde \cG_p$ through $p$.
The transversality of $\tilde \cF$ and $\tilde \cG$
implies that  $q_{_F}(p)$ is below $q_{_G}(p)$  and $f(q_{_F}(p))$ is over $g(q_{_G}(p))$. As $\tilde\cF_0$ and $\tilde \cG_0$ have the same holonomies as $\tilde\cF$ and $\tilde \cG$,   one gets that
the leaves of $\tilde \cF_0$ and of $\tilde\cG_0$ through $q_{_F}(p)$ and $q_{_G}(p)$ have a unique intersection point that we denote by $\tilde \theta(p)$.
\end{proof}

\section{Dehn twist, transverse foliations and partially hyperbolic diffeomorphism}
The aim of this section is the proof of Proposition~\ref{p.foliations} and of Theorem~\ref{t.Anosov}.

\subsection{Transverse foliations on $3$-manifolds, and the proof of Proposition~\ref{p.foliations}}
Let $M $ be a closed 3-manifold and $\cF$ and $\cG$ be transverse codimension one  foliations of class $C^1$ on $M$.
Thus $\cF$ and $\cG$ intersect each other along a $C^1$ foliation $\cE$ of dimension $1$.
We assume that there is a torus $T$ embedded in $M$ such that $\cE$ is transverse to $T$, and we denote by
$\cF_T$ and $\cG_T$ the $1$-dimensional $C^1$ foliations on $T$ obtained by intersecting  $T$ with
$\cF$ and $\cG$, respectively.

There is a collar neighborhood $U$ of $T$ and an orientation preserving diffeomorphism
$\psi\colon U \to T\times [0,1]$ inducing the identity map from $T$ to $T\times \{0\}$,  so
that $\theta(\cE)$ is the trivial foliation $\{\{p\}\times[0,1]\}_{p\in T}$.  Then $\theta(\cF)$
and $\theta(\cG)$ are the product foliations of $\cF_T\times[0,1]$ and $\cG_T\times[0,1]$,
respectively (meaning that their leaves are the product by $[0,1]$ of  the leaves of $\cF_T$ and $\cG_T$, respectively).

Let $u$ be an element of $G_{\cF_T,\cG_T}\subset \pi_1(T)$.  By definition of $G_{\cF_T,\cG_T}$,  there is a loop
$\{\varphi_t\}_{t\in[0,1]}$ of $C^1$ diffeomorphisms of $T$ so that $\varphi_0=\varphi_1$ is the identity map, $\varphi_t(\cF_T)$
is transverse to $\cG_T$
and for any $p\in T$,  the loop $\{ \varphi_t(p)\}_{ t\in[0,1]}$  belongs to the homotopy class of $u$.

We consider the diffeomorphism $\Phi$ on $T\times[0,1]$ defined by $(p,t)\mapsto (\varphi_{\alpha(t)}(p),t)$, where $\alpha\colon [0,1]\to[0,1]$ is
a smooth function equal to $0$ in a neighborhood of $0$ and to $1$ in a neighborhood of $1$.
Then $\Phi$ is a Dehn twist directed by $u$ and $\Phi$ is the identity map in a neighborhood of the boundary of $T\times [0,1]$.

Consider   $\Phi(\cF_T\times [0,1])$.  It is a foliation transverse to any torus $T\times\{t\}$ and it induces $\varphi_{\alpha(t)}(\cF_T)$ on $T\times \{t\}$.
Therefore it is transverse to $\cG_T$.

This proves that $\Phi(\cF_T\times [0,1])$ is transverse to the foliation $\cG_T\times[0,1]$.

Now the announced Dehn twist on $M$ directed by $u$ is the diffeomorphism $\psi$  with support in $U$ and whose restriction to $U$ is $\theta^{-1}\circ\Phi\circ\theta$.
By construction $\theta(\cF)$ is transverse to $\cG$, ending the proof.

\subsection{Anosov flows, Dehn twist and partially hyperbolic diffeomorphisms}
Let $X$ be a non-transitive Anosov vector field of class at least $C^2$ on a closed 3-manifold $M$ and we denote by $X_t$ the flow generated by $X$.
According to Proposition~\ref{p.Lyapunov function}, any family of transverse tori on which $X$ has no return,  are contained in a regular level of a smooth Lyapunov function.

Let $L(x):M\mapsto \R$ be a smooth  Lyapunov function of the flow $X_t$, and let $c$ be a regular value of $L$.
Thus each connected component of $L^{-1}(c)$ is a torus transverse to $X$.

Let $T_1,\cdots, T_k$ be the  disjoint transverse tori such that $$\cup_{i=1}^k T_i=L^{-1}(c).$$

Consider the set $M^{r}=L^{-1}(c,+\infty)$ and $M^{a}=L^{-1}(-\infty,c)$. Then $M^{r}$ and $M^{a}$ are two disjoint open subsets of $M$ and share the same boundary
$\cup_{i=1}^k T_i$. Since $L(x)$ is strictly decreasing along the positive  orbits of the points in the wandering domain,
one gets  that $M^{a}$ and $M^{r}$ are attracting and repelling regions of the vector field $X$. We denote by $\cA$ and $\cR$, respectively, the maximal invariant sets
of $X$ in $M^a$ and $M^r$. Thus $\cA$ is a hyperbolic (not necessarily transitive)  attractor  and $\cR$ is a  hyperbolic (not necessarily transitive) repeller for $X$.

 By ~\cite[Corollary 4]{HP}, the center stable foliation $\calF_X^{cs}$ and center unstable foliation $\calF_X^{cu}$ of the Anosov flow $X_t$ are $C^1$ foliations.
  For each $i=1,\cdots,k$,  we denote by $\cF^s_i$ and $\cF^{u}_i$  the $C^1$ foliation induced by $\calF_X^{cs}$ and $\cF_X^{cu}$ on $T_i$ respectively.

 As $X$ has no return on  $\bigcup_iT_i$,   the sets $\{X_t(T_1)\}_{t\in\R},\cdots,$ $ \{X_t(T_k)\}_{t\in\R}$ are pairwise disjoint embeddings of $T_i\times\RR$ into $M$.
As a consequence, for any integer $N$,  the sets $\{X_{t}(T_1)\}_{t\in [0,N]},\cdots, \{X_{t}(T_k)\}_{t\in [0,N]}$ are pairwise disjoint  and diffeomorphic to $\T^2\times[0,N]$.

For  each $i$,  we define the diffeomorphism
$$\psi_{i,N}:\{X_{t}(T_i)\}_{t\in [0,N]}\mapsto T_i\times [0, 1]$$
by $(X_t(p))\mapsto (p, t/N)$, for any $p\in T_i$ and $t\in[0,N]$.
Thus $D\psi_{i,N}(X)=\frac 1N \frac{\partial}{\partial s}$  is tangent to the vertical segments $\{p\}\times[0,1]$, for any $p\in T_i$.

We fix a smooth function $\alpha(s):[0,1]\mapsto[0,1]$ such that $\alpha(s)$ is a non-decreasing function on $[0,1]$,  equals to $0$ in a small neighborhood of $0$,
 and equals to $1$ in a small neighborhood of $1$.
For each $i$, the group $G_i=G_{\cF^s_i,\cF^u_i}$ is the subgroup of $\pi_1(\T^2)$ associated to the pair of transverse foliations $(\cF^s_i,\cF^{u}_i)$ by Definition \ref{d.G}.
 Given an element $u_i\in G_i $,  let $\{\varphi_{_t}\}_{t\in[0,1]}$ be the loop in $\diff^1(T_i)$ associated to $u_i$ by Theorem~\ref{t.transverse}.

Consider the map  $\Phi_i : T_i\times [0,1]\mapsto T_i\times [0,1]$  defined as
$$ (x,s)\mapsto (\varphi_{_{\alpha(s)}}(x),s).$$

Hence, the map  $\Psi_{i,N}=\psi_{i,N}^{-1}\circ\Phi_i\circ\psi_{i,N}$ is a Dehn twist directed by $u_i$. Notice that $\Psi_{i,N}$ can be $C^1$-smoothly extended on
the whole manifold $M$ to be the identity map outside $X_{t}(T_i)$.

 The main part of Theorem \ref{t.Anosov} is directly implied by the following theorem:
\begin{theorem}\label{c.compose Dehn twist}With the notation above, when $N$ is chosen large enough, the  diffeomorphism $\Psi_{k,N}\circ\cdots\circ\Psi_{1,N}\circ X_N$ is absolutely partially hyperbolic.
\end{theorem}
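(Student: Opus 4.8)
The plan is to show that, for $N$ large, the diffeomorphism $f_N=\Psi_{k,N}\circ\cdots\circ\Psi_{1,N}\circ X_N$ admits an invariant cone field coming from the strong unstable cone of $X$, together with its dual for $f_N^{-1}$ coming from the strong stable cone, and that the domination/expansion estimates (P2) survive the composition with the Dehn twists. The key point is that the Dehn twists $\Psi_{i,N}$ are supported in the flow boxes $\{X_t(T_i)\}_{t\in[0,N]}$, which are images of $T_i\times[0,1]$ under $\psi_{i,N}^{-1}$; in these coordinates $X$ is the constant vertical vector field $\tfrac1N\partial_s$, so as $N\to\infty$ the derivative $D\Psi_{i,N}$ converges in the $C^0$ sense to a derivative that preserves the $X$-direction and acts on the $T_i$-directions by $D\varphi_{\alpha(s)}$. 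Crucially $D\Psi_{i,N}$ is \emph{bounded independently of $N$}: differentiating $(x,s)\mapsto(\varphi_{\alpha(s)}(x),s)$ produces an $s$-derivative term $\alpha'(s)\,\partial_s\varphi_{\alpha(s)}(x)$ which, when pulled back by $\psi_{i,N}$, picks up a factor $N$ from $D\psi_{i,N}(X)=\tfrac1N\partial_s$ but loses a factor $N$ because that term multiplies the $X$-component $dt$; the net effect is an $O(1)$ (in fact, as $N\to\infty$, vanishing in the relevant off-diagonal block) perturbation. So the $\Psi_{i,N}$ form a family of uniformly bi-Lipschitz diffeomorphisms whose derivatives converge to block-triangular maps respecting the splitting $E^{ss}\oplus\RR X\oplus E^{uu}$ of $X$ along the flow boxes.

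First I would set up the three cone fields on $M$: a strong unstable cone $\mathcal C^u$ around $E^{uu}_X$, a strong stable cone $\mathcal C^s$ around $E^{ss}_X$, and a center-unstable cone $\mathcal C^{cu}$ around $E^{cu}_X=\RR X\oplus E^{uu}_X$, together with the dual $\mathcal C^{cs}$. Since $X$ is Anosov, there exist $T_0>0$ and $\lambda_0<1<\sigma_0$ so that $DX_{T_0}$ strictly contracts $\mathcal C^s$ and its vectors by $\lambda_0$, strictly expands $\mathcal C^u$ and its vectors by $\sigma_0$, and preserves (with appropriate expansion/contraction) the center cones. Writing $N=mT_0$ (one may take $N$ a large multiple of $T_0$, or absorb the remainder), $DX_N$ contracts/expands by $\lambda_0^m,\sigma_0^m$. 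Then I would estimate $Df_N=D\Psi_{k,N}\circ\cdots\circ D\Psi_{1,N}\circ DX_N$: away from the flow boxes $f_N=X_N$ and there is nothing to check; inside a flow box $\{X_t(T_i)\}_{t\in[0,N]}$, only $\Psi_{i,N}$ is active (the flow boxes are pairwise disjoint), and $D\Psi_{i,N}$ is a uniformly bounded perturbation — bounded by a constant $C$ independent of $N$ — of a map preserving the $X$-direction. Hence $DX_N$ followed by $D\Psi_{i,N}$ still maps $\mathcal C^u$ into itself and expands unstable vectors by at least $\sigma_0^m/C$, which exceeds $1$ and in fact tends to infinity as $N=mT_0\to\infty$; symmetrically for $Df_N^{-1}$ on $\mathcal C^s$ with contraction $\lambda_0^m C\to 0$. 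For the center, the same boundedness shows $Df_N$ preserves $\mathcal C^{cu}$ and $Df_N^{-1}$ preserves $\mathcal C^{cs}$, so by the standard cone-field criterion there is a $Df_N$-invariant splitting $TM=E^s_{f_N}\oplus E^c_{f_N}\oplus E^u_{f_N}$ with $E^{cu}=E^c\oplus E^u$ and $E^{cs}=E^s\oplus E^c$ the intersections of the iterated cones. Finally, one checks (P2): the strong bundles satisfy $\|Df_N|_{E^s}\|\le \lambda_0^mC<1$ and $m(Df_N|_{E^u})\ge \sigma_0^m/C>1$, while vectors in $E^c$ have norm distortion bounded between $\lambda'$ and $\sigma'$ with $\lambda_0^mC<\lambda'<1<\sigma'<\sigma_0^m/C$ for $N$ large — because along the $X$-direction the derivative is $\pm 1$ up to the $C^1$-bounded twisting of $\Psi_{i,N}$, and $\varphi_t$ ranges in a fixed compact loop of diffeomorphisms so $\|D\varphi_t^{\pm1}\|$ is bounded by a constant not depending on $N$. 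This gives absolute partial hyperbolicity with $N=1$ in the definition (P2) applied to $f_N$.

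The main obstacle I expect is the uniform-in-$N$ control of $D\Psi_{i,N}$, and in particular making precise that the "bad" off-diagonal block of $D\Psi_{i,N}$ — the one that could tilt unstable cones toward the stable direction, namely the $\partial_s$-component of the twist — actually \emph{decays} like $1/N$ rather than merely staying bounded. Concretely, in the coordinates $T_i\times[0,1]$ the map is $(x,s)\mapsto(\varphi_{\alpha(s)}(x),s)$ with derivative the block matrix $\begin{pmatrix}D_x\varphi_{\alpha(s)} & \alpha'(s)\,\partial_\tau\varphi_\tau(x)|_{\tau=\alpha(s)}\\ 0 & 1\end{pmatrix}$; pulling back by $\psi_{i,N}$, which sends $X\mapsto\tfrac1N\partial_s$ and is an isometry-up-to-bounded-distortion on the $T_i$ directions, the off-diagonal term acquires a factor $1/N$ relative to a fixed Riemannian metric on $M$ (since a unit $X$-direction tangent vector corresponds to $\tfrac1N\partial_s$, so the "$\partial_s$" column is weighted by $N$, but it is the \emph{output} in the $T_i$-direction weighted against the input $\partial_s$ weighted by $\tfrac1N$... care is needed with which side gets the factor). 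Getting this bookkeeping exactly right — that the net contribution to cone-tilting is $O(1/N)$, so that for $N$ large it is dominated by the fixed hyperbolic gap of $X$ — is the crux. Once that estimate is in hand, everything else is the routine cone-field machinery for (absolute) partial hyperbolicity, and the dynamical coherence and plaque expansiveness follow from the general results on normally hyperbolic / leaf-conjugacy theory (one identifies $\cF^{cs}_{f_N},\cF^{cu}_{f_N}$ with the $C^1$ weak foliations $\cF^{cs}_X,\cF^{cu}_X$ of the Anosov flow, which are $f_N$-invariant away from the flow boxes and whose leaves are only reparametrized/twisted inside, hence remain invariant branching-free foliations; plaque expansiveness is then automatic for $C^1$ center-stable/unstable foliations tangent to the invariant bundles).
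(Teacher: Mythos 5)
Your central claim --- that the $\Psi_{i,N}$ are ``uniformly bi-Lipschitz'' with $D\Psi_{i,N}$ an $O(1)$ (even $o(1)$ off-diagonal) perturbation of a map respecting the splitting of $X$ --- is false, and it is the load-bearing step of your cone argument. The error is in the assertion that $\psi_{i,N}$ is ``an isometry-up-to-bounded-distortion on the $T_i$-directions.'' It is not: $D\psi_{i,N}^{-1}$ sends a vector $v\in T_pT_i$ at height $s$ to $DX_{Ns}(v)$, so the cross-sections $X_t(T_i)$ of the flow box are copies of $T_i$ distorted by a factor with condition number of order $(\sigma/\lambda)^{t}$. Conjugating the fixed twist $\Phi_i$ by $\psi_{i,N}$ therefore produces, in the ambient metric of $M$, a derivative whose \emph{diagonal} block blows up: if $D\varphi_\tau$ does not preserve the trace foliation $\cF^s_i$ (and nothing in the construction makes it do so --- the loop from Theorem~\ref{t.transverse} only guarantees $\varphi_\tau(\cF^u_i)\pitchfork\cF^s_i$), then a unit vector at $X_t(p)$ of the form $DX_t(w)$ with $w$ along $\cF^s_i$ is sent by $D\Psi_{i,N}$ to a vector of norm comparable to $(\sigma/\lambda)^{t}$, which for $t\sim N/2$ tends to infinity with $N$. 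Even the flow direction is not controlled: $D\Psi_{i,N}(X)=X+\tfrac{\alpha'(t/N)}{N}DX_t\bigl(\partial_\tau\varphi_\tau(p)\bigr)$, whose second term can have norm of order $\sigma^{N}/N$. So $Df_N$ is not a bounded perturbation of $DX_N$, no fixed constant cone field around $E^{uu}_X$ is preserved, and the estimates $\sigma_0^m/C$, $\lambda_0^m C$ with $C$ independent of $N$ have no justification. The off-diagonal $1/N$ bookkeeping you flag as the crux is the part that actually works; the part that fails is the one you took for granted.

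What survives exponential rescaling is the \emph{direction} (projectivized) information, not the norms, and this is why the paper argues through foliations rather than derivatives. The proof first reduces absolute partial hyperbolicity to two transversality conditions, $\Psi_N(\cF^u_X)\pitchfork\cF^{cs}_X$ and $\Psi_N(\cF^{cu}_X)\pitchfork\cF^s_X$ (Lemma~\ref{l.criteria}): since $f$ coincides with $X_N$ on the attracting and repelling regions, the invariant bundles are obtained by extending those of $\cA$ and $\cR$, and the only issue is whether the stable and unstable objects meet transversally on the fundamental domain $\bigcup_i X_{[0,N]}(T_i)$. It then proves (Lemma~\ref{l.limit transverse}, after \cite{BPP}) that in the flow-box coordinates $\psi_{i,N}$ the strong foliations converge in the $C^1$ topology to the product foliations $\cF^u_i\times\{s\}$ and $\cF^s_i\times\{s\}$, and concludes because $\Phi_i$ of these product foliations is transverse to $\cF^s_i\times[0,1]$ and $\cF^u_i\times[0,1]$ by the defining property of the loop $\{\varphi_t\}$, transversality being open. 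If you want to salvage a cone-field formulation, the cones must be taken around the (twisted, $N$-dependent) bundles produced by this foliation argument, at which point you are reproving the paper's lemmas. Your closing remarks on dynamical coherence are also off-target here: the theorem under discussion is only the partial hyperbolicity statement, and in any case $\cF^{cs}_X$ and $\cF^{cu}_X$ are \emph{not} both invariant under $f$ (only $\cF^{cs}_X$ is; the invariant center-unstable foliation is obtained from $\Psi_N(\cF^{cu}_X)$).
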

\begin{proof}
We denote $$\Psi_N=\Psi_{k,N}\circ\cdots\circ \Psi_{1,N}.$$

Then $\Psi_N\circ X_N$  coincides with $X_N$ on the attracting region $M^a$.
Thus $\cA$ is the maximal invariant set of $\Psi_N\circ X_N$ in $M^a$ and is an absolute partially hyperbolic attractor.
Furthermore the center stable bundle and the strong stable bundle on $\cA$ admit   unique continuous and ($\Psi_N\circ X_N$)-invariant
extensions $E^{cs}_\cA$ and $E^s_\cA$, respectively, on $M^a$  which coincide with the tangent bundles of the center stable and strong stable foliations  $\cF^{cs}_X$ and $\cF^{s}_X$ of the vector field $X$.

In the same way, $(\Psi_N\circ X_N)^{-1}$ coincides with $X_{-N}$ on the repelling region  $M^r$. Thus $\cR$ is still an absolute partially hyperbolic repeller of $\Psi_N\circ X_N$ and its
center unstable and strong unstable bundles admit  unique continuous and ($\Psi_N\circ X_N$)-invariant extensions $E^{cu}_\cR$ and $E^u_\cR$ on $M^r$ which coincide with, respectively,
the tangent bundles of $\cF^{cu}_X$ and $\cF^{u}_X$.

Notice that the center unstable and strong unstable bundles  $E^{cu}_\cR$ and $E^u_\cR$, of the repeller $\cR$ for $\Psi_N\circ X_N$ extend in a unique way on $M\setminus \cA$, just by
pushing by the dynamics of $\Psi_N\circ X_N$.

Thus the bundles $E^{cu}_\cR$, $E^u_\cR$, $E^{cs}_\cA$ and $E^s_\cA$  coincide with the tangent bundles of the foliations  $\Psi_N(\cF^{cu}_X)$, $\Psi_N(\cF^u_X)$, $\cF^{cs}_X$, and $\cF^s_X$respectively,
on the fundamental domain $\bigcup_iX_{[0,N]}(T_i)$.


One can easily check the following classical result:
\begin{lemma}\label{l.criteria} $\Psi_N\circ X_{N}$ is absolutely partially hyperbolic if and only if
$$\Psi_N(\cF_X^{u})\pitchfork \cF^{cs}_X\textrm{ and } \Psi_N(\cF^{cu}_{X})\pitchfork \cF^{s}_X.$$
\end{lemma}

Notice that $\{X_t(T_1)\}_{t\in\R},\cdots, \{X_t(T_k)\}_{t\in\R}$ are pairwise disjoint, the same argument of  Lemma 6.2 in \cite{BPP} gives the following:
 \begin{lemma}\label{l.limit transverse}With the notation above, we have that for each $i=1,\cdots,k$,
 $$\lim_{N\rightarrow +\infty}\psi_{i,N}{(\cF^{uu}_X)}=\{\cF^{u}_i\}\times \{s\}\textrm{ and }\lim_{N\rightarrow +\infty}\psi_{i,N}{(\cF^{ss}_X)}=\{\cF^{s}_i\}\times \{s\}$$
 uniformly in the $C^1$-topology.
 \end{lemma}
 As a consequence of Lemma \ref{l.limit transverse}, when $N$ is chosen large,  for each $i=1,\cdots,k$, we have that
 $$\Phi_i(\psi_{i,N}{(\cF^{uu}_X)})\pitchfork  \cF^s_i\times[0,1] \textrm{ and } \Phi_i(\psi_{i,N}{(\cF^{ss}_X)})\pitchfork  \cF^u_i\times[0,1].$$
 Now Theorem \ref{c.compose Dehn twist} follows directly from Lemma \ref{l.criteria}.

 \end{proof}

Now, we end the proof of Theorem~\ref{t.Anosov} by proving that the (absolute) partially hyperbolic diffeomorphism $f=\Psi_N\circ X_N$ is robustly dynamically coherent and plaque expansive.
We denote by $E^c_f$ the center bundle of $f$.

Recall that $f$ coincides with $X_N$ on the repelling region $X_{-N}(M^{r})$ and on the attracting region $M^{a}$. Just as Lemma 9.1 in \cite{BPP}, we have that:
 \begin{lemma}\label{l.neutral} There exists a constant $C>1$ such that for any unit vector $v\in E^c_f$, we have the following:
 $$\frac{1}{C}\leq \norm{Df^n(v)}\leq C, \textrm{ for any integer $n\in\Z$}.$$
 \end{lemma}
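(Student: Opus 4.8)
\emph{Proof proposal (following the argument of \cite[Lemma 9.1]{BPP}).} The whole argument rests on two facts. First, the flow direction $\R\cdot X$ is \emph{neutral}: for every $x$ and every $t$, a unit vector of $\R\cdot X(x)$ is sent by $DX_t$ to a vector of norm $\|X(X_t(x))\|/\|X(x)\|$, hence of norm in $[\,m/M,\,M/m\,]$ where $m=\min\|X\|>0$, $M=\max\|X\|$; this bound is independent of $t$. Second, $f$ differs from $X_N$ only on a small region crossed at most once by each orbit. Precisely, since $X_t(T_i)\subset M^a$ for all $t>0$ (as the Lyapunov function $L$ strictly decreases off the chain recurrent set), the set $R^+:=\operatorname{supp}(\Psi_N)\subset\bigcup_iX_{[0,N]}(T_i)$ is contained in $M^a$ and at positive distance from $\cA\cup\cR$; consequently $f=X_N$ on $M\setminus R^-$ and $f^{-1}=X_{-N}$ on $M\setminus R^+$, where $R^-:=X_{-N}(R^+)\subset M^r$. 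Because $X$ has no return on $\bigcup_iT_i$, each of $R^\pm$ is a disjoint union of flow boxes $X_{(a,b)}(T_i)$; moreover $M^a$ is forward $f$-invariant (since $f=X_N$ on $M^a$ and $X_t(M^a)\subset M^a$) and $M^r$ is forward $f^{-1}$-invariant. From these it is immediate that, once a forward $f$-orbit enters $M^a$ it stays there and never meets $R^-$ again, so every forward $f$-orbit meets $R^-$ in at most one iterate; dually, every backward $f$-orbit meets $R^+$ at most once. Also, for $x\notin\cR$ the forward $f$-orbit enters $M^a$ in finite time (it cannot stay in $\overline{M^r}$: there $f=X_N$, and an $X$-orbit staying in $\overline{M^r}$ would accumulate on $\cR$, forcing the point into $\cR$).

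Next I describe $E^c_f$ near the basic pieces and on $M^a,M^r$. On $\cA$ both the forward and backward $f$-orbits stay in $M^a$ where $f=X_N$, so by uniqueness of the dominated splitting the splitting of $f$ on $\cA$ is the hyperbolic splitting of $X$; in particular $E^c_f=\R\cdot X$ on $\cA$, and symmetrically on $\cR$. Moreover, since $f=X_N$ on the forward invariant set $M^a$, the $f$-invariant foliation $\cF^{cs}_X$ is the center-stable foliation of $f$ there, i.e.\ $\cF^{cs}_f=\cF^{cs}_X$ and $\cF^{s}_f=\cF^{s}_X$ on $M^a$; hence $E^c_f(x)\subset E^{cs}_X(x)$ and $E^s_f(x)=E^s_X(x)$ for $x\in M^a$. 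Dually, $E^c_f(x)\subset E^{cu}_X(x)$ and $E^u_f(x)=E^u_X(x)$ for $x\in M^r$.

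Now fix $x$ and a unit $v\in E^c_f(x)$; I bound $\|Df^n(v)\|$ for $n\ge 0$ (the case $n\le 0$ is entirely analogous, applied to $f^{-1}$, $-X$, with $M^a\leftrightarrow M^r$ and $\cA\leftrightarrow\cR$). If $x\in\cR$, then $v\in\R\cdot X(x)$ and $f^n(x)=X_{nN}(x)$, so $\|Df^n(v)\|=\|X(X_{nN}x)\|/\|X(x)\|\le M/m$. If $x\in M^a$, write $v=v^s+v^0$ with $v^s\in E^s_X(x)$, $v^0\in\R\cdot X(x)$ (no unstable part, since $v\in E^{cs}_X(x)$); as $f^n(x)=X_{nN}(x)\in M^a$, one gets $\|Df^n(v)\|=\|DX_{nN}(v^s)+DX_{nN}(v^0)\|$, bounded above by $Ce^{-\mu nN}\|v^s\|+(M/m)\|v^0\|$ and below by a positive constant uniformly in $n,x$ (here $v^0\neq0$ because $E^c_f(x)\neq E^s_f(x)=E^s_X(x)$, whence $|v^0|$ is bounded below by compactness). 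If $x\notin M^a\cup\cR$, let $k=k(x)<\infty$ be the first time the forward orbit enters $M^a$; for $n\ge k$, applying the previous case at $f^k(x)\in M^a$ gives $\|Df^n(v)\|\le C\,\|Df^k(v)\|$, and $\|Df^k(v)\|\le C_4\|Df^{k-1}(v)\|$ since the single possible twist occurs at step $k-1\to k$ and $\|D\Psi_N^{\pm1}\|$, $\|DX_{\pm N}\|$ are bounded; while for $0\le n\le k-1$ the orbit lies in $M^r\setminus R^-$, so $Df^n(v)=DX_{nN}(v)$ with $v\in E^{cu}_X(x)$, i.e.\ $v=v^0+v^u$ with $v^u\in E^u_X(x)$.

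It remains to bound $\|DX_{nN}(v)\|$ uniformly over $0\le n\le k(x)$. The $\R\cdot X$-part is harmless ($\|DX_{nN}(v^0)\|\le (M/m)|v^0|$), so the point is $\|DX_{nN}(v^u)\|$; this is the technical core, carried out as in \cite[Lemma 9.1]{BPP}. When $k(x)$ is large the orbit $\{X_{jN}(x)\}_{j\le k(x)}$ shadows $\cR$ for a long time, so $x$ is close to $\cR$ and, $E^c_f$ being continuous with $E^c_f|_\cR=\R\cdot X$, the $E^u_X$-component $v^u$ of $v$ is correspondingly small; one checks that this smallness compensates the $E^u_X$-expansion accumulated along the transient part of the orbit before it exits $M^r$, keeping $\|DX_{nN}(v^u)\|$ bounded; when $k(x)$ stays bounded, $x$ ranges in a compact set away from $\cR$ and the finitely many iterates are controlled by compactness. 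This yields $\|Df^n(v)\|\le C$ for all $n\ge 0$, and the symmetric argument gives $\|Df^n(v)\|\le C$ for all $n\le 0$; replacing $C$ by $\max(C,2)>1$ finishes the proof. The hard part is exactly the last estimate: matching the modulus of continuity of $E^c_f$ near $\cR$ against the unstable expansion accrued before the orbit leaves $M^r$.
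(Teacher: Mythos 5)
Your overall architecture is the right one and is essentially the strategy of \cite[Lemma 9.1]{BPP}, which is all the paper itself offers here (it gives no proof of this lemma, only the citation): neutrality of the flow direction, the fact that each orbit meets the support of the twist at most once, the identification of $E^c_f$ as a line inside $E^{cs}_X$ on $M^a$ and inside $E^{cu}_X$ on $M^r$ with $E^c_f=\RR\cdot X$ on $\cA\cup\cR$, and the reduction to controlling $\|DX_{nN}(v^u)\|$ along the transient part of the orbit in $M^r$. All of these reductions are correct.

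However, the step you yourself flag as the technical core is a genuine gap, and the mechanism you propose for closing it does not work. Continuity of $E^c_f$ --- which is all that general partial hyperbolicity provides --- gives $\|v^u\|\to 0$ as $d(x,\cR)\to 0$ with no rate, whereas you need $\|v^u\|\lesssim \sigma^{-kN}$, exponentially small in the exit time $k(x)$, to beat the expansion accumulated along $E^u_X$; even a H\"older modulus of continuity would be insufficient (one would need a Lipschitz-or-better rate, which is not available). The estimate you want is true, but it comes from the $Df$-invariance of $E^c_f$ rather than from its continuity. Evaluate at the exit point $y=f^{k-1}(x)\in M^r$: a unit vector $\hat w$ spanning $E^c_f(y)\subset E^{cu}_X(y)$ decomposes as $\hat w=\hat w^0+\hat w^u$ in $\RR\cdot X(y)\oplus E^u_X(y)$ with $\|\hat w^0\|\ge c_2>0$ uniformly, by the uniform transversality of $E^c_f$ with $E^u_f=E^u_X$ on the compact manifold. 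Writing $Df^{k-1}(v)=DX_{(k-1)N}(v)=\lambda\hat w$ and matching the $\RR\cdot X$--components gives $|\lambda|=\|DX_{(k-1)N}(v^0)\|/\|\hat w^0\|\le (M/m)\,C'/c_2$, hence $\|DX_{(k-1)N}(v^u)\|=|\lambda|\,\|\hat w^u\|$ is uniformly bounded; pulling back by $DX_{-jN}$, which does not expand $E^u_X$, bounds $\|DX_{nN}(v^u)\|$ for all $0\le n\le k-1$, and incidentally yields the exponential smallness $\|v^u\|\le C\sigma^{-(k-1)N}$ that you were hoping to extract from continuity. With this replacement (and its time-reversed analogue for $n\le 0$) your proof closes; as written, it is incomplete precisely at its decisive point.
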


As a consequence of Lemma \ref{l.neutral}, we have that $f$ is \emph{Lyapunov stable} and \emph{Lyapunov unstable} in  the directions $E^{cs}_f$ and $E^{cu}_f$ respectively.

To show the dynamically coherent and plaque expansive properties, we follow  the same  argument in \cite[Theorem 9.4]{BPP}:
\begin{itemize}
\item[--]According to \cite[Theorem 7.5]{HHU1},  $f$ is \emph{dynamically coherent},  and center stable foliation $\cW^{cs}_f$ and center unstable foliation$\cW^{cu}_f$ are \emph{plaque expansive};
\item[--] By \cite{HPS},    the center stable foliation $\cW^{cs}_f$ and center unstable foliation  $\cW^{cu}_f$ are \emph{structurally stable}, proving that $f$ is robustly dynamically coherent.
\end{itemize}

\bibliographystyle{plain}

\vskip 2mm

\noindent Christian Bonatti,

\noindent {\small Institut de Math\'ematiques de Bourgogne\\
UMR 5584 du CNRS}

\noindent {\small Universit\'e de Bourgogne, 21004 Dijon, FRANCE}

\noindent {\footnotesize{E-mail : bonatti@u-bourgogne.fr}}

\vskip 2mm
\noindent Jinhua Zhang,

\noindent{\small School of Mathematical Sciences\\
Peking University, Beijing 100871, China}

\noindent {\footnotesize{E-mail :zjh200889@gmail.com }}\\
\noindent and\\
\noindent {\small Institut de Math\'ematiques de Bourgogne\\
UMR 5584 du CNRS}

\noindent {\small Universit\'e de Bourgogne, 21004 Dijon, FRANCE}\\
\noindent {\footnotesize{E-mail : jinhua.zhang@u-bourgogne.fr}}

\end{document}